\documentclass[11pt]{amsart}
\usepackage[latin1]{inputenc} % ±ØÐë¼ÓÕâ¸ö£¬·ñÔòÍ¶¸å±àÒë²»³É¹¦
\usepackage{listings, tcolorbox}% ±ØÐë¼ÓÕâ¸ö£¬·ñÔòÍ¶¸å±àÒë²»³É¹¦

\usepackage{amssymb, amsmath, amsthm}
\usepackage[colorlinks=true,linkcolor=blue,citecolor=red]{hyperref}
\usepackage{color}
\usepackage{graphics}
\usepackage{subfigure}
\usepackage{graphicx}  %²åÈëÍ¼Æ¬µÄºê°ü
\usepackage{float}  %ÉèÖÃÍ¼Æ¬¸¡¶¯Î»ÖÃµÄºê°ü
\usepackage{epsfig}
\usepackage{amssymb}
\usepackage{epstopdf}
\usepackage{enumerate}
\usepackage{appendix}
\usepackage{amscd}
\usepackage{longtable,booktabs}
\usepackage{mathrsfs}
\usepackage{tikz}
\usepackage{bm}
\usepackage[T1]{fontenc}

\numberwithin{equation}{section}
\newtheorem{theorem}{Theorem}[section]

\newtheorem{lemma}[theorem]{Lemma}
\newtheorem{prop}[theorem]{Proposition}

\newtheorem{coro}[theorem]{Corollary}
\theoremstyle{definition}

\newtheorem{re}[theorem]{Remark}

\def\spn{\mathop{\mathrm{span}}}

\newcommand{\ii}{\mathrm i}
\newcommand{\one}{\mathbf 1}
\newcommand{\Ima}{\operatorname{Im}}
\newcommand{\Rea}{\operatorname{Re}}
\newcommand{\Tr}{\operatorname{Tr}}
\newcommand{\diag}{\operatorname{diag}}
\newcommand{\Hd}{\mathcal H_{\rm d}}
\newcommand{\Hc}{\mathcal H_{\rm c}}
\newcommand{\Hp}{L^2_+(\R)}
\newcommand{\Ran}{\operatorname{Ran}}
\newcommand{\Dom}{\operatorname{Dom}}
\newcommand{\Ker}{\operatorname{Ker}}
\newcommand{\Sch}{\mathfrak S}
\newcommand{\eps}{\varepsilon}

\newcommand{\usol}{u_{\rm sol}}

\newenvironment{Acknowledgment}
{\begin{trivlist}\item[]\textbf{Acknowledgments }}{\end{trivlist}}

\newcommand{\R}{\mathbb{R}}
\newcommand{\C}{\mathbb{C}}
\newcommand{\N}{\mathbb{N}}
\newcommand{\Z}{\mathbb{Z}}

\subjclass{37K10, 35C05, 35Q51}
\keywords{Integrable system, Benjamin-Ono equation, Cauchy problem, Soliton resolution conjecture, soliton gas.}

\usepackage[shortlabels]{enumitem}
\usepackage{cleveref}
\crefname{prop}{proposition}{propositions}
\Crefname{prop}{Proposition}{Propositions}
\crefname{coro}{corollary}{corollaries}
\Crefname{coro}{Corollary}{Corollaries}

\date{}

\begin{document}
\title[Soliton resolution conjecture and soliton gases  for the Benjamin-Ono equation]
{Soliton resolution conjecture and soliton gases for the Benjamin-Ono equation with infinite-order multisolitons in the sharp well-posedness space}
%\today
\author[Pan]{Hong-Yu Pan}
\address{\textnormal{Hong-Yu Pan} \newline \indent School of Mathematics, China University of Mining and Technology, Xuzhou 221116, China}
\email{hypan@cumt.edu.cn}
\author[Tian]{Shou-Fu Tian$^{*,\dag}$}
\address{\textnormal{Shou-Fu Tian} (Corresponding author) \newline \indent School of Mathematics, China University of Mining and Technology, Xuzhou 221116, China}
\email{sftian@cumt.edu.cn}
\thanks{$^{*}$Corresponding authors(sftian@cumt.edu.cn, shoufu2006@126.com (S.F. Tian)).\\
	\hspace*{3ex}$^\dag$This author is contributed equally as the first author.}
\begin{abstract}
We prove the soliton resolution conjecture for infinite-order multisoliton solutions in the sharp well-posedness
space $H^s(\mathbb R)$, $s>-\frac12$, and construct soliton gases
in $H^{s}(\mathbb R)$ with arbitrary $s\in\mathbb R$, for the Benjamin--Ono equation. %These results extend infinite-order soliton resolution in the sharp well-posedness space, whereas previous results were established in $H^{\frac12}(\mathbb R)$, and extend finite multisoliton constructions to soliton gases. 
In particular, our work addresses two important open problems about inverse spectral for Lax operator, including the problem of spectral characterization of infinite-order multisolitons and that of determining the range of their asymptotic parameters.

A central ingredient of our work is a spectral decomposition
adapted to rough initial data. This extends the global soliton--radiation description to weakly
weighted $H^s$ data while allowing infinite negative eigenvalues.
Another important ingredient is a continuous norming action that
extends finite inverse spectral shifts to gas limits
while retaining a full continuous norming weight.
Our framework provides a basis for studying infinite-order soliton
resolution and the construction and asymptotics of soliton gases
for nonlocal equations.
\end{abstract}
\maketitle
\tableofcontents

\section{Introduction}
\label{sec:introduction}
In this work, we consider the Cauchy problem for the Benjamin--Ono
(BO) equation
\begin{equation}\label{eq:BO-introduction}
 \left\{
 \begin{array}{l}
  \partial_tu-\partial_x|D_x|u+\partial_x(u^2)=0,\\
  u(0,x)=u_0(x),
 \end{array}
 \right.
 \enspace (t,x)\in\R^2,
\end{equation}
where $u$ is real-valued and the Fourier multiplier $|D_x|$ is given by
$$
 |D_x|u(x)=\frac1\pi\operatorname{p.v.}
 \int_\R\frac{u'(y)}{x-y}\,dy,
$$
with
\begin{equation*}
 \operatorname{p.v.}\int_\R\frac{f(y)}{x-y}\,dy
 :=\lim_{\varepsilon\to0^+}
 \int_{|x-y|>\varepsilon}\frac{f(y)}{x-y}\,dy.
\end{equation*}
The BO equation describes the weakly nonlinear propagation of
internal waves in a stratified fluid of great depth
\cite{Benjamin,Ono75}. The unknown represents the displacement of
the interface from equilibrium. For the derivation and mathematical
theory of the equation, we refer to Klein and Saut \cite{KS21}.

The study of the Cauchy problem has progressed from the classical
Sobolev theory of Saut \cite{Saut} to the gauge transformation of
Tao \cite{TaoBO} and the global $L^2(\R)$ theory of Ionescu and
Kenig \cite{IonescuKenig}. Further developments include the $L^2$
analysis of Molinet and Pilod \cite{MP12} and the small-data
dispersive estimates of Ifrim and Tataru \cite{IfrimTataru}.
On the torus, G\'erard, Kappeler and Topalov \cite{GKT} constructed
nonlinear Fourier coordinates and proved sharp global
well-posedness for $s>-\frac12$. Killip, Laurens and Vi\c{s}an
\cite{KLV24} subsequently established global well-posedness on both
the line and the torus throughout this Sobolev range.

Once the global evolution is established, a natural problem is to
determine its long-time behavior. The KdV computations of Zabusky
and Kruskal \cite{ZabuskyKruskal} revealed the emergence and
interaction of solitary waves. Inverse scattering and the
nonlinear steepest descent method of Deift and Zhou \cite{DZ93}
provide a route to this description for classical integrable
equations; see \cite{BJM18} for the focusing nonlinear
Schr\"odinger equation.

For BO, G\'erard's explicit formula \cite{Gerard}, extended to
real $L^2(\R)$ data by Chen \cite{Chen}, expresses the solution
through the initial Lax and coordinate operators. Finite
multisoliton formulas \cite{Matsuno79}, inverse spectral
coordinates \cite{Sun21} and determinant formulas for rational
data \cite{BGGM} provide finite-dimensional versions of this
description. Gassot, G\'erard and Miller \cite{GGM} proved global
resolution into finitely many solitons and free radiation for
localized data. Related developments include asymptotic stability
of a soliton \cite{KenigMartel}, orbital stability of multisolitons
\cite{BKV26}, and Wu's simplicity and direct scattering results
\cite{Wu,WuDirect}. For arbitrary $L^2$ data, Badreddine, Killip
and Vi\c{s}an \cite[Theorem~1.5]{BKVasymp} proved strong $L^2$
convergence to a soliton or to zero on every fixed-width window
moving at a prescribed velocity.

Recently, Gassot and G\'erard \cite{GG} studied a class of
infinite-order multisolitons given initially by $L^2$ sums of
positive soliton profiles. These data belong to every $H^r$,
and their negative Lax eigenvalues are summable in absolute
value and accumulate at zero. Their solutions resolve without
radiation in an ordered $L^2$ limit, while the complete soliton
series gives convergence in $L^\infty$ and $\dot H^r$ for
$r\ge\frac12$. The index $\frac12$ therefore specifies a
convergence norm in that result; the prescribed initial data
are smooth.

The passage from finitely many to infinitely many solitons
allows waves of arbitrarily small amplitude and velocity.
Such configurations require control of the accumulated
contribution of small solitons, which a fixed finite
approximation does not capture. Spectral estimates must also
remain effective as the gaps between negative eigenvalues
shrink. For general integrable initial data, the continuous
spectral contribution produces a further term in the long-time
description. The infinite positive pole sums of \cite{GG}
have infinite $L^1$ mass, whereas integrable data can have
infinitely many solitons accompanied by radiation. These are
two distinct classes of infinite soliton configurations.

Another limit of finite multisolitons leads to soliton gases.
Here the negative eigenvalues fill fixed intervals away from
zero, and local limits can have nondecaying backgrounds.
Soliton gases describe configurations in which the number of
solitons grows with the observation length and a spectral
density replaces a finite list of eigenvalues. Their study
includes both kinetic equations and limits of exact solutions.
Zakharov \cite{gasZak71} introduced a kinetic description of
dilute KdV gases. El \cite{gasEl03} derived their
density-of-states equation from a thermodynamic limit of the
KdV--Whitham equations, and El and Kamchatnov \cite{gasEK05}
developed kinetic equations for dense KdV and nonlinear
Schr\"odinger gases through pairwise phase shifts. El and
Tovbis \cite{gasET20} related the spectral data of focusing
nonlinear Schr\"odinger gases to finite-gap limits and
nonlinear dispersion relations.

For KdV, Girotti, Grava, Jenkins and McLaughlin
\cite{gasGGJM21} constructed a deterministic gas by letting
simple eigenvalues fill an interval and prescribing norming
coefficients through a positive function analytic near that
interval. The limiting Riemann--Hilbert problem yields a
cnoidal background at one spatial end, exponential decay at
the other, and long-time asymptotics. For modified KdV,
Girotti, Grava, Jenkins, McLaughlin and Minakov
\cite{gasGGJMM23} used Fredholm determinants and
Riemann--Hilbert analysis to study the gas and its interaction
with a trial soliton. For BO, Altschul \cite{gasAltschul19}
studied the statistical equilibrium of a dilute gas, while
Gassot, G\'erard and Miller
\cite[Remark~20]{GGMExplicit26} described a deterministic
gas with negative eigenvalues accumulating at zero.
The multiphase solutions of Dobrokhotov and Krichever
\cite{gasDK91} provide a further connection between
nondecaying BO waves and prescribed spectral bands.

These developments raise several questions. Gassot and
G\'erard \cite[Section~1 and Remark~4.4]{GG} ask whether
trace equality determines the infinite-order multisoliton
class among real $L^2$ potentials with infinitely many negative
Lax eigenvalues, and which asymptotic soliton parameters can
be prescribed. For rough integrable
data, can the complete soliton series and free radiation
describe the solution in the global Sobolev norm throughout
the sharp well-posedness space? For nondecaying configurations,
can finite multisolitons converge to gases with prescribed
negative spectral bands, and can continuous norming data be
recovered from their long-time behavior?

In this work, we address these open problems through the Lax
operator and G\'erard's explicit formula. Trace variation and
Hardy model spaces recover the initial pole representation,
while finite inverse reconstruction and compactness realize
the asymptotic soliton parameters. Conserved spectral
measures, distorted Fourier analysis and averaged
moving-resolvent estimates give soliton resolution for real
$H^s\cap L^1$ data, $s>-\frac12$, with the complete soliton
series and free radiation in both time directions. In this
class, infinitely many negative eigenvalues require nonzero
radiation. For soliton gases, local limits and a continuous
action on the norming data yield finite-band and one-sided
families. Their physical Lax spectra and spatial and
long-time asymptotics determine the periodic background,
the algebraic vacuum tail and the response to the norming
weight. We now introduce the notation and state the main results.

\subsection{Main results}\label{sec:main-results}\label{subsec:main-results}
We write the BO solitary waves as $R_p(x-c_pt)$, where
$$
 R_p(x)=\frac{2\Ima p}{|x+p|^2},\enspace
 c_p=\frac1{\Ima p},\enspace p\in\C_+:=\{z:\Ima z>0\}.
$$

We use the Fourier transform
$\widehat f(\xi)=\int_\R e^{-ix\xi}f(x)\,dx$ and the inner product
$\langle f,g\rangle=\int_\R f\overline g$, linear in the first
argument. Let $\Pi$ be the orthogonal projection onto the Hardy
space
$$
 \Hp=\{f\in L^2(\R):\operatorname{supp}\widehat f
 \subset[0,\infty)\},\enspace
 H_+^r=\{f\in H^r(\R):\operatorname{supp}\widehat f
 \subset[0,\infty)\}.
$$
We also use $\Pi$ on $H^r$ as the Fourier multiplier
$\widehat{\Pi f}(\xi)=\mathbf1_{[0,\infty)}(\xi)\widehat f(\xi)$.
For a real potential $v$, write
$L_v=-i\partial_x-\Pi(v\,\cdot)$ for the BO Lax operator.
For bounded $v$ its domain is $H_+^1$, and for real $v\in L^2$
it is the self-adjoint operator associated with the closed form
$$
 \mathfrak q_v[f]
 =\||D_x|^{1/2}f\|_2^2-\int_\R v|f|^2,
 \enspace f\in H_+^{1/2}.
$$
For $v\in H^s$, $-\frac12<s<0$, the same notation refers to the
self-adjoint form realization, with the potential term interpreted
by Sobolev duality; see \cite{KLV24}. We denote its spectrum
by $\sigma(L_v)$.
The coordinate operator $X^*$ is given by
$$
 \widehat{X^*f}(\xi)=i\partial_\xi\widehat f(\xi),\enspace \xi>0,
 \enspace
 \Dom(X^*)=\{f\in\Hp:\widehat f|_{(0,\infty)}\in H^1(0,\infty)\}.
$$
For $h\in H^s$, the free BO group is defined by
\begin{equation}\label{eq:free-BO-introduction}
 \widehat{e^{t\partial_x|D_x|}h}(\xi)
 =e^{it\xi|\xi|}\widehat h(\xi).
\end{equation}
The norms $\|\cdot\|_2$ and $\|\cdot\|_\infty$ are taken on $\R$.

Let $\mathfrak P_{\rm in}$ consist of sequences
$p=(p_m)_{m\ge1}\in\C_+^{\N}$, counted with multiplicity, such that
\begin{equation}\label{eq:GG-input-introduction}
 \sum_{m,n\ge1}\frac{\Ima p_m}{|p_m-\overline{p_n}|^2}<\infty.
\end{equation}
Equivalently, the series
\begin{equation}\label{eq:infinite-multisoliton-introduction}
 u[p]=\sum_{m\ge1}R_{p_m}
\end{equation}
converges in $L^2(\R)$. For such data, write the negative eigenvalues of $L_{u[p]}$
as $-a_j[p]$, with $a_1[p]>a_2[p]>\cdots>0$, and choose
corresponding normalized eigenfunctions $\varphi_j\in\Dom(X^*)$ of $L_{u[p]}$. Set
\[
 \mathcal A(p)=(p_j^\infty[p])_{j\ge1},\enspace
 p_j^\infty[p]=-\langle X^*\varphi_j,\varphi_j\rangle,
 \enspace \|\varphi_j\|_2=1.
\]
These are the asymptotic soliton parameters of \cite{GG}.
Define the admissible output class by
\[
 \mathfrak P_{\rm out}=\left\{w\in\C_+^{\N}:
 0<\Ima w_1<\Ima w_2<\cdots,\enspace
 \sum_{j\ge1}(\Ima w_j)^{-1}<\infty\right\}.
\]

For the resolution theorem, let $u_0\in H^s\cap L^1$ be real,
$s>-\frac12$, and write the negative eigenvalues of $L_{u_0}$ as
$\lambda_j=-a_j$, $a_1>a_2>\cdots>0$, $1\le j\le N$,
where $N\in\N\cup\{0,\infty\}$. Choose eigenfunctions with
$L_{u_0}\varphi_j=-a_j\varphi_j$ and $\|\varphi_j\|_2=1$.
Lemma~\ref{rad:lem-coordinate} gives $\varphi_j\in\Dom X^*$.
We set
$$
 p_j=-\langle X^*\varphi_j,\varphi_j\rangle,\enspace
 \Ima p_j=(2a_j)^{-1},\enspace c_{p_j}=2a_j.
$$
An empty soliton sum is understood to be zero.

Our first theorem recovers the initial poles from trace equality
and realizes all admissible asymptotic soliton parameters. Write
$\Ran\mathcal A=\{\mathcal A(p):p\in\mathfrak P_{\rm in}\}$.
For $a_j>0$ and $c_j\in\R$, the asymptotic parameter
$w_j=-c_j+i/(2a_j)$ corresponds to the soliton
$R_{w_j}(x-2a_jt)$, whose velocity is $2a_j$ and whose center
is $2a_jt+c_j$.
\begin{theorem}[The inverse spectral problem for infinite-order multisolitons]
\label{thm:inverse-main}\leavevmode
\begin{enumerate}[label=\textup{(\roman*)},ref=\thetheorem(\roman*)]
\item\label[theorem]{thm:main}\label[theorem]{thm:main-gerard}
Let $u_0\in L^2(\R;\R)$. Suppose that $L_{u_0}$ has a sequence
of negative eigenvalues $(-a_k)_{k\ge1}$ with
$a_1>a_2>\cdots>0$ satisfying
\begin{equation}\label{eq:saturation}
 2\pi\sum_{k\ge1}a_k=\|\Pi u_0\|_2^2<\infty.
\end{equation}
Then these eigenvalues exhaust the negative spectrum of
$L_{u_0}$, and there exists $p^{\rm in}\in\mathfrak P_{\rm in}$
such that
$$
 u_0=u[p^{\rm in}]=\sum_{m\ge1}R_{p_m^{\rm in}}.
$$
The series converges in $L^2(\R)$ and absolutely and uniformly
on $\R$.

\item\label[theorem]{thm:exact-range}
Let $(a_j)_{j\ge1}$ be a strictly decreasing positive sequence
with $\sum_ja_j<\infty$, and let $(c_j)_{j\ge1}$ be any real
sequence. Then there exists $p^{\rm in}\in\mathfrak P_{\rm in}$
such that the complete negative spectrum of $L_{u[p^{\rm in}]}$
is $(-a_j)_{j\ge1}$ and
$$
 \mathcal A(p^{\rm in})_j=-c_j+\frac{i}{2a_j},\enspace j\ge1.
$$
Equivalently,
\begin{equation}\label{eq:exact-output-range}
 \Ran\mathcal A=\mathfrak P_{\rm out}.
\end{equation}
\end{enumerate}
\end{theorem}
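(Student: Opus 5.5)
Part (i) rests on a trace identity for finitely many eigenvalues, pushed to a limit through Hardy model spaces. Part (ii) is an inverse problem for finite truncations, closed by compactness.

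\emph{Part (i).} The starting point is the trace-type inequality valid for every real $v\in L^2$:
\[
 2\pi\sum_{k}a_k\le\|\Pi v\|_2^2 ,
\]
with the sum over all negative eigenvalues. I would derive it from the identity $\langle L_v\Pi v,\ldots\rangle$, or equivalently from the quadratic form $\mathfrak q_v$, by using that each eigenfunction satisfies $\langle\varphi_k,\Pi v\rangle\neq0$ with $|\langle \Pi v,\varphi_k\rangle|^2=2\pi a_k$. That normalization comes from G\'erard's commutator identity $[X^*,L_v]\varphi=i\varphi+\frac{i}{2\pi}\langle\varphi,\Pi v\rangle\Pi v$, paired with $\varphi_k$. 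Bessel's inequality in the orthonormal family $(\varphi_k)$ then gives the bound.

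Equality \eqref{eq:saturation} therefore forces two things. First, the listed eigenvalues exhaust the negative spectrum. Second, $\Pi u_0$ lies in the closed span $\mathcal K$ of the $\varphi_k$.

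Next I would show that $\mathcal K$ is a model space $\Hp\ominus\Theta\Hp$ for a Blaschke product $\Theta$ with zeros $\overline{p_m}$ reflected, so that
\[
 \Pi u_0=\sum_m\frac{i\,\alpha_m}{x+p_m}\,.
\]
The key property is that $\mathcal K$ is invariant under the compressed shift adjoint, i.e.\ under $X^*$. This follows from the commutator identity together with $\Pi u_0\in\mathcal K$: $X^*$ maps $\mathcal K$ into $\mathcal K$, and by Beurling--Lax $\mathcal K^\perp$ is shift invariant. The spectrum of $X^*$ on $\mathcal K$ is identified with the conjugates $-\overline{p_m}$ of the poles. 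Reality of $u_0$ then gives $u_0=2\Rea\Pi u_0$, and the residues are pinned to $\alpha_m=2\Ima p_m$ exactly as in the finite case. The trace $\sum_m\|R_{p_m}\|$ bound identifies $\|\Pi u_0\|_2^2$ with the double sum \eqref{eq:GG-input-introduction}, giving $p^{\rm in}\in\mathfrak P_{\rm in}$.

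Absolute and uniform convergence follows from
\[
 \sum_m\|R_{p_m}\|_\infty=\sum_m 2/\Ima p_m ,
\]
which is finite because the diagonal terms of \eqref{eq:GG-input-introduction} give $\sum_m(\Ima p_m)^{-1}<\infty$. \textbf{The main obstacle} is the residue identification with infinitely many poles accumulating at the boundary: one must rule out a singular inner factor in $\Theta$. I would try to exclude it by showing that the $\varphi_k$ are total in $\mathcal K$ and that each is a finite combination of Cauchy kernels.

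\emph{Part (ii).} For each $N$, the finite inverse spectral theory (Sun's coordinates, equivalently the finite multisoliton formula) provides poles $p^{(N)}\in\C_+^N$. The potential $u[p^{(N)}]$ has negative spectrum exactly $\{-a_j\}_{j\le N}$ and asymptotic parameters $-c_j+i/(2a_j)$, and it satisfies the finite trace equality
\[
 \|\Pi u[p^{(N)}]\|_2^2=2\pi\sum_{j\le N}a_j .
\]
This is uniformly bounded, so after passing to a subsequence $u[p^{(N)}]\rightharpoonup u$ weakly in $L^2$.

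The second obstacle is upgrading this to \emph{strong} convergence together with convergence of the eigenpairs. I would use the explicit formula
\[
 \Pi u=\langle (X^*-L)^{-1}\ldots\rangle ,
\]
where the matrix of $X^*$ in the eigenbasis has diagonal $-(-c_j+i/(2a_j))$ and explicit off-diagonal entries $\frac{i\sqrt{a_ja_k}}{a_j-a_k}$-type. These entries form a Hilbert--Schmidt-controlled infinite matrix independent of $N$, because $\sum a_j<\infty$ and the imaginary parts grow. This gives convergence of $\Pi u[p^{(N)}]$ in norm to a limit defined by the infinite matrix.

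Norm convergence yields strong resolvent convergence of the Lax operators. Hence the limit has eigenvalues $-a_j$ with eigenfunctions the limits, and the trace equality passes to the limit. Part (i) then gives $u=u[p^{\rm in}]$ with complete spectrum $(-a_j)$. Continuity of $\langle X^*\varphi_j,\varphi_j\rangle$ under the convergence gives $\mathcal A(p^{\rm in})_j=-c_j+i/(2a_j)$.

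For \eqref{eq:exact-output-range}, the inclusion $\supseteq$ is what was just proved. The inclusion $\subseteq$ holds because every $p\in\mathfrak P_{\rm in}$ has strictly decreasing summable $a_j$ by the trace equality, and $\Ima p_j^\infty=(2a_j)^{-1}$ is then increasing with $\sum_j(\Ima p_j^\infty)^{-1}=2\sum_j a_j<\infty$.
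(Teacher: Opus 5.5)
Your opening steps match the paper: the Wu relation plus Bessel's inequality, exhaustion of the negative spectrum, $q\in\mathcal K$, and the inclusion $\Ran\mathcal A\subset\mathfrak P_{\rm out}$. Both parts, however, have genuine gaps.

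\textbf{Part (i).} The fact $X^*\varphi_k\in\mathcal K$ is correct: project $(L+a_k)X^*\varphi_k=-i\varphi_k+\frac{i}{2\pi}I_+(\varphi_k)q$ onto $\Hc$ and use $L_c+a_k\ge a_k$. (The rank-one term carries $I_+(\varphi_k)$, not $\langle\varphi_k,\Pi v\rangle$.) But it does not give what Beurling's theorem needs, namely $S_t^*\mathcal K\subset\mathcal K$, equivalently $(X^*-z)^{-1}\mathcal K\subset\mathcal K$ for $\Ima z>0$.
\begin{itemize}
\item Since $X^*$ is unbounded and $\mathcal K$ is infinite-dimensional, invariance on the dense set $\spn\{\varphi_k\}$ suffices only if the restriction is maximal dissipative in $\mathcal K$. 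That is exactly what is unknown.
\item A counterexample to the general implication: let $h$ be a Cauchy kernel. Then $f\mapsto\langle X^*f,h\rangle$ is unbounded, so its kernel in $\Dom X^*\cap h^\perp$ is dense in $h^\perp$ and is mapped by $X^*$ into $h^\perp$. Yet $h^\perp$ is not $S_t^*$-invariant, because $e^{itx}h\notin\C h$.
\item The paper works with the bounded semigroup instead. The Lax-shift relations give a Volterra identity for $\mathcal F_t=P_cS_t^*P_d$, which is solved to zero on compact spectral windows of $L_c$ in $(0,\infty)$ (Proposition~\ref{prop:fixed-band}). The window at $0$ is removed by $\Ker L=\{0\}$.
\end{itemize}

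The second gap in (i) is the link between $u_0$ and $\Theta$. For an infinite Blaschke product, elements of $K_\Theta$ need not be convergent sums of Cauchy kernels, so ``residues pinned as in the finite case'' has nothing to act on.
\begin{itemize}
\item Your plan against a singular factor assumes each $\varphi_k$ is a finite combination of Cauchy kernels. This is false here. In the kernels $1/(x+p_m)$ the Lax operator acts by a Cauchy matrix with off-diagonal entries $-i/(p_n-p_m)$. A finitely supported eigenvector would force the nonzero rational function $\sum_{n\in F}c_n/(p_n-\mu)$ to vanish at the infinitely many $p_m$, $m\notin F$.
\item The missing ingredient is the local trace identity $u_0=2\pi\sum_k|\varphi_k|^2$ a.e. It is obtained by applying the trace inequality and Ky Fan's inequality to $u_0\pm\eps w$; the same variational argument gives $\Ker L=\{0\}$.
\item Combined with the kernel diagonal $(1-|\Theta(z)|^2)/(4\pi\Ima z)=\sum_k|\varphi_k(z)|^2$ and Julia--Carath\'eodory, it yields $u_0=-i\Theta'/\Theta$ a.e.
\item The angular-derivative formula then writes $u_0$ as a constant, plus the positive Poisson sum over the zeros, plus a singular-measure term. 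The condition $u_0\in L^2\cap L^1_{\rm loc}$ removes the constant and the singular term.
\end{itemize}

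\textbf{Part (ii).} The norm-convergence step fails because the matrix of $X^*$ in the eigenbasis is not Hilbert--Schmidt, nor even bounded.
\begin{itemize}
\item Its diagonal $-w_j$ has $\Ima w_j=(2a_j)^{-1}\to\infty$.
\item The entries $\frac{i}{a_j-a_k}\sqrt{a_k/a_j}$ blow up near the diagonal; for $a_j=j^{-\alpha}$ the $(j,j+1)$ entry is of order $j^{\alpha+1}$.
\item Passing to the limit in the finite resolvent formulas would again require a maximality argument for an unbounded non-normal infinite matrix.
\item Even granting norm convergence, continuity of $\langle X^*\varphi_j,\varphi_j\rangle$ needs a uniform bound on $\|X^*\varphi_j^{(N)}\|_2$, which you do not supply.
\end{itemize}

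The paper only uses one column at a time.
\begin{itemize}
\item For fixed $j$, $\|X^*\varphi_j^{(N)}\|_2^2=|w_j|^2+\sum_{k\ne j,\,k\le N}a_k/(a_j(a_j-a_k)^2)$ is bounded uniformly in $N$. The terms with $a_k\le a_j/2$ sum to at most $4a_j^{-3}\sum_ka_k$, and only finitely many $k$ have $a_k>a_j/2$.
\item With the Gagliardo--Nirenberg bound on $\|D\varphi_j^{(N)}\|_2$ and Rellich in Fourier variables, this gives strongly convergent subsequences $\varphi_j^{(N)}\to\varphi_j$.
\item The limits are orthonormal eigenfunctions of $L_{u_0}$, for the weak limit $u_0$, with $|\langle q,\varphi_j\rangle|^2=2\pi a_j$ and $\langle X^*\varphi_j,\varphi_j\rangle=-w_j$.
\item Bessel's inequality and weak lower semicontinuity then squeeze $2\pi\sum_ja_j\le\|q\|_2^2\le\liminf\|q^{(N)}\|_2^2=2\pi\sum_ja_j$. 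This gives norm convergence and trace saturation at once, after which part (i) applies.
\end{itemize}
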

\begin{re}
The trace identity in \cite{GG} is established for potentials
already represented as infinite-order multisolitons.
Theorem~\ref{thm:inverse-main}\textup{(i)} proves its converse,
recovering the initial poles, their summability and the
convergence of the soliton series. Thus, among real $L^2$
potentials with infinitely many negative Lax eigenvalues,
trace equality characterizes the infinite-order multisoliton
class. This solves the open inverse spectral problems posed in
\cite[Remark~4.4]{GG}.
\end{re}

Having proved the converse of the trace identity and determined
the exact range of the asymptotic soliton parameters, we now
turn to soliton resolution in the sharp well-posedness space
$s>-\frac12$. For real initial data
$u_0\in H^s(\R)\cap L^1(\R)$ with infinitely many negative
Lax eigenvalues, the next theorem establishes an asymptotic
decomposition in $H^s$ into an infinite soliton series and
a nonzero free radiation term as $t\to\pm\infty$.

\begin{theorem}[Soliton resolution conjecture with infinite-order multisolitons]
\label{thm:main-resolution}
Let $u_0\in H^s(\R)\cap L^1(\R)$, $s>-\frac12$.
Suppose that $L_{u_0}$ has infinitely many eigenvalues
$(-a_j)_{j\ge1}$, with $a_1>a_2>\cdots>0$.
Then there exist unique nonzero functions
$u_\infty^\pm\in H^s(\R)$ such that
\begin{equation}\label{eq:resolution-infinite}
 \lim_{t\to\pm\infty}
 \left\|u(t)-\sum_{j=1}^{\infty}R_{p_j}(\,\cdot-2a_jt)
       -e^{t\partial_x|D_x|}u_\infty^\pm\right\|_{H^s}=0.
\end{equation}
\end{theorem}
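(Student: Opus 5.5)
We will prove \eqref{eq:resolution-infinite} for $t\to+\infty$; the case $t\to-\infty$ follows by the symmetry $u(t,x)\mapsto u(-t,-x)$. This symmetry preserves the equation and maps $L_{u_0}$ to a unitarily equivalent operator. The plan has four steps.

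\textbf{Step 1: spectral splitting.} We work with the spectral decomposition of $L_{u_0}$ into its point and absolutely continuous parts, $\Hp=\Hd\oplus\Hc$. Since $u_0\in L^1$, the continuous spectrum is $[0,\infty)$, is purely absolutely continuous, and carries a distorted Fourier transform. The point spectrum consists of the simple eigenvalues $-a_j$, which accumulate only at $0$. Write $\Pi u_0=P_{\rm d}\Pi u_0+P_{\rm c}\Pi u_0$. By G\'erard's explicit formula (Chen's extension for real $L^2$ data, and the Killip--Laurens--Vi\c{s}an flow in $H^s$ for rough data),
\[
\Pi u(t,z)=\frac1{2\pi i}I_+\bigl((X^*-2tL_{u_0}-z)^{-1}\Pi u_0\bigr),\qquad \Ima z>0.
\]
We insert the splitting into this formula. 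On $\Hd$ the resolvent acts on finite sections $\spn\{\varphi_1,\dots,\varphi_n\}$. There, as in the finite analysis of Gassot--G\'erard--Miller, it produces the soliton profiles $R_{p_j}(\cdot-2a_jt)$ plus cross terms. The cross terms decay like $O\bigl(1/(t|a_j-a_k|)\bigr)$, because the centers separate at speeds $2a_j$. On $\Hc$ the term $2tL_{u_0}$ dominates. Conjugating by the distorted Fourier transform shows that this piece is asymptotically the free evolution $e^{t\partial_x|D_x|}$ applied to a wave operator image. This defines $u_\infty^+$ as a strong limit, and uniqueness is immediate from unitarity of the free group on $H^s$.

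\textbf{Step 2: the infinite tail.} We fix $n$. The trace identity (established for the relevant class) bounds $2\pi\sum_j a_j\le\|\Pi u_0\|_2^2$ from the spectral measure of $\Pi u_0$. Hence $\sum_{j>n}R_{p_j}(\cdot-2a_jt)$ has $L^2$-norm squared bounded by $C\sum_{j>n}a_j$, uniformly in $t$. The same holds for the $H^s$ norm with $s\le 0$, and for $s>0$ we interpolate with $\dot H^r$ bounds. By a diagonal argument ($n\to\infty$ after $t\to\infty$), it then suffices to show that the contribution of $\spn\{\varphi_j:j>n\}$ to $u(t)$ is uniformly small. We obtain this from the conserved spectral measure: the mass of $\Pi u_0$ on $\{\varphi_j\}_{j>n}$ is conserved and tends to $0$ as $n\to\infty$.

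\textbf{Step 3: nonzero radiation.} If $u_\infty^+=0$, then the $L^2$ mass identity forces the trace equality \eqref{eq:saturation}. By Theorem~\ref{thm:main}, $u_0$ would then be an infinite-order multisoliton, which has infinite $L^1$ mass. This contradicts $u_0\in L^1$. Hence $u_\infty^+\neq 0$.

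\textbf{Step 4: the main obstacle.} The hard part is the cross terms and low-energy eigenvalues near $0$. The small solitons are slow, so their separation rate $t|a_j-a_k|$ degenerates, and they also interact with the continuous spectrum near energy $0$. For this we would use averaged moving-resolvent estimates. Fix $\delta>0$ and split the spectrum at $-\delta$. Eigenvalues below $-\delta$ are finitely many with uniform gaps, so the finite resolution applies. The remaining part, consisting of eigenvalues in $(-\delta,0)$ together with continuous spectrum near $0$, should be shown to be $o(1)$ as $\delta\to0$ uniformly in $t$. This uses the spectral-measure smallness from Step 2 and a local smoothing bound for $\chi(L_{u_0})$ in the moving frame.

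The passage from $L^2$ to $H^s$ for $-\frac12<s<0$ is handled by equicontinuity of the KLV flow. Approximating $u_0$ by $L^2\cap L^1$ data gives uniform-in-time $H^s$ control of the difference. This lets us transfer the $L^2$ resolution, with convergence of the spectral data under the approximation.
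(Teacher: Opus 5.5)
The core of your Step~1 is false, the continuous part is asserted rather than proved, and several supporting steps fail as written. Your use of $L^1$ against an infinite pole sum and of conserved spectral mass for the tails does match the paper.

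\textbf{The discrete/continuous decoupling in Step~1.} The operator $(X^*-2tL_{u_0}-z)^{-1}$ preserves neither $\spn\{\varphi_1,\dots,\varphi_n\}$ nor $\Hd$. By \eqref{rad:eq-commutator}, $(L_{u_0}+a_j)X^*\varphi_j=-i\varphi_j+\frac{i}{2\pi}I_+(\varphi_j)\Pi u_0$ with $I_+(\varphi_j)\ne0$. Hence $\langle X^*\varphi_j,\varphi_k\rangle=\frac{iI_+(\varphi_j)\langle\Pi u_0,\varphi_k\rangle}{2\pi(a_j-a_k)}\ne0$ for every $k\ne j$. Also $X^*\varphi_j$ has a nonzero $\Hc$-component, because $\one_{(0,\infty)}(L_{u_0})\Pi u_0\ne0$; radiation is forced here by Proposition~\ref{rad:prop-reflectionless}. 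So the explicit formula does not split into a soliton block and a radiation block, and neither the solitons nor the cross terms can be read off from finite sections.

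The paper works atom by atom. It writes the observation of $\varphi_j$ at $z+2ta_j$ as the Cauchy kernel $I_+(\varphi_j)/(2\pi i(b_j-z))$ minus the observation of $(X^*-b_j)\varphi_j/(b_j-z)$, which is a vector orthogonal to $\varphi_j$. The latter is killed by the ray-decay estimate \eqref{rad:eq-ray-decay}, proved from density of the vectors $(L_{u_0}-\lambda)g$ in $\Ker(L_{u_0}-\lambda)^\perp$ and the observation formula. This handles discrete--discrete and discrete--continuous coupling at once, with no gap-dependent rates. Equality of norms then turns pointwise convergence into strong convergence.

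\textbf{The continuous part.} ``$2tL_{u_0}$ dominates'' is not an argument. The profile is $\widehat{u_\infty^+}=\mathcal R_-$, built from Jost solutions for $H^\rho\cap L^1$ potentials (Lemma~\ref{rad:lem-static}). Proving convergence needs the moving-resolvent kernel bounds, the $E$-averaged inverse estimates and the window limit of Lemma~\ref{rad:lem-window}. These give only weak convergence, which conservation of the source measure upgrades to strong. Your Step~4 does not supply this. \cite{GGM} requires spatial-moment hypotheses on the whole datum, and a spectral piece of $u_0$ is not a BO datum whose flow reproduces the corresponding piece of $u(t)$. The real difficulty is at fixed positive energies; near zero, smallness of $\nu_s$ (no atom at zero, \eqref{rad:eq-source-tail}) already suffices.

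\textbf{The rough-data transfer.} \cite{KLV24} gives continuity of the flow only on bounded time intervals; its equicontinuity concerns high-frequency tails along orbits. No uniform-in-time $H^s$ control of differences can hold. Perturbing $u_0$ moves the $a_j$ and hence the velocities $2a_j$, so solitons of nearby solutions separate linearly in time. The paper instead sends $n\to\infty$ before $t\to\infty$. It gets uniform-in-time control from the conserved measure $\nu_s$ of the limit solution and the uniformly equivalent scales \eqref{rad:eq-scales}.

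\textbf{The tail bound in Step~2.} For $s<0$, $\|\Pi u_0\|_2$ may be infinite. Even for $L^2$ data, $\|\sum_{j>n}R_{p_j}(\cdot-2a_jt)\|_2^2\le C\sum_{j>n}a_j$ does not follow uniformly in $t$. The pairings $\langle R_{p_j},R_{p_k}\rangle$ are positive and comparable to $\min(a_j,a_k)$ when centers nearly coincide. The sum $\sum_{j,k}\min(a_j,a_k)$ can diverge although $\sum_ja_j<\infty$. Uniform tails of the complete series need something like $\sum_j\sqrt{a_j}<\infty$. This is exactly where the paper uses $L^1$: $\widehat u_0\in L^\infty$ feeds a Hilbert--Schmidt Birman--Schwinger bound giving $\sum_ja_j^\gamma<\infty$ for every $\gamma>0$ (Lemma~\ref{rad:lem-moments}).

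\textbf{Step~3 for $s<0$.} You must show $u_0\in L^2$ before trace equality makes sense. The paper gets this from three facts: the positive part of $\nu_s$ vanishes, the atom masses \eqref{rad:eq-atom-mass} are summable, and the scale isomorphism identifies $\sum_j\langle\Pi u_0,\varphi_j\rangle\varphi_j$ with $\Pi u_0$. After that, your contradiction between $L^1$ and an infinite pole sum is the paper's argument.
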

\begin{re}
	The $L^1$ assumption is scaling invariant and sharp for Theorem~\ref{thm:main-resolution} in the
	Lebesgue scale. It cannot be replaced by $L^p$ for any
	$p>1$, or by weak $L^1$, even for smooth initial data.
	In fact, consider
	$$
	u_0(x)=\sum_{m\ge1}R_{i2^m}(x)
	=\sum_{m\ge1}\frac{2^{m+1}}{x^2+4^m}.
	$$
	This function belongs to every $H^r(\R)$, $r\ge0$.
	Splitting the sum at $2^m=|x|$ gives
	$u_0(x)\asymp(1+|x|)^{-1}$, so
	$$
	u_0\in\bigcap_{1<p<\infty}L^p(\R)
	\cap L^{1,\infty}(\R),
	\enspace u_0\notin L^1(\R).
	$$
	It is an infinite-order multisoliton with infinitely many
	negative eigenvalues and an ordered $L^2$ soliton decomposition
	without radiation \cite{GG}. If the theorem held for this datum
	at $s=0$, the asserted absolute convergence, with tails uniform
	in time, would turn that ordered decomposition into the
	complete-series limit with $u_\infty^\pm=0$. This contradicts
	the nonvanishing conclusion for $N=\infty$.
\end{re}
\begin{re}
The soliton resolution result in \cite[Theorem~1.1]{GGM} assumes
$u_0,xu_0\in H^1$ and $x^2u_0=c_0+v_0$, with $v_0\in L^2$
and $c_0\in\R$, and yields finitely many solitons and free
radiation in $H^1$.
Theorem~\ref{thm:main-resolution} assumes $u_0\in H^s\cap L^1$,
$s>-\frac12$, with no spatial moment condition and permits
infinitely many solitons. Unlike the ordered $L^2$ limit for
the pole class in \cite{GG}, \eqref{eq:resolution-infinite}
uses the complete series: Lemma~\ref{rad:lem-moments} gives
$\sum_j a_j^\gamma<\infty$ for every $\gamma>0$.
Proposition~\ref{rad:prop-example} supplies smooth integrable
data with infinite negative spectrum.
\end{re}

Our third theorem concerns soliton gases obtained by filling
fixed negative spectral bands. We introduce the notation for
the two families in its statement.
We write $\mathbb T=\R/(2\pi\Z)$, denote by $\delta_\lambda$
the unit point mass at $\lambda$, and use $\mathbf1_I$ for
the indicator of an interval $I$. Convergence of measures
denoted by $\rightharpoonup$ is tested against continuous
compactly supported functions. Convergence in
$C^\infty_{\rm loc}(\R_t\times\R_x)$ means local uniform
convergence of every mixed $t,x$ derivative. In the finite-band
approximation below, $R>0$ is the spatial truncation radius,
$N_{R,\Phi}$ is the number of solitons in $u_{R,\Phi}$, and
$\lambda_{k,R,\Phi}$ are its negative Lax eigenvalues.

For the one-sided family, given $0<a<b$ and
$r\in C([a,b];(0,\infty))$, set
$$
 K=b-a,\enspace d=\frac{2\pi}{K},\enspace
 \rho=\sqrt{\frac ab},\enspace
 \gamma_r(s)=\frac{\log r(s)}{2s},\enspace s\in[a,b].
$$
For each integer $N\ge1$, define the Hermitian matrix $A_N$
and the diagonal matrix $Y_N$, with indices $0\le j,k<N$, by
$$
 (A_N)_{jj}=\frac{a+b}{2},\enspace
 (A_N)_{jk}=-\frac{i}{d(j-k)}\enspace(j\ne k),\enspace
 Y_N=d\,\diag(0,\ldots,N-1).
$$
Let $I_N$ be the identity matrix. The bounds $aI_N\le A_N\le bI_N$
ensure that $A_N^{-1}$ and $\gamma_r(A_N)$ are defined, the latter
by the spectral calculus. Writing $\Tr$ for the matrix trace, set
$$
 u_{r,N}(t,x)=-2\Ima\Tr
 \left[xI_N+Y_N-\gamma_r(A_N)-2tA_N
              +\frac{i}{2}A_N^{-1}\right]^{-1}.
$$
Let $a_{j,N}$, $1\le j\le N$, denote the eigenvalues of $A_N$.
The inverse spectral center at the eigenvalue $-a_{j,N}$ of
$L_{u_{r,N}(0)}$ is denoted by $c_{j,N}$; it is the real part of
the expectation of $X^*$ in a corresponding normalized eigenfunction. We use the norming convention
$\nu_{j,N}^{(r)}=e^{2a_{j,N}c_{j,N}}$ and denote the gas limit
asserted below by $u_r$, with $u_1$ corresponding to $r\equiv1$.
The periodic background and its phase are
$$
 \delta_r=\frac1K\int_a^b\gamma_r(s)\,ds,\enspace
 Q_r(t,x)=K\frac{1-\rho^2}
 {1-2\rho\cos\!\bigl(Kx-K(a+b)t-K\delta_r\bigr)+\rho^2}.
$$
For the large-time asymptotics, we use $\xi=x/t$ and the functions
$$
 \Phi_{\rm ray}(\xi)=\frac1\pi
 \operatorname{arctanh}^{2}\!\frac{b-a}{a+b-\xi},\enspace\xi<2a,
 \enspace
 F(\xi)=\frac1{2\pi\xi}
 \log\frac{b(\xi-2a)}{a(\xi-2b)},\enspace\xi>2b,
$$
where $\operatorname{arctanh}z=\tfrac12\log((1+z)/(1-z))$
for $0<z<1$.
\begin{theorem}[Soliton gases and their asymptotics]\label{thm:main-gas}\leavevmode
\begin{enumerate}[label=\textup{(\roman*)}]
\item
Let $m\ge1$ be an integer and
$p_1<q_1<\cdots<p_m<q_m<0$.
For every $\Phi\in\mathbb T^m$, there is a bounded smooth
quasiperiodic solution $u_\Phi$ of \eqref{eq:BO-introduction}
such that, for every $t\in\R$,
$$
 \sigma(L_{u_\Phi(t)})=\bigcup_{j=1}^m[p_j,q_j]\cup[0,\infty),
$$
and this spectrum is purely absolutely continuous of
multiplicity one. There are finite multisoliton solutions
$u_{R,\Phi}$ with simple negative Lax eigenvalues such that,
as $R\to\infty$,
$$
 u_{R,\Phi}\longrightarrow u_\Phi
 \enspace\hbox{in }C^\infty_{\rm loc}(\R_t\times\R_x),\enspace
 \frac1{2R}\sum_{k=1}^{N_{R,\Phi}}\delta_{\lambda_{k,R,\Phi}}
 \rightharpoonup\frac1{2\pi}\sum_{j=1}^m
 \mathbf1_{[p_j,q_j]}(\lambda)\,d\lambda.
$$
Both limits are uniform in $\Phi$.

\item
Let $0<a<b$ and $r\in C([a,b];(0,\infty))$.
For every integer $N\ge1$, the functions $u_{r,N}$ are
$N$-soliton solutions with distinct negative Lax eigenvalues
$-a_{j,N}$, where $a_{j,N}\in[a,b]$, and inverse spectral data
$$
 c_{j,N}=-\frac{d(N-1)}2+\gamma_r(a_{j,N}),\enspace
 \nu_{j,N}^{(r)}=e^{-d(N-1)a_{j,N}}r(a_{j,N}),\enspace 1\le j\le N.
$$
As $N\to\infty$,
$$
 u_{r,N}\longrightarrow u_r
 \enspace\hbox{in }C^\infty_{\rm loc}(\R_t\times\R_x),\enspace
 \frac1N\sum_{j=1}^N\delta_{a_{j,N}}
 \rightharpoonup K^{-1}\mathbf1_{[a,b]}(s)\,ds.
$$
The limit $u_r$ is a global smooth BO solution with
$0\le u_r\le4b$. For every $t\in\R$,
$\sigma(L_{u_r(t)})=[-b,-a]\cup[0,\infty)$, and its negative
spectrum is purely absolutely continuous of multiplicity one.

For every $T_0>0$, uniformly for $|t|\le T_0$,
$$
 \begin{aligned}
 u_r(t,x)&=Q_r(t,x)+o(1),&&x\to-\infty,\\
 u_r(t,x)&=\frac{\log(b/a)}{2\pi x}
 +O\!\left(\frac{\log(2+x)}{x^2}\right),&&x\to+\infty.
 \end{aligned}
$$
As $t\to+\infty$,
$$
 \begin{aligned}
 u_r(t,t\xi)&=Q_r\!\left(t,t\xi-\frac{\Phi_{\rm ray}(\xi)}K\right)+o(1),
 &&\xi<2a,\\
 u_r(t,t\xi)&=\frac{F(\xi)}t+O(t^{-3/2}),
 &&\xi>2b,
 \end{aligned}
$$
uniformly for $\xi$ in compact subsets of the respective regions.
Moreover,
$$
 t^2(u_r-u_1)(t,t\xi)\longrightarrow
 \frac1{2\pi}\int_a^b\frac{\gamma_r(s)}{s(\xi-2s)^2}\,ds
 \enspace\hbox{as }t\to+\infty,
$$
uniformly on compact subsets of $(2b,\infty)$.
These limits for all $\xi>2b$ determine $r$ uniquely, and
$r\mapsto u_r(0,\cdot)$ is injective.
\end{enumerate}
\end{theorem}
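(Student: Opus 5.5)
The plan is to derive both parts of Theorem~\ref{thm:main-gas} from G\'erard's explicit formula in its finite-dimensional (matrix) form, and then to pass to the limit at the level of Fredholm determinants.

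\textbf{Part (ii), finite stage.} For an $N$-soliton with Lax eigenvalues $-a_{j,N}$, the formula reads
\[
 u(t,x)=-2\Ima\Tr\bigl[xI_N-M(0)-2tA+\tfrac i2A^{-1}\bigr]^{-1},
\]
where $A$ is the compression of $L_u$ to the span of its eigenfunctions and $M$ is the compression of $X^*$. I would verify that the pair $(A_N,\;-Y_N+\gamma_r(A_N))$ satisfies the finite Lax commutator identity $[M,A]=i(\text{rank one})$, up to the normalisation fixed by the choice of $d$. Since $\gamma_r(A_N)$ commutes with $A_N$, only $Y_N$ contributes to the commutator, and the identity reduces to the Hilbert-matrix relation $[Y_N,A_N]_{jk}=-i$ for $j\ne k$. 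This identifies $u_{r,N}$ as an $N$-soliton. Diagonalising $A_N$ gives the centres as the diagonal entries of $-M$ in the eigenbasis. For the trace of $Y_N$ in the eigenbasis, I would use the reflection symmetry $J A_N J=\overline{A_N}$, where $J$ is the index reversal; this yields $c_{j,N}=-d(N-1)/2+\gamma_r(a_{j,N})$, and the formula for $\nu_{j,N}$ follows at once. The spectral bounds $a\le A_N\le b$ follow from the fact that the Toeplitz symbol is $\frac{a+b}2+\frac{K}{2}\cdot(\text{sawtooth})$, whose range is $[a,b]$. The Szeg\H{o} theorem then gives the weak limit of the eigenvalue distribution.

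\textbf{Part (ii), limit and asymptotics.} For the limit $N\to\infty$, I would rewrite the trace as $\partial_x\log\det$ and shift $x$ by $d(N-1)/2$ so that $Y_N$ is centred. The determinants then converge, as Fredholm determinants of trace-class perturbations of Toeplitz operators on $\ell^2(\Z_{\ge0})$, locally uniformly in $(t,x)$ together with all their derivatives. This convergence gives the $C^\infty_{\rm loc}$ limit $u_r$ and the bounds $0\le u_r\le 4b$, since $0\le u_{r,N}\le 4\max a_{j,N}$ for each $N$. The spectrum of $L_{u_r}$ would be obtained from a Weyl-sequence and local-limit argument, using the half-line Toeplitz structure: the symbol band gives $[-b,-a]$ as purely absolutely continuous spectrum.

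The spatial and long-time asymptotics all reduce to analysing the half-infinite Toeplitz operator $x+Y-2tA+\cdots$ by Wiener--Hopf factorisation of the symbol $\frac{a+b}2+\frac K2\,\mathrm{saw}(\theta)-\xi/2$.
\begin{itemize}
 \item As $x\to-\infty$, the resolvent concentrates deep inside the matrix, which produces the periodic profile $Q_r$ with parameter $\rho=\sqrt{a/b}$.
 \item As $x\to+\infty$, the factorisation gives the $\log(b/a)/(2\pi x)$ tail.
 \item For the long-time regime, a stationary-phase analysis in the symbol variable, with the phase $\Phi_{\rm ray}$ coming from the logarithm of the Wiener--Hopf factor, gives the asymptotics in $\xi<2a$ and $\xi>2b$.
\end{itemize}
The $t^{-2}$ difference formula is the first-order variation of the $\xi>2b$ expansion in $\gamma_r$. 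Uniqueness of $r$ then follows because the kernel $s\mapsto(\xi-2s)^{-2}$, $\xi>2b$, has analytic dependence on $\xi$, so it determines all moments of $\gamma_r(s)/s$, hence $\gamma_r$ and $r$.

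\textbf{Part (i).} I would take the Dobrokhotov--Krichever finite-gap BO solutions with bands $[p_j,q_j]$ as $u_\Phi$, and establish the spectral claims through Floquet theory for the quasi-periodic Lax operator. For the approximants $u_{R,\Phi}$, I would place eigenvalues at the quantisation points of the band density $(2\pi)^{-1}dR$ inside each band, with centres arranged by the quasiperiodic phase over $[-R,R]$. Convergence would then follow from the same determinant argument, applied locally with uniformity in $\Phi$ coming from compactness of $\mathbb T^m$.

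\textbf{Main obstacle.} I expect the main difficulty to be the rigorous Wiener--Hopf and stationary-phase asymptotics for the non-smooth (sawtooth) symbol, and in particular proving the $O(t^{-3/2})$ and $o(1)$ errors uniformly in $\xi$. I would first try a Riemann--Hilbert reformulation of the Toeplitz determinant with jump on the circle, with error control via small-norm theory.
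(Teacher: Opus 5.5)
Your finite-$N$ analysis in part (ii) matches the paper: the commutator $[Y_N,A_N]_{jk}=-i$, index reversal with conjugation giving $Y$-expectation $d(N-1)/2$, the sawtooth bounds $a\le A_N\le b$, and Szeg\H{o}. So do the recentering picture for $x\to-\infty$ and the moment argument recovering $\gamma_r$. The genuine gaps are in the spectral assertions and in the approximants of part (i).

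\textbf{Spectral type.} Floquet theory is unavailable in (i): for incommensurable $K_j$ the potential $u_\Phi$ is only quasi-periodic, and $L_u=D-T_u$ is a nonlocal operator on $\Hp$, with no transfer matrices or Bloch fibres. In (ii), Weyl sequences together with spectral inclusion under strong resolvent convergence can at most identify the set $[-b,-a]\cup[0,\infty)$. Since the approximants have pure point negative spectrum, no information about absolute continuity or multiplicity survives the limit. The missing idea is the inner-function structure. Every potential in $\mathcal X_b$ has the form $u=-i\Theta'/\Theta$ with $\Theta$ inner, so $L_u(\Theta f)=\Theta Df$ makes $L_u$ on $\Theta\Hp$ unitarily equivalent to $D$. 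The negative spectral projection is then $P_{K_\Theta}$, with diagonal $u(x)/(2\pi)$. The paper builds explicit intertwiners onto $K_\Theta$:
\begin{itemize}
\item in (i), $\mathcal V:\bigoplus_jL^2(0,K_j)\to K_\Theta$ with $\mathcal V\mathcal V^*=P_{K_\Theta}$ and $L_u\mathcal V=\mathcal V\mathcal A$, where $(\mathcal AF)_j(s)=(p_j+s)F_j(s)$;
\item in (ii), $(Wf)(x)=(2\pi)^{-1/2}e^*R(x)A^{-1/2}f$ with $L_qW=-WA$. Here $WW^*=P_q$ follows from $WW^*\le P_q$ and equality of local traces, $\Tr(\chi WW^*\chi)=(2\pi)^{-1}\int\chi^2q=\Tr(\chi P_q\chi)$. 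The spectral type then comes from the unitary diagonalization of the sawtooth Toeplitz operator.
\end{itemize}

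\textbf{Approximants in (i).} These lack a convergence mechanism. Prescribing eigenvalues at quantization points with centres ``arranged by the phase'' is itself an unsolved inverse problem, and there is no half-lattice determinant to reuse in the multiband case. The paper instead truncates the Blaschke zeros of $\Theta$, whose heights are bounded above and below and which are boundedly many per unit interval. Convergence then follows from continuity of the flow on $\mathcal X_b$. The eigenvalues lie in $[a,b]$ by $K_{\Theta_R}\subset K_\Theta$, and their distribution follows from local trace continuity and the channel means $(2R)^{-1}\int_{-R}^R|\varphi_j|^2\,dx\to1$.

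\textbf{Part (ii) limit and asymptotics.} Several steps would also fail as written.
\begin{itemize}
\item Shifting $x$ by $d(N-1)/2$ centres the gas and yields the bilateral periodic limit, not $u_r$. The statement concerns the unshifted sections $P_NAP_N$, $P_NYP_N$ of the half-lattice operators.
\item Since $(x+Y)^{-1}$ is Hilbert--Schmidt but not trace class, $\det(x+B_N)$ does not converge as a trace-class Fredholm determinant. The paper uses the quadratic trace $u=\Tr(R^*A^{-1}R)$, obtained from $R-R^*=-iR^*A^{-1}R$.
\item Passing BO to the limit needs a tail estimate for the nonlocal $\mathcal H$ on nondecaying data. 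Injectivity of $r\mapsto u_r(0,\cdot)$ needs uniqueness of the flow for bounded nondecaying data. The paper proves both on $\mathcal X_B$.
\item $x+Y-2tA$ is not Toeplitz: $Y$ acts as $-id\,\partial_\theta$ in the symbol variable, so a Wiener--Hopf factorization of the symbol alone does not give the ray asymptotics.
\item For $\xi>2b$ no such factorization is needed: coercivity of $\xi-2A$, a Neumann series and a Riemann sum of bilateral diagonals suffice.
\item The $t^{-2}$ response cannot be read off from $F(\xi)/t+O(t^{-3/2})$, because that error exceeds $t^{-2}$. It comes from the exact identity $S_\gamma-S=T^{-1}S_\gamma\Gamma S$.
\item For $\xi<2a$ one needs eigenvalue asymptotics of $Y-2tA=d\,P(-i\partial_\theta+tV)P-2pt$, where $V$ jumps at $\theta=0$. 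The jump leads to the Hardy step problem $(D-h\Pi_+\operatorname{sgn}y)w=\kappa w$. Its scalar Wiener--Hopf factor gives $\Phi(\kappa)=\pi^{-1}\operatorname{arctanh}^2(h/\kappa)$ and the quantization rule $\kappa t+\Phi(\kappa)/(2\pi)=n$. Recentering then reduces the problem to the bilateral formula.
\end{itemize}
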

\begin{re}
The KdV construction in \cite{gasGGJM21} uses a norming function
analytic near the spectral interval and has an exponentially
decaying vacuum tail. The BO family in
Theorem~\ref{thm:main-gas}\textup{(ii)} admits every strictly
positive continuous weight $r$ and has an algebraic $1/x$
vacuum tail. Both gases approach a periodic background at the
opposite spatial end. For BO, the large-time profiles are
determined in $x/t<2a$ and $x/t>2b$. The periodic background
depends on $r$ through $\delta_r$, and the leading exterior
term is independent of $r$. The limits of
$t^2(u_r-u_1)(t,t\xi)$ for all $\xi>2b$ determine the whole
weight.
\end{re}

\subsection{Plan of the proofs}\label{subsec:proof-plan}
For Theorem~\ref{thm:inverse-main}, variation of the trace
inequality yields a local identity. The Lax--shift relations
imply backward-shift invariance, and model-space factorization
recovers the initial poles. Finite inverse spectral formulas,
compactness of eigenfunctions with bounds for $-i\partial_x$
and $X^*$, and equality in Bessel's inequality then realize
the prescribed asymptotic parameters. For
Theorem~\ref{thm:main-resolution}, the argument is organized
around G\'erard's formula under rough approximation, coordinate
estimates for negative eigenfunctions, and distorted Fourier
analysis of the radiation. The discrete, threshold, continuous,
and high-energy components enter the global $H^s$ decomposition,
with the negative spectral cutoff removed after the large-time
limit. For Theorem~\ref{thm:main-gas}, we establish continuity
of the local BO flow and convergence of localized spectral
traces. Inner functions and the continuous norming action give
the finite-band approximations and their spectra. A Toeplitz
representation and an intertwining map identify the physical
negative spectrum of the one-sided family. Resolvent estimates
and the spectral analysis of a step operator give its spatial
limits and the periodic and exterior long-time profiles.

The structure of this work is as follows.
Sections~\ref{sec:lax-data}--\ref{sec:poisson-gram} prove
the trace characterization, and
Section~\ref{sec:exact-range-proof} establishes the inverse
realization theorem. Section~\ref{sec:quantitative} gives
remainder estimates for the trace-saturated class and their
algebraic-rate consequences. Sections~\ref{gas:sec-phase-space}--\ref{gas:continuous-norming-section}
develop the local BO flow, finite-band gases and continuous
norming data. Sections~\ref{gas:one-sided-continuum}--\ref{gas:one-sided-gas-ray-section}
construct the one-sided gas and determine its spectrum and
asymptotics.
\begin{Acknowledgment}
This work was supported by the National Natural Science Foundation of China under Grant No. 12371255, and the Fundamental Research Funds for the Central Universities of CUMT under Grant No. 2024ZDPYJQ1003.
\end{Acknowledgment}
\allowdisplaybreaks[2]
\setlength{\abovedisplayskip}{2pt plus 1pt minus 1pt}
\setlength{\belowdisplayskip}{2pt plus 1pt minus 1pt}
\setlength{\abovedisplayshortskip}{0pt}
\setlength{\belowdisplayshortskip}{2pt plus 1pt}
\setlength{\jot}{2pt}
\makeatletter
\def\thm@space@setup{\thm@preskip=2pt plus 1pt
  \thm@postskip=2pt plus 1pt}
\renewenvironment{proof}[1][\proofname]{\par
  \pushQED{\qed}\normalfont\topsep2pt plus 1pt\relax
  \trivlist\item[\hskip\labelsep\itshape #1\@addpunct{.}]\ignorespaces
}{\popQED\endtrivlist\@endpefalse}
\makeatother
\section{The Lax operator and the trace inequality}
\label{sec:lax-data}\label{sec:sharp-trace}
Use the notation of Subsection~\ref{subsec:main-results}, and set
$D=-i\partial_x$ and $I_+f=\widehat f(0+)$.
For real $v\in L^2$, the sesquilinear form of $L_v$ is
$\mathfrak l_v[f,g]=\langle D^{1/2}f,D^{1/2}g\rangle-\int vf\overline g$
on $H^{1/2}_+$.
Write $q_v=\Pi v$, $P_{\rm d}(v)=E_{L_v}(( -\infty,0))$,
$\Hd(v)=\Ran P_{\rm d}(v)$, and $\Hc(v)=\Hd(v)^\perp$.
The Hardy shifts are $S_tf=e^{\ii tx}f$ and
$(S_t^*f)^{\wedge}(\xi)=\widehat f(\xi+t)$, $t,\xi\ge0$;
$K_\Theta=\Hp\ominus\Theta\Hp$ denotes a model space.

\begin{prop}\label{thm:GG-infinite-resolution}
For $p\in\mathfrak P_{\rm in}$, $u_0=u[p]$ belongs to every $H^r$, $r\ge0$.
Its negative Lax eigenvalues $\lambda_1<\lambda_2<\cdots<0$ are simple.
For normalized eigenfunctions $\varphi_j\in\Dom X^*$, the parameters
$p_j^\infty=-\langle X^*\varphi_j,\varphi_j\rangle$ satisfy
$$
 \Ima p_j^\infty=(2|\lambda_j|)^{-1},\enspace
 \sum_j(\Ima p_j^\infty)^{-1}=\pi^{-1}\|\Pi u_0\|_2^2.
$$
\end{prop}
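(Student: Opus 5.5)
This proposition collects results of Gassot and G\'erard \cite{GG} on infinite-order multisolitons. I would prove it by finite-dimensional reduction: truncate, use the exact finite formulas, and pass to the limit using the summability condition \eqref{eq:GG-input-introduction}.

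\emph{Step 1: regularity.} Each $R_{p_m}$ has Fourier transform $\widehat{R_{p_m}}(\xi)=2\pi e^{ip_m\xi}$ for $\xi\ge0$, extended to $\xi<0$ by conjugate symmetry. Hence
$$
\||D_x|^r R_{p_m}\|_2\lesssim (\Ima p_m)^{-r-1/2},
$$
and the cross terms are computed explicitly. In $H^r$ the Gram sum is bounded by
$$
\sum_{m,n}\frac{\Ima p_m\,\Ima p_n}{|p_m-\overline{p_n}|^{2+2r}}\cdot(\Ima p_m+\Ima p_n)^{\cdots},
$$
up to constants. Since $\Ima p_m\to\infty$ (forced by the diagonal terms $\sum_m(\Ima p_m)^{-1}<\infty$) and $|p_m-\overline{p_n}|\ge\Ima p_m+\Ima p_n$, the extra powers only help. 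Therefore the series converges in every $H^r$.

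\emph{Step 2: truncation and finite theory.} Let $u^{(M)}=\sum_{m\le M}R_{p_m}$. For this finite multisoliton the finite theory applies:
\begin{itemize}
\item $L_{u^{(M)}}$ has exactly $M$ negative eigenvalues, counted with multiplicity, and they are simple by Wu's simplicity theorem \cite{Wu}.
\item The eigenfunctions lie in the $M$-dimensional space spanned by $(x-\overline{p_m})^{-1}$, hence in $\Dom X^*$.
\item In the pole representation, $X^*$ restricted to this space is the matrix of pole positions. The Lax--$X^*$ commutator $[X^*,L]=i\cdot{}$ plus a rank-one term $\frac{i}{2\pi}I_+(\cdot)q_v$ then gives $\Ima\langle X^*\varphi,\varphi\rangle=-(2|\lambda|)^{-1}$ for a normalized eigenfunction $\varphi$.
\item The trace formula $2\pi\sum_j|\lambda_j^{(M)}|=\|\Pi u^{(M)}\|_2^2$ holds. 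It follows from $\sum_j|\lambda_j|=\pi^{-1}\cdot\frac12\|\Pi u\|^2$ in the finite pole case, equivalently the Gram computation of Step 1.
\end{itemize}

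\emph{Step 3: passage to the limit.} Since $u^{(M)}\to u_0$ in $L^2\cap L^\infty$, we have $L_{u^{(M)}}\to L_{u_0}$ in norm-resolvent sense. Negative eigenvalues and spectral projections therefore converge away from $0$. For eigenvalues of $L_{u_0}$ below $-\epsilon$:
\begin{itemize}
\item \emph{Simplicity} passes to the limit by Wu's argument applied directly to $u_0\in H^1$, which uses only $I_+\varphi\ne0$.
\item \emph{The imaginary part identity} is derived directly for $u_0$. From $\varphi\in\Dom X^*$, the identity $\langle[X^*,L]\varphi,\varphi\rangle=0$ yields $\Ima p_j^\infty=(2|\lambda_j|)^{-1}$ via $|I_+\varphi|^2=2\pi|\lambda|$, which is obtained by pairing the eigenvalue equation with the reproducing structure.
\end{itemize}
Membership $\varphi_j\in\Dom X^*$ follows because $\widehat{\varphi_j}$ solves the Fourier-side eigenvalue equation $\xi\widehat\varphi-\widehat{\Pi(u\varphi)}=\lambda\widehat\varphi$ with an $L^1$-weighted right side. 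Here $xu_0\notin L^2$ in general, so I would instead use that $u\varphi$ has $\widehat{u\varphi}\in H^1_{\rm loc}$ on the positive half-line thanks to the explicit exponential form of $\widehat u_0$. This is the delicate point.

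\emph{Step 4: the trace identity.} By Step 2, $\sum_j(\Ima p_j^\infty)^{-1}=2\sum_j|\lambda_j|$, so the claim is $2\pi\sum_j|\lambda_j|=\|\Pi u_0\|^2$.
\begin{itemize}
\item The inequality $\le$ is the general trace inequality, obtained by Fatou from the finite identities and eigenvalue convergence.
\item The reverse needs a uniform tail bound. I expect this to be the main obstacle: eigenvalues of $u^{(M)}$ near zero must not escape to the continuous spectrum. I would bound $\sum_{j>J}|\lambda_j^{(M)}|$ by the norm-squared of the tail $\|\Pi\sum_{m>J'}R_{p_m}\|^2$, up to an error, using min-max on the $M$-dimensional invariant space. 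That tail is small uniformly in $M$ by \eqref{eq:GG-input-introduction}.
\end{itemize}
Passing $M\to\infty$ then gives equality.
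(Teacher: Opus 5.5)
Steps~1, 2 and~4 are workable. The gap is in Step~3, at exactly the point on which simplicity and the formula for $\Ima p_j^\infty$ rest. You need that the eigenfunctions of the \emph{limit} operator satisfy $\varphi_j\in\Dom X^*$, with $X^*\varphi_j$ regular enough (e.g.\ in $H^{1/2}_+$) to justify $\langle(L_{u_0}+a_j)X^*\varphi_j,\varphi_j\rangle=\langle X^*\varphi_j,(L_{u_0}+a_j)\varphi_j\rangle=0$. Without this you have neither Wu's relation nor $I_+\varphi_j\neq0$ for $u_0$. So ``Wu's argument applied directly to $u_0$'' is not available, and simplicity does not pass to the limit on its own.

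The route you sketch does not close this gap. For an infinite sum $\int u_0=\infty$, so $\widehat{u_0}$ is unbounded at $\xi=0$. For $p_m=\ii 2^m$ one has $\widehat{u_0}(\xi)\asymp\log(1/\xi)$ and $|\widehat{u_0}'(\xi)|\asymp\xi^{-1}$ as $\xi\to0^+$. Differentiating the half-line convolution through $\widehat{u_0}$ therefore produces a non-integrable kernel. In any case, $H^1_{\rm loc}$ on the open half-line is weaker than what you need, namely $\widehat{\varphi_j}'\in L^2(0,\infty)$ up to $\xi=0$.

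The standard fix never differentiates the potential. Difference quotients commute with the half-line convolution up to a boundary term $\widehat w(\xi)\widehat\varphi(0+)$ and the harmless derivative of $(\xi+a)^{-1}$. Since $I-(D+a)^{-1}T_{u_0}$ annihilates $\varphi$, you cannot invert it directly. Instead split $u_0=z+w$ with $z$ compactly supported (so $xz\in L^2$) and $\|w\|_2$ small, and invert $I-(D+a)^{-1}T_w$. This is the argument of Lemma~\ref{rad:lem-coordinate}; the paper itself simply takes these facts from \cite[Proposition~3.1]{GG} and \cite[Theorem~1.4]{BKV26}.

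A compactness argument on the truncations is no substitute. The bound $\|X^*\varphi_j^{(M)}\|_2\ge\sqrt{a_k^{(M)}/a_j^{(M)}}\,|a_j^{(M)}-a_k^{(M)}|^{-1}$ blows up if two truncated eigenvalues merge, so uniform bounds already presuppose simplicity of the limit.

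Two further corrections. First, your relation $|I_+\varphi|^2=2\pi|\lambda|$ is wrong. The commutator pairing gives $\langle\Pi u_0,\varphi\rangle I_+(\varphi)=2\pi$. Evaluating the Fourier-side eigenvalue equation at $0^+$ gives $|\lambda|I_+(\varphi)=\overline{\langle\Pi u_0,\varphi\rangle}$. Hence $|I_+\varphi|^2=2\pi/|\lambda|$, while it is $|\langle\Pi u_0,\varphi\rangle|^2$ that equals $2\pi|\lambda|$. The imaginary part then comes from the boundary form $\Ima\langle X^*\varphi,\varphi\rangle=-|I_+\varphi|^2/(4\pi)$. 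Your sketch omits this step, and with your value it would give $\Ima p=|\lambda|/2$.

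Second, the step you call the main obstacle is easy. Since $R_p>0$, $u^{(M)}$ increases pointwise to $u_0$. By min--max, each $a_j^{(M)}$ (set to $0$ for $j>M$) is nondecreasing in $M$, and by norm-resolvent convergence it tends to $a_j$. Monotone convergence and the finite identity $2\pi\sum_ja_j^{(M)}=\|\Pi u^{(M)}\|_2^2$ then give $2\pi\sum_ja_j=\|\Pi u_0\|_2^2$ with no tail estimate.
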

These assertions follow from \cite[Theorem~1.3, Propositions~3.1 and~4.3,
Lemma~5.3]{GG} and \cite[Theorem~1.4]{BKV26}. For real $v\in L^2$,
\cite[Theorem~2.3]{BKV26} and the Wu relation give
$\sigma_{\rm ess}(L_v)=[0,\infty)$, $|\langle\Pi v,\psi\rangle|^2=2\pi b\|\psi\|_2^2 \enspace(L_v\psi=-b\psi,\ b>0)$.
For example, $p_m=i2^m$ belongs to $\mathfrak P_{\rm in}$, since
$\sum_n2^m/(2^m+2^n)^2\le(m+1/3)2^{-m}$.
The preceding proposition therefore gives a nonzero potential with
infinitely many negative eigenvalues and trace equality.
Until Section~\ref{sec:exact-range-proof}, let $u_0$, $(a_k)$ and the
normalized eigenfunctions $(\varphi_k)$ satisfy the hypotheses of Theorem~\ref{thm:main}.
Put $L=L_{u_0}$, $q=\Pi u_0$, and choose phases so that
$q=\sum_k\sqrt{2\pi a_k}\,\varphi_k$ by equality in Bessel's inequality.
The Wu relation excludes further negative eigenfunctions. Thus
$\Hd=\overline{\spn\{\varphi_k\}}$, $P_d=E_L(( -\infty,0))$,
$P_c=I-P_d$, and $L_d=-A$, where $A\varphi_k=a_k\varphi_k$ and
$A\in\Sch_1$. For $L_c=L|_{\Hc}$,
\begin{equation}\label{eq:Lc-nonnegative}
 \sigma(L_c)\subset[0,\infty),\enspace
 I_{\Hc}=E_{L_c}((0,\infty))+E_{L_c}(\{0\}).
\end{equation}

We next examine equality in the trace inequality.

\begin{lemma}
For real $v\in L^2$, let $(-b_j)$ be all negative eigenvalues of $L_v$,
with normalized eigenfunctions $\psi_j$. Then
\begin{equation}\label{eq:sharp-trace}
 2\pi\Tr(-L_v)_+=2\pi\sum_jb_j
 =\sum_j|\langle\Pi v,\psi_j\rangle|^2
 \le\|\Pi v\|_2^2=\tfrac12\|v\|_2^2.
\end{equation}
For every finite orthonormal family $(e_\ell)\subset H^{1/2}_+$,
\begin{equation}\label{eq:Ky-Fan-form}
 \sum_\ell-\mathfrak l_v[e_\ell]
 \le\sum_\ell\langle(-L_v)_+e_\ell,e_\ell\rangle
 \le\Tr(-L_v)_+.
\end{equation}
\end{lemma}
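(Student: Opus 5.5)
The plan is to use the Wu relation quoted after Proposition~\ref{thm:GG-infinite-resolution} for the first two equalities, Bessel's inequality for the inequality, and a short Fourier computation for the last equality. Then \eqref{eq:Ky-Fan-form} should follow from the spectral theorem together with a Ky Fan type trace bound.

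\textbf{Step 1: the identities in \eqref{eq:sharp-trace}.} First, every negative eigenvalue of $L_v$ is simple. If some $-b$ had a two-dimensional eigenspace, pick an eigenfunction $\psi\ne0$ in it orthogonal to $\Pi v$. The Wu relation gives $0=|\langle\Pi v,\psi\rangle|^2=2\pi b\|\psi\|_2^2$, a contradiction. Hence $\Tr(-L_v)_+=\sum_jb_j$, where both sides may a priori equal $+\infty$.

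Next, summing the Wu relation over the normalized $\psi_j$ gives $2\pi\sum_jb_j=\sum_j|\langle\Pi v,\psi_j\rangle|^2$. The $\psi_j$ are orthonormal in $L^2_+$, and $\Pi v\in L^2_+$, so Bessel's inequality bounds this sum by $\|\Pi v\|_2^2$. In particular the trace is finite.

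Finally, $v$ is real, so $|\widehat v(-\xi)|=|\widehat v(\xi)|$. Plancherel then gives $\|\Pi v\|_2^2=\frac1{2\pi}\int_0^\infty|\widehat v|^2=\frac12\|v\|_2^2$.

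\textbf{Step 2: the second inequality in \eqref{eq:Ky-Fan-form}.} Write $T=(-L_v)_+=\sum_jb_j\langle\cdot,\psi_j\rangle\psi_j$. By Step 1 this is a nonnegative trace-class operator. For any finite orthonormal family, $P=\sum_\ell\langle\cdot,e_\ell\rangle e_\ell$ is an orthogonal projection. Therefore
\[
\sum_\ell\langle Te_\ell,e_\ell\rangle=\Tr(PTP)=\Tr(T^{1/2}PT^{1/2})\le\Tr T .
\]

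\textbf{Step 3: the first inequality in \eqref{eq:Ky-Fan-form}.} It suffices to prove, for each $f\in H^{1/2}_+$,
\[
-\mathfrak l_v[f]\le\langle(-L_v)_+f,f\rangle .
\]
Let $E$ be the spectral measure of $L_v$. By the form representation theorem, $H^{1/2}_+$ is the form domain $\Dom|L_v|^{1/2}$, and
\[
\mathfrak l_v[f]=\int_\R\lambda\,d\langle E(\lambda)f,f\rangle .
\]
Hence
\[
-\mathfrak l_v[f]=\int(-\lambda)_+\,d\langle Ef,f\rangle-\int\lambda_+\,d\langle Ef,f\rangle\le\int(-\lambda)_+\,d\langle Ef,f\rangle=\langle(-L_v)_+f,f\rangle .
\]
Summing over $\ell$ gives the first inequality.

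\textbf{Main obstacle.} The only delicate point is the form-domain identification and the spectral representation of $\mathfrak l_v$ for merely $L^2$ potentials. The key fact is that $\int v|f|^2$ is infinitesimally form-bounded relative to $\|D^{1/2}f\|^2$ on $H^{1/2}_+$. I would derive this from the Sobolev embedding $H^{1/2}\subset L^4$ combined with a high/low frequency splitting of $v$; this is the closed-form construction recalled in the introduction, and it is where I would rely on \cite{KLV24,BKV26}. The Wu relation itself is taken as given.
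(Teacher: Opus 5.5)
Your proof is correct and follows the paper's route. The Wu relation plus Bessel's inequality, with Plancherel for $\|\Pi v\|_2^2=\frac12\|v\|_2^2$, give \eqref{eq:sharp-trace}, and the spectral theorem on the form domain $H^{1/2}_+$ gives \eqref{eq:Ky-Fan-form}. One small point: the identification $(-L_v)_+=\sum_j b_j\langle\cdot,\psi_j\rangle\psi_j$, and hence $\Tr(-L_v)_+=\sum_jb_j$, needs more than simplicity: it needs the negative spectrum to be purely discrete, which is the quoted fact $\sigma_{\rm ess}(L_v)=[0,\infty)$ from \cite[Theorem~2.3]{BKV26}.
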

\begin{proof}
Apply the Wu relation and Bessel's inequality for \eqref{eq:sharp-trace},
and the spectral theorem for \eqref{eq:Ky-Fan-form}.
\end{proof}

\begin{lemma}
Under \eqref{eq:saturation},
\begin{equation}\label{eq:local-trace}
 u_0=2\pi\sum_k|\varphi_k|^2\enspace\text{in }L^1_{\rm loc}\text{ and a.e.}
\end{equation}
\end{lemma}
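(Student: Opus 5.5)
The identity will come from varying the trace inequality around $u_0$, where \eqref{eq:saturation} makes it an equality. Fix a real test function $w\in C_c^\infty(\R)$ and $\eps\in\R$, and set $v_\eps=u_0+\eps w$. The inequality \eqref{eq:sharp-trace} applied to $v_\eps$ gives
\[
2\pi\Tr(-L_{v_\eps})_+\le\tfrac12\|u_0+\eps w\|_2^2
=\tfrac12\|u_0\|_2^2+\eps\int u_0w+\tfrac{\eps^2}2\|w\|_2^2 .
\]

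For a lower bound on the left side, I use \eqref{eq:Ky-Fan-form} with the orthonormal family $\varphi_1,\dots,\varphi_K$. Since $\mathfrak l_{v_\eps}[\varphi_k]=-a_k-\eps\int w|\varphi_k|^2$, this gives
\[
\Tr(-L_{v_\eps})_+\ge\sum_{k\le K}a_k+\eps\int w\sum_{k\le K}|\varphi_k|^2 .
\]
Combining the two bounds and using $2\pi\sum_k a_k=\frac12\|u_0\|_2^2$ (from \eqref{eq:saturation} and $\|\Pi u_0\|_2^2=\frac12\|u_0\|_2^2$), I obtain
\[
-2\pi\sum_{k>K}a_k+2\pi\eps\int w\sum_{k\le K}|\varphi_k|^2\le\eps\int u_0w+\tfrac{\eps^2}2\|w\|_2^2 .
\]

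Next I pass to the limits. Letting $K\to\infty$ kills the tail $2\pi\sum_{k>K}a_k$. The partial sums $\rho_K=2\pi\sum_{k\le K}|\varphi_k|^2$ increase to $\rho=2\pi\sum_k|\varphi_k|^2$, and $\int\rho=2\pi\sum_k1=\infty$, so $\rho$ is not in $L^1$. The limit is therefore only justified by monotone convergence for $w\ge0$, or once I know $\rho\in L^1_{\rm loc}$.

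I handle this by first taking $w\ge0$ and $\eps>0$. Monotone convergence gives $\int w\rho\le\int u_0w+\frac\eps2\|w\|_2^2$, and letting $\eps\to0^+$ yields $\int w\rho\le\int u_0w$ for all $w\ge0$. Hence $\rho\le u_0$ a.e., and $\rho\in L^1_{\rm loc}$ because $u_0\in L^2\subset L^1_{\rm loc}$. Since $0\le\rho_K\le\rho$, dominated convergence now lets me send $K\to\infty$ for arbitrary signed $w$ and either sign of $\eps$. This gives
\[
\eps\Bigl(\int\rho w-\int u_0w\Bigr)\le\tfrac{\eps^2}2\|w\|_2^2\quad\text{for all }\eps\in\R .
\]
Dividing by $\eps$ and letting $\eps\to0^\pm$ forces $\int\rho w=\int u_0w$ for every $w\in C_c^\infty$. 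Therefore $u_0=\rho$ in $L^1_{\rm loc}$, and a.e.

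The main obstacle is making sure \eqref{eq:Ky-Fan-form} applies to the perturbed operator. This requires that the $\varphi_k$ lie in the form domain $H^{1/2}_+$, which holds because they are eigenfunctions of the form realization. It also requires that $\int v_\eps|\varphi_k|^2$ be finite, which holds because $w$ is bounded. Finally, \eqref{eq:sharp-trace} must hold for $v_\eps\in L^2$ real, which it does. One-sidedness in $\eps$ was the only subtlety, and the positivity step above resolves it.
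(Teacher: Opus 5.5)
Your proposal is correct and follows the paper's argument. Perturb $u_0$ by $\eps w$, bound $\Tr(-L_{u_0+\eps w})_+$ below via \eqref{eq:Ky-Fan-form} on $\varphi_1,\dots,\varphi_N$ and above via \eqref{eq:sharp-trace}, subtract the saturation identity, use $\eps>0$ with $w\ge0$ first to get local summability, then let $N\to\infty$ and $\eps\to0$. The only difference is cosmetic: the paper keeps $w\ge0$ bounded and uses both signs of $\eps$, while you pass to signed smooth $w$ by dominated convergence, which is equivalent.
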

\begin{proof}
For $0\le w\in L_c^\infty$, apply
\eqref{eq:sharp-trace}--\eqref{eq:Ky-Fan-form} to $u_0\pm\eps w$ and
$\varphi_1,\ldots,\varphi_N$. The plus sign first gives local summability.
Letting $N\to\infty$ and subtracting \eqref{eq:saturation} gives
$\pm2\pi\sum_k\int w|\varphi_k|^2\le\pm\int u_0w+\frac\eps2\|w\|_2^2$.
Let $\eps\to0$.
\end{proof}

\begin{prop}\label{prop:zero-by-saturation}
Under \eqref{eq:saturation}, $\Ker L=\Ker L_c=\{0\}$.
\end{prop}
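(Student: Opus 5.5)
The plan is to run the variational argument of the proof of \eqref{eq:local-trace} once more, adding a kernel element to the orthonormal family. Since $0$ is not a negative eigenvalue, $\Ker L\perp\Hd$, so $\Ker L\subset\Hc$ and $\Ker L=\Ker L_c$. It therefore suffices to show $\Ker L=\{0\}$.

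Suppose $Lf=0$ with $\|f\|_2=1$. Since $f\in\Dom L$, it lies in the form domain $H^{1/2}_+$ and satisfies $\mathfrak l_{u_0}[f]=0$. Eigenvectors for distinct eigenvalues of a self-adjoint operator are orthogonal, so $\varphi_1,\ldots,\varphi_N,f$ is a finite orthonormal family in $H^{1/2}_+$.

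Fix $0\le w\in L^\infty_c$ and $\eps>0$, and set $v=u_0+\eps w$, which is real and in $L^2$. The perturbed forms satisfy
\[
-\mathfrak l_v[\varphi_k]=a_k+\eps\int w|\varphi_k|^2,\qquad -\mathfrak l_v[f]=\eps\int w|f|^2 .
\]
Apply \eqref{eq:Ky-Fan-form} to $v$ with the family $\varphi_1,\ldots,\varphi_N,f$, then \eqref{eq:sharp-trace}. Since $w$ is real, $2\Rea\langle\Pi u_0,\Pi w\rangle=\int u_0w$ and $\|\Pi w\|_2^2=\tfrac12\|w\|_2^2$. This gives
\[
2\pi\Bigl(\sum_{k\le N}a_k+\eps\sum_{k\le N}\int w|\varphi_k|^2+\eps\int w|f|^2\Bigr)
\le\|q\|_2^2+\eps\int u_0w+\tfrac{\eps^2}2\|w\|_2^2 .
\]

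Now let $N\to\infty$, using the local summability already established. The saturation identity \eqref{eq:saturation} cancels $2\pi\sum_k a_k$ against $\|q\|_2^2$. The local trace identity \eqref{eq:local-trace} gives $\int u_0w=2\pi\sum_k\int w|\varphi_k|^2$, which cancels the $\eps$-linear eigenfunction terms. What remains is
\[
2\pi\eps\int w|f|^2\le\tfrac{\eps^2}{2}\|w\|_2^2 .
\]
Divide by $\eps$ and let $\eps\to0$ to get $\int w|f|^2=0$ for every such $w$. Hence $f=0$ a.e., a contradiction.

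The step needing the most care is the passage $N\to\infty$. It requires $\sum_k\int w|\varphi_k|^2<\infty$ and the exact cancellation of the linear terms, and this is precisely what \eqref{eq:local-trace} supplies. The rest is bookkeeping.
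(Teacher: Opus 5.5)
Your proposal is correct and follows the paper's own argument: you adjoin a normalized kernel vector to $\varphi_1,\ldots,\varphi_N$, apply \eqref{eq:Ky-Fan-form} and \eqref{eq:sharp-trace} to $u_0+\eps w$, and after $N\to\infty$ cancel terms using \eqref{eq:saturation} and \eqref{eq:local-trace}, which yields the paper's bound $\int w|f|^2\le\eps\|w\|_2^2/(4\pi)$. Your opening observation that $\Ker L\perp\Hd$, so that $\Ker L=\Ker L_c$, makes explicit a step the paper leaves implicit.
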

\begin{proof}
If $L\psi=0$, $\|\psi\|_2=1$, include $\psi$ in the preceding
orthonormal family for $u_0+\eps w$. After $N\to\infty$,
\eqref{eq:local-trace} yields
$\int w|\psi|^2\le\eps\|w\|_2^2/(4\pi)$, hence $\psi=0$.
\end{proof}

\section{Lax shifts and inverse reconstruction}
\label{sec:lax-shift}\label{sec:poisson-gram}\label{sec:model-space}
\begin{lemma}
For real $u_0\in L^2$, $\Dom L=H^1_+$ with equivalent graph norm,
and both Hardy shifts preserve this domain. For $f,g\in H^{1/2}_+$,
\begin{equation}\label{eq:shift-form}
 \mathfrak l_{u_0}[S_tf,S_tg]=\mathfrak l_{u_0}[f,g]+t\langle f,g\rangle.
\end{equation}
For $f\in H^1_+$,
\begin{equation}\label{eq:shift-Lax}
 S_t^*LS_tf=(L+t)f,\enspace
 S_t^*Lf=(L+t)S_t^*f-\mathcal B_tf,
\end{equation}
where, for every $f\in\Hp$,
\begin{equation}\label{eq:B-formula}
 \mathcal B_tf=\frac1{2\pi}\int_0^t\widehat f(s)S_{t-s}^*q\,ds,
 \enspace \|\mathcal B_tf\|_2\le\sqrt{\frac t{2\pi}}\|q\|_2\|f\|_2.
\end{equation}
If $q\in\Hd$, set $\mathcal F_t=P_cS_t^*P_d$. Then
$\mathcal F_t\varphi_k\in\Dom L_c$ and
\begin{equation}\label{eq:cross-Volterra}
 (L_c+t+a_k)\mathcal F_t\varphi_k
 =\frac1{2\pi}\int_0^t\widehat{\varphi_k}(s)\mathcal F_{t-s}q\,ds.
\end{equation}
\end{lemma}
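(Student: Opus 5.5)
The plan is to treat $V f=\Pi(u_0 f)$ as a perturbation of $D$ on $H^1_+$ and then verify each identity on the Fourier side, first for a dense class and then by continuity. For the domain statement, Gagliardo--Nirenberg gives, for $f\in H^1_+$,
$$
\|\Pi(u_0f)\|_2\le\|u_0\|_2\|f\|_\infty\le C\|u_0\|_2\|f\|_2^{1/2}\|Df\|_2^{1/2}.
$$
Hence $V$ is $D$-bounded with relative bound $0$. Kato--Rellich then shows that $D-V$ is self-adjoint on $H^1_+$ with graph norm equivalent to the $H^1$ norm, and it agrees with the form realization $L$. The shift $S_t$ is multiplication by $e^{\ii tx}$, so it preserves $H^1$ and the Hardy condition. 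The shift $S_t^*$ translates $\widehat f$ to the left and restricts to $[0,\infty)$; since $\xi^2\le(\xi+t)^2$, it preserves $H^1_+$ and $H^{1/2}_+$.

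For \eqref{eq:shift-form}, note that $\widehat{S_tf}(\xi)=\widehat f(\xi-t)$. Therefore
$$
\|D^{1/2}S_tf\|_2^2=\frac1{2\pi}\int_0^\infty(\eta+t)|\widehat f(\eta)|^2\,d\eta=\|D^{1/2}f\|_2^2+t\|f\|_2^2.
$$
Since $|S_tf|=|f|$, the potential term is unchanged, and polarization gives the identity.

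For the first relation in \eqref{eq:shift-Lax}, we have $S_t^*DS_t=D+t$ on $H_+$. Moreover, $S_t^*\Pi(e^{\ii tx}u_0f)$ has Fourier transform $\widehat{u_0f}(\xi)$ for $\xi\ge0$, so it equals $\Pi(u_0f)$. For the second relation, split $f=S_tS_t^*f+g$, where $\widehat g=\widehat f\,\mathbf 1_{[0,t)}$, so that
$$
g=\frac1{2\pi}\int_0^t\widehat f(s)e^{\ii sx}\,ds.
$$
The first piece gives $S_t^*\Pi(u_0S_tS_t^*f)=\Pi(u_0S_t^*f)$, as above. For the second piece, $S_t^*\Pi(e^{\ii sx}u_0)$ has transform $\widehat{u_0}(\xi+t-s)$ on $\xi\ge0$. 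Because $\xi+t-s>0$ there, only positive frequencies of $u_0$ enter, so this equals $S_{t-s}^*q$. Inserting these into $S_t^*Lf=(D+t)S_t^*f-S_t^*\Pi(u_0f)$ yields \eqref{eq:shift-Lax} with $\mathcal B_t$ as in \eqref{eq:B-formula}.

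The bound in \eqref{eq:B-formula} comes from Minkowski's inequality and $\|S^*_{t-s}q\|_2\le\|q\|_2$, together with Cauchy--Schwarz:
$$
\int_0^t|\widehat f|\le\sqrt t\,\bigl(\textstyle\int|\widehat f|^2\bigr)^{1/2}=\sqrt{2\pi t}\,\|f\|_2.
$$
This gives $\|\mathcal B_tf\|_2\le\sqrt{t/2\pi}\,\|q\|_2\|f\|_2$.

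I expect the main technical point to be rigor for general $f$: $\widehat f$ is only in $L^2$, and $\mathcal B_t f$ is an $\Hp$-valued Bochner integral. I would first prove the identities for $f$ with $\widehat f\in C_c^\infty([0,\infty))$, where all exchanges of integrals are classical. Both sides then extend continuously, using the $L^2$ bound for $\mathcal B_t$ and the graph-norm equivalence for the other terms.

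For \eqref{eq:cross-Volterra}, apply the second relation of \eqref{eq:shift-Lax} to $f=\varphi_k\in H^1_+$. This gives $(L+t+a_k)S_t^*\varphi_k=\mathcal B_t\varphi_k$. The projection $P_c$ is a spectral projection of $L$, so it maps $\Dom L$ into itself and commutes with $L$ there. Since $S_t^*\varphi_k\in H^1_+$, it follows that $\mathcal F_t\varphi_k=P_cS_t^*\varphi_k\in\Dom L_c$. Applying $P_c$ and moving the bounded operator $P_c$ inside the Bochner integral gives
$$
(L_c+t+a_k)\mathcal F_t\varphi_k=\frac1{2\pi}\int_0^t\widehat{\varphi_k}(s)\,P_cS_{t-s}^*q\,ds.
$$
Since $q\in\Hd$, we have $q=P_dq$, so $P_cS_{t-s}^*q=\mathcal F_{t-s}q$, which is \eqref{eq:cross-Volterra}.
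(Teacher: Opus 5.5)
Your proposal is correct and follows the paper's proof: you check the Fourier translation identities on a dense class and extend them by graph-norm continuity, bound $\mathcal B_t$ by Plancherel and Cauchy--Schwarz, and project the second shift identity at $\varphi_k$ onto $\Hc$. The only difference is the domain step, where you use Gagliardo--Nirenberg and Kato--Rellich instead of the resolvent estimate of \cite{BKV26} and the closed graph theorem; this makes that step self-contained, and the identification with the form realization, which you leave implicit, follows because the Kato--Rellich operator is contained in the form operator and both are self-adjoint.
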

\begin{proof}
The resolvent estimate \cite[Theorem~2.3]{BKV26} gives
$\Dom L\subset H^1_+$; the reverse inclusion follows from $u_0f\in L^2$
for $f\in H^1_+$. The closed graph theorem gives the norm equivalence.
Fourier translation on a core and graph-norm density prove
\eqref{eq:shift-form}--\eqref{eq:B-formula}; Plancherel gives the bound.
Apply the second identity in \eqref{eq:shift-Lax} to $\varphi_k$ and project
onto $\Hc$ to obtain \eqref{eq:cross-Volterra}.
\end{proof}

\begin{prop}\label{prop:fixed-band}
Under \eqref{eq:saturation}, for $t\ge0$ and $k\ge1$,
$E_{L_c}((0,\infty))\mathcal F_tq =E_{L_c}((0,\infty))\mathcal F_t\varphi_k=0$.
\end{prop}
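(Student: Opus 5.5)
Write $E_+=E_{L_c}((0,\infty))$. Set
$$
 g_k(t)=E_+\mathcal F_t\varphi_k,\qquad h(t)=E_+\mathcal F_tq .
$$
By the choice of phases, $q=\sum_k\sqrt{2\pi a_k}\,\varphi_k$ with $\sum_k a_k<\infty$, so $q\in\Hd$ and $h(t)=\sum_k\sqrt{2\pi a_k}\,g_k(t)$ in $\Hp$. Each $\mathcal F_t$ is a contraction. The plan is to derive a closed Volterra system for $(g_k)$ from \eqref{eq:cross-Volterra}, and then show that the positivity of $L_c+t+a_k$ on $\Ran E_+$ forces the trivial solution. Note that $E_+$ commutes with $L_c$, and $\Ran E_+$ together with $E_{L_c}(\{0\})=0$ exhausts $\Hc$ by \eqref{eq:Lc-nonnegative} and Proposition~\ref{prop:zero-by-saturation}. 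Hence applying $E_+$ to \eqref{eq:cross-Volterra} gives
$$
 (L_c+t+a_k)\,g_k(t)=\frac1{2\pi}\int_0^t\widehat{\varphi_k}(s)\,h(t-s)\,ds .
$$

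The natural route is a Gronwall argument in $t$. On $\Ran E_+$ we have $L_c+t+a_k\ge a_k$, so
$$
 \|g_k(t)\|\le \frac{1}{2\pi a_k}\int_0^t|\widehat{\varphi_k}(s)|\,\|h(t-s)\|\,ds .
$$
Summing with weights $\sqrt{2\pi a_k}$ gives a scalar Volterra inequality $\|h(t)\|\le\int_0^t K(s)\|h(t-s)\|\,ds$ with
$$
 K(s)=\sum_k(2\pi a_k)^{-1/2}|\widehat{\varphi_k}(s)| .
$$
This kernel has a weight $a_k^{-1/2}$ that blows up, so it may fail to be locally integrable. That is the main obstacle.

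To overcome it I would use the better resolvent bound $\|(L_c+t+a_k)^{-1}E_+\|\le (t+a_k)^{-1}\le t^{-1}$. I would also use Cauchy--Schwarz together with $\sum_k a_k<\infty$ and the estimate $\int_0^t|\widehat{\varphi_k}|^2\le 2\pi$ from Plancherel. With these I aim for $\|h(t)\|\le C(t)\,t^{-1}\int_0^t\|h\|$, or a similar bound that is integrable near $t=0$.

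If that still degenerates at $t=0$, I would instead use initial smallness: $\mathcal F_0=P_cP_d=0$, and $\|\mathcal F_t\varphi_k\|\to0$ as $t\to0^+$ by strong continuity of $S_t^*$. I would also exploit the finer structure that the right-hand side is $\mathcal B_t$-like, with the bound \eqref{eq:B-formula} $\|\mathcal B_tf\|\le\sqrt{t/2\pi}\,\|q\|\,\|f\|$. Concretely, I would rewrite $\sum_k\sqrt{2\pi a_k}\,\widehat{\varphi_k}(s)=\widehat q(s)$ and apply the operator $\sum_k\sqrt{2\pi a_k}\,(L_c+t+a_k)^{-1}(\cdot)$ inside the integral. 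Using $\sum_k a_k(t+a_k)^{-1}\le t^{-1}\sum_k a_k$, this yields $\|h(t)\|\le C\,t^{-1/2}\sup_{s\le t}\|h(s)\|$. That bound gives $h=0$ on a small interval $[0,t_0]$ where $Ct_0^{1/2}$-type constants are less than $1$, after rearranging. I would then propagate by the semigroup property $S_{t+\tau}^*=S_\tau^*S_t^*$ and repeat on successive intervals.

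Once $h\equiv0$ for $t\ge0$, the right-hand side of the displayed equation vanishes. Since $L_c+t+a_k$ is injective on $\Ran E_+$ (it is at least $a_k>0$ there), we get $g_k(t)=0$ for every $k$ and every $t\ge0$, which is the claim.
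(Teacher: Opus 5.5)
\textbf{There is a genuine gap at the start of the argument, at $t=0$.} Your final estimate $\|h(t)\|\le Ct^{-1/2}\sup_{s\le t}\|h(s)\|$ has a constant that blows up as $t\to0^+$. It therefore gives no contraction on any initial interval $[0,t_0]$; the phrase ``$Ct_0^{1/2}$-type constants are less than $1$'' has the exponent backwards.

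The $\sqrt t$ gained from Cauchy--Schwarz on $[0,t]$ (the same gain as in \eqref{eq:B-formula}) is more than cancelled by the resolvent. On all of $\Ran E_{L_c}((0,\infty))$ the only lower bound for $L_c+t+a_k$ is $t+a_k$, while $a_k\to0$ and $\sigma(L_c)$ reaches down to $0$. Even the sharpest version of your estimate, with the sum over $k$ taken inside the kernel, has kernel norm $(2\pi)^{-1}\|\widehat{(A+\lambda+t)^{-1}q}\|_{L^2(ds)}=(\sum_ka_k(a_k+\lambda+t)^{-2})^{1/2}$. This is unbounded as $\lambda,t\to0$: for $a_k=k^{-\alpha}$ and $\lambda\to0$, $t$ times its square grows like $t^{-1/\alpha}$.

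Initial smallness does not rescue this. The resulting inequality $\|h(t)\|^2\le t^{-2}(\sum_ka_k)\int_0^t\|h\|^2$ holds with equality for the nonzero profiles $\int_0^t\|h\|^2=\varepsilon e^{-c/t}$, $c=\sum_ka_k$. Your semigroup propagation is fine once $h=0$ on some $[0,t_0]$, but nothing in the proposal produces that.

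\textbf{The missing idea is to localize the spectrum of $L_c$ away from zero \emph{before} estimating}, as the paper does.
\begin{enumerate}
\item Apply $E_{L_c}(\Delta)$ with $\Delta=[\lambda_0,R]\Subset(0,\infty)$, and represent $L_c$ as multiplication by $\lambda$.
\item Sum \eqref{eq:cross-Volterra} with weights $\sqrt{2\pi a_k}$. Fibrewise this gives $g(t,\lambda)=\int_0^tK(t,\lambda;s)g(t-s,\lambda)\,ds$ with the scalar kernel $K=(2\pi)^{-1}\widehat{(A+\lambda+t)^{-1}q}(s)$.
\item By Plancherel, $\|K(t,\lambda;\cdot)\|_2\le\|q\|_2/(\sqrt{2\pi}\lambda_0)$, uniformly in $t\ge0$ and $\lambda\in\Delta$.
\end{enumerate}
The sum over $k$ must sit inside the kernel. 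Estimating $\|g_k\|$ term by term, as in your first attempt, produces weights such as $\sum_ka_k^{-1/2}$, or $\sum_k\sqrt{a_k}$ after localization, and these need not be finite.

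With a $t$-independent kernel bound, Cauchy--Schwarz in $s$ and integration over $[T,T+\ell]\times\Delta$ give a contraction on every interval of length $\ell<2\pi\lambda_0^2/\|q\|_2^2$, the first one included. Iterating gives $E_{L_c}(\Delta)\mathcal F_tq=0$ for all $t\ge0$. Exhausting $(0,\infty)$ by $[1/n,n]$, and then applying your final paragraph, completes the proof.
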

\begin{proof}
Represent $L_c$ in a direct integral as multiplication by $\lambda$,
restrict to $\Delta=[\lambda_0,R]\Subset(0,\infty)$, and write
$g(t,\lambda)=(UE_{L_c}(\Delta)\mathcal F_tq)(\lambda)$,
with norms in the spectral fibres and integrals against the spectral measure. Summing \eqref{eq:cross-Volterra} with coefficients $\sqrt{2\pi a_k}$ gives
\begin{equation}\label{eq:fixed-band-limit}
 g(t,\lambda)=\int_0^tK(t,\lambda;s)g(t-s,\lambda)\,ds,
 \enspace K(t,\lambda;s)=\frac1{2\pi}
 \widehat{(A+\lambda+t)^{-1}q}(s).
\end{equation}
Put $q_{\le N}=\sum_{k\le N}\sqrt{2\pi a_k}\varphi_k$.
The truncated sums converge in $L^2([0,T]\times\Delta)$:
Plancherel bounds $\|K(t,\lambda;\cdot)\|_2$ by
$C_\Delta=\|q\|_2/(\sqrt{2\pi}\lambda_0)$ and bounds the truncated
kernel error by $\|q-q_{\le N}\|_2/(\sqrt{2\pi}\lambda_0)$.
Cauchy--Schwarz and Fubini justify the limit.
On an interval of length $\ell$ following a zero interval,
\eqref{eq:fixed-band-limit} gives $\|g\|_{L^2}^2\le\ell C_\Delta^2\|g\|_{L^2}^2$.
Choose $\ell C_\Delta^2<1$ and iterate. Strong continuity and exhaustion by
$[1/n,n]$ give the assertion for $q$; \eqref{eq:cross-Volterra} gives it
for each $\varphi_k$.
\end{proof}

\begin{coro}
Under \eqref{eq:saturation},
\begin{equation}\label{eq:cross-zero}
 P_cS_t^*P_d=0,\enspace S_t^*\Hd\subset\Hd\enspace(t\ge0).
\end{equation}
\end{coro}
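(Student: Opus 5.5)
Proposition~\ref{prop:fixed-band} already kills the component of $\mathcal F_t\varphi_k$ in $E_{L_c}((0,\infty))\Hc$. By \eqref{eq:Lc-nonnegative}, $I_{\Hc}=E_{L_c}((0,\infty))+E_{L_c}(\{0\})$, so it remains to control the kernel part. Proposition~\ref{prop:zero-by-saturation} gives $\Ker L_c=\{0\}$, hence $E_{L_c}(\{0\})=0$. Together these give $\mathcal F_t\varphi_k=P_cS_t^*\varphi_k=0$ for every $k\ge1$ and $t\ge0$.

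To pass from the eigenfunctions to all of $\Hd$, we use that $\Hd=\overline{\spn\{\varphi_k\}}$ and that $P_cS_t^*P_d$ is bounded. Indeed $\|S_t^*\|\le1$, since $S_t^*$ is Fourier translation followed by restriction to $[0,\infty)$. By linearity, $P_cS_t^*$ vanishes on $\spn\{\varphi_k\}$, and by continuity it vanishes on the closure $\Hd$. Hence $P_cS_t^*P_d=0$. The invariance statement is then a restatement: for $f\in\Hd$ we have $S_t^*f=P_dS_t^*f+P_cS_t^*f=P_dS_t^*f\in\Hd$.

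The only point requiring care is to make sure that Proposition~\ref{prop:fixed-band} is applicable, namely that $q\in\Hd$ holds under \eqref{eq:saturation}. This follows from the expansion $q=\sum_k\sqrt{2\pi a_k}\,\varphi_k$ given by equality in Bessel's inequality. The identity \eqref{eq:cross-Volterra} is therefore available, and so are its consequences in Proposition~\ref{prop:fixed-band}. No further obstacle is expected, since the substantive Volterra argument has already been done there.
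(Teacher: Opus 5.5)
Your proposal is correct and matches the paper's proof: Proposition~\ref{prop:fixed-band}, the decomposition \eqref{eq:Lc-nonnegative} and $\Ker L_c=\{0\}$ from Proposition~\ref{prop:zero-by-saturation} give $P_cS_t^*\varphi_k=0$. Density of $\spn\{\varphi_k\}$ in $\Hd$ and the contraction bound $\|S_t^*\|\le1$ then extend this to all of $\Hd$, and the invariance $S_t^*\Hd\subset\Hd$ follows immediately.
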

\begin{proof}
Combine Propositions~\ref{prop:fixed-band} and~\ref{prop:zero-by-saturation}
with \eqref{eq:Lc-nonnegative}, then use density of $(\varphi_k)$ in $\Hd$.
\end{proof}

\subsection{Proof of Theorem~\ref{thm:inverse-main}\textup{(i)}: the reconstruction of infinite-order multisolitons}
\label{subsec:proof-inverse-trace}
\begin{proof}
Put $\rho=u_0/(2\pi)$ and let $P_y$ be the Poisson extension.
Since $\Hd\ne0$, $\Hc\ne\Hp$. If $\Hc=0$, the Hardy kernel and
\eqref{eq:local-trace} give
$(4\pi y)^{-1}=\sum_k|\varphi_k(x+\ii y)|^2\le P_y\rho(x)$,
contradicting $y\to0$ at a Lebesgue point.
By \eqref{eq:cross-zero}, $\Hc$ is forward-shift invariant; Beurling's
theorem \cite[Theorem~4.3]{GarciaModel} gives $\Hd=K_\Theta$ for a
nonconstant inner function $\Theta$. Its diagonal kernel satisfies
$$
 \frac{1-|\Theta(x+\ii y)|^2}{4\pi y}
 =\sum_k|\varphi_k(x+\ii y)|^2\le P_y\rho(x).
$$
Taking boundary limits of finite sums gives the reverse lower bound.
Hence the left side tends to $\rho$ almost everywhere, and
Julia--Carath\'eodory gives $-\ii\Theta'/\Theta=u_0$ there.
The canonical factorization and angular-derivative formula
\cite{Garnett,AhernClark} therefore yield
\begin{equation}\label{eq:phase-density}
 u_0(x)=\beta+\sum_m\frac{2\Ima q_m}{|x-q_m|^2}
       +\int_\R\frac{1+t^2}{(x-t)^2}\,d\sigma(t)\enspace\text{a.e.},
\end{equation}
where $\beta\ge0$, $(q_m)\subset\C_+$ are the zeros, and $\sigma$ is
a finite positive singular measure. Positivity and $u_0\in L^2$ force
$\beta=0$. If $\sigma\ne0$, choose a compact set of positive $\sigma$-mass
inside a bounded interval $J$. Since
$\int_J|x-t|^{-2}\,dx=\infty$, Tonelli contradicts $u_0\in L^1(J)$.
Thus $\Theta$ is a pure Blaschke product, with infinitely many zeros
because $\dim K_\Theta=\infty$.

Set $p_m^{\rm in}=-\overline{q_m}$. Equation~\eqref{eq:phase-density}
gives $u_0=\sum_mR_{p_m^{\rm in}}$; monotonicity and domination by $u_0$
give $L^2$ convergence. The finite Gram identity
\cite[Lemma~4.1, (4.2)]{GG} and monotone convergence give
\begin{equation}\label{eq:infinite-Gram}
 4\pi\sum_{m,n}\frac{\Ima p_m^{\rm in}}
 {|p_m^{\rm in}-\overline{p_n^{\rm in}}|^2}
 =\|\Pi u_0\|_2^2=2\pi\sum_ka_k.
\end{equation}
Its diagonal bounds $\pi\sum_m(\Ima p_m^{\rm in})^{-1}$.
Since $0\le R_p\le2/\Ima p$, the series also converges absolutely and
uniformly, and $p^{\rm in}\in\mathfrak P_{\rm in}$.
\end{proof}

\begin{coro}\label{cor:trace-saturation-classification}
For real $u\in L^2$, let $(-a_j)_{j\in J}$ be its complete negative Lax
spectrum. Then $2\pi\sum_ja_j=\|\Pi u\|_2^2$ precisely for $u=0$
if $J=\varnothing$, a finite multisoliton if $0<|J|<\infty$, or an
infinite-order multisoliton if $J=\N$.
\end{coro}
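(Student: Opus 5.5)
We prove the two implications, splitting by the cardinality of $J$. \emph{Sufficiency.} If $u=0$ then $L_0=D\ge0$ has no negative spectrum and both sides vanish. If $u=u[p]$ with $p\in\mathfrak P_{\rm in}$, Proposition~\ref{thm:GG-infinite-resolution} gives simple eigenvalues $-a_j=-(2\Ima p_j^\infty)^{-1}$, $j\in\N$. Its trace identity reads $\sum_j(\Ima p_j^\infty)^{-1}=\pi^{-1}\|\Pi u\|_2^2$, that is, $2\pi\sum_ja_j=\|\Pi u\|_2^2$. For a finite multisoliton $u=\sum_{m\le N}R_{p_m}$, we use the finite Gram identity \cite[Lemma~4.1, (4.2)]{GG} together with the finite-$N$ spectral theory of \cite{GG,Sun21}. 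These show that $L_u$ has exactly $N$ negative eigenvalues and that $\Pi u$ lies in their span, so Bessel's inequality in \eqref{eq:sharp-trace} is an equality. Alternatively, the finite case is the special case of Proposition~\ref{thm:GG-infinite-resolution}'s references restricted to finitely many poles.

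\emph{Necessity.} Assume $2\pi\sum_{j\in J}a_j=\|\Pi u\|_2^2$.
\begin{itemize}
\item If $J=\varnothing$, the left side is $0$, so $\Pi u=0$. Since $u$ is real, $\widehat u(-\xi)=\overline{\widehat u(\xi)}$, and hence $\|u\|_2^2=2\|\Pi u\|_2^2=0$ as in \eqref{eq:sharp-trace}. Thus $u=0$.
\item If $J=\N$, this is exactly Theorem~\ref{thm:inverse-main}\textup{(i)}, since simplicity lets us order $a_1>a_2>\cdots$; the Wu relation already gives simplicity.
\item If $0<|J|<\infty$, we rerun the argument of Sections~\ref{sec:lax-data}--\ref{sec:model-space} with finitely many eigenfunctions. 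The equality case of Bessel's inequality gives $q=\sum_{k\le N}\sqrt{2\pi a_k}\varphi_k\in\Hd$. Lemma~\eqref{eq:local-trace} and Proposition~\ref{prop:zero-by-saturation} go through verbatim, since they only used finite sums plus a limit. Proposition~\ref{prop:fixed-band} and the corollary \eqref{eq:cross-zero} also go through, as $A$ is now a finite-rank operator. Beurling's theorem then gives $\Hd=K_\Theta$ with $\dim K_\Theta=N$, so $\Theta$ is a Blaschke product of degree $N$ times a possible singular factor and a constant.
\end{itemize}

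In the finite case, finite dimensionality of $K_\Theta$ forces $\Theta$ to be a finite Blaschke product up to a unimodular constant. A nontrivial singular inner factor, or the factor $e^{\ii\beta z}$ with $\beta>0$, would make $K_\Theta$ infinite dimensional. This step is simpler than in the infinite case, where the singular part had to be excluded via $L^1_{\rm loc}$ integrability. The boundary identity $-\ii\Theta'/\Theta=u$ a.e. is obtained as in the proof of Theorem~\ref{thm:inverse-main}\textup{(i)}. It gives $u=\sum_{m\le N}R_{-\overline{q_m}}$, a finite multisoliton.

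The main point requiring care is that the earlier proofs were written under $J=\N$. I expect the only genuine check to be the nontriviality step: ruling out $\Hc=0$ is automatic here because $\dim\Hd<\infty=\dim\Hp$. The Volterra iteration in Proposition~\ref{prop:fixed-band} is unaffected, and its kernel bound improves for finite $N$.
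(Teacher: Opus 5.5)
Your argument is correct. In the empty and infinite cases it is the paper's argument, but for $0<|J|<\infty$ you take a genuinely different route.

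The paper does not rerun Sections~\ref{sec:lax-data}--\ref{sec:model-space} in the finite case. Bessel equality gives $\Pi u=\sum_j\sqrt{2\pi a_j}\,\varphi_j$, hence $\langle(L_u+\kappa)^{-1}\Pi u,\Pi u\rangle=\sum_j2\pi a_j/(\kappa-a_j)$ for large $\kappa$. The paper then invokes the multisoliton characterization \cite[Theorem~1.5]{BKV26}.

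You instead check that the following use only finitely many eigenfunctions: the local trace identity \eqref{eq:local-trace}, Proposition~\ref{prop:zero-by-saturation}, the Volterra argument of Proposition~\ref{prop:fixed-band}, and \eqref{eq:cross-zero}. Beurling then gives $\Hd=K_\Theta$ with $\dim K_\Theta=|J|$, and finite dimensionality forces $\Theta$ to be a finite Blaschke product. This makes automatic the two delicate steps of the infinite case: ruling out $\Hc=0$ at a Lebesgue point, and excluding $\beta$ and the singular measure in \eqref{eq:phase-density} by integrability. The boundary identity $-\ii\Theta'/\Theta=u$ also becomes a direct computation for a rational inner function.

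Your route is self-contained within the paper's own machinery and treats finite and infinite $J$ uniformly. The paper's route is a one-line reduction that depends on an external characterization.

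One small correction in the finite sufficiency direction: the Gram identity is not what yields saturation. What you need is $\Hd=K_\Theta$ for the finite Blaschke product with zeros $-\overline{p_m}$ (for instance by Lemma~\ref{gas:bounded-inner-model}), together with $\Pi u=\sum_m\ii/(x+p_m)\in K_\Theta$. That is what the finite theory you cite actually provides.
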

\begin{proof}
Equality in Bessel's inequality gives
$\Pi u=\sum_j\sqrt{2\pi a_j}\,\varphi_j$ after phase normalization.
For finite $J$ and sufficiently large $\kappa$, its resolvent pairing is
$\sum_j2\pi a_j/(\kappa-a_j)$, so \cite[Theorem~1.5]{BKV26} applies.
The infinite case is Theorem~\ref{thm:main}; the converse follows from
\cite[Theorems~1.4 and~1.6]{BKV26} and \cite[Proposition~4.3]{GG}.
The empty case follows from real-valuedness.
\end{proof}

\begin{lemma}\label{lem:Xstar-discrete}
Under \eqref{eq:saturation}, $\varphi_k\in\Dom X^*$ and
$X^*\varphi_k\in\Dom L\cap\Hd$.
\end{lemma}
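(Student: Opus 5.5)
The idea is to identify $X^*$ with $\ii$ times the generator of the backward shift semigroup, and then to exploit the invariance $S_t^*\Hd\subset\Hd$ from \eqref{eq:cross-zero} together with the Lax--shift relation \eqref{eq:shift-Lax}. On the Fourier side, $(S_t^*f)^\wedge(\xi)=\widehat f(\xi+t)$ is left translation on $L^2(0,\infty)$. Its generator $G$ is $\partial_\xi$ with domain $H^1(0,\infty)$ and no boundary condition, so $\Dom X^*=\Dom G$ and $X^*=\ii G$. By \eqref{eq:cross-zero}, $(S_t^*|_{\Hd})_{t\ge0}$ is a $C_0$ contraction semigroup on $\Hd$. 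Hence, once we know $\varphi_k\in\Dom G$, we automatically get $G\varphi_k\in\Hd$, because it is a limit of the vectors $(S_t^*\varphi_k-\varphi_k)/t\in\Hd$. It therefore suffices to show that these difference quotients converge, and that the limit lies in $\Dom L$.

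\textbf{Step 1: the difference-quotient identity.} Apply the second identity in \eqref{eq:shift-Lax} to $f=\varphi_k\in H^1_+$ and use $L\varphi_k=-a_k\varphi_k$. This gives
\begin{equation*}
 (L+a_k)\,\frac{S_t^*\varphi_k-\varphi_k}{t}
 =-S_t^*\varphi_k+\frac1t\mathcal B_t\varphi_k ,\qquad t>0 .
\end{equation*}
All terms lie in $\Hd$, since $S_t^*\varphi_k\in\Hd\cap H^1_+$ and $\Hd$ reduces $L$. Split $\Hd=\C\varphi_k\oplus\Hd^{(k)}$. On $\Hd^{(k)}$ we have $L+a_k=-(A-a_k)$, and this has a bounded inverse because $a_k$ is a simple isolated point of $\sigma(A)$; recall that $A\in\Sch_1$ accumulates only at $0$. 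The $\varphi_k$-component needs separate treatment. Pairing the identity with $\varphi_k$ and using self-adjointness kills the left side, and yields
\begin{equation*}
 \langle S_t^*\varphi_k,\varphi_k\rangle=\tfrac1t\langle\mathcal B_t\varphi_k,\varphi_k\rangle .
\end{equation*}
Consequently the full quotient $(S_t^*\varphi_k-\varphi_k)/t$ is controlled by $\frac1t\mathcal B_t\varphi_k$ together with $S_t^*\varphi_k\to\varphi_k$.

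\textbf{Step 2: the limit of $\frac1t\mathcal B_t\varphi_k$.} This is the main obstacle. By \eqref{eq:B-formula},
\begin{equation*}
 \frac1t\mathcal B_t\varphi_k=\frac1{2\pi t}\int_0^t\widehat{\varphi_k}(s)S_{t-s}^*q\,ds ,
\end{equation*}
which converges in norm to $\frac1{2\pi}I_+\varphi_k\,q$ provided $\widehat{\varphi_k}$ is continuous at $0^+$. I would obtain this continuity from the eigenvalue equation on the Fourier side:
\begin{equation*}
 (\xi+a_k)\widehat{\varphi_k}(\xi)=\frac1{2\pi}\int_0^\infty\widehat{u_0}(\xi-\eta)\widehat{\varphi_k}(\eta)\,d\eta ,\qquad \xi>0 .
\end{equation*}
The right side is the convolution of two $L^2$ functions, so it is continuous in $\xi$. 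Since $a_k>0$, $\widehat{\varphi_k}$ is continuous on $[0,\infty)$. (If the form-domain interpretation causes trouble, one first checks this on $\Dom L=H^1_+$, where $u_0\varphi_k\in L^2$.) With this in hand, the right side of Step 1 converges in norm to
\begin{equation*}
 -\varphi_k+\tfrac1{2\pi}I_+\varphi_k\,q\in\Hd .
\end{equation*}

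\textbf{Step 3: conclusion.} Apply $(L+a_k)^{-1}$ on $\Hd^{(k)}$, which is bounded, and use the scalar formula from Step 1 for the $\varphi_k$-component. Together these show that $(S_t^*\varphi_k-\varphi_k)/t$ converges in norm as $t\to0^+$. Hence $\varphi_k\in\Dom G=\Dom X^*$, and $X^*\varphi_k=\ii G\varphi_k\in\Hd$. Finally, $L$ is closed, $(S_t^*\varphi_k-\varphi_k)/t\in\Dom L$, and both this quotient and its image under $L+a_k$ converge. Therefore $G\varphi_k\in\Dom L$, and
\begin{equation*}
 (L+a_k)G\varphi_k=-\varphi_k+\tfrac1{2\pi}I_+\varphi_k\,q .
\end{equation*}
This gives $X^*\varphi_k\in\Dom L\cap\Hd$.
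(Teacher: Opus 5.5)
Your route differs from the paper's. The paper takes $\varphi_k\in\Dom X^*$, $X^*\varphi_k\in\Dom L$ and the commutator identity from \cite[Proposition~3.1]{GG}, after the reconstruction in Theorem~\ref{thm:main}. It then gets $X^*\varphi_k\in\Hd$ by projecting that identity onto $\Hc$, where $L_c+a_k\ge a_k$ is injective. The identity in your Step~1 and the norm limit in Step~2 are correct; continuity of $\widehat{\varphi_k}$ at $0^+$ via $(\xi+a_k)\widehat{\varphi_k}=\widehat{u_0\varphi_k}$ with $u_0\varphi_k\in L^1$ is fine.

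\textbf{The gap is in the direction of $\ker(L+a_k)$.} Write $g_t=(S_t^*\varphi_k-\varphi_k)/t=\alpha_t\varphi_k+h_t$ with $h_t\in\Hd^{(k)}$. Since $(L+a_k)g_t=(L+a_k)h_t$ and $L+a_k$ is boundedly invertible on $\Hd^{(k)}$, you do get norm convergence of $h_t$. But the coefficient $\alpha_t=t^{-1}(\langle S_t^*\varphi_k,\varphi_k\rangle-1)$ is invisible to $L+a_k$. Your scalar identity $\langle S_t^*\varphi_k,\varphi_k\rangle=t^{-1}\langle\mathcal B_t\varphi_k,\varphi_k\rangle$ is only zeroth order. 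Combined with Step~2 it gives $\langle S_t^*\varphi_k,\varphi_k\rangle\to(2\pi)^{-1}I_+(\varphi_k)\langle q,\varphi_k\rangle=1$, i.e.\ $t\alpha_t\to0$, which does not bound $\alpha_t$. To extract $\alpha_t$ from that identity you would need the $O(t)$ term of $t^{-1}\mathcal B_t\varphi_k$. That requires differentiability of $\widehat{\varphi_k}$ at $0^+$ and of $\tau\mapsto S_\tau^*q$, which is essentially what is being proved. So Step~3, as written, does not show that $g_t$ converges.

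\textbf{The missing step can be supplied without any extra regularity.} Pick $f\in\Hp$ with $\widehat f\in C_c^\infty(0,\infty)$ and $\langle\varphi_k,f\rangle\ne0$; such $f$ exist by density. Since $S_tf=e^{\ii tx}f$ and $xf\in L^2$, dominated convergence gives
\[
 \langle g_t,f\rangle=\Bigl\langle\varphi_k,\frac{e^{\ii tx}-1}{t}\,f\Bigr\rangle\longrightarrow\langle\varphi_k,\ii xf\rangle .
\]
Hence $\alpha_t=(\langle g_t,f\rangle-\langle h_t,f\rangle)/\langle\varphi_k,f\rangle$ converges, and so $g_t$ converges in norm. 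With this inserted, your closedness argument and the invariance $S_t^*\Hd\subset\Hd$ complete the proof.

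The repaired route uses only \eqref{eq:cross-zero} and \eqref{eq:shift-Lax}, not the reconstruction of Theorem~\ref{thm:main} or the multisoliton analysis of \cite{GG}. The paper's route instead only inverts $L+a_k$ on $\Hc$, where no kernel direction arises.
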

\begin{proof}
After the preceding reconstruction, \cite[Proposition~3.1]{GG} gives
the domain assertions and
$(L+a_k)X^*\varphi_k=-\ii\varphi_k+\ii I_+(\varphi_k)q/(2\pi)$.
Projecting onto $\Hc$ and using $L_c+a_k\ge a_kI$ gives the claim.
\end{proof}
Moreover, \eqref{eq:infinite-Gram} and \cite[Lemma~4.1]{GG} imply
$u_0\in H^r$ for every $r\ge0$, so its global BO solution has the
parameters in Proposition~\ref{thm:GG-infinite-resolution}.

\section{Wu's identities and the exact range of the map}
\label{sec:exact-range-proof}
For $w\in\mathfrak P_{\rm out}$, put
$a_j=(2\Ima w_j)^{-1}$, $c_j=-\Rea w_j$, and $Q_N=Q_{\{-a_1,\ldots,-a_N\},(c_1,\ldots,c_N)}$.
The finite inverse formula \cite[Definition~1.2]{BKV26} is
$Q_N=-2\Ima\,\partial_x\log\det(xI+B_N)$, where
$(B_N)_{jj}=w_j$ and $(B_N)_{jk}=\ii/(a_j-a_k)$ for $j\ne k$.
Since $\Ima B_N=\diag((2a_j)^{-1})>0$, its eigenvalues lie in $\C_+$;
factoring the determinant expresses $Q_N$ as a positive sum of $N$
soliton profiles. Normalize $L_{Q_N}\varphi_j^{(N)}=-a_j\varphi_j^{(N)}$
so that, with $q^{(N)}=\Pi Q_N$,
\begin{equation}\label{eq:finite-q-expansion}
 q^{(N)}=\sum_{j\le N}\sqrt{2\pi a_j}\,\varphi_j^{(N)},\enspace
 \|q^{(N)}\|_2^2=\tfrac12\|Q_N\|_2^2=2\pi\sum_{j\le N}a_j.
\end{equation}
The inverse spectral matrix of \cite[Definition~5.1, Theorem~5.2,
Proposition~5.4 and Corollary~5.5]{Sun21} is $-D_NB_ND_N^{-1}$,
$D_N=\diag(\sqrt{a_j})$. Its diagonal and the Wu relations give
\begin{equation}\label{eq:finite-center-Wu}
 \langle X^*\varphi_j^{(N)},\varphi_j^{(N)}\rangle=-w_j,
 \enspace I_+\varphi_j^{(N)}=\sqrt{2\pi/a_j}.
\end{equation}
As in Lemma~\ref{lem:Xstar-discrete}, the commutator excludes a continuous
component of $X^*\varphi_j^{(N)}$; its off-diagonal entries are
\begin{equation}\label{eq:finite-X-offdiagonal}
 \langle X^*\varphi_j^{(N)},\varphi_k^{(N)}\rangle
 =\frac{\ii}{a_j-a_k}\sqrt{\frac{a_k}{a_j}}\enspace(k\ne j).
\end{equation}

\begin{lemma}\label{lem:phase-space-compactness}
Sets bounded in $\|f\|_2+\|Df\|_2+\|X^*f\|_2$ are relatively compact in $\Hp$.
\end{lemma}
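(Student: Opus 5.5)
On the Fourier side this is a Rellich--Kondrachov statement on the half-line. By Plancherel, for $f\in\Hp$ we have $\|f\|_2=(2\pi)^{-1/2}\|\widehat f\|_{L^2(0,\infty)}$. Also $\widehat{Df}(\xi)=\xi\widehat f(\xi)$, and by definition of $X^*$ we have $\widehat{X^*f}=\ii\partial_\xi\widehat f$ on $(0,\infty)$. So a set $\mathcal K\subset\Hp$ bounded in $\|f\|_2+\|Df\|_2+\|X^*f\|_2$ corresponds to a set $\widehat{\mathcal K}$ of functions $g=\widehat f|_{(0,\infty)}$ with
$$
 \|g\|_{L^2(0,\infty)}+\|\xi g\|_{L^2(0,\infty)}+\|g'\|_{L^2(0,\infty)}\le C .
$$
It suffices to show that such a set is relatively compact in $L^2(0,\infty)$. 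Since the Fourier transform is an isometry up to the constant $\sqrt{2\pi}$ onto $L^2(0,\infty)$ for Hardy functions, relative compactness then transfers back to $\Hp$.

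We argue by total boundedness, separating the behaviour at infinity from the local behaviour.

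\emph{Tail.} For $M>0$,
$$
 \int_M^\infty|g|^2\,d\xi\le M^{-2}\int_M^\infty\xi^2|g|^2\,d\xi\le C^2M^{-2}.
$$
This is uniformly small on $\widehat{\mathcal K}$ for large $M$.

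\emph{Local part.} On the bounded interval $(0,M)$ the restrictions $g|_{(0,M)}$ are bounded in $H^1(0,M)$. By the one-dimensional Sobolev embedding and Arzel\`a--Ascoli (the bound $|g(\xi)-g(\eta)|\le\|g'\|_2|\xi-\eta|^{1/2}$ together with a uniform sup bound from $H^1(0,M)\subset C([0,M])$), they are relatively compact in $C([0,M])$, hence in $L^2(0,M)$. Equivalently, one may invoke Rellich's theorem on the interval directly.

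\emph{Conclusion.} Given $\eps>0$, choose $M$ with $CM^{-1}<\eps/2$. Then take a finite $\eps/2$-net of $\{g|_{(0,M)}\}$ in $L^2(0,M)$, extended by zero outside $(0,M)$. This gives a finite $\eps$-net of $\widehat{\mathcal K}$ in $L^2(0,\infty)$. Hence $\widehat{\mathcal K}$ is totally bounded, and so $\mathcal K$ is relatively compact in $\Hp$.

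The only point requiring care is the endpoint $\xi=0$. The domain of $X^*$ imposes no boundary condition there: $\widehat f(0+)=I_+f$ need not vanish. This causes no difficulty, because $H^1(0,M)$ compactly embeds into $L^2(0,M)$ on the closed interval without any boundary condition. One should also check that elements of $\Dom X^*$ are genuinely in $H^1(0,\infty)$ with $\|(\widehat f)'\|_2=\|\widehat{X^*f}\|_2$. This is immediate from the definition of $\Dom(X^*)$ given in Subsection~\ref{subsec:main-results}, so I expect no real obstacle.
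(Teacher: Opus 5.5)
Your proposal is correct and follows essentially the same route as the paper: the paper's proof also passes to the Fourier side, applies Rellich on each bounded interval $(0,R)$, and controls the tail by $\int_R^\infty|\widehat f|^2\le R^{-2}\int_0^\infty\xi^2|\widehat f|^2$. Your total-boundedness bookkeeping and the remark that no boundary condition is needed at $\xi=0$ just spell out what the paper leaves implicit.
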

\begin{proof}
Use Rellich on each Fourier interval $(0,R)$ and
$\int_R^\infty|\widehat f|^2\le R^{-2}\int_0^\infty\xi^2|\widehat f|^2$.
\end{proof}

\subsection{Proof of Theorem~\ref{thm:inverse-main}\textup{(ii)}: the exact range of the asymptotic soliton parameters}
\label{subsec:proof-inverse-range}
\begin{proof}
Proposition~\ref{thm:GG-infinite-resolution} gives
$\Ran\mathcal A\subset\mathfrak P_{\rm out}$. Conversely, fix
$w\in\mathfrak P_{\rm out}$ and set $A=\sum_ja_j<\infty$,
$U=(4\pi A)^{1/2}$. By \eqref{eq:finite-q-expansion}, a subsequence
$Q_{N_\ell}\rightharpoonup u_0$ in $L^2$, with $q^{(N_\ell)}\rightharpoonup q=\Pi u_0$.
For each fixed $j$, \eqref{eq:finite-center-Wu}--\eqref{eq:finite-X-offdiagonal}
give $\sup_{N\ge j}\|X^*\varphi_j^{(N)}\|_2<\infty$: the tail is bounded by
$4a_j^{-3}\sum_{a_k\le a_j/2}a_k$. The eigenvalue equation and
Gagliardo--Nirenberg give, for $Y_N=\|D\varphi_j^{(N)}\|_2$,
$Y_N\le a_j+U\sqrt{2Y_N}\le a_j+Y_N/2+U^2$.

Lemma~\ref{lem:phase-space-compactness} and weak graph closure yield a
diagonal subsequence with $\varphi_j^{(N_\ell)}\to\varphi_j$ strongly in
$L^2$, and $(D\varphi_j^{(N_\ell)},X^*\varphi_j^{(N_\ell)})$
weakly convergent to $(D\varphi_j,X^*\varphi_j)$.
Gagliardo--Nirenberg also gives uniform convergence; hence
$Q_{N_\ell}\varphi_j^{(N_\ell)}\rightharpoonup u_0\varphi_j$ in $L^2$.
The limits are orthonormal, satisfy $L_{u_0}\varphi_j=-a_j\varphi_j$, and obey
\begin{equation}\label{eq:limit-output-data}
 \langle q,\varphi_j\rangle=\sqrt{2\pi a_j},\enspace
 \langle X^*\varphi_j,\varphi_j\rangle=-w_j.
\end{equation}
Bessel's inequality and weak lower semicontinuity give
$2\pi A\le\|q\|_2^2\le\liminf_\ell\|q^{(N_\ell)}\|_2^2=2\pi A$.
Thus $q^{(N_\ell)}\to q$ and $Q_{N_\ell}\to u_0$ strongly in $L^2$.
The Wu relation excludes additional negative eigenvalues.
Theorem~\ref{thm:main} reconstructs $u_0=u[p^{\rm in}]$, and
\eqref{eq:limit-output-data} with \cite[Lemma~5.3]{GG} gives
$\mathcal A(p^{\rm in})=w$, proving \eqref{eq:exact-output-range}.
\end{proof}

\section{Soliton resolution for integrable initial data}
\label{sec:integrable-resolution}
Write $D=-i\partial_x$, $T_v=\Pi(v\,\cdot)$, $I_+f=\widehat f(0+)$,
$\mathcal Ff=\widehat f$, $G_z=(D-z)^{-1}\Pi$, $M_vh=vh$ and
$\mathfrak h(I)=L^2(I,dE/(2\pi))$, with $\mathfrak h_+=\mathfrak h((0,\infty))$.
For a smooth unit-interval partition $(\chi_j)$, set
$\|h\|_{H^r_{\rm uloc}}=\sup_j\|\chi_jh\|_{H^r}$.
All spectral sums include the finite and empty cases.

\subsection{Spectral source and coordinate estimates}
\begin{lemma}\label{rad:lem-source}
For the BO solution with real $u_0\in H^s$, $s>-\frac12$, there is a
common shift $A_t=L_{u(t)}+\lambda_0\ge1$ such that, uniformly in $t$,
\begin{equation}\label{rad:eq-scales}
 \|A_t^sf\|_2\asymp\|f\|_{H^s},\enspace f\in H_+^s,
\end{equation}
using completion for $s<0$. The finite measure
$\nu_s(B)=\|\one_B(L_{u(t)})A_t^s\Pi u(t)\|_2^2$ is conserved, has no
atom at zero, and satisfies
\begin{equation}\label{rad:eq-source-tail}
 \sup_t\|\eta(L_{u(t)})\Pi u(t)\|_{H^s}^2
 \le C_s\int|\eta|^2\,d\nu_s
\end{equation}
for bounded Borel $\eta$. Under smooth $H^s$ approximation, the constants
are uniform, the measures converge narrowly with their masses, and
\begin{equation}\label{rad:eq-fixed-time}
 \psi(L_{u_n(t)})\Pi u_n(t)\to\psi(L_{u(t)})\Pi u(t)
 \enspace\text{in }L^2\enspace(\psi\in C_c(\R),\ t\text{ fixed}).
\end{equation}
For $s>0$ and $K>0$, uniformly in $t$,
\begin{equation}\label{rad:eq-gain}
 g=\one_{[-K,K]}(L_{u(t)})g
 \enspace\Longrightarrow\enspace\|g\|_{H^{s+1}}\le C_{s,K}\|g\|_2.
\end{equation}
\end{lemma}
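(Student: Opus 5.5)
The approach is to follow the Killip--Laurens--Vi\c{s}an framework \cite{KLV24}, using the Lax pair structure to transport spectral quantities along the flow. The first step is the uniform equivalence \eqref{rad:eq-scales}. For real $v\in H^s$, $s>-\frac12$, the potential term $T_v$ is form-bounded relative to $D$ with relative bound as small as we like, once we shift by a large constant depending only on $\|v\|_{H^s}$. This is the estimate $|\langle vf,f\rangle|\le\eps\|D^{1/2}f\|_2^2+C_\eps\|f\|_2^2$ via Sobolev duality and $H^{1/2}_+$ multiplication bounds, valid because $s>-\frac12$. Since KLV give global equicontinuity and $\sup_t\|u(t)\|_{H^s}<\infty$, a single $\lambda_0$ works for all $t$. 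The equivalence $\|A_t^{1/2}f\|\asymp\|(D+1)^{1/2}f\|$ then follows. I would extend it to the exponent $s$ by interpolation and duality between the form-domain level and $L^2$ when $|s|\le\frac12$. For larger $s$, I would iterate the resolvent identity, using $A_t^{-1}=(D+\lambda_0)^{-1}\bigl(I+T_{u}A_t^{-1}\bigr)$ and the fact that $T_u$ maps $H^{r+1}_+$ into $H^{r}_+$ with constants controlled by $\sup_t\|u\|_{H^s}$.

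The second step is conservation of $\nu_s$. From the Lax equation $\partial_tL=[B,L]$ and the identity $\partial_t\Pi u=B\Pi u-iL^2\mathbf 1$ (G\'erard's form), one obtains unitary $U(t)$ with $L_{u(t)}=U(t)L_{u_0}U(t)^*$ and $\Pi u(t)=U(t)e^{-itL_{u_0}^2}\ldots$. More practically, I would use the explicit formula: $\langle \eta(L_{u(t)})\Pi u(t),\Pi u(t)\rangle$ is conserved for smooth data, since these are the generating functionals $\langle (L+\kappa)^{-1}\Pi u,\Pi u\rangle$ (the BO ``$\beta$'' functional). Their conservation for all large $\kappa$ identifies the spectral measure of $\Pi u(t)$ with respect to $L_{u(t)}$ through Stieltjes inversion. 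The weighted measure $\nu_s$ is $(\lambda+\lambda_0)^{2s}$ times that measure, so it is conserved as well. For rough data, I would pass to the limit using continuity of the KLV flow and strong resolvent convergence $L_{u_n(t)}\to L_{u(t)}$, which follows from $u_n(t)\to u(t)$ in $H^s$ and the uniform form bounds. This yields \eqref{rad:eq-fixed-time} for $\psi\in C_c$, together with narrow convergence and convergence of masses, since the masses are comparable to $\|\Pi u_n\|_{H^s}^2$ by \eqref{rad:eq-scales}. For the absence of an atom at zero, I would use that $\langle\Pi u,\psi\rangle=0$ for $L\psi=0$, either by the Wu-type relation at zero (pairing $L\psi=0$ with $\mathbf 1$ formally gives $\langle\Pi u,\psi\rangle=-\langle D\psi,\cdot\rangle$-type vanishing), or by the limiting form $\kappa\langle(L+\kappa)^{-1}q,q\rangle$-asymptotics as $\kappa\to0$. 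I expect this to be the delicate point at low regularity, and the first thing to try is approximation combined with the uniform bound $\kappa\langle (L+\kappa)^{-1}q,q\rangle\to0$.

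The tail estimate \eqref{rad:eq-source-tail} then follows at once: $\|\eta(L)\Pi u\|_{H^s}^2\asymp\|A_t^s\eta(L)\Pi u\|_2^2=\int|\eta|^2d\nu_s$, with the constant coming from \eqref{rad:eq-scales}. For \eqref{rad:eq-gain}, take $g=\one_{[-K,K]}(L)g$, so that $\|A_t^{m}g\|_2\le(K+\lambda_0)^m\|g\|_2$ for every $m$. Applying \eqref{rad:eq-scales} at the exponent $s+1$ requires the equivalence there. For $s>0$ this holds because $T_u$ maps $H^{r+1}_+$ into $H^{r}_+$ boundedly for $0\le r\le s$, which uses $u\in H^s$ with $s>0$ and products controlled by $u\in H^s$ acting on $H^{r+1}$. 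A bootstrap from $r=0$ upward in steps then gives $\|g\|_{H^{s+1}}\lesssim\|A^{s+1}g\|_2\le C_{s,K}\|g\|_2$. I expect the main obstacle to be the uniform-in-time equivalence at exponents beyond the form level for $s$ near $-\frac12$. This is handled by the KLV equicontinuity and the small relative form bound.
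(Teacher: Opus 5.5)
Your architecture matches the paper's: a uniform shift from the form bound, L\"owner--Heinz for $-\frac12<s<0$, the elliptic identity $Df=A_tf-\lambda_0f+\Pi(uf)$ for integer powers, conservation from the Lax structure, and approximation. Obtaining conservation by Stieltjes inversion of the conserved functionals $\langle(L+\kappa)^{-1}\Pi u,\Pi u\rangle$, instead of from the covariance in Lemma~\ref{rad:lem-observation}, is a legitimate variant.

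The gap that carries real weight is uniformity in $t$ for $s>0$. Your iterated resolvent identity and your proof of \eqref{rad:eq-gain} both rely on $\sup_t\|u(t)\|_{H^s}<\infty$, which you import from \cite{KLV24}. The paper takes only the flow, the approximation and the form-level bounds from the literature, and \emph{proves} this bound. It has to, because the graph-norm constants of $A_t^{m+1}$, $m=\lfloor s\rfloor$, depend on $\|u(t)\|_{H^m}$, and the constant in \eqref{rad:eq-gain} depends on $\|u(t)\|_{H^s}$. The missing step is a bootstrap off your own conservation law:
\begin{enumerate}[label=\textup{(\alph*)}]
\item For $s\ge0$, the ordinary source measure $\mu$ of $\Pi u(t)$ is conserved at the $L^2$ level. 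Hence its moments $\int(\lambda+\lambda_0)^{2k}\,d\mu$ are conserved, and they are finite at $t=0$.
\item The constants for $A_t^k$ involve only $\|u(t)\|_{H^{k-1}}$, so induction on $k$ gives uniform $H^k$ bounds for $k\le m$.
\item Interpolation between $A_t^m$ and $A_t^{m+1}$ then gives \eqref{rad:eq-scales} at $s$.
\item The conserved $2s$ moment finally yields $\sup_t\|u(t)\|_{H^s}$.
\end{enumerate}
So the uniform bound is an output of the conservation law, not an input to the equivalence. Your closing remark puts the difficulty at $s$ near $-\frac12$, but there only the form level is involved.

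Two further steps do not go through as written.

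First, masses that are comparable to $\|\Pi u_n\|_{H^s}^2$ give bounds, not convergence of masses. Also, for $s<0$ you only have $\Pi u_n(t)\to\Pi u(t)$ in $H^s$, so strong resolvent convergence alone does not give \eqref{rad:eq-fixed-time} in $L^2$. Both conclusions need $A_{u_n(t)}^s\Pi u_n(t)\to A_{u(t)}^s\Pi u(t)$ in $L^2$. The paper derives this from the uniform $H^s\to L^2$ bound together with convergence on a dense set. For $s<0$ that convergence comes from resolvent continuity. For $s>0$ it comes from spectral truncation at $R$, whose tail on smooth $f$ is at most $R^{s-m-1}\|A_{u_n}^{m+1}f\|_2$.

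Second, your preferred route to excluding an atom at zero cannot work. Under narrow convergence one only has $\nu_s(\{0\})\ge\limsup_n\nu_s^{(n)}(\{0\})$, so approximation can create an atom. Moreover, the limit of $\kappa\langle(L+\kappa)^{-1}q,q\rangle$ as $\kappa\to0$ off the real axis is exactly the mass of the source measure at zero. Asserting that it vanishes, uniformly in the approximation, is therefore circular. What is needed is Wu's relation for the rough potential itself: $\langle\Pi u(t),f\rangle=0$ for $f\in\Ker L_{u(t)}$, which the paper takes from \cite[Theorem~1.4]{BKV26}. For $u\in L^2$ this is elementary. One has $\xi\widehat f(\xi)=\widehat{uf}(\xi)$ on $(0,\infty)$, with $\widehat{uf}$ continuous and $\widehat f\in L^2$, which forces $\widehat{uf}(0)=\overline{\langle\Pi u,f\rangle}=0$.
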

\begin{proof}
Use the flow and approximation in \cite{KLV24} and the form realization,
resolvent continuity and uniform negative Sobolev bounds in
\cite[Theorem~2.3 and Proposition~2.4]{BKV26}. This realization has
essential spectrum $[0,\infty)$ and discrete negative spectrum.
For $-\frac12<s<0$, choose
$\frac12(D+\lambda_0)\le A_t\le\frac32(D+\lambda_0)$; inversion and
L\"owner--Heinz with exponent $-2s$ give \eqref{rad:eq-scales}.
Resolvent continuity, the uniform $H^s\to L^2$ bound and density yield
$A_{v_n}^s\Pi v_n\to A_v^s\Pi v$ in $L^2$.
Functional calculus and smooth Lax covariance
(Lemma~\ref{rad:lem-observation}) now give conservation, measure
convergence and \eqref{rad:eq-source-tail}--\eqref{rad:eq-fixed-time}.

For integers $m\ge1$ and $v\in H^{m-1}$,
$Df=(L_v+\lambda_0)f-\lambda_0f+\Pi(vf)$ gives
$\Dom(L_v+\lambda_0)^m=H_+^m$ with equivalent graph norms.
At $m=1$, first use the rough resolvent to obtain
$H^{3/4}\subset L^\infty$, then absorb
$\|vf\|_2\lesssim\|v\|_2\|f\|_2^{1/2}\|Df\|_2^{1/2}$;
induct using the Sobolev algebra property and interpolate.
For $s\ge0$, the conserved measure
$d\mu(\lambda)=(\lambda+\lambda_0)^{1/2}d\nu_{-1/4}(\lambda)$
is the ordinary source measure; its mass gives the uniform $L^2$ bound.
At $t=0$, the graph-norm identities and interpolation give
$\int(\lambda+\lambda_0)^{2s}\,d\mu<\infty$.
Put $m=\lfloor s\rfloor$. For $1\le k\le m$, the graph-norm
constants for $A_t^k$ depend on the already bounded
$H^{k-1}$ norm of $u(t)$; the conserved $2k$ moment therefore
gives a uniform $H^k$ bound. The uniform $H^m$ bound controls
the graph norms of $A_t^{m+1}$, and interpolation between
$A_t^m$ and $A_t^{m+1}$ proves \eqref{rad:eq-scales} at $s$.
Its conserved $2s$ moment gives the uniform $H^s$ bound.
For $v_n\to v$ in $H^s$, these scale constants are uniform.
For fixed smooth Hardy vectors $f$, strong resolvent convergence and
$\sup_n\|A_{v_n}^{m+1}f\|_2<\infty$ give
$A_{v_n}^sf\to A_v^sf$: spectral truncation at $R$ leaves a tail
bounded by $R^{s-m-1}\|A_{v_n}^{m+1}f\|_2$.
Density in $H^s$ then gives
$A_{v_n}^s\Pi v_n\to A_v^s\Pi v$ in $L^2$.
Functional calculus gives narrow convergence of the source measures
with their masses and \eqref{rad:eq-fixed-time}. The same elliptic identity on vectors with compact spectral
support gives \eqref{rad:eq-gain}. For $f\in\ker L_{u(t)}$, Wu's
relation \cite[Theorem~1.4]{BKV26} gives
$\langle\Pi u(t),f\rangle=0$, so $\nu_s(\{0\})=0$.
Finiteness and narrow convergence then give uniform source tails
at zero and infinity.
\end{proof}

\begin{lemma}\label{rad:lem-coordinate}
Let $v\in H^\rho(\R;\R)$, $-\frac12<\rho<0$, and let $L_vf=-af$,
$a>0$, $\|f\|_2=1$. Then
\begin{equation}\label{rad:eq-coordinate}
 f\in\Dom X^*,\enspace f,X^*f\in H_+^{1+\rho},
\end{equation}
and, in $H^\rho$,
\begin{equation}\label{rad:eq-commutator}
 (L_v+a)X^*f=-if+\frac{i}{2\pi}I_+(f)\Pi v.
\end{equation}
For $c=\langle\Pi v,f\rangle$, $p=-\langle X^*f,f\rangle$,
$cI_+(f)=2\pi$, $|c|^2=2\pi a$, $|I_+(f)|^2=2\pi/a$, $\Ima p=(2a)^{-1}$.
At an isolated limiting eigenvalue, normalized eigenfunctions with
consistent phases and their $X^*$ images converge in $H^{1+\rho}$
under $H^\rho$ convergence of the potentials.
The same holds for $(L_v+\lambda_0)^{-1}\varphi$ when
$\widehat\varphi\in C_c^\infty(0,\infty)$ and $\lambda_0$ is a common
sufficiently large shift.
\end{lemma}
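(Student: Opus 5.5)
The proof works on the Fourier side and bootstraps regularity from the eigenvalue equation. For $f\in H^{1/2}_+$ with $L_vf=-af$, we have $(D+a)f=\Pi(vf)$ in $H^{-1/2}$.

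\emph{Regularity of $f$.} Since $v\in H^\rho$ with $\rho>-\frac12$, the product estimate $H^\rho\cdot H^{1/2+\epsilon'}\subset H^{\rho-\epsilon''}$ applies (the exponent sum is positive). This gains regularity for $f$, and iterating gives $f\in H^{1+\rho}_+$; at each step $(D+a)^{-1}$ is bounded from $H^r_+$ to $H^{r+1}_+$ because $a>0$. Once $f\in H^{1+\rho}\subset H^{1/2+}$, which is an algebra-like multiplier on $H^\rho$, the bound $vf\in H^\rho$ closes.

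\emph{$f\in\Dom X^*$ and the commutator identity.} Formally, $\widehat{X^*f}=i\partial_\xi\widehat f$. Differentiating the Fourier equation $(\xi+a)\widehat f(\xi)=\mathbf 1_{\xi>0}\frac1{2\pi}(\widehat v*\widehat f)(\xi)$ produces the commutator $[X^*,\Pi M_v]$. The boundary term at $\xi=0^+$ gives $\frac{i}{2\pi}I_+(f)\Pi v$, and $[X^*,D]=-i$ gives $-if$. This yields \eqref{rad:eq-commutator} formally. To make it rigorous, I would regularize: approximate $v$ by smooth real $v_n\to v$ in $H^\rho$ (or in $H^\rho\cap L^2$). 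For $L^2$ potentials, $f_n\in\Dom X^*$ and the identity are known from \cite[Proposition~3.1]{GG}. Isolatedness of $-a$ and resolvent convergence from Lemma~\ref{rad:lem-source} give eigenvalues $-a_n\to -a$ and eigenfunctions $f_n\to f$ in $H^{1+\rho}$ after phase fixing. We then solve
\[
X^*f_n=(L_{v_n}+a_n)^{-1}\Bigl(-if_n+\tfrac{i}{2\pi}I_+(f_n)\Pi v_n\Bigr)
\]
on the complement of $f_n$, plus the multiple $\langle X^*f_n,f_n\rangle f_n$. The right-hand side lies in $H^\rho$, and the reduced resolvent maps $H^\rho$ to $H^{1+\rho}$ uniformly in $n$. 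The solvability condition, orthogonality of the source to $f_n$, is exactly Wu's relation $cI_+(f)=2\pi$ combined with $\langle f_n,f_n\rangle=1$. Uniform bounds on $X^*f_n$ in $H^{1+\rho}$, together with closedness of $X^*$, give $f\in\Dom X^*$, $X^*f\in H^{1+\rho}_+$, and the identity in the limit. The diagonal coefficient $\langle X^*f_n,f_n\rangle$ also needs a uniform bound. For this I take the imaginary part of the pairing of the commutator identity with $f_n$, which gives $\Ima p_n=(2a_n)^{-1}$ directly. For the real part I use $|\langle X^*f_n,f_n\rangle|\le\|xf_n\|$-type control via $I_+$; bounding this real part is the main obstacle.

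\emph{Scalar identities and continuity.} The relations $|c|^2=2\pi a$ and $cI_+(f)=2\pi$ pass to the limit from the $L^2$ case, using the Wu relation \cite[Theorem~1.4]{BKV26}. The identity $\Ima p=(2a)^{-1}$ follows by pairing \eqref{rad:eq-commutator} with $f$: by self-adjointness the left side vanishes, so
\[
0=-i+\tfrac{i}{2\pi}I_+(f)\overline{c}\,\cdot(\ldots),
\]
and the term $\langle X^*f,(L+a)f\rangle$ gives the imaginary part via $X^*-X$ asymmetry $\langle X^*f,f\rangle-\langle f,X^*f\rangle=\frac{i}{2\pi}|I_+f|^2$. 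This gives $2\Ima p\cdot 2\pi=|I_+f|^2=2\pi/a$.

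The convergence statement is the same argument applied to any $H^\rho$-convergent sequence. The final claim concerns $(L_v+\lambda_0)^{-1}\varphi$ with $\widehat\varphi\in C_c^\infty(0,\infty)$. Here $[X^*,(L_v+\lambda_0)^{-1}]$ is computed through the same commutator, with the boundary term $I_+$ replaced by $I_+((L+\lambda_0)^{-1}\varphi)$, and $X^*\varphi$ is smooth. The same uniform resolvent bounds then apply.
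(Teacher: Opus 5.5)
Your regularity bootstrap and your derivation of the scalar identities from the commutator identity are fine. The central step, however, has a genuine gap, and it is exactly the point you flag: proving $f\in\Dom X^*$ by smooth approximation.

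The reduced resolvent only determines the component of $X^*f_n$ orthogonal to $f_n$. The coefficient $\mu_n=\langle X^*f_n,f_n\rangle$ is invisible to the identity $(L_{v_n}+a_n)X^*f_n=\cdots$, because $L_{v_n}+a_n$ annihilates $f_n$. The boundary form fixes $\Ima\mu_n$, but nothing in your argument bounds $\Rea\mu_n$. The suggested control through $\|xf_n\|_2$ cannot work: $\widehat{f_n}$ jumps from $0$ to $I_+(f_n)\neq0$ at the origin, so $xf_n=X^*f_n+\frac{i}{2\pi}I_+(f_n)$ is not in $L^2$ for any $n$. Without a bound on $\mu_n$ you lack the uniform graph bound needed to invoke closedness of $X^*$. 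You also lack the strong $H^{1+\rho}$ convergence of the $X^*$ images that the lemma asserts; closedness alone would only give weak limits.

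There is a second, related gap. You assert that the \emph{given} $f$ is a limit of eigenfunctions $f_n$ of $L_{v_n}$, with uniformly bounded reduced resolvents. This presupposes that $-a$ is a simple eigenvalue of $L_v$. For $v\in H^\rho$ with $\rho<0$, simplicity is part of what the lemma establishes: $|c|^2=2\pi a$ for every normalized eigenfunction forces it. So it cannot be assumed. If the multiplicity were $m\ge2$, the approximants would have $m$ simple eigenvalues clustering at $-a$. The reduced resolvents would then blow up, and a general $f$ in the eigenspace would not be such a limit.

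The paper avoids approximation altogether.
\begin{enumerate}
\item It writes the eigenfunction equation as the fixed point $f=A^a_vf$, with $A^a_w=(D+a)^{-1}T_w$.
\item It splits $v=z+w$, where $z$ is compactly supported with $xz\in H^\rho$, and $w$ is so small that $\|A^a_w\|_{Y\to Y}<\frac12$ on $Y=H^{1+\rho}_+$.
\item Difference quotients in $\xi$ commute with $\widehat A^a_w$ up to an explicit term carrying the boundary value $\widehat f(0+)$; this term is the source of $I_+(f)\Pi v$. Meanwhile $\partial_\xi\widehat A^a_z$ is explicit through $\widehat{xz}$.
\item Inverting $I-A^a_w$ gives $\widehat f'\in\mathcal FY$, including its component along $f$. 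This works for every eigenfunction, with no simplicity assumption.
\end{enumerate}
The kernel of $I-A^a_v$, which is what defeats your reduced-resolvent approach, never has to be inverted. Simplicity then follows from $cI_+(f)=2\pi$ combined with $aI_+(f)=\bar c$; the latter comes from evaluating the Fourier eigenfunction equation at $\xi=0^+$. The same differentiated equation with a common $z$ gives the graph convergence.

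If you want to keep your route, the missing ingredient is a duality bound. Fix $\phi$ with $\widehat\phi\in C_c^\infty(0,\infty)$ and $\langle f,\phi\rangle\neq0$. Then $\langle X^*f_n,\phi\rangle=\langle f_n,X\phi\rangle$ with $\widehat{X\phi}=i\widehat\phi'$, which bounds $\mu_n$ and gives its convergence. Testing against several such $\phi$ similarly excludes a multiple eigenvalue, since the off-diagonal entries $\langle X^*\psi_{j,n},\psi_{k,n}\rangle=iI_+(\psi_{j,n})c_{k,n}/(2\pi(b_{j,n}-b_{k,n}))$ would blow up.

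Also note a sign slip: the boundary form is $\langle X^*f,f\rangle-\langle f,X^*f\rangle=-\frac{i}{2\pi}|I_+(f)|^2$. Your sign would make $\Ima p$ negative.
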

\begin{proof}
Put $Y=H_+^{1+\rho}$, $A_w^a=(D+a)^{-1}T_w$. Two applications of
the eigenfunction equation, starting from the form domain and using
$H^\rho H^{1/2}\subset H^{\rho-\varepsilon}$, give $f\in Y$.
Here $0<\varepsilon<\min(\frac12+\rho,1+2\rho)$, and
$\|wh\|_{H^\rho}\lesssim\|w\|_{H^\rho}\|h\|_Y$.
The Fourier kernel of $A_w^a$ is
$\widehat w(\xi-\zeta)/(2\pi(\xi+a))$, $\xi,\zeta>0$;
weighted Cauchy--Schwarz gives continuity at zero.
Split $v=z+w$ with $z$ compactly supported, $xz\in H^\rho$ and
$\|A_w^a\|_{Y\to Y}<1/2$. For
$\delta_h\psi=(\psi(\cdot+h)-\psi)/h$, direct subtraction gives
$$
 [\delta_h,\widehat A_w^a]\psi
 =-\frac{(\widehat A_w^a\psi)(\xi+h)}{\xi+a}
 +\frac{\int_0^h\widehat w(\xi+h-\zeta)\psi(\zeta)\,d\zeta}
 {2\pi h(\xi+a)}.
$$
The right side converges in $\mathcal FY$, while
$\partial_\xi\widehat A_z^a\psi
=-(\widehat A_z^a\psi)/(\xi+a)-i\widehat A_{xz}^a\psi$.
Invert $I-A_w^a$ to prove \eqref{rad:eq-coordinate}--\eqref{rad:eq-commutator}.
Pairing with $f$ and evaluating the eigenfunction equation at zero yield
$cI_+(f)=2\pi$ and $aI_+(f)=\bar c$; the boundary form gives
$\Ima p=|I_+(f)|^2/(4\pi)$.
For convergent potentials, a common cutoff and norm-resolvent convergence
$L^2\to Y$ give eigenfunction and boundary-trace convergence.
The differentiated equation
$(I-\widehat A_w^a)\widehat f'
=-\widehat f/(\xi+a)-i\widehat A_{xz}^a\widehat f
+\widehat w(\xi)\widehat f(0+)/(2\pi(\xi+a))$
then gives graph convergence. For resolvent images replace $a$ by
$\lambda_0$ and add $\widehat\varphi'/(\xi+\lambda_0)$.
\end{proof}

\begin{lemma}\label{rad:lem-observation}
For real $v\in\bigcap_mH^m$ there is a unitary $\Omega_{v,t}$ on $\Hp$
satisfying
\begin{equation}\label{rad:eq-covariance}
 \Omega_{v,t}\Pi v=\Pi v(t),\enspace
 L_{v(t)}\Omega_{v,t}=\Omega_{v,t}L_v.
\end{equation}
If $P_t$ is the closure of $X^*-2tL_v$ on $\Dom X^*\cap H_+^2$, then
\begin{equation}\label{rad:eq-observation}
 (\Omega_{v,t}f)(z)=\frac1{2\pi i}I_+((P_t-z)^{-1}f),\enspace z\in\C_+.
\end{equation}
\end{lemma}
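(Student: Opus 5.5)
Lemma~\ref{rad:lem-observation} is G\'erard's explicit formula in operator form; we follow its Lax-pair proof. For smooth real $v$, the BO flow has the Lax pair $\partial_tL_{v(t)}=[B_{v(t)},L_{v(t)}]$ with $B_w=\ii(T_{|D|w}-T_w^2)$, which is bounded and skew-adjoint on $\Hp$ for $w\in\bigcap_mH^m$. We define $\Omega_{v,t}$ as the solution of $\partial_t\Omega=B_{v(t)}\Omega$, $\Omega_{v,0}=I$. Since $B$ is bounded, skew-adjoint and continuous in $t$, the Dyson series gives a unitary propagator. The identity $L_{v(t)}\Omega_{v,t}=\Omega_{v,t}L_v$ then follows in the standard way: $\Omega_{v,t}^*L_{v(t)}\Omega_{v,t}$ has zero derivative on $H^1_+$, because $H^1_+$ is preserved by the propagator, as $B$ maps $H^1_+$ to itself with uniform bounds. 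For $\Omega_{v,t}\Pi v=\Pi v(t)$, we use the classical identity that the BO equation projected by $\Pi$ reads $\partial_t\Pi u=B_u\Pi u-\ii L_u^2\Pi u$. Equivalently, with $\tilde B=B-\ii L^2$ one has $\partial_tq=\tilde Bq$ and $\partial_tL=[\tilde B,L]$. We therefore first run the argument with the modified propagator $\tilde\Omega$ and relate the two by $\tilde\Omega_{v,t}=\Omega_{v,t}e^{-\ii tL_v^2}$. This is the usual G\'erard bookkeeping, and we must be careful about which unitary is called $\Omega_{v,t}$; the statement fixes it so that both identities in \eqref{rad:eq-covariance} hold. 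If the pure $B$-flow does not transport $\Pi v$, the choice $\Omega_{v,t}=\tilde\Omega_{v,t}$ still intertwines $L$, since $L^2$ commutes with $L$. So we adopt $\tilde\Omega$ as the definition.

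For \eqref{rad:eq-observation}, the second ingredient is the evolution of $X^*$. Using $[X^*,L_w]f=\ii f-\frac{\ii}{2\pi}I_+(f)\Pi w$ (compare \eqref{rad:eq-commutator}), we compute
\[
\tilde\Omega_{v,t}^*X^*\tilde\Omega_{v,t}=X^*-2tL_v
\]
on $\Dom X^*\cap H^2_+$, modulo a rank-one term involving $I_+$, which must be checked. This is done by differentiating in $t$ and using $[X^*,\tilde B_w]=-2L_w$ plus terms annihilated after composition with $I_+$. The key identity is $I_+(\tilde\Omega_{v,t}g)=I_+(g)$, modulo the appropriate correction. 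This follows because $I_+(h)=\lim_{\eps\to0}\langle h,\chi_\eps\rangle$ and $\tilde B^*\chi_\eps\to0$ in the relevant sense, or more cleanly because $I_+\circ\tilde B_w=0$ on smooth vectors, since $T_w$ has symbol vanishing at $\xi=0^+$ only in the weak sense. Then for $z\in\C_+$, on the Fourier side $(X^*-z)^{-1}$ is an explicit Volterra integral, and $\frac1{2\pi\ii}I_+((X^*-z)^{-1}h)=h(z)$ for $h\in\Hp$ by Paley--Wiener: solving $\ii\partial_\xi\widehat g-z\widehat g=\widehat h$ with decay gives $\widehat g(0)=-\ii\int_0^\infty e^{\ii z\xi}\widehat h(\xi)d\xi$, which is $2\pi\ii\,h(z)$ up to convention. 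Applying this to $h=\tilde\Omega_{v,t}f$ and conjugating the resolvent by $\tilde\Omega_{v,t}$ yields \eqref{rad:eq-observation}. This requires $(P_t-z)^{-1}$ to exist for $z\in\C_+$, which holds since $\Ima X^*\le0$ as a dissipative operator on the Hardy side and $2tL_v$ is self-adjoint; hence $P_t$ is maximal dissipative.

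The main obstacle is domain control: $P_t$ must be closable on $\Dom X^*\cap H^2_+$ and maximal dissipative, and the conjugation identity must hold on a core. We would first verify that $\tilde\Omega_{v,t}$ maps $\Dom X^*\cap H^2_+$ onto itself. This uses the weighted commutator $[X^*,B_w]$, which is bounded when $xw\in L^2$ is not assumed, via the Fourier kernel of $T_w$ as in Lemma~\ref{rad:lem-coordinate}. Where only $v\in\bigcap H^m$ is available, we would cite Chen's extension \cite{Chen} of G\'erard's formula to real $L^2$ data, which is exactly this statement for smooth data, and reduce the lemma to it.
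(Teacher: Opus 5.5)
Your outline has the same architecture as the paper's proof: propagate by $B_{v(t)}=\ii(T_{|D|v(t)}-T_{v(t)}^2)$, correct by a factor $e^{\pm\ii tL_v^2}$, conjugate $X^*$, show $I_+$ is invariant, and finish with $\frac1{2\pi\ii}I_+((X^*-z)^{-1}h)=h(z)$. However, the two facts that carry the lemma are left as ``to be checked'', and the reasons you give for them fail.

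\emph{The sign.} For the convention $\partial_tu=\partial_x|D|u-\partial_x(u^2)$ used here, $\partial_t\Pi u=B_u\Pi u+\ii L_u^2\Pi u$. The linear part must be $\ii D^2\Pi u$, and only the plus sign cancels the cubic terms. So the correct unitary is $\Omega_{v,t}=U(t)e^{\ii tL_v^2}$, with generator $C_t=B_{v(t)}+\ii L_{v(t)}^2$. Your $\tilde\Omega=\Omega e^{-\ii tL_v^2}$ does not carry $\Pi v$ to $\Pi v(t)$. Moreover, both $[X^*,\tilde B_w]=-2L_w$ and $I_+\tilde B_w=0$ are false for $\tilde B=B-\ii L^2$.

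\emph{The missing idea.} The Fourier kernel of $C_t$ is
\[
 \widehat{C_tf}(\xi)=\ii\xi^2\widehat f(\xi)-\frac{\ii}{\pi}\int_0^\infty\min(\xi,\eta)\,\widehat v(t,\xi-\eta)\widehat f(\eta)\,d\eta,
\]
because $|\xi-\eta|-\xi-\eta=-2\min(\xi,\eta)$ for $\xi,\eta>0$. Since this kernel vanishes at $\xi=0$, $I_+C_t=0$, hence $I_+\Omega_{v,t}=I_+$. Since it vanishes at $\eta=0$, integration by parts in $[X^*,C_t]$ produces no boundary term. Thus $[X^*,C_t]=-2L_{v(t)}$ \emph{exactly}, with no rank-one correction at all.

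Your justifications do not capture this. The functionals $I_+T_w$ and $I_+B_w$ are nonzero. The cancellation at $\xi=0$ is between $T_{|D|w}$ and $T_wD$, and the $T_w^2$ terms cancel identically; both happen only in $B_w+\ii L_w^2$. Nor would ``terms annihilated after composition with $I_+$'' suffice. For $A=X^*-2tL_v$ and a correction $R$ with $I_+R=0$, one has $I_+[(A+R-z)^{-1}-(A-z)^{-1}]=-I_+(A-z)^{-1}R(A+R-z)^{-1}$, which need not vanish. You genuinely need $R=0$.

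\emph{The domain step.} A maximal dissipative operator plus an unbounded self-adjoint one need not be maximal dissipative. The paper instead treats $2tT_v$ as a bounded perturbation of $X^*-2tD=e^{-\ii tD^2}X^*e^{\ii tD^2}$, which is unitarily equivalent to $X^*$, and uses frequency cutoffs to identify the closure $P_t$ with $\Omega_{v,t}^*X^*\Omega_{v,t}$. It obtains $X^*\Omega_{v,t}=\Omega_{v,t}(X^*-2tL_v)$ on the core by Duhamel's formula, using difference quotients of the backward shifts $(S_hf)^\wedge(\xi)=\widehat f(\xi+h)$:
\[
 h^{-1}[S_h,C_t]f=\ii(2D+h)S_hf-2\ii S_hT_{v(t)}f+R_{h,t}f,\qquad \|R_{h,t}f\|_2\lesssim\sqrt h\|v(t)\|_2\|f\|_2 .
\]
This avoids needing to know a priori that $\Omega_{v,t}$ preserves $\Dom X^*$.

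Falling back on Chen's formula does not close the gap. It concerns only the single vector $f=\Pi v$, whereas \eqref{rad:eq-observation} is an operator identity asserted for every $f\in\Hp$. The paper later applies it to eigenfunctions and to arbitrary $q\in\Hp$.
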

\begin{proof}
Let $U'=i(T_{|D|v(t)}-T_{v(t)}^2)U$, $U(0)=I$, and
$V=Ue^{itL_v^2}$. The smooth Lax equations \cite[Section~2]{GG} give
\eqref{rad:eq-covariance}. Its generator has Fourier expression
$\widehat{C_tf}(\xi)=i\xi^2\widehat f(\xi)
-(i/\pi)\int_0^\infty\min(\xi,\eta)\widehat v(t,\xi-\eta)
\widehat f(\eta)\,d\eta$.
For $(S_hf)^{\wedge}(\xi)=\widehat f(\xi+h)$, subtraction gives
$h^{-1}[S_h,C_t]f=i(2D+h)S_hf-2iS_hT_{v(t)}f+R_{h,t}f$,
where $\|R_{h,t}f\|_2\lesssim\sqrt h\|v(t)\|_2\|f\|_2$.
Duhamel's formula yields $X^*V=V(X^*-2tL_v)$ on the core.
Frequency cutoffs and the bounded perturbation $2tT_v$ identify the
maximal dissipative closure as $P_t=V^*X^*V$.
Since $C_t$ is $O(\xi)$ at zero, $I_+Vf=I_+f$; graph continuity and
Hardy evaluation prove \eqref{rad:eq-observation} with $\Omega_{v,t}=V$.
\end{proof}

\subsection{The continuous spectral transform and moving resolvent}
\begin{lemma}\label{rad:lem-static}
Let $v\in H^\rho(\R;\R)\cap L^1(\R)$, $-\frac12<\rho<0$.
For every $E>0$ there are unique bounded solutions
$$
 m_-=e^{iEx}+G_E^+(vm_-),\enspace
 m_+=e^{iEx}+G_E^-(vm_+),\enspace
 G_E^\pm=(D-E\mp i0)^{-1}\Pi.
$$
The maps $\mathcal F_v^\pm\Pi g(E)=\int g\overline{m_\pm(\cdot,E)}$,
initially defined for $g\in C_c^\infty$, extend to contractions
$\Hp\to\mathfrak h_+$, vanish on the negative spectral subspace,
and are unitary from $E_{L_v}((0,\infty))\Hp$ onto $\mathfrak h_+$,
intertwining $L_v$ with multiplication by $E$.
If real $v_n\to v$ in $H^\rho\cap L^1$, then, for every
$\chi\in C_c((0,\infty))$, the operators
$\chi\mathcal F_{v_n}^\pm$ and $(\mathcal F_{v_n}^\pm)^*\bar\chi$
converge strongly to $\chi\mathcal F_v^\pm$ and
$(\mathcal F_v^\pm)^*\bar\chi$, respectively. On every compact
$J\Subset(0,\infty)$ the Jost functions are uniformly bounded,
converge locally in $H^r$ for $-\rho<r<\frac12$, and their sources
$vm_\pm$ converge in $L^1$, uniformly in $E\in J$.
If also $v\in H^s$, $s>-\frac12$, set
$\mathcal R_\pm(E)=\int v\overline{m_\pm(\cdot,E)}$. Then
\begin{equation}\label{rad:eq-source-transform}
 \mathcal F_v^\pm[\psi(L_v)\Pi v]=\psi\mathcal R_\pm,
 \enspace \psi\in C_c((0,\infty)),
\end{equation}
and the measure in Lemma~\ref{rad:lem-source} has positive part
$(E+\lambda_0)^{2s}|\mathcal R_\pm(E)|^2\,dE/(2\pi)$.
\end{lemma}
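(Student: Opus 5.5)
The plan is to build the Jost functions by a Neumann-type argument on the Fourier side, then obtain the distorted Fourier transforms from a limiting absorption principle and Stone's formula. For fixed $E>0$ we write the integral equation $m=e^{iEx}+G_E^\pm(vm)$ as $(I-G_E^\pm M_v)(m-e^{iEx})=G_E^\pm(ve^{iEx})$ in $L^\infty$. Since $v\in L^1$, the source $vm$ lies in $L^1$, so $\widehat{vm}$ is bounded and continuous. The kernel of $G_E^\pm$ on $\Hp$ is then explicit:
\[
G_E^\pm g(x)=\frac1{2\pi}\int_0^\infty\frac{e^{ix\xi}\widehat g(\xi)}{\xi-E\mp i0}\,d\xi .
\]
By a Plemelj computation this is $\pm i e^{iEx}\widehat g(E)$ plus a principal-value part with bounded kernel in $x$. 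Consequently $G_E^\pm M_v$ is compact on $C_b$ modulo a small part; to make this precise we split $v=z+w$ with $z$ compactly supported and $w$ small in $L^1$, and we work on weighted spaces locally in $H^r$, $-\rho<r<\frac12$, which control $vm$ by Sobolev duality for $v\in H^\rho$. Fredholm theory then reduces existence and uniqueness to triviality of the kernel. A bounded solution of $m=G_E^\pm(vm)$ would give an embedded eigenvalue or resonance at $E>0$. Pairing with $m$ and taking imaginary parts kills $\widehat{vm}(E)$, after which a further bootstrap gives $m\in\Hp\cap\Dom L_v$ with $L_vm=Em$. This is excluded by absence of positive eigenvalues: $\sigma_{\rm ess}=[0,\infty)$ with purely absolutely continuous positive part, which we take from \cite[Theorem~2.3]{BKV26} and Wu \cite{WuDirect}. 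Uniformity on compact $J\Subset(0,\infty)$, and continuity in $E$, follow from continuity of the Fredholm operator in $E$.

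Next we establish the spectral representation. Stone's formula gives
\[
\langle E_{L_v}(d E)\Pi g,\Pi g\rangle=\frac1{2\pi i}\,\langle[(L_v-E-i0)^{-1}-(L_v-E+i0)^{-1}]\Pi g,\Pi g\rangle\,dE .
\]
The resolvent identity $(L_v-z)^{-1}=(I-G_zM_v)^{-1}G_z$ expresses the boundary values through the Jost solutions, and yields the density $|\mathcal F_v^\pm\Pi g(E)|^2/(2\pi)$ on $(0,\infty)$. This is the standard Ikebe argument, and it shows that $\mathcal F_v^\pm$ is an isometry on $E_{L_v}((0,\infty))\Hp$ and is zero on the negative and zero spectral subspaces. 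Here $\Ker L_v$ carries no absolutely continuous weight, and the negative spectrum is discrete. The intertwining property follows since each $m_\pm(\cdot,E)$ is a generalized eigenfunction of $L_v$. For surjectivity we show that the adjoint is injective: $(\mathcal F_v^\pm)^*h=0$ on compactly supported $h$ forces $h=0$, using completeness of the absolutely continuous part together with the equality between the spectral measure of $L_v$ on $(0,\infty)$ and Lebesgue measure/$2\pi$. This last point should follow from multiplicity one, which we obtain by comparing with the free case through the wave operators.

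For stability we take $v_n\to v$ in $H^\rho\cap L^1$. The operators $G_E^\pm M_{v_n}$ converge in norm on the fixed weighted space, uniformly for $E\in J$, and the inverses converge by the uniform invertibility established in the first step. This gives convergence of $m_\pm$ locally in $H^r$, and convergence of the sources $v_nm_\pm$ in $L^1$, uniformly in $E\in J$. Strong convergence of $\chi\mathcal F_{v_n}^\pm$ then holds on $C_c^\infty$ data by dominated convergence, and extends by density together with the uniform contraction bound. The adjoints converge strongly because $\bar\chi$ localizes to compact $E$-ranges where the Jost functions converge. Finally, \eqref{rad:eq-source-transform} follows from the intertwining property applied to $\Pi v$: formally $\mathcal F_v^\pm\Pi v=\mathcal R_\pm$, made rigorous by approximating $v$ by $C_c^\infty$ functions and using $\psi(L_v)$ to localize away from $E=0$ and $E=\infty$. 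The density of $\nu_s$ is then $(E+\lambda_0)^{2s}|\mathcal R_\pm|^2dE/(2\pi)$, because $A_t^s$ acts as multiplication by $(E+\lambda_0)^s$ under $\mathcal F_v^\pm$.

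The main obstacle is the first step. Working with $v\in H^\rho$ of negative regularity means $vm$ is defined only by duality, so the Fredholm setup and the exclusion of positive embedded eigenvalues and resonances have to be carried out in the rough space. To handle this we first prove every statement for smooth integrable $v$. We then pass to rough $v$ by the uniform bounds of the stability step, showing that invertibility of $I-G_E^\pm M_v$ persists because any kernel element would produce an $L^2$ eigenfunction in $Y=H^{1+\rho}_+$, via the same bootstrap as in Lemma~\ref{rad:lem-coordinate}.
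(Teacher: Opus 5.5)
\textbf{There is a genuine gap at the step you yourself flag as the main obstacle}: excluding nonzero bounded solutions of $m=G_E^\pm(vm)$.

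Getting $\widehat{vm}(E)=0$ is right in spirit, but the pairing needs care. It has to be done after bootstrapping $vm$ into $H^\rho\cap L^1$. It also requires splitting $G_E^\pm$ into the full-line Volterra kernel $\pm ie^{iEx}\mathbf 1_{\pm x>0}$ plus a correction $K_E$, whose multiplier $-\mathbf 1_{\xi<0}/(\xi-E)$ is real and regular near $\xi=E$. A principal value against the merely continuous $\widehat{vm}$, with $vm\in L^1$ only, is not defined.

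The real problem is the next sentence. ``A further bootstrap gives $m\in\Hp\cap\Dom L_v$'' does not follow. The regularity bootstrap of Lemma~\ref{rad:lem-coordinate} starts from an $L^2$ eigenfunction and cannot create spatial decay. Since $v$ is only in $L^1$, $\widehat{vm}$ is only continuous, so $\widehat m=\mathbf 1_{\xi>0}\widehat{vm}/(\xi-E)$ need not be square integrable near $E$. In real space, the vanishing integral only gives $m(x)=-ie^{iEx}\int_x^\infty e^{-iEy}vm\,dy+(K_E*(vm))(x)$ as $x\to+\infty$, with no rate.

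Smoothness does not help either. Potentials behaving like $1/(x\log^2x)$, as in Proposition~\ref{rad:prop-example}, are smooth and integrable but have no first moment. So your plan of treating smooth $v$ first and then passing to rough $v$ has the same hole. Moreover, the stability step you would use to pass to the limit presupposes the uniform invertibility that is being proved.

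The missing idea in the paper is a weighted fixed-point argument:
\begin{itemize}
\item Use $\int p=0$ for $p=e^{-iEx}vm$ to rewrite $h=e^{-iEx}m$ as $h=C_E(vh)$, with $C_E=A+k_E*$. Here $A$ uses outward integrals: $i\int_{-\infty}^x$ on the left and $-i\int_x^\infty$ on the right.
\item Split $v=v_0+v_1$ with $v_0\in C_c^\infty$ and $v_1$ small in $H^\rho\cap L^1$.
\item Show $C_EM_{v_1}$ is a contraction both on $\mathcal X_0=H^{1+\rho}_{\rm uloc}$ and on $\mathcal X_1=\{h:xh\in\mathcal X_0\}$. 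The key estimate is $|x\int_x^\infty v_1h\,dy|\le\|v_1\|_1\sup_y|yh(y)|$, since $|x/y|\le1$ on the exterior.
\item Note that $C_E(v_0h)\in\mathcal X_1$.
\item Uniqueness of the Neumann solution in $\mathcal X_0$ then forces $m=O(\langle x\rangle^{-1})\in L^2$.
\end{itemize}
Only after this can you bootstrap to $H^{1+\rho}_+$ and invoke absence of positive eigenvalues.

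Two lesser points:
\begin{itemize}
\item Your surjectivity argument, via multiplicity one and wave operators, relies on existence and completeness of wave operators. You do not establish these, and they are not available off the shelf for $v\in H^\rho\cap L^1$. The paper's route is more elementary. By intertwining, the closed range of the isometry is invariant under bounded multipliers in $E$, hence equals $\mathfrak h(S)$ for some measurable $S$. Each Jost function has unit incoming amplitude, so testing against a countable dense family of $g$ forces $S$ to have full measure.
\item $E\mapsto G_E^\pm M_v$ is only strongly, not norm, continuous on these $L^\infty$-based spaces, because the Volterra kernel $e^{iE(x-y)}$ varies non-uniformly for large $|x-y|$. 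Uniform invertibility on $J$ therefore needs collective compactness, not ``continuity of the Fredholm operator''.
\end{itemize}
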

\begin{proof}
The free kernels satisfy
$$
 G_E^+(x)=ie^{iEx}\mathbf1_{x>0}+K_E(x),\enspace
 G_E^-(x)=-ie^{iEx}\mathbf1_{x<0}+K_E(x),
$$
where $K_E$ has multiplier $-\mathbf1_{\xi<0}/(\xi-E)$ and
$|K_E(x)|\lesssim_J1+\log_+(1/|x|)$, $|xK_E(x)|\lesssim_J1$.
Let $m=G_E^\pm(vm)$ be bounded.
Since $vm\in L^1$, the free decomposition gives
$m\in H^r_{\rm uloc}$ for every $r<\frac12$.
Choose $-\rho<r<\frac12$. Localized multiplication first gives
$vm\in H^{\rho+r-1/2-\varepsilon}$ with exponent greater than
$-\frac12$, and hence $m\in H^{1/2}_{\rm uloc}$.
A second product gives $vm\in H^{\rho-\delta_0}$, where
$0<\delta_0<\min\{\rho+\frac12,1+2\rho\}$.
Thus $m\in H^{1+\rho-\delta_0}_{\rm uloc}$ multiplies $H^\rho$
boundedly. Another use of the equation yields
$p=e^{-iEx}vm\in H^\rho\cap L^1$ and $h=e^{-iEx}m\in\mathcal X_0:=H^{1+\rho}_{\rm uloc}$.
Put $k_E=e^{-iEx}K_E$. Its real multiplier has order minus one,
so $\int\overline p(k_E*p)$ is real; the pairing is justified by
$1+2\rho>0$. For the plus sign,
$H=h-k_E*p=i\int_{-\infty}^xp$ satisfies
$(|H|^2)'=-2\operatorname{Im}(\overline p(k_E*p))$ because $v$ is real.
Integration gives $H(+\infty)=0$ and $\int p=0$; the minus sign
gives the same conclusion.

Choose smooth $\zeta_-+\zeta_+=1$, equal to one on the respective
left and right exteriors, and put
$$
 Ap=i\zeta_-\int_{-\infty}^xp-i\zeta_+\int_x^\infty p,
 \enspace C_Ep=Ap+k_E*p,
 \enspace\mathcal X_1=\{h\in\mathcal X_0:xh\in\mathcal X_0\}.
$$
The zero integral gives $h=C_E(vh)$. Write $v=v_0+v_1$ with
$v_0\in C_c^\infty$ and $\|v_1\|_{H^\rho}+\|v_1\|_1$ small.
Sobolev multiplication makes $C_EM_{v_1}$ a contraction on
$\mathcal X_0$. It is also a contraction on $\mathcal X_1$:
the outward integrals satisfy
$\|xA(v_1h)\|_\infty\lesssim\|v_1\|_1\|h\|_{\mathcal X_1}$
by $|x/y|\le1$ on each exterior; differentiation gives the
$H^{1+\rho}_{\rm uloc}$ bound. For the other term use
$$
 x(k_E*(v_1h))=k_E*(v_1xh)+(xk_E)*(v_1h),\enspace
 i\partial_\xi\!\left(-\frac{\mathbf1_{\xi<-E}}\xi\right)
 =-\frac iE\delta_{-E}+i\frac{\mathbf1_{\xi<-E}}{\xi^2}.
$$
The delta term is a bounded plane wave controlled by $\|v_1h\|_1$;
the remaining multiplier gains two derivatives. The compact source
$v_0h$ and its first moment give $C_E(v_0h)\in\mathcal X_1$.
The Neumann solution in $\mathcal X_1$ equals the given
$\mathcal X_0$ solution, so $m=O(\langle x\rangle^{-1})\in L^2$.
The equation $Dm=Em+\Pi(vm)$ then gives $m\in H^{1+\rho}_+$,
in the Lax form domain. It would be a positive eigenfunction,
which is excluded by \cite[Theorem~1.4]{BKV26}.

For the inhomogeneous equations choose
$$
 -\rho<r<\tfrac12,\enspace 0<\delta<\rho+r,\enspace
 \sigma=\rho+r-\tfrac12-\delta> -\tfrac12,\enspace
 \mathcal X=L^\infty\cap H^r_{\rm uloc},\enspace
 \mathcal Y=H^\sigma\cap L^1.
$$
Localized Sobolev multiplication gives
$\|vh\|_{\mathcal Y}\lesssim(\|v\|_{H^\rho}+\|v\|_1)\|h\|_{\mathcal X}$.
Mollification of the bounded factor identifies this Sobolev product
with the classical product. Approximation of $v$ by smooth compact
functions and local Rellich compactness make
$M_v:\mathcal X\to\mathcal Y$ compact.
On either closed half-strip adjacent to $J\Subset(0,\infty)$,
the free decomposition gives uniformly
$G_z:\mathcal Y\to L^\infty\cap H^{1+\sigma}_{\rm uloc}\to\mathcal X$.
Hence $S_z=M_vG_z$ is collectively compact on $\mathcal Y$.
For fixed sources the Volterra differences converge locally in
$H^1$, and the remaining multipliers converge in
$H^\sigma\to H^{1+\sigma}$ norm. Compact approximation of $v$
therefore gives strong continuity of $S_z$ up to the boundary.
The homogeneous argument proves injectivity of $I-S_E$;
for nonreal $z$, self-adjointness proves injectivity because
$G_zp\in H^{1+\sigma}_+$ lies in the form domain.
Fredholm theory gives the inverses. Their norms are uniform:
otherwise collective compactness would yield a unit vector in
the kernel of a limiting $I-S_z$. This argument also applies
uniformly to convergent $H^\rho\cap L^1$ families, since their
multiplication operators converge in operator norm.
The identity
$(I-G_zM_v)^{-1}=I+G_z(I-M_vG_z)^{-1}M_v$
gives the asserted continuity and bounds.

Set $q_\pm=(I-M_vG_E^\mp)^{-1}(ve^{iE\cdot})$ and
$m_\pm=e^{iE\cdot}+G_E^\mp q_\pm$. Bounded Jost candidates
belong to $\mathcal X$ by their $L^1$ sources, so uniqueness follows.
After modulation the multiplier term lies in
$H^{1+\sigma}\subset C_0$; the Volterra term gives the unit
incoming amplitude. The flux identity gives unit modulus at
the opposite endpoint.
For $g\in C_c^\infty$, put $p_z=(I-S_z)^{-1}g$.
The form identity gives
$G_zp_z=(L_v-z)^{-1}\Pi g$. Effective sources converge in
$\mathcal Y$ uniformly on $J$. Adjunction follows from absolute
Fubini for the bounded Volterra kernel and from the Sobolev
pairing $H^\sigma\times H^{1+\sigma}$ for the other term.
Consequently,
$\int e^{-iEx}p_{E\mp i0}=\int g\overline{m_\mp}$.
The free jump kernel tends boundedly to $ie^{iE(x-y)}$;
resolvent algebra and source convergence give uniformly on $J$
$$
 \lim_{\eta\to0}
 \left\langle\big[(L_v-E-i\eta)^{-1}-(L_v-E+i\eta)^{-1}\big]
 \Pi g,\Pi g\right\rangle
 =i\left|\int g\overline{m_\pm}\right|^2.
$$
Stone's formula and polarization yield
$\langle\psi(L_v)\Pi g,\Pi g\rangle
=\int_0^\infty\psi(E)|\mathcal F_v^\pm\Pi g(E)|^2\,dE/(2\pi)$.
Uniformity excludes singular positive spectrum. These identities
give the isometry and intertwining, so the closed range reduces
all scalar multiplication operators and equals $\mathfrak h(S)$
for a measurable set $S$. A countable dense family of compactly supported smooth
test functions detects every nonzero normalized Jost function outside
one null set; hence $S$ has full measure.
For $\chi\in C_c((0,\infty))$, local Jost convergence gives weak
convergence of $\chi\mathcal F_{v_n}^\pm$ on compactly supported smooth test functions
and hence on $\Hp$. The identity
$\|\chi\mathcal F_{v_n}^\pm q\|^2
=\langle|\chi|^2(L_{v_n})q,q\rangle$
and norm-resolvent convergence give convergence of norms, hence
strong convergence. The adjoints converge weakly and have norms
$\|(\mathcal F_{v_n}^\pm)^*\bar\chi h\|_2=\|\bar\chi h\|_{\mathfrak h_+}$
by surjectivity, so they also converge strongly.
Approximate $v$ in $H^s\cap H^\rho\cap L^1$ by real
smooth compact functions. Lemma~\ref{rad:lem-source} gives
convergence of the completed sources, while
$\mathcal R_{\pm,n}\to\mathcal R_\pm$ locally uniformly.
Applying the preceding convergence with $\chi=1$ on
$\operatorname{supp}\psi$ proves \eqref{rad:eq-source-transform}
and the density formula.
\end{proof}

\begin{lemma}\label{rad:lem-kernel}
Fix $J\Subset(0,\infty)$, $\gamma>0$, $t\ge1$, and set
$\epsilon=(2t)^{-1}$, $M_\epsilon f=e^{i\epsilon x^2/2}f$,
$K_{t,E}=2t(X^*-2tD+2tE-i\gamma)^{-1}\Pi$ and
$\widetilde K_{t,E}=M_\epsilon^{-1}K_{t,E}M_\epsilon$.
For $d=x-y$,
\begin{equation}\label{rad:eq-moving-kernel}
\begin{aligned}
 K_{t,E}(x,y)&=\frac i2\one_{d<0}e^{id^2/(4t)+i(E+\epsilon y)d+\gamma\epsilon d}\\
 &\enspace-\frac1{2\pi}\operatorname{p.v.}\int_0^\infty
 \frac{e^{i\eta^2/(4t)-i(E+\epsilon y)\eta-\gamma\epsilon\eta}}{\eta+d}\,d\eta,
\end{aligned}
\end{equation}
\begin{equation}\label{rad:eq-moving-bound}
 |K_{t,E}(x,y)|\le C_\gamma\left(1+
 \log_+\frac1{|x-y|(|E+\epsilon x|+\sqrt\epsilon)}\right).
\end{equation}
With fixed compact source and target cutoffs,
$\widetilde K_{t,E}\to-G_E^-$ in $\mathcal L(H^\sigma,H^{1+\sigma})$,
uniformly on $J$, for $-1/2<\sigma<0$.
Set $F_{t,E}(y)=\int_0^\infty e^{it\eta^2-i(2tE+y)\eta-\gamma\eta}\,d\eta$,
$B_{t,E}p=(i/(2\pi))\int F_{t,E}p$, $\Pi_\epsilon=M_\epsilon^{-1}\Pi M_\epsilon$.
Then
\begin{equation}\label{rad:eq-moving-odes}
\begin{aligned}
 \partial_y(M_\epsilon(y)K_{t,E}(x,y))
 &=-(iE+\gamma\epsilon)M_\epsilon(y)K_{t,E}(x,y)+iM_\epsilon(y)\Pi(x-y),\\
 (\partial_x-iE-\gamma\epsilon)\widetilde K_{t,E}p
 &=-i\Pi_\epsilon p+M_\epsilon^{-1}B_{t,E}(M_\epsilon p).
\end{aligned}
\end{equation}
Uniformly for $0<\epsilon\le1/2$,
$\|\Pi_\epsilon\|_{H^q\to H^q}\le C_q$ for $|q|<1/2$, and
$\sup_x\|M_\epsilon(y)\Pi(x-y)\|_{H^{a-1}_y}\le C_a$ for $0<a<1/2$.
There is $h_{\epsilon,E}$ with
$(\partial_x-iE-\gamma\epsilon)h_{\epsilon,E}=M_\epsilon^{-1}$,
$\|\sqrt\epsilon h_{\epsilon,E}\|_{W^{1,\infty}}\le C_J$; also
$|B_{t,E}(M_\epsilon p)|\lesssim\sqrt\epsilon\|p\|_1$.

The function $\Psi_{t,E}(y)=\sqrt t\,e^{itE^2-i\pi/4}M_\epsilon(y)F_{t,E}(y)$
is uniformly bounded in $W^{1,\infty}$ and converges locally uniformly
to $\sqrt\pi e^{-\gamma E-iEy}$, uniformly on $J$.
\end{lemma}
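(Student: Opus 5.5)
The whole lemma follows from computing $K_{t,E}$ exactly on the Fourier side. Conjugation and the other assertions are then read off from \eqref{rad:eq-moving-kernel}. For $f\in\Hp$, the operator $X^*-2tD+2tE-i\gamma$ acts on $\widehat f$ as $i\partial_\xi-2t\xi+2tE-i\gamma$ on $(0,\infty)$, with no boundary condition at $\xi=0$. Since $\gamma>0$, this is maximal dissipative and invertible. I would solve the first-order ODE
\[
 i\partial_\xi\widehat g-(2t\xi-2tE+i\gamma)\widehat g=\widehat f
\]
by integrating from $+\infty$. The integrating factor is $e^{-i(t\xi^2-2tE\xi)+\gamma\xi}$, so $\widehat g(\xi)$ is an integral over $\zeta>\xi$, and the kernel in $(\xi,\zeta)$ is explicit. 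Passing back to physical space with $\widehat f(\zeta)=\int e^{-i\zeta y}f(y)\,dy$ and substituting $\eta=\zeta-\xi$, the $\xi$-integral produces two pieces. The first is a Fresnel/Gaussian piece. The second, after completing the square with $\epsilon=(2t)^{-1}$, splits as a full-line Gaussian integral, which gives the indicator term $\frac i2\one_{d<0}e^{id^2/(4t)+\dots}$ by an explicit Fresnel evaluation, plus a half-line Cauchy-type integral, which gives the p.v.\ term. I would check the constants against the free limit $t\to\infty$, where the kernel should reduce to that of $-G_E^-$ displayed in Lemma~\ref{rad:lem-static}.

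The ODEs \eqref{rad:eq-moving-odes} are then direct differentiations of \eqref{rad:eq-moving-kernel}. In $y$, differentiating the p.v.\ integral in $d$ and integrating by parts in $\eta$ reproduces the kernel together with the boundary term at $\eta=0$, which is $\Pi(x-y)$. In $x$, the same computation for the conjugated kernel leaves the boundary contribution $-i\Pi_\epsilon$ and a Gaussian remainder, which is exactly $F_{t,E}$, i.e.\ the rank-one operator $B_{t,E}$. The remaining assertions are handled as follows.
\begin{itemize}
\item $|B_{t,E}(M_\epsilon p)|\lesssim\sqrt\epsilon\|p\|_1$ follows from the van der Corput bound $|F_{t,E}|\lesssim t^{-1/2}$.
\item For $\Psi_{t,E}$, stationary phase at $\eta=E+\epsilon y$ gives the constant $\sqrt\pi e^{i\pi/4}e^{-itE^2}$ times $e^{-\gamma E-iEy}$ locally uniformly. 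The derivative bound comes from $\partial_y F=-i\int\eta(\cdots)$, treated in the same way.
\item $h_{\epsilon,E}$ is the variation-of-constants primitive of $e^{-i\epsilon x^2/2}$ against $e^{(iE+\gamma\epsilon)x}$, integrated from the side where the exponential decays. It is bounded by $\epsilon^{-1/2}$ by a Fresnel integral estimate.
\item $\Pi_\epsilon$ is bounded on $H^q$, $|q|<\frac12$, uniformly in $\epsilon$. It is $\Pi$ conjugated by a chirp, so I would write it as the Cauchy singular integral with kernel $e^{i\epsilon(y^2-x^2)/2}/(x-y)$. This is a Calder\'on–Zygmund operator whose commutator part is uniformly bounded on $L^2$, and for $|q|<\frac12$ the $H^q$ bound follows by interpolating against weights in the $A_2$ range. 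The bound on $M_\epsilon(y)\Pi(x-y)$ in $H^{a-1}_y$ is the statement that the localized Cauchy kernel lies in $H^{a-1}$ for $a<\frac12$, uniformly under chirp modulation.
\end{itemize}

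For the pointwise bound \eqref{rad:eq-moving-bound}, the indicator term is bounded, since $\gamma\epsilon d$ appears with $d<0$. For the p.v.\ term I would split at $\eta\sim(|E+\epsilon x|+\sqrt\epsilon)^{-1}$. Below this scale the phase derivative is small relative to $1/\eta$, so the integral behaves like $\log(1/|d|\cdot)$. Above it, integration by parts or van der Corput with the quadratic phase gives $O(1)$. I expect this step to be the main obstacle: the oscillatory analysis must be uniform near the stationary point $\eta=E+\epsilon y$, and the estimate must be expressed through $x$ rather than $y$.

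Finally, to get $\widetilde K_{t,E}\to-G_E^-$ in $\mathcal L(H^\sigma,H^{1+\sigma})$ with compact cutoffs, I would write $\widetilde K_{t,E}+G_E^-$ via \eqref{rad:eq-moving-odes}. It solves $(\partial_x-iE)w=O(\epsilon)\,\widetilde K p-i(\Pi_\epsilon-\Pi)p+M_\epsilon^{-1}B_{t,E}(M_\epsilon p)$. On compact sets each right-hand term tends to zero in $H^\sigma$: for $\Pi_\epsilon-\Pi$ the chirp tends to $1$ in $C^\infty_{\rm loc}$ and the uniform bound handles the tails. Inverting $\partial_x-iE$ locally then gains one derivative, and the pointwise bound supplies the needed control at the cutoffs.
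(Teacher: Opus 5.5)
Your derivation of the kernel, of \eqref{rad:eq-moving-odes}, of the bounds on $B_{t,E}$ and $h_{\epsilon,E}$, and of the limit of $\Psi_{t,E}$ follows the paper. Your splitting for \eqref{rad:eq-moving-bound} is also essentially the paper's, provided you split in the distance $\eta+d$ to the singularity rather than in $\eta$. The substitution $\eta+d=\sqrt t\,w$ turns the phase into $w^2/4-Aw$ with $A=\sqrt t(E+\epsilon x)$, and this is what puts $x$ rather than $y$ into the bound. One correction to the kernel computation: after $\zeta=\xi+\eta$ the $\xi$-phase is linear, $\int_0^\infty e^{i\xi(d+2t\eta)}\,d\xi=i/(d+2t\eta+i0)$. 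The indicator term is therefore the $-i\pi\delta$ part of Plemelj's formula, not a Fresnel evaluation.

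\textbf{The local norm limit.} This is the genuine gap, in the step $\widetilde K_{t,E}\to-G_E^-$. On a bounded interval, $(\partial_x-iE)w=g$ determines $w=(\widetilde K_{t,E}+G_E^-)p$ only up to a term $c\,e^{iEx}$. A uniform bound on the kernel keeps $c$ bounded, but nothing in your argument forces $c\to0$. You therefore need a direct proof that the kernel itself converges locally, and that is where the difficulty sits. The phase $\eta^2/(4t)-(E+\epsilon y)\eta$ in the p.v.\ integral has a stationary point at $\eta\approx2t(E+\epsilon y)$, far out in the tail, and it contributes $O(t^{-1/2})$. The paper proceeds as follows:
\begin{enumerate}
\item It subtracts the limiting numerator $e^{-iE\eta}$; on bounded $\eta$-intervals the difference is $\eta\,O(t^{-1})$.
\item It uses $\eta/(\eta+d)=1-d/(\eta+d)$ to reduce to smooth kernels, $d\log|d|$ and $d_-$.
\item It bounds first source and target derivatives by Schur's test.
\item It splits the tail beyond $L$ by stationary phase, obtaining $C/L+Ct^{-1/2}$, and then interpolates.
\end{enumerate}
Once this kernel convergence, together with the logarithmic majorant, is in hand, your ODE argument could be completed. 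It cannot replace that convergence.

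\textbf{The uniform Sobolev estimates.} These rest on the wrong mechanism. The physical kernel $e^{i\epsilon(y^2-x^2)/2}/(x-y)$ has $\partial_x$ containing $\epsilon x/(x-y)$, so it is not a Calder\'on--Zygmund kernel with uniform constants. Physical-space $A_2$ weights commute with the chirp and give weighted $L^2$ bounds, not $H^q$ bounds, so they cannot produce the condition $|q|<\frac12$. That condition comes from the Fourier side:
\begin{enumerate}
\item $\Pi_\epsilon=\one_{[0,\infty)}(D+\epsilon x)$, and $D+\epsilon x$ acts on $\widehat f$ as $\xi+i\epsilon\partial_\xi$.
\item Conjugation by $e^{i\xi^2/(2\epsilon)}$ turns $\xi+i\epsilon\partial_\xi$ into $i\epsilon\partial_\xi$. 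So $\Pi_\epsilon$ is a Riesz projection in $\xi$, conjugated by a unimodular factor that commutes with the weight $\langle\xi\rangle^{2q}$.
\item The bound is then exactly $\langle\xi\rangle^{2q}\in A_2$. For $|q|\ge\frac12$ it fails for every $\epsilon>0$.
\end{enumerate}

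\textbf{The kernel $M_\epsilon(y)\Pi(x-y)$ in $H^{a-1}_y$.} Chirp modulation is not even bounded on $H^{a-1}$, so ``uniformly under chirp modulation'' is not a valid principle. The bound holds because the $y$-Fourier transform of this kernel is a truncated Fresnel integral $\int_\beta^\infty e^{-is^2/2}\,ds$, which is bounded uniformly in $\beta$.

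\textbf{The $W^{1,\infty}$ bound on $\Psi_{t,E}$.} Since $\partial_yM_\epsilon=i\epsilon yM_\epsilon$ is unbounded on $\R$, applying van der Corput to $\partial_yF_{t,E}=-i\int\eta(\cdots)$ leaves $\sqrt t\,\epsilon|y||F_{t,E}|$ uncontrolled. The missing ingredient is the exact identity $F_{t,E}'=\epsilon-(iE+i\epsilon y+\gamma\epsilon)F_{t,E}$, obtained by integrating by parts in $\eta$. It gives $(M_\epsilon F_{t,E})'=M_\epsilon\bigl(\epsilon-(iE+\gamma\epsilon)F_{t,E}\bigr)$, which is bounded.
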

\begin{proof}
The Fourier resolvent
$\widehat{(X^*-2tD-z)^{-1}f}(\xi)
=i e^{-it\xi^2-iz\xi}\int_\xi^\infty e^{it\eta^2+iz\eta}\widehat f(\eta)\,d\eta$
gives \eqref{rad:eq-moving-kernel} by inversion with Abel damping.
Put $\eta+d=\sqrt t\,w$, $A=\sqrt t(E+\epsilon x)$.
Subtract the numerator at zero on $|w|<(1+|A|)^{-1}$; its constant
part gives the logarithm in \eqref{rad:eq-moving-bound}.
Van der Corput near $2A$ and integration by parts on the remaining
$|w|\asymp R$ give respectively $C/|A|$ and $C/[R(|A|+R)]$.
For the local norm limit, subtract the limiting numerator: on bounded
intervals the difference is $\eta O(t^{-1})$ with all fixed derivatives.
Using $\eta/(\eta+d)=1-d/(\eta+d)$ reduces it to smooth kernels,
$d\log|d|$ and $d_-$; Schur's test bounds first source and target
derivatives. Beyond $L$, stationary-phase splitting gives
$C/L+Ct^{-1/2}$. Interpolation proves the asserted limit.

Differentiation gives \eqref{rad:eq-moving-odes}. The kernel $M_\epsilon(y)\Pi(x-y)$
has Fourier transform bounded by a constant and the Fresnel integral
$\int_0^{b/\sqrt\epsilon}e^{-is^2/2}ds$, giving its Sobolev bound.
Conjugating $\xi+i\epsilon\partial_\xi$ by
$e^{i\xi^2/(2\epsilon)}$ and applying the weighted Hilbert transform
bound gives the projection estimate. Van der Corput and
$F_{t,E}'=\epsilon-(iE+i\epsilon y+\gamma\epsilon)F_{t,E}$ give the
trace bounds. Use
$h_{\epsilon,E}(x)=-\int_0^\infty
 e^{-(iE+\gamma\epsilon)s-i\epsilon(x+s)^2/2}\,ds$
for the inhomogeneous equation; stationary phase at
$\eta=E+\epsilon y$ gives the limit of $\Psi_{t,E}$.
\end{proof}

\begin{lemma}\label{rad:lem-moving}
Fix $J\Subset(0,\infty)$ and $\gamma>0$.
Let $-\frac12<\rho<0$, $-\rho<r<\frac12$ and
$\sigma=\rho+r-\frac12-\delta>-\frac12$, where $\delta>0$.
Set $\mathcal X=L^\infty\cap H^r_{\rm uloc}$,
$\mathcal Y=H^\sigma\cap L^1$, with sum norms, and
$\mathcal Y_R=\{p\in\mathcal Y:\operatorname{supp}p\subset[-R,R]\}$.
For $c\in H^\rho\cap L^1$, multiplication by $c$ satisfies
$$
 \|M_c\|_{\mathcal X\to\mathcal Y}
 \le C(\|c\|_{H^\rho}+\|c\|_1),\enspace
 \int_J\|\widetilde K_{t,E}M_c\|_{\mathcal X\to\mathcal X}^2dE
 \le C(\|c\|_{H^\rho}+\|c\|_1)^2
$$
uniformly for $t\ge1$. Moreover,
$$
 \|\widetilde K_{t,E}\|_{\mathcal Y_R\to\mathcal X}
 +\|G_E^-\|_{\mathcal Y\to\mathcal X}\le C_J,
 \enspace E\in J,\enspace t\ge C_J(1+R),
$$
with $C_J$ independent of $R\ge1$.
For real $u\in H^\rho\cap L^1$, all sufficiently close real
$c\in C_c^\infty$ satisfy, for sufficiently large $t$ depending on $c$,
$\sup_{E\in J}\|(I+\widetilde K_{t,E}M_c)^{-1}\|_{ \mathcal X\to\mathcal X}\le C_u$,
where $C_u$ is independent of the chosen approximation and its support.
\end{lemma}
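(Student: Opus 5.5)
The lemma has three parts, and I would prove them in order: the multiplication bound, the uniform mapping bounds for $\widetilde K_{t,E}$ and $G_E^-$, and then the square-integrated bound together with the uniform invertibility.

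\textbf{Multiplication bound.} This is the localized Sobolev product estimate already used in the proof of Lemma~\ref{rad:lem-static}. On each unit cell, write $\chi_j c\,h$ with $h\in H^r_{\rm uloc}$ and $c\in H^\rho$. The product lies in $H^{\rho+r-1/2-\delta}=H^\sigma$, because $\rho+r>0$. Summing over the almost orthogonal partition gives the global $H^\sigma$ norm. The $L^1$ part is immediate from $\|ch\|_1\le\|c\|_1\|h\|_\infty$.

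\textbf{Mapping bounds.} For $G_E^-$ I use the free decomposition $G_E^-(x)=-ie^{iEx}\mathbf 1_{x<0}+K_E(x)$.
\begin{itemize}
\item The Volterra part maps $L^1\to L^\infty$ and, after one derivative, lands in $H^{1}_{\rm loc}$ modulo a plane wave.
\item The multiplier part maps $H^\sigma\to H^{1+\sigma}\subset L^\infty\cap H^r_{\rm uloc}$, since $1+\sigma>1/2\ge r$.
\end{itemize}
For $\widetilde K_{t,E}$ on $\mathcal Y_R$ I use the explicit kernel \eqref{rad:eq-moving-kernel}. The $L^\infty$ bound comes from the logarithmic bound \eqref{rad:eq-moving-bound}. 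When $t\ge C_J(1+R)$ and $|y|\le R$, we have $|E+\epsilon y|\ge E/2$, so the logarithm is integrable against $L^1$ data only up to a local $H^\sigma$ correction. I treat that correction with the norm limit $\widetilde K_{t,E}\to-G_E^-$ on compact cutoffs. Regularity in $H^r_{\rm uloc}$ comes from the first ODE in \eqref{rad:eq-moving-odes}: $(\partial_x-iE-\gamma\epsilon)\widetilde Kp$ equals $-i\Pi_\epsilon p$ plus a bounded trace term. The uniform bounds on $\Pi_\epsilon$ on $H^\sigma$ and $|B_{t,E}(M_\epsilon p)|\lesssim\sqrt\epsilon\|p\|_1$, combined with the primitive $h_{\epsilon,E}$, then give $H^{1+\sigma}_{\rm uloc}$ control uniformly in $t$.

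\textbf{Square-integrated bound.} Here $c$ is not compactly supported. The factor $|E+\epsilon x|$ in \eqref{rad:eq-moving-bound} degenerates near the moving point $x=-2tE$. I would integrate in $E$ first: for fixed $x,y$, the set of $E\in J$ where $|E+\epsilon x|<\tau$ has measure $\lesssim\tau$. Hence $\int_J\log_+^2(\cdots)\,dE$ is bounded by $C(1+\log_+(1/|x-y|))^2$, uniformly in $t$. Minkowski's inequality in $L^2(J)$, applied to $\|\widetilde K_{t,E}(cf)\|_\infty$, then gives the $L^\infty$ part of the square-integrated bound from $\|c\|_1\|f\|_\infty$ plus the local $H^\rho$ pairing. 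The $H^r_{\rm uloc}$ part follows from the same ODE argument as before.

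\textbf{Uniform invertibility.} This is the main obstacle. I would pass to a limit along the following route.
\begin{enumerate}
\item Fix $c$ close to $u$ in $H^\rho\cap L^1$ with support in $[-R,R]$.
\item For $t\ge C_J(1+R)$, $M_c$ maps into $\mathcal Y_R$, so $\widetilde K_{t,E}M_c\to-G_E^-M_c$ in norm on $\mathcal X$. This uses the compact-cutoff norm convergence from Lemma~\ref{rad:lem-kernel}, together with the fact that outside a large window the outputs are controlled by the uniform $\mathcal Y_R\to\mathcal X$ bound and the decay of the Volterra tail.
\item Lemma~\ref{rad:lem-static} gives $\sup_{E\in J}\|(I-G_E^-M_u)^{-1}\|\le C$, and this bound is stable under operator-norm perturbation of $M_u$ to $M_c$ with $\|c-u\|_{H^\rho\cap L^1}$ small.
\item A Neumann perturbation then yields the bound $C_u$ for $(I+\widetilde K_{t,E}M_c)^{-1}$ once $t$ is large depending on $c$.
\end{enumerate}
The delicate point is that the convergence in step 2 must hold on all of $\mathcal X$, not only on compact targets. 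I expect to handle this with the ODE representation: away from the support of $c$, both $\widetilde K_{t,E}(cf)$ and $G_E^-(cf)$ are plane waves times primitives, and their difference is $O(\sqrt\epsilon\,\|c\|_1\|f\|_\infty)+O(R\epsilon)$.
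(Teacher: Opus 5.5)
Your multiplication bound and static mapping bounds are fine. The invertibility step, however, has a genuine gap. Your Neumann argument needs $\widetilde K_{t,E}M_c\to-G_E^-M_c$ in $\mathcal L(\mathcal X)$, and this is false, because the $\mathcal X$ norm takes a supremum over all of $\R$. Take a source $p=cf$ with $\widehat p(E)\ne0$. The static output $-G_E^-p$ keeps the undamped tail $ie^{iEx}\widehat p(E)$ as $x\to-\infty$. The conjugated moving kernel instead carries the factor $e^{\gamma\epsilon(x-y)}$ on $x<y$, which is bounded away from $1$ once $y-x\gtrsim t$. In addition, the trace term in \eqref{rad:eq-moving-odes} integrates to $B_{t,E}(M_\epsilon p)\,h_{\epsilon,E}$. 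Since $|h_{\epsilon,E}|\sim\epsilon^{-1/2}$ beyond the stationary point $x\approx-2tE$, this contribution is of order one: it is bounded by $C\|p\|_1$, not by $C\sqrt\epsilon\|p\|_1$. Hence the difference does not tend to zero in $L^\infty(\R)$ even for a fixed $f$. Your estimate $O(\sqrt\epsilon\|c\|_1\|f\|_\infty)+O(R\epsilon)$ holds only within distance $O(R)$ of $\operatorname{supp}c$.

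The missing idea is that the far field never feeds back into the equation, since $M_c$ only reads values on $[-R,R]$. The paper therefore inverts a reduced operator on $\mathcal Y_R$. Put $B=M_c:\mathcal X\to\mathcal Y_R$, $A_E=-G_E^-|_{\mathcal Y_R}$ and $A_{t,E}=\widetilde K_{t,E}|_{\mathcal Y_R}$. The identity $(I+BA_E)^{-1}=I-B(I+A_EB)^{-1}A_E$ bounds the static reduced inverse independently of $R$ and $c$. The operator $B(A_{t,E}-A_E)$ involves only sources and targets in $[-R,R]$, so the compact-cutoff limit of Lemma~\ref{rad:lem-kernel} makes it small in $\mathcal L(\mathcal Y_R)$, uniformly on $J$, for large $t$. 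A Neumann series then bounds $(I+BA_{t,E})^{-1}$. Finally, $(I+A_{t,E}B)^{-1}=I-A_{t,E}(I+BA_{t,E})^{-1}B$ returns to $\mathcal X$. This last step is where the $R$-independent bound $\|\widetilde K_{t,E}\|_{\mathcal Y_R\to\mathcal X}\le C_J$ enters, and it is why $C_u$ does not depend on the support of $c$.

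Your square-integrated bound has a second gap: the supremum over the target and the $E$-integral are in the wrong order. Integrating \eqref{rad:eq-moving-bound} in $E$ at fixed $(x,y)$ and then applying Minkowski controls $\sup_x\|(\widetilde K_{t,E}M_cf)(x)\|_{L^2(J,dE)}$. The lemma instead needs an $E$-wise majorant of $\|\widetilde K_{t,E}M_c\|_{\mathcal X\to\mathcal X}$; this is how it is used through the bad set $\mathcal B_{n,t}$ in Lemma~\ref{rad:lem-window}. For every $E$, that supremum contains the degenerate point $x=-2tE$.

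The paper moves the degeneration off the target, using $a_E(x)=|E+\epsilon x|+\sqrt\epsilon$ and $H_E=\log_+(1/a_E)$.
\begin{itemize}
\item Far from the diagonal: if $|x-y|\ge1$ and $|x-y|a_E(x)<1$, then $a_E(y)\le2a_E(x)$. So the far kernel is at most $C(1+H_E(y))$, and Minkowski in $y$ applies.
\item Near the diagonal: the row bound $\|\eta(x-\cdot)M_\epsilon K_{t,E}(x,\cdot)\|_{H^{-\sigma}}\le C(1+H_E(x))$, with $\eta$ a fixed cutoff supported in $|x-y|\le2$, comes from the $y$-equation in \eqref{rad:eq-moving-odes}. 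Combined with the cell norms $N_j=\|\widetilde\chi_jc\|_{H^\rho}$ over enlarged unit cutoffs, it gives the majorant $\sup_jN_j(1+H_E(j))$.
\item Integration: the square of that majorant is at most $\sum_jN_j^2(1+H_E(j))^2$, and $\sup_y\int_JH_E(y)^2\,dE\le C$.
\end{itemize}

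The same direct row bound, not the compact-cutoff limit you invoke, is what gives $\|\widetilde K_{t,E}\|_{\mathcal Y_R\to\mathcal X}\le C_J$ with the explicit threshold $t\ge C_J(1+R)$. A limit statement only yields a threshold whose dependence on $R$ is uncontrolled.
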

\begin{proof}
Sobolev multiplication and localization give $\|ch\|_{H^\sigma}\le C\|c\|_{H^\rho}
\|h\|_{H^r_{\rm uloc}}$; the $L^1$ bound follows from $h\in L^\infty$.
Local mollification of $h$ with a common $L^\infty$ bound identifies
the Sobolev product with the classical product by dominated convergence.
Write $a_E(x)=|E+\epsilon x|+\sqrt\epsilon$ and
$H_E(x)=\log_+(1/a_E(x))$. If $|x-y|\ge1$ and
$|x-y|a_E(x)<1$, then
$a_E(y)\le a_E(x)+\epsilon/a_E(x)\le2a_E(x)$.
Thus the far part of \eqref{rad:eq-moving-bound} is bounded by
$C(1+H_E(y))$. Its contribution to the $L^\infty$ operator norm
is bounded by $C\|c\|_1+C\int H_E|c|$.

For a fixed smooth cutoff $\eta(x-y)$ supported on $|x-y|\le2$,
the source identity \eqref{rad:eq-moving-odes}, the Hardy row
bound and the local logarithmic $L^2$ bound give
$\|\eta(x-\cdot)M_\epsilon K_{t,E}(x,\cdot)\|_{H^{-\sigma}} \le C(1+H_E(x))$.
Take enlarged cutoffs $\widetilde\chi_j$ and
$A_j=\|\widetilde\chi_jc\|_{H^\rho}$. Sobolev duality bounds the
near contribution by $C\sup_j A_j(1+H_E(j))\|h\|_{\mathcal X}$.
Here $H_E(x)\le\log2+H_E(j)$ for $|x-j|\le1$,
$\sum_jA_j^2\le C\|c\|_{H^\rho}^2$, and
$\sup_y\int_JH_E(y)^2dE\le C$. Minkowski and
$\sup_j b_j^2\le\sum_jb_j^2$ prove the averaged $L^\infty$ bound.

For $p=ch$, subtract $B_{t,E}(M_\epsilon p)h_{\epsilon,E}$ from
$w=\widetilde K_{t,E}p$. This term has
$L^\infty\cap H^1_{\rm uloc}$ norm at most $C\|p\|_1$.
The remaining target equation and $r-1<\sigma$ give
$\|w\|_{H^r_{\rm uloc}}\le C(\|w\|_\infty+\|p\|_{H^\sigma}+\|p\|_1)$.
Consequently a measurable majorant of
$\|\widetilde K_{t,E}M_c\|_{\mathcal X\to\mathcal X}$ is
\begin{equation}\label{rad:eq-moving-majorant}
 Q_{c,t}(E)=C\left[\|c\|_{H^\rho}+\|c\|_1+
 \int H_E(y)|c(y)|\,dy+\sup_j A_j(1+H_E(j))\right],
\end{equation}
with $\|Q_{c,t}\|_{L^2(J)}\le C(\|c\|_{H^\rho}+\|c\|_1)$.
For a source supported in $[-R,R]$ and $t\ge C_J(1+R)$,
$|E+\epsilon y|$ is bounded below on its two-unit neighborhood.
The same duality estimate near the diagonal and bound away from it give
$\|\widetilde K_{t,E}p\|_{\mathcal X}\le C_J\|p\|_{\mathcal Y}$
with a constant independent of $R$. The static bound follows
from its bounded Volterra kernel on $L^1$ and its order $-1$
multiplier on $H^\sigma$.

Lemma~\ref{rad:lem-static} gives a uniform stationary inverse for $u$.
The product and static bounds give the same inverse bound for every
sufficiently close $c$. Put $B=M_c:\mathcal X\to\mathcal Y_R$,
$A_E=-G_E^-|_{\mathcal Y_R}$ and
$A_{t,E}=\widetilde K_{t,E}|_{\mathcal Y_R}$.
The identity $(I+BA_E)^{-1}=I-B(I+A_EB)^{-1}A_E$ bounds the
reduced inverse independently of $R$ and $c$.
The local kernel limit gives $B(A_{t,E}-A_E)\to0$ in
$\mathcal L(\mathcal Y_R)$, uniformly in $E$ for each fixed $c$.
The reduced moving inverse is uniformly bounded. The identity
$(I+A_{t,E}B)^{-1}=I-A_{t,E}(I+BA_{t,E})^{-1}B$
and the estimate for sources supported in $[-R,R]$ give the bound on $\mathcal X$.
\end{proof}

\begin{lemma}\label{rad:lem-window}
Let $u\in H^s(\R;\R)\cap L^1$, $s>-\frac12$, and let
$u_n\in C_c^\infty(\R;\R)$ converge to $u$ in $H^s\cap L^1$.
For $q\in\Hp$, $J\Subset(0,\infty)$, $\gamma>0$ and
$\varphi\in\mathfrak h(J)$,
\begin{equation}\label{rad:eq-window}
 \lim_{t\to+\infty}\limsup_{n\to\infty}
 \left|\left\langle
 \sqrt{4\pi t}\,e^{itE^2-i\pi/4}
 (\Omega_{u_n,t}q)(-2tE+i\gamma)
 -e^{-\gamma E}\mathcal F_u^-q,\varphi
 \right\rangle_{\mathfrak h(J)}\right|=0.
\end{equation}
Time reflection gives the past limit with $\mathcal F_u^+$.
\end{lemma}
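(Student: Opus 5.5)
We reduce \eqref{rad:eq-window} to a statement about fixed smooth data and a moving resolvent, and then pass to the limit with the stationary Jost theory of Lemma~\ref{rad:lem-static}. By Lemma~\ref{rad:lem-observation}, with $z=-2tE+i\gamma$,
$$
 (\Omega_{u_n,t}q)(z)=\frac1{2\pi i}I_+\bigl((X^*-2tL_{u_n}+2tE-i\gamma)^{-1}q\bigr).
$$
Since $X^*-2tL_{u_n}=X^*-2tD+2tT_{u_n}$, the second resolvent identity gives
$$
 (P_t-z)^{-1}=(2t)^{-1}K_{t,E}\bigl(I+K_{t,E}M_{u_n}\bigr)^{-1},
$$
applied to $q$ with Hardy projections absorbed, where $K_{t,E}$ is the kernel of Lemma~\ref{rad:lem-kernel}. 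The first reduction uses the contractivity of everything in sight. The map $q\mapsto(\Omega_{u_n,t}q)(-2tE+i\gamma)\sqrt{4\pi t}$ is uniformly bounded $\Hp\to\mathfrak h(J)$: the $E$-variable sweeps the horizontal line $\Ima z=\gamma$, $z=-2tE$, $dz=2t\,dE$, and Hardy $H^2$ boundedness applies to the unitary image. Likewise $\mathcal F_u^-$ is a contraction. Hence it suffices to prove the limit for $q=\Pi g$ with $g\in C_c^\infty$ and $\varphi$ continuous on $J$, and then use density. We also conjugate by $M_\epsilon$ and work with $\widetilde K_{t,E}$.

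The key steps, in order, are as follows.
\begin{enumerate}[(1)]
\item Express $I_+$ of the resolvent as a pairing against the trace function. By the first equation of \eqref{rad:eq-moving-odes} (the $y$-ODE), $I_+(K_{t,E}f)$ equals $\int F_{t,E}\,f$ up to the explicit constant, since $I_+\bigl((X^*-2tD-z)^{-1}f\bigr)=i\int_0^\infty e^{it\eta^2+iz\eta}\widehat f\,d\eta$. Thus
$$
 \sqrt{4\pi t}\,e^{itE^2-i\pi/4}(\Omega_{u_n,t}\Pi g)(z)
 =c_0\int\Psi_{t,E}(y)\,\overline{\ }\!\cdots\,
 \bigl[(I+M_{u_n}\widetilde K_{t,E})^{-1}\widetilde g\bigr](y)\,dy,
$$
with $\widetilde g=M_\epsilon^{-1}g$ and the source form of the inverse.
\item Control the inverse uniformly. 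Choose $c$ close to $u$ in $H^\rho\cap L^1$, where $\rho=\min(s,-\frac14)$ suffices since $H^s\cap L^1\subset H^\rho\cap L^1$. Lemma~\ref{rad:lem-moving} gives $\|(I+\widetilde K_{t,E}M_c)^{-1}\|_{\mathcal X\to\mathcal X}\le C_u$ for large $t$. The perturbation from $c$ to $u_n$ is handled by the averaged bound $\int_J\|\widetilde K_{t,E}M_{u_n-c}\|^2dE\le C\|u_n-c\|^2$. On the set of $E$ where this is at most $1/(2C_u)$, a Neumann series applies; the exceptional set has small measure, and on it the uniform Hardy bound of the reduction step controls the contribution. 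This gives an $n$-uniform estimate with error $o(1)$ as $c\to u$.
\item Pass to the limit for fixed $c$. Lemma~\ref{rad:lem-kernel} gives $\widetilde K_{t,E}\to-G_E^-$ on compactly supported sources, and $\Psi_{t,E}\to\sqrt\pi e^{-\gamma E-iEy}$ locally uniformly with uniform $W^{1,\infty}$ bounds. Resolvent identities as in the proof of Lemma~\ref{rad:lem-moving} give
$$
 (I+M_c\widetilde K_{t,E})^{-1}M_\epsilon^{-1}g\to(I-M_cG_E^-)^{-1}g
$$
in $\mathcal Y$, uniformly on $J$. The limit pairing against $e^{-iEy}$ is $\int g\overline{m_+^{(c)}}$ by the Jost construction $m_+=e^{iEx}+G_E^-(vm_+)$. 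For the sign labels, $G^-$ is the resolvent that yields $m_+$, and the statement writes $\mathcal F_u^-$, so the orientation must be checked against the convention.
\item Finally let $c\to u$, using the Jost convergence of Lemma~\ref{rad:lem-static}.
\end{enumerate}

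The main obstacle is step (3): the non-compactly supported tail of $M_\epsilon^{-1}g$ and of $\Psi_{t,E}$. The phase $e^{i\epsilon x^2/2}$ is not close to $1$ on $|x|\gtrsim\sqrt t$, and the source $u_n$ is not compact uniformly in $n$. To handle this we first pick $c$ compactly supported in $[-R,R]$ and take $t\ge C_J(1+R)$, so that the estimate $\|\widetilde K_{t,E}\|_{\mathcal Y_R\to\mathcal X}\le C_J$ is available. Then $\epsilon R^2\to0$, and the conjugations converge on $\operatorname{supp}c\cup\operatorname{supp}g$. The order of limits in \eqref{rad:eq-window}, namely $n\to\infty$ before $t\to\infty$, is exactly what permits replacing $u_n$ by such a $c$ at cost $o(1)$. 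The past limit follows by the time reflection $u(t,x)\mapsto u(-t,-x)$, which exchanges $G^\pm$ and hence $\mathcal F_u^\pm$.
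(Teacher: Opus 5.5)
Your outline follows the paper's proof. The shared steps are: reduction to $q=\Pi g$ by the uniform Hardy contraction; a compactly supported approximant $c$ with $t\ge C_J(1+R)$; the majorant \eqref{rad:eq-moving-majorant} to split $J$ into a good set and a bad set; the $\Psi_{t,E}$ pairing of the effective source; and finally $c\to u$ through the stationary inverses. On the good set you use a Neumann series around $(I+\widetilde K_{t,E}M_c)^{-1}$. On the bad set, of measure $O(\eta^2)$, you use the Hardy contraction and the contractivity of $\mathcal F_u^-$. Two points need repair.

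\textbf{The sign in step (3) is not a matter of convention, and your tentative identification is wrong.} The limiting source is $p_{c,E}=(I-M_cG_E^-)^{-1}g$. Since $c$ is real and $(G_E^-)^*=\Pi(D-E-i0)^{-1}=G_E^+$, adjunction gives
\[
 \int e^{-iEy}p_{c,E}(y)\,dy
 =\langle g,(I-G_E^+M_c)^{-1}e^{iE\cdot}\rangle
 =\int g\,\overline{m_-^{(c)}(\cdot,E)}
 =\mathcal F_c^-\Pi g(E).
\]
The pairing is justified by $\mathcal Y\times\mathcal X$ duality, as in Lemma~\ref{rad:lem-static}. The last equality holds because $m_-=e^{iEx}+G_E^+(cm_-)$. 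So the $G^-$ resolvent in the source produces $m_-$, not $m_+$. This is exactly the $\mathcal F_u^-$ in the statement, and it is what the paper calls stationary resolvent adjunction.

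\textbf{The identification of the true resolvent with the $\mathcal X$-solution is missing.} The $\mathcal X$-invertibility on the good set controls the actual observation only once you know two things about the true field
$h_{n,t,E}=M_\epsilon^{-1}2t(X^*-2tL_{u_n}+2tE-i\gamma)^{-1}\Pi g$. It must lie in $\mathcal X$, and it must solve
$(I+\widetilde K_{t,E}M_{u_n})h=\widetilde K_{t,E}M_\epsilon^{-1}g$ there. Without this, uniqueness in $\mathcal X$ cannot equate the Neumann solution with the $L^2$ resolvent. The paper obtains membership from two facts: the effective source is compactly supported $L^2$, because $u_n\in C_c^\infty$; and at fixed $t$ the kernel bound has a finite logarithmic coefficient.

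\textbf{Minor corrections.}
\begin{enumerate}[(a)]
\item The factorization should read $(P_t-z)^{-1}=(2t)^{-1}(I+K_{t,E}M_{u_n})^{-1}K_{t,E}$, not $(2t)^{-1}K_{t,E}(I+K_{t,E}M_{u_n})^{-1}$.
\item $M_\epsilon^{-1}g$ has no tail, since $g\in C_c^\infty$. The only non-compact object is $u_n$, whose support is uncontrolled relative to the fixed $t$ in $\limsup_n$. That is why the averaged comparison with $c$ is needed, rather than the $\mathcal Y_R$ bound applied to $u_n$ itself.
\end{enumerate}
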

\begin{proof}
Choose $-\frac12<\rho<\min(s,0)$ and $r,\sigma,\mathcal X,\mathcal Y$
as in Lemma~\ref{rad:lem-moving}. First take $q=\Pi g$ with
$g\in C_c^\infty$. Given small $\eta>0$, choose real
$c\in C_c^\infty$ with $\|u-c\|_{H^\rho}+\|u-c\|_1<\eta$.
Choose $R$ containing the supports of $c,g$, and write
$A_{t,E}=\widetilde K_{t,E}$, $g_t=M_\epsilon^{-1}g$.
For the potential $c$, set
$h_{c,t,E}=(I+A_{t,E}M_c)^{-1}A_{t,E}g_t, \enspace p_{c,t,E}=g_t-ch_{c,t,E}$.
Lemma~\ref{rad:lem-moving} bounds them in $\mathcal X$ and
$\mathcal Y_R$ by constants independent of the selected $c$.
The reduced equation and local kernel convergence give
$p_{c,t,E}\to p_{c,E}$ in $\mathcal Y$, uniformly on $J$, where
$p_{c,E}=g-c(I-G_E^-M_c)^{-1}(-G_E^-g)$.
The stationary inverse identity also gives
$\sup_E\|p_{c,E}-p_{u,E}\|_{\mathcal Y}\le C_g\eta$.

For large $n$, set $d_n=u_n-c$, so
$\|d_n\|_{H^\rho}+\|d_n\|_1\le2\eta$. Let $L$ bound $(I+A_{t,E}M_c)^{-1}$ and define
$\mathcal B_{n,t}=\{E\in J:Q_{d_n,t}(E)>(2L)^{-1}\}$,
$\mathcal G_{n,t}=J\setminus\mathcal B_{n,t}$, using
\eqref{rad:eq-moving-majorant}. Then $|\mathcal B_{n,t}|\le C\eta^2$
and $\|(I+A_{t,E}M_{u_n})^{-1}\|\le2L$ on $\mathcal G_{n,t}$.
The field
$h_{n,t,E}=M_\epsilon^{-1}2t (X^*-2tL_{u_n}+2tE-i\gamma)^{-1}\Pi g$
belongs to $\mathcal X$: its effective source is compactly supported
$L^2$, and for fixed $t$ the kernel bound has a finite logarithmic
coefficient. It satisfies
$(I+A_{t,E}M_{u_n})h_{n,t,E}=A_{t,E}g_t$.
Subtracting the equation for $h_{c,t,E}$ and using the averaged estimate yields
$$
 \int_{\mathcal G_{n,t}}
 \left(\|h_{n,t,E}-h_{c,t,E}\|_{\mathcal X}^2+
       \|p_{n,t,E}-p_{c,t,E}\|_{\mathcal Y}^2\right)dE
 \le C_g\eta^2,
 \enspace p_{n,t,E}=g_t-u_nh_{n,t,E}.
$$
All constants are independent of $c,n,t$ in these ranges.

Lemma~\ref{rad:lem-observation} and the exact free resolvent give
$$
 \sqrt{4\pi t}\,e^{itE^2-i\pi/4}
 (\Omega_{u_n,t}\Pi g)(-2tE+i\gamma)
 =\frac1{\sqrt\pi}\int\Psi_{t,E}(y)p_{n,t,E}(y)\,dy.
$$
The bounded trace kernel, its locally uniform limit, and convergence
of the compact source imply that this expression differs on
$\mathcal G_{n,t}$, in $L^2(J)$, by at most $C_g\eta+o_t(1)$ from
$e^{-\gamma E}\int e^{-iEy}p_{u,E}(y)\,dy
=e^{-\gamma E}\mathcal F_u^-q$; the last identity is stationary
resolvent adjunction. On $\mathcal B_{n,t}$, each term paired with
$\varphi$ is bounded by
$\|q\|_2\|\mathbf1_{\mathcal B_{n,t}}\varphi\|_{\mathfrak h(J)}$.
For the actual term this follows from the unitary observation
and Hardy evaluation; for the stationary term it follows from
Lemma~\ref{rad:lem-static}. Absolute continuity and
$|\mathcal B_{n,t}|\le C\eta^2$ make these bounds vanish as
$\eta\to0$. Taking $n\to\infty$, then $t\to\infty$, then
$\eta\to0$ proves \eqref{rad:eq-window} for $\Pi g$.
The same contraction bounds give the result for every $q\in\Hp$
by density. Reflection gives the past limit.
\end{proof}

\begin{lemma}\label{rad:lem-moments}
If real $v\in H^s$, $s>-1/2$, has $\widehat v\in L^\infty(-1,1)$,
then its negative eigenvalues $-a_j$ satisfy
$\sum_ja_j^\gamma<\infty$ for every $\gamma>0$.
\end{lemma}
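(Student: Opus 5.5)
Since the $a_j$ are bounded (they lie below $\lambda_0$ from Lemma~\ref{rad:lem-source}), it suffices to prove $\sum_j a_j^\gamma<\infty$ for small $\gamma\in(0,1)$. The tool is the small-eigenvalue counting function $N(\kappa)=\#\{j:a_j>\kappa\}$. It is enough to show $N(\kappa)\lesssim 1+\log(1/\kappa)$ as $\kappa\to0$, or even $N(\kappa)\lesssim_\epsilon\kappa^{-\epsilon}$ for every $\epsilon>0$. Abel summation then gives
$$
 \sum_ja_j^\gamma=\gamma\int_0^\infty \kappa^{\gamma-1}N(\kappa)\,d\kappa<\infty .
$$
The hypothesis $\widehat v\in L^\infty(-1,1)$ is what controls low frequencies, and small eigenvalues of the BO Lax operator are governed by low frequencies of the potential. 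This matches the Wu relation and the threshold behaviour $\Ima p_j=(2a_j)^{-1}$: eigenfunctions of small eigenvalues live at spatial scale $1/a_j$, that is, at Fourier scale $\lesssim a_j$.

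\textbf{Step 1: reduction to low frequencies.} Split $v=v_{\rm lo}+v_{\rm hi}$ with $\widehat{v_{\rm lo}}=\widehat v\,\mathbf1_{(-\delta,\delta)}$ for a fixed small $\delta\le1$. For $f\in H^{1/2}_+$ I would bound $|\int v_{\rm hi}|f|^2|\le\frac12\|D^{1/2}f\|_2^2+C_\delta\|f\|_2^2$ via the rough form bound of Lemma~\ref{rad:lem-source}. This is the delicate point, since it only gives a constant shift. Hence I will instead use the Birman--Schwinger principle. By Glazman's lemma, $N(\kappa)$ is at most the number of eigenvalues greater than $1$ of the compact operator
$$
 \mathcal K_\kappa=(D+\kappa)^{-1/2}\Pi M_v\Pi(D+\kappa)^{-1/2}
$$
on $\Hp$, provided this operator is bounded. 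For the high part, $(D+\kappa)^{-1/2}\Pi M_{v_{\rm hi}}\Pi(D+\kappa)^{-1/2}$ is bounded uniformly in $\kappa$, because $\Pi M_{v_{\rm hi}}\Pi$ maps low output frequencies $\xi<\delta/2$ only from inputs $\eta>\delta/2$. After a finite-rank correction its norm is at most $1/2$ for $\delta$ chosen appropriately. Ky Fan's inequality then reduces the counting to eigenvalues greater than $1/2$ of the low part, up to an additive constant.

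\textbf{Step 2: the low part as a Hankel/Toeplitz-type kernel.} On the Fourier side, $\mathcal K^{\rm lo}_\kappa$ has kernel
$$
 \frac{\widehat v(\xi-\eta)\,\mathbf1_{|\xi-\eta|<\delta}}{2\pi\sqrt{(\xi+\kappa)(\eta+\kappa)}},\qquad \xi,\eta>0,
$$
and $|\widehat v|\le M$ there. I would dominate it, in the Schatten sense, by the positive kernel $M/(2\pi\sqrt{(\xi+\kappa)(\eta+\kappa)})$ restricted to $\xi,\eta<2\delta$ plus a near-diagonal band piece. Pass to logarithmic variables $\xi=e^{x}$, so that the measure becomes $d\xi/\xi$ after unitary rescaling. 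The band-limited kernel becomes a kernel of convolution type of width $O(1)$ in $x$, living on the interval $x\in(\log\kappa,\log 2\delta)$ of length $\log(1/\kappa)$ and bounded by $M$ times an integrable function of $x-y$.

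The key estimate, which I expect to be the main obstacle, is that the number of singular values greater than $1/4$ of such a truncated convolution operator on an interval of length $\ell$ is $O(1+\ell)$. I would try a Hilbert--Schmidt bound on unit blocks: partition the log-interval into $\ell$ unit cells, note that each cell block plus its neighbours has Hilbert--Schmidt norm $O(M)$, and note that the off-block tails are small in operator norm. The rank of the non-small part is then $O(\ell)$. The near-threshold cell $\xi\sim\kappa$ needs care, as does the absence of any decay of $\widehat v$. Boundedness of $\widehat v$ is exactly what makes each log-cell contribute $O(1)$.

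\textbf{Step 3: conclusion.} Steps 1 and 2 give $N(\kappa)\le C(1+\log(1/\kappa))$. The integral above then converges for every $\gamma>0$, which proves the lemma. If the log-cell bound fails in this form, the fallback is a Lieb--Thirring-type moment bound $\sum a_j^\gamma\lesssim\|\mathcal K^{\rm lo}\|_{\Sch_p}^p$ for suitable $p$, computed through the same dyadic decomposition. Either route needs the summability of dyadic contributions $\kappa\sim2^{-k}$, weighted by $2^{-k\gamma}$.
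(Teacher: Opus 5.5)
You have the right framework: count with the Birman--Schwinger operator $\mathcal K_\kappa$, then use $\sum_ja_j^\gamma=\gamma\int_0^{a_*}\kappa^{\gamma-1}N(\kappa)\,d\kappa$. However, both steps meant to bound $N(\kappa)$ fail as written.

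In Step~1 the high part is not bounded uniformly in $\kappa$, and your justification points the wrong way. The coupling of low outputs $\xi<\delta/2$ to high inputs is exactly where the weight $(\xi+\kappa)^{-1/2}$ fails to be square integrable uniformly in $\kappa$. Take $\widehat g$ supported in $[2,3]$. The output on $(0,\delta/2)$ is $(\xi+\kappa)^{-1/2}c_g(\xi)$ with $c_g$ continuous, since $\widehat v\in L^2_{\rm loc}$. Its norm is therefore $\gtrsim|c_g(0)|\sqrt{\log(1/\kappa)}$ unless $\widehat v$ vanishes on $(-3,-2)$. The ``finite-rank correction'' that would bring the norm below $1/2$ is asserted without any argument.

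In Step~2 the substitution $\xi=e^x$ does not produce a band kernel. For $\xi,\eta<\delta$ the constraint $|\xi-\eta|<\delta$ is vacuous. The kernel therefore has modulus about $|\widehat v(e^x-e^y)|/(2\pi)$ on the whole square $(\log\kappa,\log\delta)^2$, with no decay in $x-y$. If $\widehat v$ is constant near $0$, it is essentially a rank-one kernel of norm of order $\log(1/\kappa)$. So the off-block tails in your cell decomposition are not small. Moreover, pointwise domination by a positive kernel controls Hilbert--Schmidt and operator norms, but not the number of singular values above a threshold. The ``key estimate'' you flag as the main obstacle is thus not established, and the argument you sketch cannot establish it.

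The missing observation is that no splitting or refined counting is needed. You already noted that $N(\kappa)\lesssim_\epsilon\kappa^{-\epsilon}$ suffices, and the Hilbert--Schmidt norm of the whole operator gives this. Integrate the squared kernel along the diagonals $\xi-\eta=k$, using $\int_0^\infty\frac{d\eta}{(\eta+\kappa)(\eta+|k|+\kappa)}=\frac{\log(1+|k|/\kappa)}{|k|}$. This gives
\[
 \|\mathcal K_\kappa\|_{\Sch_2}^2=\frac1{4\pi^2}\int_\R|\widehat v(k)|^2\,
 \frac{\log(1+|k|/\kappa)}{|k|}\,dk .
\]
On $|k|<1$, the hypothesis $\widehat v\in L^\infty(-1,1)$ and $\int_0^{1/\kappa}\log(1+u)\,\frac{du}{u}\lesssim1+\log^2(1/\kappa)$ bound this part by $O(1+\log^2(1/\kappa))$. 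On $|k|>1$, one has $\log(1+|k|/\kappa)/|k|\le C_s(1+\log(1/\kappa))\langle k\rangle^{2s}$ because $2s>-1$, so this part is $O\bigl((1+\log(1/\kappa))\|v\|_{H^s}^2\bigr)$.

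Hence $N(\kappa)\le\|\mathcal K_\kappa\|_{\Sch_2}^2\lesssim1+\log^2(1/\kappa)$ for $0<\kappa\le1$, and your Step~3 finishes the proof. This is the paper's argument. It gives $\log^2$ rather than the $\log$ you were aiming for, but any polylogarithmic bound suffices.
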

\begin{proof}
For $B_a=(D+a)^{-1/2}T_v(D+a)^{-1/2}$, integration along Fourier
kernel diagonals gives
$$
 \|B_a\|_{\Sch_2}^2=\frac1{4\pi^2}\int_\R|\widehat v(k)|^2
 \frac{\log(1+|k|/a)}{|k|}\,dk
 \le C_v(1+\log^2(1/a)),\enspace0<a\le1.
$$
Here the low-frequency estimate uses the hypothesis; on $|k|>1$ the
weight is bounded by $C_s(1+\log(1/a))\langle k\rangle^{2s}$.
Birman--Schwinger and min--max give
$N(a)=\#\{j:a_j>a\}\le\|B_a\|_{\Sch_2}^2$.
Semiboundedness and
$\sum_ja_j^\gamma=\gamma\int_0^{a_*}a^{\gamma-1}N(a)\,da$
finish the proof.
\end{proof}

\subsection{Proof of Theorem~\ref{thm:main-resolution}: soliton resolution with infinitely many solitons and radiation}
\begin{proof}
Choose real $u_n\in C_c^\infty$ converging to $u_0$ in
$H^s\cap L^1$, and write $L_t=L_{u(t)}$, $q(t)=\Pi u(t)$.
Lemma~\ref{rad:lem-source} defines the spectral projections on
$H^s$ and gives $q=q_-+q_+$, where
$q_\pm(t)=\one_{\R_\pm}(L_t)q(t)$. Its conserved measure satisfies
\begin{equation}\label{rad:eq-atom-mass}
 \nu_s\{-a_j\}=2\pi a_j(\lambda_0-a_j)^{2s}.
\end{equation}
This follows from Lemma~\ref{rad:lem-coordinate}; the pairings
are defined by choosing $-\frac12<\rho<\min(s,0)$.
Wu's identity detects every negative eigenfunction, so conservation
of this measure preserves the complete negative spectrum.

Fix $\psi\in C_c((0,\infty))$, set
$f=\psi(L_0)q(0)$ and $v_\psi(t)=\psi(L_t)q(t)$.
The vectors $f_n=\psi(L_{u_n})\Pi u_n$ converge to $f$
in $L^2$, and their observations converge to $v_\psi(t)$ in
$L^2$ at each fixed time. Lemma~\ref{rad:lem-window} and the
observation contraction thus give, for $\sigma\in\{-1,1\}$,
$$
 \sqrt{4\pi|t|}\,e^{itE^2-i\sigma\pi/4}
 v_\psi(t,-2tE+i\gamma)
 \rightharpoonup e^{-\gamma E}\psi(E)\mathcal R_{-\sigma}(E)
 \enspace(t\to\sigma\infty)
$$
in $\mathfrak h(J)$ for every $J\Subset(0,\infty)$ and $\gamma>0$.
Define the unitary rescaling from $L^2(\R)$ to $\mathfrak h(\R)$ by
$(\mathcal M_tv)(E)=\sqrt{4\pi|t|}\,e^{itE^2-i\sigma\pi/4}v(-2tE)$.
Fourier inversion gives $\mathcal M_te^{itD^2}\mathcal F^{-1}=C_t$,
where $C_t$ has Fourier multiplier $e^{-i\zeta^2/(4t)}$ in $E$;
hence $C_t,C_t^*\to I$ strongly. For
$F_t=\mathcal F e^{-itD^2}v_\psi(t)$, extended by zero to $E<0$,
one has
$\mathcal M_t(e^{-\gamma D}v_\psi(t))=C_t(e^{-\gamma E}F_t)$.
Testing against $e^{\gamma E}\varphi$, with
$\varphi\in C_c^\infty((0,\infty))$, yields
$\mathcal F e^{-itD^2}v_\psi(t) \rightharpoonup\psi\mathcal R_{-\sigma} \enspace\hbox{in }\mathfrak h_+$.
The conserved source measure and Lemma~\ref{rad:lem-static} give
$\|v_\psi(t)\|_2=\|f\|_2
=\|\psi\mathcal R_{-\sigma}\|_{\mathfrak h_+}$, so convergence
is strong. For $s>0$, interpolate with the uniform
$H^{s+1}$ bound in Lemma~\ref{rad:lem-source} and the
compact Fourier support of the free comparison; for
$s\le0$, use $L^2\subset H^s$.
The same lemma and the density
$d\nu_s(E)=(E+\lambda_0)^{2s}|\mathcal R_\pm(E)|^2 \frac{dE}{2\pi}$, $E>0$,
remove the compact spectral cutoff. Define real $u_\infty^\pm$
by $\widehat u_\infty^+(E)=\mathcal R_-(E)$,
$\widehat u_\infty^-(E)=\mathcal R_+(E)$ for $E>0$ and
$\widehat u_\infty^\pm(-E)=\overline{\widehat u_\infty^\pm(E)}$.
Then $u_\infty^\pm\in H^s$ and
\begin{equation}\label{rad:eq-continuous-resolution}
 \|2\Rea q_+(t)-e^{t\partial_x|D_x|}u_\infty^\pm\|_{H^s}\to0.
\end{equation}

For the discrete part, let $-a_{j,n}$ and $\varphi_{j,n}$ approximate
$-a_j$ and $\varphi_j$ with consistent phases. Write
$c_{j,n}=\langle\Pi u_n,\varphi_{j,n}\rangle$,
$b_{j,n}=\langle X^*\varphi_{j,n},\varphi_{j,n}\rangle$ and
$\Omega_{n,t}=\Omega_{u_n,t}$.
We use the following consequence of
Lemmas~\ref{rad:lem-coordinate} and~\ref{rad:lem-observation}:
if $v_n\to v$ in $L^2$, $\lambda_n\to\lambda<0$, and
$v\perp\Ker(L_0-\lambda)$, then
\begin{equation}\label{rad:eq-ray-decay}
 \lim_{|t|\to\infty}\limsup_n
 |(\Omega_{n,t}v_n)(z-2t\lambda_n)|=0,\enspace z\in\C_+.
\end{equation}
Indeed, the vectors $(L_0-\lambda)g$ with
$g=(L_0+\lambda_0)^{-1}\varphi$ and
$\widehat\varphi\in C_c^\infty((0,\infty))$ are dense in that
orthogonal complement. Put $g_n=(L_{u_n}+\lambda_0)^{-1}\varphi$.
The observation formula gives
$$
 [\Omega_{n,t}(L_{u_n}-\lambda_n)g_n](z-2t\lambda_n)
 =-\frac{I_+g_n}{4\pi it}
 +\frac{[\Omega_{n,t}(X^*-z)g_n](z-2t\lambda_n)}{2t}.
$$
Graph convergence bounds this by $C_{g,z}/|t|$ uniformly in $n$;
the observation bound controls the approximation error by
$C_z\varepsilon$. This proves \eqref{rad:eq-ray-decay}.
The corresponding atom identity is
$$
 [\Omega_{n,t}\varphi_{j,n}](z+2ta_{j,n})
 =\frac{I_+\varphi_{j,n}}{2\pi i(b_{j,n}-z)}
 -\left[\Omega_{n,t}\frac{(X^*-b_{j,n})\varphi_{j,n}}
 {b_{j,n}-z}\right](z+2ta_{j,n}).
$$
The last input converges to a vector orthogonal to $\varphi_j$.
Multiply by $c_{j,n}$, use $c_{j,n}I_+\varphi_{j,n}=2\pi$, and
apply \eqref{rad:eq-ray-decay}. Fixed-time spectral convergence gives
$[\one_{\{-a_j\}}(L_t)q(t)](z+2a_jt) \longrightarrow\frac{i}{z+p_j}=\Pi R_{p_j}(z)$.
The boundary form gives $\Ima p_j=|I_+\varphi_j|^2/(4\pi)=(2a_j)^{-1}$.
The atom and its limit both have squared $L^2$ norm $2\pi a_j$.
Hardy evaluation kernels are total, so convergence is strong in
$L^2$. For $s>0$, interpolate using \eqref{rad:eq-gain};
for $s\le0$, use $L^2\subset H^s$.

Lemma~\ref{rad:lem-moments} applies because $u_0\in L^1$. Since
$\widehat{R_{p_j}}(\xi)=2\pi e^{i\Rea p_j\xi-|\xi|/(2a_j)}, \enspace \|R_{p_j}\|_{H^r}\le C_{r,a_*}\sqrt{a_j}$,
the complete soliton sum converges absolutely in every $H^r$,
with tails uniform in time. The discrete source tails are
uniformly small by \eqref{rad:eq-atom-mass} and
Lemma~\ref{rad:lem-source}. Sum the fixed-atom limits, remove the
negative cutoff, and use \eqref{rad:eq-continuous-resolution}.
Unitarity of the
free group proves uniqueness; Proposition~\ref{rad:prop-reflectionless}
below gives the nonvanishing assertion.
\end{proof}

\begin{prop}\label{rad:prop-reflectionless}
For real $u_0\in H^s\cap L^1$, $s>-1/2$, either radiation profile
vanishes if and only if $u_0$ is a finite sum of $R_p$, including zero.
The number of negative eigenvalues then equals the number of profiles,
with multiplicity.
\end{prop}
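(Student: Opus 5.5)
The plan is to show that a vanishing radiation profile forces equality in the trace inequality \eqref{eq:sharp-trace}, and then to apply Corollary~\ref{cor:trace-saturation-classification}. The case $L^1$ will rule out infinitely many eigenvalues.

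Suppose $u_\infty^+=0$; the case $u_\infty^-=0$ follows by time reflection. By construction $\widehat u_\infty^+(E)=\mathcal R_-(E)$ for $E>0$. Hence $\mathcal R_-=0$ almost everywhere on $(0,\infty)$. Lemma~\ref{rad:lem-static} identifies the positive part of $\nu_s$ with $(E+\lambda_0)^{2s}|\mathcal R_-(E)|^2\,dE/(2\pi)$, so $\nu_s((0,\infty))=0$. Lemma~\ref{rad:lem-source} gives $\nu_s(\{0\})=0$. Therefore $\nu_s$ is carried by the negative eigenvalues, and $\Pi u_0\in\Hd(u_0)$.

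Now the regularity has to be upgraded. Write $\Pi u_0=\sum_j c_j\varphi_j$ with $|c_j|^2=2\pi a_j$, as in Lemma~\ref{rad:lem-coordinate}. Lemma~\ref{rad:lem-moments} applies because $\widehat u_0$ is bounded, since $u_0\in L^1$; it gives $\sum_ja_j<\infty$. Each $\varphi_j$ lies in $L^2$ with norm one, and the family is orthonormal. So $\Pi u_0\in L^2$, and $u_0=2\Rea\Pi u_0\in L^2$ since $u_0$ is real. Moreover $\|\Pi u_0\|_2^2=2\pi\sum_ja_j$. This is exactly \eqref{eq:saturation} when there are infinitely many eigenvalues, and the finite-sum analogue otherwise.

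Corollary~\ref{cor:trace-saturation-classification} now gives three possibilities: $u_0=0$, or $u_0$ is a finite multisoliton, or $u_0$ is an infinite-order multisoliton. The last one must be excluded. By Theorem~\ref{thm:main}, such a $u_0$ would be $\sum_mR_{p^{\rm in}_m}\ge0$, with $\int R_p=2\pi$ for each term, so $\|u_0\|_1=\infty$. This contradicts $u_0\in L^1$. Positivity makes the monotone convergence argument clean. In the finite case, $Q_N$ is a positive sum of $N$ profiles by the determinant factorization of Section~\ref{sec:exact-range-proof}, and $N$ equals the number of negative eigenvalues. Multiplicity is counted via the eigenvalues of $B_N$; the eigenvalues of $L$ are simple by \cite{BKV26}.

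For the converse, let $u_0=\sum_{m\le N}R_{p_m}$. By \cite[Theorems~1.4 and~1.6]{BKV26}, trace equality holds for finite multisolitons. So $\Pi u_0\in\Hd$, and $\nu_s$ has no positive part. Hence $\mathcal R_\pm=0$ almost everywhere. The $L^2$ identity $\|\mathcal F_{u_0}^\pm\Pi u_0\|=\|E_{L}((0,\infty))\Pi u_0\|=0$ then gives both $u_\infty^\pm=0$.

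I expect the main obstacle to be the rigorous passage from $\nu_s((0,\infty))=0$ to the pointwise trace identity when $-\frac12<s<0$. There, $\Pi u_0$ is a priori only in the completed space $H^s_+$, and Wu's pairing is defined through Sobolev duality. I would handle this by working with $A^s\Pi u_0$, which is an honest $L^2$ vector. Its spectral measure is purely atomic with atoms given by \eqref{rad:eq-atom-mass}, so $A^s\Pi u_0=\sum_jc_j(\lambda_0-a_j)^s\varphi_j$. Applying the bounded operator $A^{-s}$ termwise then yields $\Pi u_0=\sum_jc_j\varphi_j$ in $L^2$.
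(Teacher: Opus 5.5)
Your proposal is correct and follows the paper's proof essentially step by step: the positive part of $\nu_s$ vanishes, $\Pi u_0$ is identified with $\sum_jc_j\varphi_j\in L^2$ through its $A^s$ image and the isomorphism \eqref{rad:eq-scales}, trace saturation follows, Corollary~\ref{cor:trace-saturation-classification} applies, and $L^1$ excludes infinite nonnegative pole sums. For $s<0$, note that $A^{-s}$ is bounded only on $\Hd$, and it is the injectivity of the completed $A^s$ that pins down $\Pi u_0$; also, you obtain $\sum_ja_j<\infty$ from Lemma~\ref{rad:lem-moments}, whereas the paper reads it off directly from the finiteness of $\nu_s$, the atom masses \eqref{rad:eq-atom-mass} and $1\le\lambda_0-a_j\le\lambda_0$.
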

\begin{proof}
Vanishing makes the positive part of $\nu_s$ zero. There is no zero
atom, and $1\le\lambda_0-a_j\le\lambda_0$, so
\eqref{rad:eq-atom-mass} gives $\sum_ja_j<\infty$.
For $c_j=\langle\Pi u_0,\varphi_j\rangle$, the series
$q=\sum_jc_j\varphi_j$ converges in $L^2$ and has the same
$(L_{u_0}+\lambda_0)^s$ image as $\Pi u_0$.
The isomorphism in \eqref{rad:eq-scales} gives $q=\Pi u_0$, hence $u_0\in L^2$
and trace equality. Corollary~\ref{cor:trace-saturation-classification}
applies; $L^1$ excludes an infinite nonnegative pole sum because every
profile has integral $2\pi$. Conversely, finite multisolitons saturate
the trace inequality, so both positive source measures vanish.
The eigenvalue count follows from the same corollary.
\end{proof}

\begin{prop}\label{rad:prop-example}
Let $A>\pi/2$, $T\ge1$, and let smooth $0\le\theta\le1$ equal zero
on $(-\infty,1]$ and one on $[2,\infty)$.
The potential $V_T(x)=A\theta((\log x)/T)/(x(\log x)^2)$ for $x>1$,
extended by zero, belongs to $L^1\cap\bigcap_{m\ge0}H^m$ and has
infinitely many negative eigenvalues. Its solution has infinitely many
solitons and nonzero radiation in both directions.
\end{prop}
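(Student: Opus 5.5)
Fix $T\ge1$, write $V=V_T$, and check the regularity first. Since $\theta((\log x)/T)$ vanishes for $x\le e^T$, $V$ is smooth and supported in $[e^T,\infty)$. On that half-line it behaves like $A/(x(\log x)^2)$, which is integrable at infinity, so $V\in L^1$. Every derivative of $V$ is $O(x^{-1-k}(\log x)^{-2})$, so $V\in\bigcap_mH^m$. In particular $\widehat V$ is bounded, and Lemma~\ref{rad:lem-moments} and Theorem~\ref{thm:main-resolution} both apply to $V$.

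The main task is to show that $L_V$ has infinitely many negative eigenvalues. I would argue by min--max. It suffices to build, for every $M$, an $M$-dimensional subspace of $H^{1/2}_+$ on which $\mathfrak q_V$ is negative definite. Take logarithmically dyadic scales $x_k=e^{2^k}$ with $2^k\ge2T$, so that $\theta\equiv1$ on the relevant region. On each scale use a Hardy test vector $f_k=\Pi g_k$, where $g_k$ is real, nonnegative, slowly varying, and adapted to the annulus $[x_k,x_{k+1}]$.

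For $g\in H^{1/2}$ the form splits as
\[
\mathfrak q_V[\Pi g]=\||D|^{1/2}\Pi g\|_2^2-\int V|\Pi g|^2 .
\]
The quantity to exploit is the contrast between kinetic and potential energy on $[x_k,x_{k+1}]$. The potential energy behaves like $A\int g^2\,dx/(x\log^2x)$, that is, like $A\int g^2\,d(\log\log x)$ up to the logarithmic weight. For the kinetic energy I would use a function $g$ of $\log\log x$, for example $g(x)=h(\log\log x)$. Its $\dot H^{1/2}$ seminorm is then dominated by the logarithmic Hardy-type inequality $\|g\|_{\dot H^{1/2}}^2\lesssim\int|g'|^2x\log x\cdots$. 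Each dyadic block contributes a bounded kinetic cost against a potential gain proportional to $A$.

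The threshold $A>\pi/2$ should be exactly the constant in the sharp critical Hardy inequality for $|D|$ with logarithmic weight. The decisive point is that $\int V|\Pi g|^2$ equals $\tfrac14\int Vg^2$ plus oscillatory cross terms, because $\Pi g$ carries half of $g$. These cross terms are negligible for slowly varying $g$. With suitable normalization each block then has $\mathfrak q_V[f_k]<0$. The hypothesis $A>\pi/2$ is where the BO critical constant enters, and I would expect this comparison of constants to be the main obstacle. Matching $\pi/2$ exactly requires a careful choice of profile $h$ and a precise asymptotic evaluation of $\||D|^{1/2}\Pi g\|_2^2$ via the Fourier (Hilbert-transform) formula $\tfrac1{4\pi}\iint|g(x)-g(y)|^2|x-y|^{-2}$.

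Next I would show that the blocks are almost orthogonal for the form. The supports are separated at doubly exponential scales, so the off-diagonal terms of $\mathfrak q_V$, coming from the nonlocal kinetic part, are summably small. Hence negativity on $\spn\{f_k\}$ follows from diagonal dominance, and the number of negative eigenvalues is infinite.

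Finally I would apply Theorem~\ref{thm:main-resolution} to $u_0=V$. It produces the complete soliton series indexed by the infinitely many negative eigenvalues, together with radiation profiles $u_\infty^\pm$. These profiles are nonzero by Proposition~\ref{rad:prop-reflectionless}: a vanishing profile would force $V$ to be a finite sum of $R_p$, whose negative spectrum is finite, contradicting what was just shown.
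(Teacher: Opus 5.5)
Your overall strategy, min--max with widely separated logarithmic blocks and the threshold coming from a Hardy constant, is the paper's. The regularity check and the final step (Theorem~\ref{thm:main-resolution} with Proposition~\ref{rad:prop-reflectionless}) are also fine. The gap is the core step: you never produce an $M$-dimensional subspace on which $\mathfrak q_V<0$, and you postpone exactly the comparison that uses $A>\pi/2$. The ingredients you propose would not deliver it.

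\emph{Block length.} For a block that is essentially $G(\log|x|)$, with $t=\log|x|$, negativity amounts to $\int|G'|^2\,dt<\frac A{2\pi}\int G^2t^{-2}\,dt$. On your fixed blocks $t\in[2^k,2^{k+1}]$, substitute $G=\sqrt t\,\kappa(\log t)$. The quotient becomes $\frac14+\int|\kappa'|^2/\int|\kappa|^2\ge\frac14+\pi^2/(\log 2)^2$, so these blocks only handle $A$ above roughly $130$. Reaching every $A>\pi/2$ needs Hardy near-extremizers $\sqrt t\,\chi(\log(t/T_j)/L)$ with $L\to\infty$ as $A\downarrow\pi/2$. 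It also needs an exact asymptotic evaluation of the kinetic term, not a bound up to constants.

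\emph{Localization fails for one-sided blocks.} Take $g_k$ supported in $[x_k,x_{k+1}]\subset(0,\infty)$ and real. Then exactly $|\Pi g|^2=\frac14\bigl(g^2+(\mathcal Hg)^2\bigr)$. Moreover $\frac1{4\pi}\iint|g(x)-g(y)|^2|x-y|^{-2}\,dx\,dy$ contains the half-line term $\frac1{2\pi}\int_0^\infty g^2\,dx/x=\frac1{2\pi}\int G^2\,dt$. This exceeds $\frac14\int Vg^2=\frac A4\int G^2t^{-2}\,dt$ once $(\log x_k)^2>\pi A/2$. So the kinetic cost is not bounded, and if $\mathcal Hg$ is dropped the block has positive form.

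The term $\frac14\int V(\mathcal Hg)^2$ is therefore not negligible. For $g\ge0$, $\mathcal Hg$ has a plateau of height about $\pi^{-1}\int g(y)\,dy/y$ on all of $|x|\ll x_k$, where $V$ has mass of order $A/T$. This plateau is shared by all blocks, so $\int V\,\Pi g_j\overline{\Pi g_k}$ is not small for $j\ne k$. The potential Gram matrix contains a rank-one part proportional to $c_jc_k$, $c_k=\int g_k\,dy/y$, whose off-diagonal entries are as large as its diagonal ones. Your diagonal-dominance step only checks kinetic off-diagonal terms and misses this. Removing the plateau requires $c_k=0$, which is impossible for $g_k\ge0$.

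\emph{How the paper does it.} It defines the test vectors in Fourier variables: $\widehat f_j(\xi)=\sqrt{2\pi}\,\mathbf 1_{\xi>0}\,\xi^{-1}g_j(\log(1/\xi))$, with $g_j=-G_j'$, $G_j(\tau)=\sqrt{T_j}\,h(\tau/T_j)$, $h\in C_c^\infty((1,B))$, $T_j\ge2T$ and $T_{j+1}>BT_j$.
\begin{itemize}
\item The kinetic form is then exactly $\int|g_j|^2=\int|h'|^2=\alpha$, and it is diagonal by disjoint Fourier supports.
\item Fourier inversion, using that $e^{ie^\sigma}-\mathbf 1_{\sigma<0}$ has a bounded primitive, gives $\sqrt{2\pi}f_j(e^t)=G_j(t)+O(T_j^{-1/2})$ uniformly.
\item Put $M_j=(2\pi)^{-1/2}G_j(\log x)\mathbf 1_{x>0}$. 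Then $\sum z_jf_j$ differs from the disjointly supported $\sum z_jM_j$ in $L^2(V\,dx)$ by at most $C\|V_T\|_1^{1/2}(\sum_jT_j^{-1})^{1/2}\|z\|$.
\item On the other hand $\int V|\sum z_jM_j|^2=\beta\sum|z_j|^2$, with $\beta=\frac A{2\pi}\int h^2r^{-2}\,dr$.
\end{itemize}
The one-dimensional Hardy inequality, with near-extremizers $h(r)=\sqrt r\,\chi((\log r)/L)$, allows $\alpha<\beta$ exactly when $\frac14<\frac A{2\pi}$. Choosing the $T_j$ sparse enough that the error is below $\sqrt\beta-\sqrt\alpha$ makes the form negative on every nonzero finite combination.

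Since $\widehat f_j$ is real, $f_j(-x)=\overline{f_j(x)}$, so these vectors are two-sided functions of $\log|x|$ with no plateau. An even version $g_k\approx G_k(\log|x|)$ of your blocks could imitate this. Proving its sharp kinetic asymptotics is precisely the computation your proposal leaves open.
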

\begin{proof}
Choose $h\in C_c^\infty((1,B);\R)$ with
$\alpha=\int|h'|^2<\beta=(A/(2\pi))\int|h(r)|^2r^{-2}dr$;
$h(r)=\sqrt r\,\chi((\log r)/L)$, with fixed nonzero
$\chi\in C_c^\infty((0,1))$, has quotient tending to $1/4<A/(2\pi)$ as $L\to\infty$.
Take $T_{j+1}>BT_j$, $T_j\ge2T$, and put
$G_j(\tau)=\sqrt{T_j}h(\tau/T_j)$, $g_j=-G_j'$,
$\widehat f_j(\xi)=\sqrt{2\pi}\one_{\xi>0}\xi^{-1}g_j(\log(1/\xi))$.
Disjoint Fourier supports give kinetic form $\alpha\sum|z_j|^2$.
Set $K(\sigma)=e^{ie^\sigma}-\one_{\sigma<0}$.
Its primitive is bounded: the negative half-line is absolutely integrable,
and $r=e^\sigma$ reduces the positive half-line to $\int_1^R e^{ir}dr/r$.
Fourier inversion and integration by parts therefore give, uniformly in $t$,
$\sqrt{2\pi}f_j(e^t)=G_j(t)+O(\|g_j'\|_1)
=G_j(t)+O(T_j^{-1/2})$.
The disjoint functions $M_j(x)=(2\pi)^{-1/2}G_j(\log x)\one_{x>0}$
have potential form $\beta\sum|z_j|^2$, while the error map
$(z_j)\mapsto\sum z_j(f_j-M_j)$ into $L^2(V_Tdx)$ has norm
$C\|V_T\|_1^{1/2}(\sum T_j^{-1})^{1/2}$.
Choose it below $\sqrt\beta-\sqrt\alpha$. Every nonzero finite
combination has negative Lax form, so min--max gives infinitely many
eigenvalues. Apply Theorem~\ref{thm:main-resolution} and
Proposition~\ref{rad:prop-reflectionless}.
\end{proof}

For $-\frac12<s<0$, choose nonnegative $\chi\in C_c^\infty(\R)$
equal to one near zero and set $\alpha=\frac12-\frac s2$.
Then $v=-\chi|x|^{-\alpha}$ belongs to $H^s\cap L^1\setminus L^2$:
$\widehat v(\xi)=O(|\xi|^{\alpha-1})$ at infinity and
$s+\alpha<\frac12<\alpha<1$. Its Lax form is nonnegative,
so Proposition~\ref{rad:prop-reflectionless} gives nonzero radiation
in both directions and no solitons.

\section{Explicit remainder estimates and decay rates}
\label{sec:quantitative}\label{subsec:proof-main-modulus}
Fix $u_0=u[p^{\rm in}]$, $p^{\rm in}\in\mathfrak P_{\rm in}$, set
$L=L_{u_0}$, $q=\Pi u_0$, and use its spectral data
$\lambda_j=-a_j$, $\varphi_j$, $p_j^\infty$.
Choose $\gamma_j=\langle q,\varphi_j\rangle>0$, and set
$m_j=\langle X^*\varphi_j,\varphi_j\rangle$, $z_j=\overline{m_j}$,
$q_J=\sum_{k>J}a_k$ and $P_J=\operatorname{proj}_{\spn\{\varphi_k:k\le J\}}$.
Wu's identity and Lemma~\ref{lem:Xstar-discrete} give
$\gamma_j^2=2\pi a_j$, $I_+(\varphi_j)=\gamma_j/a_j$, $q=\sum_j\gamma_j\varphi_j$, $X^*\varphi_j\in\Dom(L)\cap\Hd$, $\Ima z_j=(2a_j)^{-1}$.
Thus $f_j=q-\gamma_j\varphi_j$ and
$g_j=(X^*\varphi_j-m_j\varphi_j)/(m_j-z_j)$ belong to
$\Hd\cap\varphi_j^\perp$, where $m_j-z_j=-\ii/a_j$.
For $k\ne j$, $z\in\C_+$ and $J\ge N$, define
$$
\begin{aligned}
 \kappa_{jk}(z)&=\frac{|I_+(\varphi_k)|/(4\pi)
 +\|X^*\varphi_k-z\varphi_k\|_2/(4\sqrt{\pi\Ima z})}
 {|\lambda_k-\lambda_j|},\\
 \Theta_{N,J}&=\sqrt\pi\sum_{j\le N}(\Ima z_j)^{-1/2}
 \bigl((2\pi q_J)^{1/2}+\gamma_j\|(I-P_J)g_j\|_2\bigr),\\
 \Xi_{N,J}&=2\pi\sum_{j\le N}\sum_{\substack{k\le J\\k\ne j}}
 (\gamma_k+\gamma_j|\langle g_j,\varphi_k\rangle|)\kappa_{jk}(z_j),
 \enspace K_N^\to=\min\{J\ge N:\Theta_{N,J}\le2\pi q_N\},\\
 \mathcal T_N^\Omega&=\max_{j<k\le N}
 \frac{|\Rea(p_j^\infty-p_k^\infty)|}{|\lambda_j-\lambda_k|},\enspace
 \mathcal B_N^\Omega=2\Xi_{N,K_N^\to}
 +4\pi\sum_{j<k\le N}|\lambda_j-\lambda_k|^{-1}.
\end{aligned}
$$
An empty maximum is zero. These constants are finite:
$\Theta_{N,J}\to0$, $q_N>0$, and the eigenvalues are distinct.
Write $\usol(t)=\sum_jR_{p_j^\infty}(\cdot+2t\lambda_j)$.
For $s>\frac12$, put $\vartheta_s=\frac12-\frac1{4s}$,
$M_s=\sup_t\|u(t)\|_{\dot H^s}$ and
$\mathcal M_s=M_s+2\sqrt{\pi\Gamma(2s+1)}a_1^{s-1/2}\sum_ja_j, \enspace C_{s,u_0}=2^{\vartheta_s}C_s^{\rm GN}\mathcal M_s^{1/(2s)}$,
where $C_s^{\rm GN}$ is the Gagliardo--Nirenberg constant.

\begin{prop}\label{thm:main-modulus}
If $M_s<\infty$ for some $s>\frac12$, then, for every $N\ge1$ and
$0<|t|\ge\mathcal T_N^\Omega$,
\begin{equation}\label{eq:main-bound}
 \|u(t)-\usol(t)\|_\infty
 \le4q_N+C_{s,u_0}
 \left(6\pi q_N+\frac{\mathcal B_N^\Omega}{|t|}\right)^{\vartheta_s}.
\end{equation}
The infimum $\omega_s^\Omega(t)$ of the right-hand side over admissible
$N$ tends to zero as $|t|\to\infty$.
\end{prop}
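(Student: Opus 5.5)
The strategy is to split the error $u(t)-\usol(t)$ into an $L^2$-small part and an $L^\infty$ part controlled by interpolation. The split uses G\'erard's explicit formula for the infinite-order multisoliton $u_0=u[p^{\rm in}]$. By Theorem~\ref{thm:main} and the reconstruction of Section~\ref{sec:poisson-gram}, $\Pi u(t)$ is a pure soliton sum with no continuous component. We write $\Pi u(t)(z)=\frac1{2\pi i}I_+((X^*-2tL-z)^{-1}q)$ and expand $q=\sum_j\gamma_j\varphi_j$.

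\textbf{Head and tail.} First separate the head $j\le N$ from the tail $j>N$. The tail soliton sum $\sum_{j>N}R_{p_j^\infty}(\cdot+2t\lambda_j)$ has $L^\infty$ norm at most $\sum_{j>N}2/\Ima p_j^\infty=4q_N$, which is the first term of \eqref{eq:main-bound}. The tail source $(I-P_N)q$ has squared norm $2\pi q_N$. By Lemma~\ref{rad:lem-observation}, unitarity of $\Omega_{u_0,t}$ bounds its contribution to $\|u(t)-\usol(t)\|_2^2$ by $2\cdot 2\pi q_N$ uniformly in $t$.

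\textbf{Each head atom.} For $j\le N$ use the atom identity from the proof of Theorem~\ref{thm:main-resolution}: $\gamma_j\Omega_t\varphi_j$ evaluated at $z+2ta_j$ equals $\Pi R_{p_j^\infty}(z)$ minus $\gamma_j[\Omega_t g_j](z+2ta_j)$. Here $g_j\in\Hd\cap\varphi_j^\perp$, and it is split with $P_J$, $J=K_N^\to$.
\begin{enumerate}[label=(\alph*)]
\item The $(I-P_J)g_j$ part and the $q_J$ tail are controlled in the weighted evaluation norm via Hardy point evaluation, $|f(z)|\le\|f\|_2/(4\pi\Ima z)^{1/2}$. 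Summed over $j\le N$, this yields $\Theta_{N,J}\le2\pi q_N$ by the choice of $K_N^\to$.
\item The finitely many remaining components $\varphi_k$, $k\le J$, $k\ne j$, evolve as atoms moving at the different speed $2a_k$. The ray-decay computation \eqref{rad:eq-ray-decay}, done quantitatively with $g=\varphi_k$ and $L\varphi_k=\lambda_k\varphi_k$, gives $|[\Omega_t\varphi_k](z_j+2ta_j)|\le\kappa_{jk}(z_j)/|t|$. Summing these with the weights $\gamma_k+\gamma_j|\langle g_j,\varphi_k\rangle|$ gives $\Xi_{N,J}/|t|$.
\end{enumerate}

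\textbf{Main obstacle.} The genuinely delicate point is converting these pointwise-on-rays bounds into an $L^2$ bound for the head error. The cross interactions among the first $N$ solitons must be handled as well, and this is where the condition $|t|\ge\mathcal T_N^\Omega$ enters. For such $t$ the centers $\Rea p_j^\infty+2t\lambda_j$ are ordered and separated. I would estimate the $L^2$ norm of each pairwise overlap $R_{p_j}R_{p_k}$-type cross term, obtained by expanding $\|\sum_{j\le N}(\cdot)\|_2^2$, by $\pi$ over the separation, which is at least $|\lambda_j-\lambda_k||t|$. This produces the $4\pi\sum_{j<k}|\lambda_j-\lambda_k|^{-1}/|t|$ part of $\mathcal B_N^\Omega$. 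Together with the factor $2$ from the real part this gives
\[
\|u(t)-\usol(t)\|_2^2\le 2\Bigl(6\pi q_N+\frac{\mathcal B_N^\Omega}{|t|}\Bigr)
\]
after absorbing the tail soliton $L^2$ mass, which is again $O(q_N)$. I expect this bookkeeping, and justifying that the Hardy-evaluation estimate on a single ray controls a whole $L^2$ norm, to be where most care is needed. If direct evaluation fails, I would instead test against reproducing kernels, as in the proof of Theorem~\ref{thm:main-resolution}.

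\textbf{Interpolation and the limit.} Gagliardo--Nirenberg gives $\|f\|_\infty\le C_s^{\rm GN}\|f\|_2^{1-1/(2s)}\|f\|_{\dot H^s}^{1/(2s)}$. We apply it with $\|f\|_{\dot H^s}\le M_s+\|\usol\|_{\dot H^s}$. Using $\widehat{R_p}=2\pi e^{i\Rea p\xi-|\xi|\Ima p}$, one gets $\|R_{p_j}\|_{\dot H^s}\le2\sqrt{\pi\Gamma(2s+1)}\,a_j^{1/2}\cdot a_j^{s-1/2}\cdots$, which sums to the stated bound by $\mathcal M_s$. With exponent $1-1/(2s)=2\vartheta_s$ on the square root, this yields \eqref{eq:main-bound}. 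Finally, $\omega_s^\Omega(t)\to0$: for fixed $N$ the right-hand side tends to $4q_N+C(6\pi q_N)^{\vartheta_s}$ once $|t|\ge\mathcal T_N^\Omega$, and $q_N\to0$.
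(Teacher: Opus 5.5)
Your estimates (a) and (b) are the right ones, but the step you flag as the main obstacle is a genuine gap, and the route you sketch for it does not work.

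First, the atom identity is not an identity in $z$. The correction term is $\gamma_j\frac{m_j-z_j}{m_j-z}[\Omega_tg_j](z+2ta_j)$, and it equals $\gamma_j[\Omega_tg_j](z_j+2ta_j)$ only at $z=z_j$. Second, no pointwise bound on a ray can be integrated into an $L^2$ bound: $\|\Omega_tg_j\|_2=\|g_j\|_2$ does not decay, and the Hardy bound $\|f\|_2/(2\sqrt{\pi\Ima z})$ degenerates at the boundary.

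The missing idea is that (a) and (b) control exactly the overlap defects $\langle\Pi u(t),\mathcal K_{j,t}\rangle-2\pi a_j=2\pi(\Omega_tf_j-\gamma_j\Omega_tg_j)(z_j+2ta_j)$. This holds because $\mathcal K_{j,t}=\Pi R_{p_j^\infty}(\cdot+2t\lambda_j)=\ii/(\cdot+2t\lambda_j-m_j)$ is $2\pi$ times the Hardy reproducing kernel at $z_j+2ta_j$. With $r_N=u-\sum_{j\le N}R_{p_j^\infty}(\cdot+2t\lambda_j)$, one then expands the quadratic form
\[
 \|\Pi r_N(t)\|_2^2=\|\Pi u(t)\|_2^2-2\Rea\sum_{j\le N}\langle\Pi u(t),\mathcal K_{j,t}\rangle+\sum_{j,k\le N}\langle\mathcal K_{j,t},\mathcal K_{k,t}\rangle .
\]
The three pieces are handled as follows.
\begin{enumerate}[label=(\roman*)]
\item The conserved norm $\|\Pi u(t)\|_2^2=2\pi\sum_ja_j$ and $\|\mathcal K_{j,t}\|_2^2=2\pi a_j$ reduce the diagonal to exactly $2\pi q_N$.
\item The defects contribute at most $2\Theta_{N,J}+2\Xi_{N,J}/|t|$.
\item The off-diagonal Gram entries are $2\pi\ii/(2t(\lambda_j-\lambda_k)+p_j^\infty-\overline{p_k^\infty})$. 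They have modulus at most $2\pi/(|t||\lambda_j-\lambda_k|)$ once $|t|\ge\mathcal T_N^\Omega$, using only the real part of the denominator; no ordering of centers is needed.
\end{enumerate}
With $J=K_N^\to$ this gives $\|\Pi r_N(t)\|_2^2\le6\pi q_N+\mathcal B_N^\Omega/|t|$, where $6\pi=2\pi+2\cdot2\pi$. No separate tail vector $\Omega_t(I-P_N)q$ and no per-atom splitting of the head error is involved. Your fallback of testing against reproducing kernels points this way, but the expansion against the conserved norm is what turns point values into $L^2$ control.

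The second problem is which function gets the $L^2$ bound. You state it for $u(t)-\usol(t)$, ``absorbing the tail soliton $L^2$ mass, which is again $O(q_N)$''. That fails: the tail sum has positive, uncontrolled cross terms and need not lie in $L^2$ at all. Take $w_j=\ii j^\alpha/2$ with $1<\alpha\le2$, which is admissible by Theorem~\ref{thm:exact-range} and is the family of Corollary~\ref{cor:target-power}. There $\usol(t,x)\gtrsim|x|^{1/\alpha-1}$ for large $|x|$, so $\|u(t)-\usol(t)\|_2=\infty$.

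This is why \eqref{eq:main-bound} carries the additive $4q_N$. Gagliardo--Nirenberg is applied to $r_N$, using:
\begin{itemize}
\item $\|r_N\|_2^2=2\|\Pi r_N\|_2^2$;
\item $\|r_N\|_{\dot H^s}\le M_s+\sum_j2\sqrt{\pi\Gamma(2s+1)}\,a_j^{s+1/2}\le\mathcal M_s$, where the exponent is $s+\frac12$ and $a_j^{s+1/2}\le a_1^{s-1/2}a_j$.
\end{itemize}
The tail enters only through $\|\sum_{j>N}R_{p_j^\infty}(\cdot+2t\lambda_j)\|_\infty\le4q_N$. Once these two points are fixed, your final argument that $\omega_s^\Omega(t)\to0$ is fine.
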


\begin{proof}
Set $\Omega_tF(z)=(2\ii\pi)^{-1}I_+((X^*-2tL-z)^{-1}F)$.
By \cite[Lemma~5.1]{GG},
$|\Omega_tF(z)|\le\frac{\|F\|_2}{2\sqrt{\pi\Ima z}}$.
Applying the resolvent to
$(X^*-2tL-z+2t\lambda_j)\varphi_k
=(X^*-z)\varphi_k\allowbreak-2t(\lambda_k-\lambda_j)\varphi_k$
therefore gives, for $v\in\Hd\cap\varphi_j^\perp$,
$$
 |\Omega_tv(z-2t\lambda_j)|
 \le |t|^{-1}\sum_{\substack{k\le J\\k\ne j}}
 |\langle v,\varphi_k\rangle|\kappa_{jk}(z)
 +\frac{\|(I-P_J)v\|_2}{2\sqrt{\pi\Ima z}}.
$$
Let $\mathcal K_j=\Pi R_{p_j^\infty}=\ii/(\cdot-m_j)$ and
$\mathcal K_{j,t}=\mathcal K_j(\cdot+2t\lambda_j)$.
The explicit formula and the Hardy reproducing identity
\cite[proof of Lemma~5.3]{GG} yield
$$
\begin{aligned}
 \langle\Pi u(t),\mathcal K_{j,t}\rangle-2\pi a_j
 &=2\pi\bigl(\Omega_tf_j-\gamma_j\Omega_tg_j\bigr)(z_j-2t\lambda_j),\\
 \langle\mathcal K_{j,t},\mathcal K_{k,t}\rangle
 &=\frac{2\pi\ii}{2t(\lambda_j-\lambda_k)
 +p_j^\infty-\overline{p_k^\infty}}.
\end{aligned}
$$
Since $\|(I-P_J)f_j\|_2^2=2\pi q_J$, the sum of the absolute
overlap defects is at most $\Theta_{N,J}+\Xi_{N,J}/|t|$.
Expand the square of $\Pi r_N$, where
$r_N=u-\sum_{j\le N}R_{p_j^\infty}(\cdot+2t\lambda_j)$,
and use $\|\Pi u(t)\|_2^2=2\pi\sum_ja_j$.
For $|t|\ge\mathcal T_N^\Omega$, each cross denominator has
modulus at least $|t||\lambda_j-\lambda_k|$; hence
$$
 \|\Pi r_N(t)\|_2^2
 \le2\pi q_N+2\Theta_{N,J}+\frac{2\Xi_{N,J}}{|t|}
 +\frac{4\pi}{|t|}\sum_{j<k\le N}|\lambda_j-\lambda_k|^{-1}.
$$
Taking $J=K_N^\to$ gives
$\|\Pi r_N(t)\|_2^2\le6\pi q_N+\mathcal B_N^\Omega/|t|$.
The identity
$\|R_{p_j^\infty}\|_{\dot H^s} =2\sqrt{\pi\Gamma(2s+1)}a_j^{s+1/2}$
gives $\|r_N(t)\|_{\dot H^s}\le\mathcal M_s$.
Interpolation, $\|r_N\|_2^2=2\|\Pi r_N\|_2^2$, and the tail bound
$\|\usol-\sum_{j\le N}R_{p_j^\infty}(\cdot+2t\lambda_j)\|_\infty
\le4q_N$ prove \eqref{eq:main-bound}. Letting $|t|\to\infty$ and
then $N\to\infty$ proves the assertion about $\omega_s^\Omega$.
\end{proof}

\begin{coro}\label{cor:general-power}
Under Proposition~\ref{thm:main-modulus}, suppose, for all sufficiently
large $N$, that $q_N\le C_0N^{-p}$,
$\mathcal T_N^\Omega\le C_0N^\tau$ and
$\mathcal B_N^\Omega\le C_0N^\nu$, where $p>0$, $\tau,\nu\ge0$.
Then $\|u(t)-\usol(t)\|_\infty=O(|t|^{-\vartheta_sp/m})$,
where $m=\max\{\tau,\nu+p\}$.
\end{coro}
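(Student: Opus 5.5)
The plan is to minimize the right-hand side of \eqref{eq:main-bound} over an admissible $N$ chosen as a power of $|t|$. Admissibility in Proposition~\ref{thm:main-modulus} requires $|t|\ge\mathcal T_N^\Omega$. With the hypothesis $\mathcal T_N^\Omega\le C_0N^\tau$, this holds whenever $C_0N^\tau\le|t|$.

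For large $|t|$, set
\[
 N=N(t)=\bigl\lfloor (|t|/C_1)^{1/m}\bigr\rfloor,
\]
with $C_1\ge C_0$ fixed. Then $N\to\infty$ as $|t|\to\infty$, so the power hypotheses apply. The admissibility condition follows from $C_0N^\tau\le C_0(|t|/C_1)^{\tau/m}\le|t|$, because $\tau\le m$ and $|t|/C_1\ge1$. If $\tau=0$, the condition is just $|t|\ge C_0$.

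Now bound the three terms separately:
\begin{itemize}
 \item $q_N\le C_0N^{-p}\lesssim |t|^{-p/m}$.
 \item $\mathcal B_N^\Omega/|t|\le C_0N^\nu/|t|\lesssim |t|^{\nu/m-1}$. Since $m\ge\nu+p$, we have $\nu/m-1\le -p/m$, so this term is also $O(|t|^{-p/m})$.
\end{itemize}
Substituting into \eqref{eq:main-bound} gives
\[
 \|u(t)-\usol(t)\|_\infty\le 4C|t|^{-p/m}+C_{s,u_0}\bigl(C|t|^{-p/m}\bigr)^{\vartheta_s}.
\]
For large $|t|$ we have $|t|^{-p/m}\le1$, and $\vartheta_s<\frac12<1$. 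The first term is therefore dominated by the second, and the bound is $O(|t|^{-\vartheta_sp/m})$.

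Two points need attention.
\begin{itemize}
 \item \emph{Degenerate case.} If $m=0$, then $\tau=\nu=p=0$, which contradicts $p>0$. So $m\ge p>0$ and the choice of $N$ is well defined.
 \item \emph{Floor function.} $N\ge1$ for large $|t|$, and $N\asymp|t|^{1/m}$ up to a factor $2$, which only affects constants.
\end{itemize}

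The only genuinely delicate step is checking that $N(t)$ is simultaneously admissible and large enough for the asymptotic hypotheses to hold. This is handled by taking $|t|$ large, so that $N(t)$ exceeds the threshold beyond which the power bounds hold.
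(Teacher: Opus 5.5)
Your proposal is correct and follows the paper's own proof: you choose $N\asymp|t|^{1/m}$, with the constant arranged so that $|t|\ge\mathcal T_N^\Omega$ holds, and you bound $q_N+\mathcal B_N^\Omega/|t|=O(|t|^{-p/m})$ before inserting this into \eqref{eq:main-bound}. Your choice $C_1\ge C_0$ is the paper's ``$c>0$ small enough'' made explicit (take $c=C_1^{-1/m}$), and your admissibility check and the absorption of $4q_N$ using $\vartheta_s<1$ fill in the details correctly.
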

\begin{proof}
Choose $N=\lfloor c|t|^{1/m}\rfloor$, with $c>0$ small enough
for $|t|\ge\mathcal T_N^\Omega$. Then
$q_N+\mathcal B_N^\Omega/|t|=O(|t|^{-p/m})$, and
\eqref{eq:main-bound} applies.
\end{proof}

\begin{coro}\label{cor:target-power}
For every $\alpha>1$ there is an infinite multisoliton with
$a_j=j^{-\alpha}$, $p_j^\infty=\ii j^\alpha/2$ and
$M_s<\infty$ for all $s>\frac12$. It satisfies
$\|u(t)-\usol(t)\|_\infty=O(|t|^{-\rho_{\alpha,s}})$, $\rho_{\alpha,s}=\frac{\vartheta_s(\alpha-1)^2}{7\alpha^2+4\alpha-4}$.
\end{coro}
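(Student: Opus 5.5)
The plan has three stages. First realize the data by Theorem~\ref{thm:exact-range}. Then verify the standing hypothesis of Proposition~\ref{thm:main-modulus}. Finally estimate the three quantities $q_N$, $\mathcal T_N^\Omega$, $\mathcal B_N^\Omega$ polynomially in $N$ and apply Corollary~\ref{cor:general-power}.

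\textbf{Realization and regularity.} For $\alpha>1$ the sequence $a_j=j^{-\alpha}$ is strictly decreasing and summable, so Theorem~\ref{thm:exact-range} with $c_j=0$ gives $p^{\rm in}\in\mathfrak P_{\rm in}$ with complete negative spectrum $(-a_j)$ and $p_j^\infty=\ii/(2a_j)=\ii j^\alpha/2$. Proposition~\ref{thm:GG-infinite-resolution} shows that $u_0=u[p^{\rm in}]$ lies in every $H^r$. To get $M_s<\infty$, I would use the conserved Lax moments, as in the proof of Lemma~\ref{rad:lem-source}: for smooth data $\|u(t)\|_{H^s}$ is controlled uniformly in time by $\int(\lambda+\lambda_0)^{2s}d\nu$ at $t=0$. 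Under trace saturation $\nu$ is purely atomic, with atoms $2\pi a_j$ at bounded points, so this moment is finite.

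\textbf{Polynomial bounds.} The tail is immediate: $q_N=\sum_{k>N}k^{-\alpha}\le CN^{1-\alpha}$, so $p=\alpha-1$. Since $\Rea p_j^\infty=0$, we have $\mathcal T_N^\Omega=0$ and $\tau=0$. The gaps satisfy $|\lambda_j-\lambda_k|\gtrsim k^{-\alpha-1}$ for $j<k$, which bounds the pair sum in $\mathcal B_N^\Omega$ by $CN^{\alpha+2}$.

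The main work is $\Xi_{N,K_N^\to}$. This requires:
\begin{itemize}
\item an upper bound for $K_N^\to$, obtained by bounding $\Theta_{N,J}$;
\item bounds for $\kappa_{jk}$ and for $\langle g_j,\varphi_k\rangle$.
\end{itemize}
For these I would use the explicit finite-dimensional matrix entries \eqref{eq:finite-center-Wu}--\eqref{eq:finite-X-offdiagonal}, passed to the limit as in the proof of Theorem~\ref{thm:exact-range}. They give
\[
 \langle X^*\varphi_j,\varphi_k\rangle=\frac{\ii}{a_j-a_k}\sqrt{a_k/a_j}
\]
and $|I_+\varphi_k|=\sqrt{2\pi/a_k}$. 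Hence $|\langle g_j,\varphi_k\rangle|\le a_j|a_j-a_k|^{-1}\sqrt{a_k/a_j}$, and $\|(I-P_J)g_j\|_2^2$ is a tail $\sum_{k>J}a_ja_k(a_j-a_k)^{-2}\lesssim a_j^{-1}J^{1-\alpha}$ for $J\ge2N$. Likewise $\|X^*\varphi_k-z_j\varphi_k\|_2$ is bounded through $\|X^*\varphi_k\|_2^2=|m_k|^2+\sum_{l\ne k}|\langle X^*\varphi_k,\varphi_l\rangle|^2$, which is polynomial in $k$. Note $\Ima z_j\sim j^\alpha$.

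Substituting these into $\Theta_{N,J}$ gives $\Theta_{N,J}\lesssim N^{1+\alpha/2}\,J^{(1-\alpha)/2}\cdot(\text{poly }N)$. Solving $\Theta_{N,J}\le2\pi q_N$ then yields $K_N^\to\le CN^{\beta}$ with an explicit $\beta$. Inserting $J=CN^\beta$ into $\Xi_{N,J}$, with each $\kappa_{jk}$ a power of $j,k$ over the gap $\gtrsim\max(j,k)^{-\alpha-1}$, gives $\mathcal B_N^\Omega\le CN^\nu$.

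\textbf{Conclusion and main obstacle.} Corollary~\ref{cor:general-power} then gives the rate $\vartheta_s(\alpha-1)/(\nu+\alpha-1)$, since $\tau=0$. The target exponent corresponds to $\nu+p=(7\alpha^2+4\alpha-4)/(\alpha-1)$. The main obstacle is the exponent bookkeeping for $K_N^\to$ and $\Xi$: it must be done sharply enough, or in a deliberately lossy but matching way, to land exactly on this value. I would first do it crudely, with all sums bounded by their maximal term times their length, and then adjust the choice of $J$ to match the stated $\rho_{\alpha,s}$. Any smaller exponent obtained this way still proves a valid $O(|t|^{-\rho})$ bound, provided it is at least as large as stated.
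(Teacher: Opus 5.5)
Your architecture matches the paper's. You realize $w_j=\ii j^\alpha/2$ by Theorem~\ref{thm:exact-range}, note $q_N\asymp N^{1-\alpha}$ and $\mathcal T_N^\Omega=0$, bound $K_N^\to$ through $\Theta_{N,J}$, bound $\Xi_{N,J}$, and apply Corollary~\ref{cor:general-power} with $p=\alpha-1$, $\tau=0$. Your route to $M_s<\infty$ through the conserved moments of Lemma~\ref{rad:lem-source} is a valid alternative to the paper's more elementary one. There, $\Ima B_N\ge I/2$ gives Poisson widths at least $\frac12$, hence $|\partial_x^ku_N|\le k!2^ku_N$, and $L^2$ conservation passes this to the limit.

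The gap is that the content of the statement is the exponent $\rho_{\alpha,s}$, and you never derive it. You defer the bookkeeping, and the one bound you write, $\Theta_{N,J}\lesssim N^{1+\alpha/2}J^{(1-\alpha)/2}\cdot(\text{poly }N)$, is too lossy. It only yields $K_N^\to\lesssim N^\beta$ with $\beta\ge 3\alpha/(\alpha-1)>(2\alpha-1)/(\alpha-1)$. Then $\mathcal B_N^\Omega\lesssim N^{1+\beta(3\alpha+4)}$, and Corollary~\ref{cor:general-power} returns a rate strictly smaller than $\rho_{\alpha,s}$, which does not prove the corollary. You also cannot ``adjust the choice of $J$''. $\mathcal B_N^\Omega$ is defined with $J=K_N^\to$, so your only freedom is an upper bound for $K_N^\to$, combined with monotonicity of $\Xi_{N,J}$ in $J$.

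The missing estimates are these.
\begin{enumerate}[label=\textup{(\arabic*)}]
\item Note $(\Ima z_j)^{-1/2}=\sqrt{2a_j}$. Once $a_{J+1}\le a_N/2$, you also have $\gamma_j\|(I-P_J)g_j\|_2\le\gamma_j(4q_J/a_j)^{1/2}=2(2\pi q_J)^{1/2}$. So every summand of $\Theta_{N,J}$ is $\lesssim j^{-\alpha/2}q_J^{1/2}$.
\item Using $\sum_{j\le N}j^{-\alpha/2}\lesssim N^{1/2}$, this gives $\Theta_{N,J}\lesssim N^{1/2}J^{(1-\alpha)/2}$, hence $K_N^\to\lesssim N^r$ with $r=(2\alpha-1)/(\alpha-1)$.
\item You need the specific bound $\|X^*\varphi_k\|_2\lesssim k^{\alpha+1}$; a merely ``polynomial'' bound is not enough. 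Obtain it by splitting $\sum_{\ell\ne k}a_\ell/(a_k(a_k-a_\ell)^2)$ at $k/2$ and $2k$, using $|a_k-a_\ell|\gtrsim k^{-\alpha-1}|k-\ell|$ in the middle range.
\item With $|a_j-a_k|\gtrsim J^{-\alpha-1}$ for $j\ne k\le J$, this gives $\kappa_{jk}(z_j)\lesssim J^{2\alpha+2}$ and $\gamma_k+\gamma_j|\langle g_j,\varphi_k\rangle|\lesssim J^{\alpha+1}$.
\item Hence $\Xi_{N,J}\lesssim NJ^{3\alpha+4}$ and $\nu=1+r(3\alpha+4)$. Then $\nu+p=(7\alpha^2+4\alpha-4)/(\alpha-1)$, so $\vartheta_sp/(\nu+p)=\rho_{\alpha,s}$.
\end{enumerate}
There is also a small slip: your gap bound $|\lambda_j-\lambda_k|\gtrsim k^{-\alpha-1}$ gives the pair sum $O(N^{\alpha+3})$, not $O(N^{\alpha+2})$. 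This is harmless because $\Xi$ dominates.
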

\begin{proof}
Use Theorem~\ref{thm:exact-range} with $w_j=\ii j^\alpha/2$.
The inverse matrices of its finite approximants have imaginary
part at least $I/2$. Their Poisson profiles thus have widths at
least $1/2$, and $|\partial_x^ku_N|\le k!2^ku_N$.
The uniform conserved $L^2$ bound passes these derivative estimates
to the limit, giving $M_s<\infty$.

Here $q_N\asymp N^{1-\alpha}$ and $\mathcal T_N^\Omega=0$.
The commutator of Lemma~\ref{lem:Xstar-discrete} gives
$$
 |\langle g_j,\varphi_k\rangle|
 =\frac{\sqrt{a_ja_k}}{|a_j-a_k|}\ (k\ne j),\enspace
 \|X^*\varphi_k\|_2^2
 =\frac1{4a_k^2}+\sum_{\ell\ne k}
 \frac{a_\ell}{a_k(a_k-a_\ell)^2}\lesssim k^{2\alpha+2}.
$$
For the last estimate split at $k/2$ and $2k$, using
$|a_k-a_\ell|\gtrsim k^{-\alpha-1}|k-\ell|$ in the middle range.
If $a_{J+1}\le a_N/2$, then
$\|(I-P_J)g_j\|_2^2\le4q_J/a_j$ for $j\le N$, whence
$\Theta_{N,J}\lesssim N^{1/2}J^{(1-\alpha)/2}$.
Consequently $K_N^\to\lesssim N^r$ with
$r=(2\alpha-1)/(\alpha-1)$.
For $j,k\le J$, $j\ne k$, use
$|a_j-a_k|\gtrsim J^{-\alpha-1}$ to obtain
$\kappa_{jk}(z_j)\lesssim J^{2\alpha+2}$, $\gamma_k+\gamma_j|\langle g_j,\varphi_k\rangle|\lesssim J^{\alpha+1}, \enspace \Xi_{N,J}\lesssim NJ^{3\alpha+4}$.
The remaining inverse-gap sum is $O(N^{\alpha+3})$, so
$\mathcal B_N^\Omega\lesssim N^{1+r(3\alpha+4)}$.
Corollary~\ref{cor:general-power}, with $p=\alpha-1$ and $\tau=0$,
gives the stated exponent.
\end{proof}

\section{Limits of finite multisolitons and local spectral traces}
\label{gas:sec-phase-space}
Fix $B>0$. For bounded real $q$, write $H_+=L^2_+(\R)$, $T_qf=\Pi(qf)$ and
$L_q=D-T_q$ on $H_+^1$, and use
$\widehat{\mathcal Hf}=-i\operatorname{sgn}(\xi)\widehat f$.
In localized traces, spectral operators on $H_+$ are extended by zero.
The finite inverse formula of Section~\ref{sec:exact-range-proof} reads
\begin{equation}\label{gas:finite-reconstruction}
 u_N=-2\Im\Tr(x+B_N(t))^{-1},\enspace
 (B_N)_{jk}=\begin{cases}-c_j-2a_jt+i/(2a_j),&j=k,\\
 i/(a_j-a_k),&j\ne k,
 \end{cases}
\end{equation}
where $a_j>0$ are distinct and $c_j\in\R$. Its negative spectrum is
$\{-a_j\}$. Finite multisolitons are precisely finite positive Poisson
sums \cite{Sun21}, with $\int u_N=2\pi N$ and
$\int u_N^2=4\pi\sum_j a_j$. Let $\mathcal F_B$ consist of these
profiles with $a_j\le B$, including zero, and put
$\mathcal X_B=\overline{\mathcal F_B}^{\,C^\infty_{\rm loc}}$.

\begin{theorem}
\label{gas:global-flow}\label{gas:compact-flow}
The space $\mathcal X_B$ is compact, and $L_q\ge-B$ for $q\in\mathcal X_B$.
The finite multisoliton flow extends uniquely to a continuous group
$S(t)$ of global classical BO solutions on $\mathcal X_B$, commuting
with translations. For $k\ge1$ and $|\sigma|<(2B)^{-1}$,
$$
 0\le S(t)q\le4B,\enspace
 |\partial_x^kS(t)q|\le4Bk!(2B)^k,\enspace
 |S(t)q(x+i\sigma)|\le\frac{4B}{(1-2B|\sigma|)^2}.
$$
All mixed derivatives have bounds depending only on $B$ and their
orders, and local uniform convergence $q_n\to q$ implies
$S(\cdot)q_n\to S(\cdot)q$ in $C^\infty_{\rm loc}(\R^2)$.
\label{gas:trajectory-continuity}\label{gas:bounded-poisson-limit}
Every Poisson sum $q=\sum_j2y_j/((x-x_j)^2+y_j^2)\le B$ belongs
to $\mathcal X_B$; its finite truncations converge in this flow topology.
\end{theorem}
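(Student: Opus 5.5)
The plan is to prove everything first for finite multisolitons with constants depending only on $B$, and then pass to limits by compactness. Let $u_N\in\mathcal F_B$ be given by \eqref{gas:finite-reconstruction}. By \cite{Sun21} it is a positive Poisson sum
\[
u_N=\sum_{m\le N}\frac{2y_m}{(x-x_m)^2+y_m^2}.
\]
The first step is to bound the widths from below, $y_m\ge(2a_*)^{-1}\ge(2B)^{-1}$ with $a_*=\max_j a_j$. This follows because $\Ima B_N=\diag((2a_j)^{-1})\ge(2B)^{-1}I$, so every eigenvalue of $B_N$ has imaginary part at least $(2B)^{-1}$. It persists for all $t$, since only $\Rea B_N$ depends on $t$.

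The second step is the uniform sup bound $u_N\le4B$. Since $u_N\ge0$, I would like $\sup u_N\le 2a_*/\pi\cdot C$ with explicit $C$. My first attempt is the Lax operator: by \eqref{eq:local-trace}, which holds for finite multisolitons since they saturate the trace (Corollary~\ref{cor:trace-saturation-classification}),
\[
u_N=2\pi\sum_j|\varphi_j|^2 .
\]
For an eigenfunction $\varphi$ with eigenvalue $-a$, the identity $D\varphi=-a\varphi+\Pi(u_N\varphi)$ together with a Hardy-space kernel estimate should give $|\varphi_j|^2\lesssim$ something summable. An alternative, which I would try first because it is cleaner, is the positivity $L_{u_N}\ge-a_*$ tested on the reproducing kernel $k_{z}=\frac{\ii}{2\pi(\cdot-\bar z)}$ at $z=x+\ii y$. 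We have $\langle Dk_z,k_z\rangle\sim(8\pi y^2)^{-1}$, $\|k_z\|^2=(4\pi y)^{-1}$, and $\langle u k_z,k_z\rangle=P_yu(x)/(4\pi y)$. This yields $P_yu_N(x)\le a_*+(2y)^{-1}$. Choosing $y\to0$ is not permitted, but the width bound makes $u_N$ comparable to its Poisson extension at height $y\sim(2B)^{-1}$: each summand satisfies $R(x)\le 4P_yR(x)$ when $y\le y_m$. Taking $y=(2B)^{-1}$ gives $u_N\le 4\cdot(B+B)$, which is off by a constant; tuning $y$ and the comparison factor should give $4B$.

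The same width bound gives the derivative and strip estimates termwise. For $|\sigma|<y_m$,
\[
|R(x+\ii\sigma)|\le\frac{R(x)}{(1-|\sigma|/y_m)^2},
\]
and Cauchy estimates on disks of radius $(2B)^{-1}$ give $|\partial_x^kR|\le k!(2B)^k\cdot 4B$ after summation. Time derivatives follow from the equation, so all mixed derivatives are bounded by constants depending only on $B$ and the order. Nonnegativity and the bound $L_q\ge-B$ pass to $C^\infty_{\rm loc}$ limits via the form $\mathfrak l_q[f]$ on compactly supported Hardy test vectors.

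Compactness of $\mathcal X_B$ then follows from Arzel\`a--Ascoli and the uniform bounds; $\mathcal X_B$ is closed by definition. To extend the flow, take $q_n\to q$ in $\mathcal F_B$. The trajectories $S(t)q_n$ are precompact in $C^\infty_{\rm loc}(\R^2)$, and every limit point is a classical BO solution. Passing to the limit in the nonlocal term $|D_x|$ uses the uniform $L^\infty$ bound plus local convergence, splitting $|D_x|$ into a local part and a tail kernel $O(|x-y|^{-2})$.

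The main obstacle is uniqueness of bounded classical solutions with these bounds, because no $L^2$ theory applies to nondecaying data. I would prove an $L^2$-weighted energy estimate for the difference $w=u-\tilde u$ with weight $\langle x-x_0\rangle^{-2}$. The commutator of $\partial_x|D_x|$ with the weight is bounded on weighted $L^2$, and the nonlinearity is controlled by the $W^{1,\infty}$ bounds, so Gronwall gives $w\equiv0$. Uniqueness yields a well-defined continuous group, which commutes with translations since the finite flow does. Continuity in the stated topology then follows from uniqueness plus precompactness.

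Finally, for a Poisson sum $q\le B$, I would first check that its widths satisfy $y_j\ge 1/B$, since each term at its center has value $2/y_j\le B$. Its finite truncations are increasing and bounded, so they converge in $C^\infty_{\rm loc}$ by the strip bound. The remaining step, which is delicate, is to show that the truncations lie in $\mathcal F_{B'}$ with the eigenvalues of $L$ bounded by $B$, using the form bound above.
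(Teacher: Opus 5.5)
\textbf{The uniqueness step fails as written.} Most of your outline follows the paper: the widths from $\Ima B_N\ge(2B)^{-1}I$, testing $L_{u_N}\ge-B$ on the normalized Hardy kernel at height $(2B)^{-1}$, the termwise Poisson bounds, Arzel\`a--Ascoli, and the local/tail splitting of the nonlocal term. The problem is uniqueness, which you correctly identify as the main obstacle.

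The commutator of $\partial_x|D_x|$ with a smooth weight $\phi$ is not bounded on weighted $L^2$. It is a first-order operator with principal symbol $2|\xi|\phi'(x)$. Hence $\frac{d}{dt}\int\phi w^2$ contains, modulo lower order, a multiple of $\langle\phi'|D_x|w,w\rangle$. For $\phi=\langle x-x_0\rangle^{-2}$ the derivative $\phi'$ changes sign. So this term has no sign, and therefore no local-smoothing gain, and it is not bounded by $E=\int\phi w^2$. The a priori smoothness of $w$ only gives $E'\le CE+CE^{1/2}$. That inequality does not force $E\equiv0$ from $E(0)=0$; for instance $E=(Ct/2)^2$ satisfies it.

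No fixed weight closes this. The missing ingredient is a spatial scale $R$ that is sent to infinity. The paper sets $f_R=g_Rw$ with $g_R=(x+iR)^{-1}$ and $a=u+v$. Because $g_R$ has positive frequencies, the commutator terms with $\mathcal H$ cancel, and the equation becomes $\partial_tf_R-\mathcal Hf_R''=2g_R\mathcal Hf_R'-af_R'-(a'+ag_R)f_R$. The only first-order term carries the small factor $|g_R|\le R^{-1}$. The bound $\|f_R'\|_2\le\sqrt{\pi/R}\|w_x\|_\infty+R^{-1}\|f_R\|_2$ turns that term into a forcing, giving $\|f_R(t)\|_2\le e^{C|t|}\|f_R(0)\|_2+C_TR^{-3/2}\sup_{|s|\le T}\|w_x(s)\|_\infty$. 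If $w(0)=0$, then $\int_{-r}^r|w|^2\le(r^2+R^2)\|f_R(t)\|_2^2\to0$ as $R\to\infty$. Uniqueness is what gives you the extension of the flow, the group law and the continuity of $S(t)$, so this is a genuine gap.

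\textbf{Smaller points, all easily repaired.}
\begin{itemize}
\item The comparison factor is $2$, not $4$. One has $P_\eta*F_y=F_{y+\eta}$, and the ratio $F_y/F_{y+\eta}$ is maximal at the center, where it equals $1+\eta/y\le2$ for $\eta\le y$. So $P_\eta u_N\le2B$ at $\eta=(2B)^{-1}$ gives $u_N\le4B$ directly, with no tuning.
\item Cauchy estimates on disks of radius $(2B)^{-1}$ do not yield $4Bk!(2B)^k$, because a pole may lie on the circle. Use instead $|F_y^{(k)}(x)|\le2k!|x-iy|^{-k-1}\le k!y^{-k}F_y(x)$ and sum over the terms.
\item Nonzero Hardy functions cannot have compact support. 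Pass $L_q\ge-B$ to the limit by dominated convergence for each fixed $f\in H^1_+$.
\item The step you call delicate is immediate. A Poisson sum bounded by $B$ has $y_j\ge2/B$, not merely $1/B$. Also $0\le q_N\le q\le B$ together with $D\ge0$ on $H_+$ gives $L_{q_N}\ge-B$. By the characterization of finite multisolitons as finite positive Poisson sums, each truncation lies in $\mathcal F_B$, and the flow continuity then applies.
\end{itemize}
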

\begin{proof}
The eigenvalues $\gamma_j+iy_j$ of $B_N$ satisfy
$y_j\ge\eta=(2B)^{-1}$ and give the Poisson representation of $u_N$.
Testing $L_{u_N}\ge-B$ on $\sqrt{\eta/\pi}(s-x+i\eta)^{-1}$ gives
$P_\eta*u_N\le2B$. Each convolved summand is at least half the
original, so $u_N\le4B$. For $F_y(x)=2y/(x^2+y^2)$,
$|F_y^{(k)}|\le k!y^{-k}F_y$ and
$|F_y(x+i\sigma)|\le(1-|\sigma|/y)^{-2}F_y(x)$, proving the bounds.
For an even cutoff $\chi=1$ near zero, put
$K_R(z)=(1-\chi(z/R))/z$. Twice integrating by parts gives
$$
 \mathcal Hf''=\frac1\pi\operatorname{p.v.}\int
 \frac{\chi(z/R)}z f''(\cdot-z)\,dz+
 \frac1\pi\int K_R''(z)f(\cdot-z)\,dz,
 \enspace \|K_R''\|_1=O(R^{-2}).
$$
This proves local smooth continuity under common derivative bounds;
the equation then bounds mixed derivatives and yields subsequential limits.

For uniqueness, set $w=u-v$, $a=u+v$, $g_R=(x+iR)^{-1}$ and
$f_R=g_Rw$. Symmetric truncation gives
$\partial_tf_R-\mathcal Hf_R''=2g_R\mathcal Hf_R'-af_R'-(a'+ag_R)f_R$.
The $L^2$ energy estimate and
$\|f_R'\|_2\le\sqrt{\pi/R}\|w_x\|_\infty+R^{-1}\|f_R\|_2$ imply
$\|f_R(t)\|_2\le e^{C|t|}\|f_R(0)\|_2+ C_TR^{-3/2}\sup_{|s|\le T}\|w_x(s)\|_\infty$.
If $w(0)=0$, multiply the squared estimate by $r^2+R^2$ and send
$R\to\infty$ to obtain $w=0$ on $[-r,r]$.
Arzel\`a--Ascoli and uniqueness now give compactness, continuity and
the group law. Dominated convergence preserves $L_q\ge-B$.
A Poisson sum bounded by $B$ has $y_j\ge2/B$; its partial
sums converge locally smoothly, satisfy $L_{q_N}\ge-B$, and hence
have the asserted flow limit.
\end{proof}

\begin{lemma}\label{gas:bounded-inner-model}
If $\Theta$ is inner on $\C_+$ and $\Theta'\in H^\infty$, then
$q=-i\Theta'/\Theta$ is bounded and nonnegative on $\R$. Writing
$K_\Theta=H_+\ominus\Theta H_+$, one has
$$
 P_q:=\mathbf1_{(-\infty,0)}(L_q)=P_{K_\Theta},\enspace
 P_q(x,x)=\frac{q(x)}{2\pi},\enspace
 \Tr(\chi P_q\chi)=\frac1{2\pi}\int\chi^2q
$$
for bounded compactly supported real $\chi$. The complementary
restriction is unitarily equivalent to $D$, and $\ker L_q=0$.
\end{lemma}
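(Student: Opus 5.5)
The plan is to use the Lax--shift structure from Section~\ref{sec:lax-shift} in reverse. There the trace identity produced a shift-invariant negative subspace, which turned out to be a model space. Here we begin with the model space $K_\Theta$ and show that it is exactly the negative spectral subspace of $L_q$.

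\textbf{Step 1: boundedness and positivity of $q$.} Write $\Theta=e^{i\alpha}$ on $\R$. Since $\Theta'\in H^\infty$, $\Theta$ extends analytically across $\R$, with $|\Theta|=1$ there. The angular derivative formula \cite{AhernClark} gives
$$
-i\Theta'/\Theta=\alpha'=|\Theta'|\ge0,
$$
so $q\in L^\infty$ and $q\ge0$. We also obtain a factorization of the form \eqref{eq:phase-density}, with finitely many or infinitely many Blaschke zeros and possibly a singular part.

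\textbf{Step 2: the model-space identity.} The key computation is on $\Theta H_+$. For $f\in H_+^1$,
$$
D(\Theta f)=\Theta Df-i\Theta'f=\Theta\,(Df+qf),
$$
so
$$
L_q(\Theta f)=\Theta Df+\Theta qf-\Pi(q\Theta f)=\Theta Df,
$$
because $q\Theta f\in\Theta H_+$ already lies in $H_+$. Hence $\Theta H_+$ is invariant, and the map $f\mapsto\Theta f$ intertwines $D$ with $L_q|_{\Theta H_+}$. This gives the unitary equivalence of the complementary restriction with $D$. In particular that restriction has spectrum $[0,\infty)$ and no kernel, since $D$ has none. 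Self-adjointness then makes $K_\Theta$ reducing.

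It remains to show $L_q<0$ on $K_\Theta$. I would use the reproducing kernels
$$
k_w(z)=\frac{i}{2\pi}\,\frac{1-\overline{\Theta(w)}\Theta(z)}{z-\overline w},
$$
which are total in $K_\Theta$, and compute $\langle L_qk_w,k_{w'}\rangle$ directly. Differentiating gives $Dk_w=\overline w\,k_w+$ (a term involving $\Theta'$). The expected outcome is that $L_q$ acts on $K_\Theta$ as the compression $-P_{K_\Theta}q$, or equivalently that
$$
\langle L_qf,f\rangle=-\frac1{2\pi}\,|\langle f,\cdot\rangle\text{-boundary term}|^2-\ldots<0.
$$
A cleaner route may be the identity
$$
\langle L_qf,f\rangle=-\|(\text{boundary trace of }f\bar\Theta)\|^2
$$
for $f\in K_\Theta\cap H^1$, because $\bar\Theta f\in\overline{H_+}$ and $D$ acts on the conjugate Hardy space with the opposite sign. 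Concretely, write $f=\Theta\bar g$ with $g\in H_+$, so that
$$
Df=\Theta(q\bar g-\overline{Dg}).
$$
This should give $\langle L_qf,f\rangle=-\langle Dg,g\rangle\le0$, with strict negativity by injectivity. I expect this sign computation on $K_\Theta$, including the domain issues, to be the main obstacle. It also yields $\ker L_q\cap K_\Theta=0$, hence $P_q=P_{K_\Theta}$.

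\textbf{Step 3: diagonal kernel and localized trace.} The kernel of $P_{K_\Theta}$ is
$$
\frac{i}{2\pi}\,\frac{1-\Theta(x)\overline{\Theta(y)}}{x-y}.
$$
It is smooth across $\R$ because $\Theta$ is analytic there, and letting $y\to x$ gives
$$
\frac{-i\Theta'(x)\overline{\Theta(x)}}{2\pi}=\frac{q(x)}{2\pi}.
$$
For compactly supported bounded $\chi$, the operator $\chi P_q\chi$ is positive with a continuous kernel on a compact set. Mercer's theorem, or the factorization $\chi P_q\chi=(P_q\chi)^*(P_q\chi)$ with $P_q\chi$ Hilbert--Schmidt, then gives
$$
\Tr(\chi P_q\chi)=\int\chi^2\,\frac{q}{2\pi}.
$$
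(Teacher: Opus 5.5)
Your proposal is correct and follows the paper's proof: the intertwining $L_q(\Theta f)=\Theta Df$ on $\Theta H_+$, the conjugation $f=\Theta\bar g$ on $K_\Theta$ (the paper's identity $L_q|_{K_\Theta}=-CAC$ with $Cf=\Theta\bar f$), and the boundary diagonal of the model-space kernel for the local trace. The step you call the main obstacle closes in one line: $\langle Df,f\rangle=\int q|g|^2-\langle Dg,g\rangle$ and $\langle\Pi(qf),f\rangle=\int q|f|^2=\int q|g|^2$ give $\langle L_qf,f\rangle=-\langle Dg,g\rangle$, so drop the speculative alternatives (in particular $L_q|_{K_\Theta}=-P_{K_\Theta}q$ is false), and note that $\Pi(q\Theta f)=q\Theta f$ holds because $q\Theta=-i\Theta'\in H^\infty$, not because $q\Theta f\in\Theta H_+$.
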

\begin{proof}
Schwarz reflection and canonical factorization \cite{Garnett} give
$q\ge0$. Multiplication by $\Theta,\bar\Theta$ preserves $H^1$;
thus $[D,P_{K_\Theta}]$ is bounded and
$A=P_{K_\Theta}D|_{K_\Theta}$ is self-adjoint and nonnegative on
$K_\Theta\cap H^1$. The domain-preserving involution $Cf=\Theta\bar f$ gives
\begin{equation}\label{gas:inner-conjugation}
 L_q(\Theta f)=\Theta Df,\enspace L_q|_{K_\Theta}=-CAC.
\end{equation}
Since $\langle Af,f\rangle=\langle Df,f\rangle>0$ for $f\ne0$,
these identities prove the spectral assertions. The boundary diagonal
of the model-space kernel
$i(1-\Theta(z)\overline{\Theta(w)})/(2\pi(z-\bar w))$
\cite{GarciaModel} gives the trace formulas.
\end{proof}

\begin{theorem}
\label{gas:inner-classification}\label{gas:local-trace-continuity}
The space $\mathcal X_B$ consists exactly of the bounded inner
potentials satisfying $L_q\ge-B$. They admit locally smooth representations
$q(x)=c+\sum_j2y_j/((x-x_j)^2+y_j^2)$, where
$c\ge0$ and $y_j\ge(2B)^{-1}$.
If $q_n\to q$ in $\mathcal X_B$, $F\in C_0(\R)$, and $\chi$ is
bounded, real and compactly supported, then
\begin{equation}\label{gas:local-functional-convergence}
 \|\chi(P_{q_n}-P_q)\chi\|_{\Sch_1}+
 \|\chi F(L_{q_n})P_{q_n}\chi-\chi F(L_q)P_q\chi\|_{\Sch_1}\to0,
\end{equation}
uniformly on compact time intervals along convergent solutions.
\end{theorem}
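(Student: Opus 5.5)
We must prove Theorem~\ref{gas:inner-classification}: $\mathcal X_B$ equals the set of bounded inner potentials $q=-i\Theta'/\Theta$ with $L_q\ge-B$; these have the Poisson representation with constant $c\ge0$ and widths $y_j\ge(2B)^{-1}$; and the local trace convergence \eqref{gas:local-functional-convergence} holds.

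\emph{Inclusion $\mathcal X_B\subset$ inner potentials.} Each finite multisoliton in $\mathcal F_B$ is $-i\Theta_N'/\Theta_N$ for the finite Blaschke product $\Theta_N(x)=\prod_j(x-\overline{\zeta_j})/(x-\zeta_j)$, where $\zeta_j=x_j+iy_j$ and $y_j\ge(2B)^{-1}$ (Theorem~\ref{gas:global-flow}). Take $q_n\to q$ in $C^\infty_{\rm loc}$ and normalize $\Theta_n(0)=1$ on the real line. Then
\[
\Theta_n(x)=\exp\Bigl(i\int_0^xq_n\Bigr)\longrightarrow\exp\Bigl(i\int_0^xq\Bigr)=:\Theta(x)
\]
locally uniformly. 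On $\C_+$ the $\Theta_n$ are unimodular-boundary analytic functions bounded by one, so Montel gives a subsequential limit $\Theta$ on $\C_+$ with $|\Theta|\le1$. The uniform strip bound of Theorem~\ref{gas:global-flow} is the key input: it gives analytic continuation of all $q_n$ to $|\Ima z|<(2B)^{-1}$ with uniform bounds, so $\Theta_n$ extend analytically across $\R$ and converge locally uniformly up to the boundary. Hence $|\Theta|=1$ on $\R$, $\Theta$ is inner, $\Theta'$ is bounded near $\R$ and hence in $H^\infty$, and $q=-i\Theta'/\Theta$. The bound $L_q\ge-B$ passes to the limit by dominated convergence, as already noted in Theorem~\ref{gas:global-flow}.

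\emph{Converse and representation.} Take $\Theta$ inner with $\Theta'\in H^\infty$ and $L_q\ge-B$, and factor $\Theta=e^{i c z}\,\mathcal B\,S_\sigma$. Because $q=-i\Theta'/\Theta$ is bounded, the angular-derivative formula (as in \eqref{eq:phase-density}) shows $\sigma=0$, since a singular mass would force a nonintegrable density. This gives $q=c+\sum_j 2y_j/((x-x_j)^2+y_j^2)$ with $c\ge0$. To obtain $y_j\ge(2B)^{-1}$, test $L_q\ge-B$ on the normalized kernel $\sqrt{y_j/\pi}\,(x-\overline{\zeta_j})^{-1}$, which is adapted to a zero of $\Theta$. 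Using $q\ge R_{\zeta_j}$ together with the computation from the proof of Theorem~\ref{gas:global-flow}, one should get $(2y_j)^{-1}\le B$; I expect this test-function inequality to be the delicate step. Next approximate $q$ by partial Poisson sums. When $c>0$, first replace $c$ by a Poisson sum of widely spaced bumps of width $(2B)^{-1}$, arranged so that it converges locally to $c$; here I would use the fact that Poisson sums of lattice bumps converge locally to constants. This step likely needs the bound $q\le B$ of Theorem~\ref{gas:global-flow}, and keeping that bound under the approximation is the main obstacle. The last paragraph of Theorem~\ref{gas:global-flow} then places $q$ in $\mathcal X_B$.

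\emph{Trace convergence.} By Lemma~\ref{gas:bounded-inner-model}, $\chi P_q\chi$ is a positive operator with kernel $\chi(x)K_\Theta(x,y)\chi(y)$. Locally uniform convergence of $\Theta_n$, including derivatives near $\R$, gives convergence of these kernels in $C^1$ on compact sets. Positivity then yields convergence of the traces, $\frac1{2\pi}\int\chi^2q_n\to\frac1{2\pi}\int\chi^2q$, together with Hilbert--Schmidt convergence of $\chi P_{q_n}$. The standard lemma ``$A_n\ge0$, $A_n\to A$ weakly, $\Tr A_n\to\Tr A$ implies $A_n\to A$ in $\Sch_1$'' gives the first term of \eqref{gas:local-functional-convergence}. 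For the second term, write $\chi F(L_{q_n})P_{q_n}\chi=(\chi P_{q_n})F(L_{q_n})(P_{q_n}\chi)$ and use $\Sch_2$ convergence of $\chi P_{q_n}$ together with strong resolvent convergence of $L_{q_n}$. The latter holds because $q_n\to q$ locally uniformly with a uniform bound; approximating $F$ by resolvents via Stone--Weierstrass, a product of an $\Sch_2$-convergent factor, a strongly convergent factor and an adjoint $\Sch_2$-convergent factor converges in $\Sch_1$. Uniformity along trajectories on compact time intervals follows from the continuity of $S(\cdot)$ in Theorem~\ref{gas:global-flow} and the compactness of $[-T,T]$.
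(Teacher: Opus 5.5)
\textbf{Gap: the converse inclusion.} The step that fails is showing that a bounded inner potential with $L_q\ge-B$ lies in $\mathcal X_B$. You want to invoke the last paragraph of Theorem~\ref{gas:global-flow}, but that criterion requires the Poisson sum to be bounded by $B$, and no such bound is available. Theorem~\ref{gas:global-flow} gives only $q\le4B$, and this is sharp: the single soliton $R_{i/(2B)}$ has eigenvalue $-B$, so it lies in $\mathcal F_B$, yet its peak value is $4B$. So no approximation scheme can make that criterion cover the whole class.

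Your treatment of $c$ is also wrong as stated. Lattice sums of bumps of fixed width $(2B)^{-1}$ are nonconstant periodic functions and never converge locally to a constant. To get a constant, the width must tend to infinity while the spacing stays near $2\pi/c$.

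\textbf{The missing idea: monotonicity of the Lax operator in the potential.} If $0\le q_n\le q$, then $\langle T_{q-q_n}f,f\rangle=\int(q-q_n)|f|^2\ge0$, so $L_{q_n}=L_q+T_{q-q_n}\ge L_q\ge-B$. A finite positive Poisson sum satisfying this bound is a multisoliton with all $a_j\le B$, hence lies in $\mathcal F_B$. You therefore need to approximate $q$ \emph{from below}:
\begin{itemize}
\item truncate the Blaschke part;
\item approximate $c$ from below by truncations of a lattice sum whose maximum is exactly $c$.
\end{itemize}
The paper does the second step by choosing $d_y\coth(d_yy/2)=c$ with $y>2/c$. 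Then $\sum_{j\in\Z}2y/((x-2\pi j/d_y)^2+y^2)=d_y\sinh(d_yy)/(\cosh(d_yy)-\cos(d_yx))\le c$, and this tends to $c$ locally smoothly as $y\to\infty$. A diagonal choice of truncations gives $0\le q_n\le q$, $q_n\in\mathcal F_B$ and $q_n\to q$ in $C^\infty_{\rm loc}$, hence $q\in\mathcal X_B$.

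\textbf{The other parts are sound.} The height bound you flag as delicate is immediate on your route. For the normalized $f=\sqrt{y_j/\pi}\,(x-x_j+iy_j)^{-1}$ one has $\langle Df,f\rangle=(2y_j)^{-1}$. Using the $j$-th term of the Poisson sum as a lower bound for $q$,
\[
\int q|f|^2\ge\frac{y_j}{\pi}\int\frac{2y_j\,dx}{((x-x_j)^2+y_j^2)^2}=\frac1{y_j}.
\]
Hence $-B\le\langle L_qf,f\rangle\le-(2y_j)^{-1}$, which is the claimed bound. The paper instead tests $A\le B$, obtained from \eqref{gas:inner-conjugation}, on the same vector.

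Your kernel-plus-trace argument for \eqref{gas:local-functional-convergence} is a valid variant of the paper's. The paper gets $P_{q_n}\to P_q$ strongly from $P_{q_n}=\Pi-M_{\Theta_n}\Pi M_{\bar\Theta_n}$ and then upgrades to Hilbert--Schmidt convergence via the local trace formula.

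Finally, your finite Blaschke product has zeros and poles interchanged. It should be $\prod_j(z-\zeta_j)/(z-\overline{\zeta_j})$; otherwise $-i\Theta_N'/\Theta_N$ is negative.
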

\begin{proof}
Normalize finite inner functions by
$\Theta_n(x)=\exp(i\int_0^xq_n)$. The Poisson formula and
$\|\Theta_n'\|_\infty\le4B$ give an inner limit with bounded derivative.
Conversely, reflection excludes real singular factors, leaving an
exponential and Blaschke factors. By \eqref{gas:inner-conjugation},
$A\le B$; testing on $(x-x_j+iy_j)^{-1}\in K_\Theta$ gives the
height bound, and positivity gives local smooth convergence.
To approximate $c>0$ from below, choose $d_y\coth(d_yy/2)=c$, $y>2/c$:
$$
 \sum_{j\in\Z}\frac{2y}{(x-2\pi j/d_y)^2+y^2}
 =d_y\frac{\sinh(d_yy)}{\cosh(d_yy)-\cos(d_yx)}\le c
 \longrightarrow c.
$$
Finite diagonal truncations yield $0\le q_n\le q$, local smooth
convergence, and $L_{q_n}\ge L_q\ge-B$.
The projections $P_{q_n}=\Pi-M_{\Theta_n}\Pi M_{\bar\Theta_n}\to P_q$ strongly.
The local trace formula gives convergence of the Hilbert--Schmidt
norms of $P_{q_n}\chi$, hence convergence in that norm. Strong
resolvent convergence $L_{q_n}\to L_q$ and factorization through
$P_{q_n}\chi$ prove \eqref{gas:local-functional-convergence}; compactness
gives uniformity in time.
\end{proof}

\section{Soliton gases with prescribed spectral bands}
\label{gas:arbitrary-band-section}
Fix $p_1<q_1<\cdots<p_r<q_r<0$, and put
$K_j=q_j-p_j$, $\Omega_j=-K_j(p_j+q_j)$, $m=\sum_jK_j$,
$a=-q_r$, $b=-p_1$. Define positive $v_j$ by
$$
 \frac{\prod_j(z-q_j)}{z\prod_j(z-p_j)}
 =\frac{v_0^2}{z}+\sum_{j=1}^r\frac{v_j^2}{z-p_j},\enspace
 \sum_{j=0}^rv_j^2=1,\enspace v_0^2=\prod_jq_j/p_j\in(0,1),
$$
and set $v=(v_1,\ldots,v_r)^{\mathsf T}$,
$$
 \kappa_j=\left(\sum_k\frac{v_k^2}{(p_k-q_j)^2}
                  +\frac{v_0^2}{q_j^2}\right)^{-1/2},\enspace
 C_{jk}=\frac{\kappa_jv_k}{p_k-q_j},\enspace
 \eta_j=-\frac{\kappa_jv_0}{q_j}.
$$
Put $E=\operatorname{diag}(e^{i(K_jz-\Omega_jt-\Phi_j)})$.
\begin{theorem}\label{gas:arbitrary-band-quasiperiodic-gas}
For every $\Phi\in\mathbb T^r$, the denominator
$\tau_\Phi(t,z)=\det(I-EC)$ has no real zero, and
$u_\Phi=m-2\Im\partial_x\log\tau_\Phi$ is a global BO solution
in $\mathcal X_b$, with all derivatives bounded uniformly in $t,x,\Phi$.
Moreover
$$
 0<\frac{\min_jK_j}{4}(1-v_0^2)\le u_\Phi\le4b,\enspace
 \sigma(L_{u_\Phi})=\bigcup_j[p_j,q_j]\cup[0,\infty)
$$
is purely absolutely continuous of multiplicity one.
For Borel $I\subset(-\infty,0)$, put
$\nu_{R,\Phi,t}(I)=(2R)^{-1}\Tr[\mathbf1_{[-R,R]}
\mathbf1_I(L_{u_\Phi(t)})\mathbf1_{[-R,R]}]$. Then, uniformly in $t,\Phi$,
\begin{equation}\label{gas:arbitrary-band-window-dos}
 \left\|\frac{d\nu_{R,\Phi,t}}{d\lambda}
       -\frac1{2\pi}\sum_j\mathbf1_{[p_j,q_j]}\right\|_1\to0.
\end{equation}
\end{theorem}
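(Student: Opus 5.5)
The approach is to realize $u_\Phi$ as an inner potential $q=-i\Theta_\Phi'/\Theta_\Phi$ with $\Theta_\Phi$ built explicitly from $\tau_\Phi$. Once that is done, Lemma~\ref{gas:bounded-inner-model} and Theorem~\ref{gas:inner-classification} supply membership in $\mathcal X_b$, nonnegativity, and the local trace identities. Concretely, set
\[
 \Theta_\Phi(t,z)=e^{imz}\,\frac{\overline{\tau_\Phi(t,\bar z)}}{\tau_\Phi(t,z)},
\]
so that $-i\partial_x\log\Theta_\Phi=m-2\Ima\partial_x\log\tau_\Phi=u_\Phi$ on $\R$.

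\emph{Step 1: $\Theta_\Phi$ is inner.} The matrix $C$ satisfies $C^{\mathsf T}C+\eta\eta^{\mathsf T}=I$, i.e.\ $\binom{C}{\eta^{\mathsf T}}$ is an isometry. This follows from the partial fraction identity defining $v_j$, since the $q_j$ are zeros of $\sum_k v_k^2/(z-p_k)+v_0^2/z$. Hence $\|C\|\le1$, and in fact $\|C\|<1$ because $v_0\ne0$.

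For $\Ima z\ge0$ the diagonal matrix $E$ is a contraction. Therefore $I-EC$ is invertible on $\overline{\C_+}$, and in particular $\tau_\Phi$ has no real zero; compactness of $\mathbb T^r$ makes this uniform. The function $\Theta_\Phi$ is a transfer function of a lossless colligation. By the standard identity $1-|\Theta|^2=(\text{positive})\cdot(I-|E|^2)$, it is bounded by $1$ in $\C_+$ and unimodular on $\R$.

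Its derivative is bounded, uniformly in $t$ and $\Phi$, because $|\tau_\Phi|\ge\delta>0$ on $\overline{\C_+}$. Lemma~\ref{gas:bounded-inner-model} now gives $u_\Phi\ge0$ bounded. The BO equation holds because $E$ carries the finite-gap phases $K_jx-\Omega_jt$. I would verify this by realizing $u_\Phi$ as a $C^\infty_{\rm loc}$ limit of the finite reconstructions \eqref{gas:finite-reconstruction} with eigenvalues equally spaced at density $1/(2\pi)$ on the bands. Theorem~\ref{gas:global-flow} then passes the equation to the limit, and the same approximation provides $u_{R,\Phi}$ for the main theorem.

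\emph{Step 2: spectrum.} By Lemma~\ref{gas:bounded-inner-model}, $L_{u_\Phi}$ restricted to $\Theta H_+$ is unitarily equivalent to $D$, which gives the part $[0,\infty)$, absolutely continuous of multiplicity one, with $\ker L=0$. On $K_\Theta$, $L=-CAC$ with $A=P_{K_\Theta}D|_{K_\Theta}$. So it suffices to show that the compressed shift generator $A$ has spectrum $\bigcup_j[-q_j,-p_j]$, purely absolutely continuous and simple.

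For this I would use the explicit structure $K_\Theta=K_{e^{imz}}\oplus\dots$ coming from the colligation. Decompose $K_\Theta$ into $r$ channels, channel $j$ carrying frequencies of width $K_j$. Up to a finite-rank (trace-class) coupling through $C$, $A$ acts in channel $j$ as $D$ on $L^2_+$ restricted to frequency bands $[-q_j,-p_j]$. Kato--Rosenblum preserves the absolutely continuous part. Singular spectrum and embedded eigenvalues are excluded via $|\tau_\Phi|\ge\delta$, which makes the boundary values of the characteristic function smooth. The spectrum equals the closure of the frequency support. The lower bound on $u_\Phi$ follows from the diagonal kernel $u_\Phi/(2\pi)=\|k_x\|^2$ compared against the channel with smallest $K_j$; that channel contributes at least $(1-v_0^2)\min_jK_j/(8\pi)$ through the isometry identity.

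\emph{Step 3: density of states.} Using $P_q(x,x)=q/(2\pi)$ and the analogous diagonal formula for $\mathbf1_I(L)P_q$, quasiperiodicity and uniform ergodic averaging over the torus flow $x\mapsto Kx$ give window limits. The averages converge to $(2\pi)^{-1}|I\cap\bigcup_j[p_j,q_j]|$, uniformly in $t$ and $\Phi$. Upgrading to $L^1$ convergence of densities requires equicontinuity of $d\nu_R/d\lambda$, which should come from the smooth spectral density of the channel model.

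\emph{Main obstacle.} I expect Step 2 to be the main obstacle: showing pure absolute continuity with exactly these band edges, and in particular that no spectrum sits in the gaps $(q_j,p_{j+1})$. The first thing I would try is computing the Weyl function of $A$ explicitly from $\Theta_\Phi$ and reading off its imaginary part.
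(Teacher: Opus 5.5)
Your overall frame matches the paper: write $u_\Phi=-i\Theta'/\Theta$ for an inner $\Theta$ with bounded derivative and invoke Lemma~\ref{gas:bounded-inner-model}. Your $\Theta_\Phi=e^{imz}\overline{\tau_\Phi(t,\bar z)}/\tau_\Phi(t,z)$ agrees, up to a unimodular constant, with the paper's transfer function $v_0+v^*(I-EC)^{-1}E\eta$.

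\textbf{Step 1 fails on the real axis.} Unitarity of $\begin{pmatrix}C&\eta\\v^*&v_0\end{pmatrix}$ gives $C^*C=I-vv^*$ and $CC^*+\eta\eta^*=I$; your relation $C^{\mathsf T}C+\eta\eta^{\mathsf T}=I$ mixes the two. Since $I-vv^*$ has eigenvalue $1$ on $v^\perp$, $\|C\|=1$ whenever $r\ge2$; $v_0\ne0$ only shrinks $C$ in the direction $v$. On $\R$ the matrix $E$ is unitary, and as $\Phi$ ranges over $\mathbb T^r$ it ranges over all diagonal unitaries. So you must show, uniformly, that $EC$ never has eigenvalue $1$, and the contraction argument gives nothing there. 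The missing ingredient is the second consequence of the residue formula, $CPC^*=Q$ with $P=\diag(p_j)$, $Q=\diag(q_j)$. For $W=\diag(\sqrt{-p_j})$ it gives $\|W^{-1}CW\|=\max_j\sqrt{q_j/p_j}<1$. Since $W$ commutes with $E$, the Neumann series for $(I-EC)^{-1}$ then converges uniformly on a strip around $\R$. This yields the nonvanishing of $\tau_\Phi$, the uniform derivative bounds, and the positive-frequency expansion $\log\tau_\Phi=-\sum_n\Tr(EC)^n/n$.

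\textbf{The BO equation.} Theorem~\ref{gas:global-flow} only says that local limits of multisolitons evolve by BO. Nothing in your sketch identifies such a limit (with which norming constants?) with the explicit function whose phases move as $\Phi_j+\Omega_jt$, so this step assumes what it should prove. The paper checks the equation directly. Put $R=(I-EC)^{-1}$, $\varphi=v^*R$, $f=\tau_x/\tau$, $h=\tau_t/\tau$. The real-line identities $R+R^*-I=\varphi^*\varphi$, $RKR^*=P-RP-PR^*$ and $u=\varphi K\varphi^*$ give $i(h-\bar h)+u^2-(f_x+\bar f_x)=\sum_j\Omega_j$. Positivity of frequencies gives $|D|u=f_x+\bar f_x$ and $u_t=i\partial_x(h-\bar h)$.

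\textbf{The spectrum.} Your channel splitting is not available: $\Theta(iy)\to v_0\ne0$, so $e^{imz}$ does not divide $\Theta$ and $K_{e^{imz}}\not\subset K_\Theta$. A trace-class coupling plus Kato--Rosenblum excludes neither eigenvalues in the gaps nor singular spectrum, as you suspect. The missing idea is an explicit intertwiner $\mathcal VF=(2\pi)^{-1/2}\sum_j\varphi_j\int_0^{K_j}e^{isx}F_j(s)\,ds$ on $\bigoplus_jL^2(0,K_j)$.
\begin{itemize}
\item It satisfies $\mathcal V\mathcal V^*=P_{K_\Theta}$, by the kernel identity $1-\Theta(z)\overline{\Theta(w)}=\varphi(z)[I-E(z)E(w)^*]\varphi(w)^*$.
\item One has $(D-u)\varphi=\varphi P+\kappa^{\mathsf T}E^*R^*$. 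The $j$th entry of the last term has Fourier support in $(-\infty,-K_j]$, so its product with $\int_0^{K_j}e^{isx}F_j\,ds$ is killed by $\Pi$.
\item Hence $L_u\mathcal V=\mathcal V\mathcal A$ with $\mathcal A$ multiplication by $p_j+s$, and $\varphi_j(iy)\to v_j>0$ gives injectivity band by band.
\end{itemize}

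\textbf{Density of states.} The same representation gives the diagonal $(2\pi)^{-1}\sum_j|I\cap[p_j,q_j]|\,|\varphi_j(x)|^2$ of $\mathbf1_I(L_u)$. Thus $d\nu_{R,\Phi,t}/d\lambda$ is constant on each band, and no equicontinuity is needed. What remains is that the window means of $|\varphi_j|^2$ tend to $1$; the paper obtains this from the componentwise isometry of $\mathcal V$, averaged over the phase torus.

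\textbf{Lower bound.} It should come from $u=\sum_jK_j|\varphi_j|^2$ summed over all bands, not from a single channel. Use $\|R^*v\|\ge\|v\|/2$ on $\R$ and $\|v\|^2=1-v_0^2$.
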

\begin{proof}
Write $P=\operatorname{diag}(p_j)$, $Q=\operatorname{diag}(q_j)$,
$K=Q-P$. Evaluating and subtracting the residue formula
at $q_j,q_k$ shows that the rows $(C_{j1},\ldots,C_{jr},\eta_j)$
are an orthonormal basis of $(v,v_0)^\perp$. Therefore
$$
 \begin{pmatrix}C&\eta\\v^*&v_0\end{pmatrix}\text{ is unitary},\enspace
 C^*C=I-vv^*,\enspace CPC^*=Q.
$$
For $W=\operatorname{diag}(\sqrt{-p_j})$,
$\|W^{-1}CW\|=\rho:=\max_j\sqrt{q_j/p_j}<1$.
Thus $R=(I-EC)^{-1}$ and all its derivatives are uniformly bounded
on a common strip containing $\R$.
Put $\varphi=v^*R$ and $\Theta=v_0+\varphi E\eta$. Unitarity gives
\begin{equation}\label{gas:band-inner-kernel}
 1-\Theta(z)\overline{\Theta(w)}
 =\varphi(z)[I-E(z)E(w)^*]\varphi(w)^*.
\end{equation}
Hence $\Theta$ is inner with bounded derivative and
$\Theta(x+iy)\to v_0$ uniformly as $y\to\infty$.
On $\R$, direct multiplication gives
\begin{equation}\label{gas:band-resolvent-identities}
 H:=R+R^*-I=\varphi^*\varphi,\enspace RKR^*=P-RP-PR^*,
 \enspace u=\varphi K\varphi^*=-i\Theta'/\Theta.
\end{equation}
Since $\|EC\|\le1$, $\|R^*v\|\ge\|v\|/2$, proving the lower bound.
To verify the determinant formula of \cite[Theorem~1.1]{gasDK91},
put $f=\tau_x/\tau$, $h=\tau_t/\tau$.
The rank-one identity $H=\varphi^*\varphi$ and
\eqref{gas:band-resolvent-identities} give
$i(h-\bar h)+u^2-(f_x+\bar f_x)=\sum_j\Omega_j$.
The expansion $\log\tau=-\sum_{n\ge1}\Tr(EC)^n/n$ and its
derivatives of each fixed order converge uniformly, with positive frequencies.
Thus $|D|u=f_x+\bar f_x$, $u_t=i\partial_x(h-\bar h)$, and
spatial differentiation proves BO.

Define
$$
 \mathcal V:\bigoplus_jL^2(0,K_j)\to H_+,\enspace
 (\mathcal VF)(x)=\frac1{\sqrt{2\pi}}
 \sum_j\varphi_j(x)\int_0^{K_j}e^{isx}F_j(s)\,ds.
$$
The model-space kernel and \eqref{gas:band-inner-kernel} give
$\mathcal V\mathcal V^*=P_{K_\Theta}$.
Differentiating $R$ yields
$(D-u)\varphi=\varphi P+\kappa^{\mathsf T}E^*R^*$.
Column $j$ of $E^*R^*=\sum_{n\ge0}(E^*C^*)^nE^*$ has Fourier
support in $(-\infty,-K_j]$. Its product with a function supported
in Fourier space in $(0,K_j)$ is killed by $\Pi$, so
$L_u\mathcal V=\mathcal V\mathcal A$, $(\mathcal AF)_j(s)=(p_j+s)F_j(s)$.
These identities hold first for smooth compactly supported $F_j$,
then by density; bounded $\varphi_j,\varphi_j'$ ensure the operator
domain. The kernel of $\mathcal V$ reduces $\mathcal A$ and hence
splits over the disjoint bands. Each component is injective because
$\varphi_j$ is analytic and $\varphi_j(iy)\to v_j>0$.
Thus $\mathcal V$ is unitary onto $K_\Theta$. Lemma~\ref{gas:bounded-inner-model}
gives the remaining free spectrum; $L_u\ge-b$ and
Theorem~\ref{gas:inner-classification} give $u\in\mathcal X_b$ and $u\le4b$.

The transformed spectral projection has diagonal
$(2\pi)^{-1}\sum_j|I\cap[p_j,q_j]|\,|\varphi_j(x)|^2$.
Moreover
\begin{equation}\label{gas:band-channel-mean}
 (2R)^{-1}\int_{-R}^R|\varphi_j(t,x;\Phi)|^2dx\to1
 \enspace\text{uniformly in }t,\Phi.
\end{equation}
Indeed, componentwise isometry gives
$\int|\varphi_j(x;\Phi)|^2|f(x)|^2dx=\|f\|_2^2$ for
$\operatorname{supp}\widehat f\subset(0,K_j)$.
Average over any coset of the closure of $K\R$ in $\mathbb T^r$:
translation invariance and Fubini force the phase mean to be one.
Uniform trigonometric approximation converts it to the spatial mean,
since a character averages to $\sin((n\cdot K)R)/((n\cdot K)R)$
when $n\cdot K\ne0$. Time only changes the phase.
The diagonal formula now proves \eqref{gas:arbitrary-band-window-dos}.
\end{proof}

\begin{coro}
\label{gas:arbitrary-band-finite-arrays}\label{gas:inner-divisor-enclosure}
The preceding $\Theta$ is a pure Blaschke product. Its zeros
$X_\ell+iY_\ell$ satisfy $0<\delta\le Y_\ell\le Y<\infty$
and a uniform bound on the number of zeros whose real parts lie
in an interval of length one.
At time zero, the finite sums
$q_{R,\Phi}=\sum_{|X_\ell|\le R}2Y_\ell/((x-X_\ell)^2+Y_\ell^2)$
have simple negative eigenvalues $-a_{k,R,\Phi}$ with
$a_{k,R,\Phi}\in[a,b]$, and their BO solutions converge to $u_\Phi$
in $C^\infty_{\rm loc}(\R^2)$ uniformly in $\Phi$.
For every $g\in C([a,b])$, uniformly in $\Phi$,
$$
 \frac1{2R}\sum_k g(a_{k,R,\Phi})
 \longrightarrow\frac1{2\pi}\sum_j\int_{p_j}^{q_j}g(-\lambda)\,d\lambda.
$$
\end{coro}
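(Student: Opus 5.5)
To show $\Theta$ is a pure Blaschke product, I would use the representation of Theorem~\ref{gas:inner-classification}: $u_\Phi\in\mathcal X_b$, so $u_\Phi=c+\sum_j2Y_j/((x-X_j)^2+Y_j^2)$ with $c\ge0$ and $Y_j\ge(2b)^{-1}=:\delta$. The proof of that theorem already excludes real singular factors by reflection, since $\Theta'\in H^\infty$. It remains to show that the exponential factor $e^{icz}$ is absent. For this I would use $\Theta(x+iy)\to v_0>0$ uniformly as $y\to\infty$, from \eqref{gas:band-inner-kernel}. If $\Theta=e^{icz}B$ with $c>0$, then $|\Theta(iy)|\le e^{-cy}\to0$, which is a contradiction.

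For the upper bound $Y_\ell\le Y$ and the local finiteness of zeros, the same uniform limit gives $|\Theta|\ge v_0/2$ on $\{\Ima z\ge Y\}$ for some $Y$, so every zero lies in $\delta\le\Ima z\le Y$. Since the $Y_\ell$ are bounded, $u_\Phi\le4b$ bounds $\sum_{X_\ell\in I}2Y_\ell/((x-X_\ell)^2+Y_\ell^2)$ at the midpoint of any unit interval $I$. Each such term is at least $2\delta/(1/4+Y^2)$, so the number of zeros in $I$ is bounded uniformly. All of these constants are uniform in $\Phi$, because $R$ and its derivatives are bounded uniformly on a common strip.

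For the finite sums, $q_{R,\Phi}$ is a finite Poisson sum, hence a finite multisoliton, whose inner function is the finite Blaschke product $\Theta_R$ built from the selected zeros. I would argue as follows.
\begin{enumerate}
\item $\Theta_R$ divides $\Theta$, so $K_{\Theta_R}\subset K_\Theta$.
\item By \eqref{gas:inner-conjugation}, $L_{q_R}|_{K_{\Theta_R}}=-C_RA_RC_R$ with $A_R$ the compression of $D$. Its eigenvalues are the negative Lax eigenvalues, and they are simple by Proposition~\ref{thm:GG-infinite-resolution} (finite case).
\item For the lower bound $a_k\ge a$, I need $A_R\ge a$. I would express $A_R$ through the compression of $P_{K_\Theta}DP_{K_\Theta}$, which is unitarily conjugate (via $C$) to $-L_{u_\Phi}|_{K_\Theta}$ and has spectrum $\bigcup_j[-q_j,-p_j]\subset[a,b]$. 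The compression of an operator with spectrum in $[a,b]$ to a subspace has numerical range in $[a,b]$. I expect the main obstacle is checking that $C_R$ versus $C$ conjugation is compatible, i.e.\ that $P_{K_{\Theta_R}}DP_{K_{\Theta_R}}$ is the compression of $P_{K_\Theta}DP_{K_\Theta}$. Since $K_{\Theta_R}\subset K_\Theta$, this holds directly, so $A_R$ has spectrum in $[a,b]$.
\end{enumerate}

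Convergence of the flows follows from the last assertion of Theorem~\ref{gas:global-flow}. Partial Poisson sums bounded by $4b$ converge locally smoothly, and the flow is continuous on $\mathcal X_{b}$. Uniformity in $\Phi$ comes from compactness of $\mathbb T^r$ together with continuity of $\Phi\mapsto u_\Phi(0)$ and uniform zero-counting tails.

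For the density of states, I would compare $\sum_kg(a_{k,R})=\Tr g(-L_{q_R})P_{q_R}$ with the windowed trace $\Tr[\mathbf 1_{[-R,R]}g(-L_{u_\Phi})P_{u_\Phi}\mathbf 1_{[-R,R]}]$. The latter is controlled by \eqref{gas:arbitrary-band-window-dos}. The difference should be $o(R)$. Inside $[-R+M,R-M]$ the potentials agree up to a far-field tail that is small for large $M$, and the local trace continuity \eqref{gas:local-functional-convergence} applies on unit cells. The two boundary layers of width $M$ contribute $O(M)$ by the uniform local trace bound $\Tr(\chi P_q\chi)=\frac1{2\pi}\int\chi^2q\le 4b|\operatorname{supp}\chi|/(2\pi)$. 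Outside the window, $P_{q_R}$ has trace density $q_R/(2\pi)$, which decays, and its total beyond $R+M$ is small relative to $R$. This localization estimate, uniform in $\Phi$, is the technical heart. I would first prove it for polynomial $g$ via resolvent approximation, then pass to general $g$ by Weierstrass, using the uniform total mass $O(R)$.
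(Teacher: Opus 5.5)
Your proposal is correct and follows essentially the paper's route:
\begin{itemize}
\item $\Theta(x+iy)\to v_0>0$ and the common strip give the Blaschke structure and the height bounds;
\item $u_\Phi\le4b$ gives the counting bound;
\item $K_{\Theta_R}\subset K_\Theta$ together with \eqref{gas:inner-conjugation} gives the enclosure in $[a,b]$;
\item Theorem~\ref{gas:compact-flow} gives flow convergence;
\item the density of states follows from a comparison of localized traces, in which the interior error comes from \eqref{gas:local-functional-convergence}, the boundary strips cost $O(M)$, and the exterior Poisson mass of retained poles is $O(\log(2+R))$ by the counting bound.
\end{itemize}
Two small corrections: Proposition~\ref{thm:GG-infinite-resolution} concerns infinite pole sequences, so cite finite spectral theory (or simply the Wu relation) for simplicity; and your polynomial/Weierstrass step is unnecessary, since \eqref{gas:local-functional-convergence} already holds for every $F\in C_0(\R)$.
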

\begin{proof}
The common strip and $\Theta(x+iy)\to v_0>0$ exclude singular
and exponential factors and bound the zero heights above and below.
The positive Poisson representation and $u\le4b$ bound their number
in intervals of length one; omitted poles contribute $O(R^{-1})$ locally,
with all derivatives. For a finite divisor $\Theta_R$, inclusion
$K_{\Theta_R}\subset K_\Theta$ and \eqref{gas:inner-conjugation}
put its compressed $D$ spectrum in $[a,b]$. Finite spectral theory
\cite{Sun21,BKV26} gives simplicity, and Theorem~\ref{gas:compact-flow}
gives flow convergence.

Choose real $\chi\in C_c^\infty$, $\int\chi^2=1$, put
$\chi_s(x)=\chi(x-s)$ and
$J_g(q;s)=\Tr[\chi_sg(-L_q)P_q\chi_s]$.
Then $\int J_g(q_{R,\Phi};s)ds=\sum_k g(a_{k,R,\Phi})$.
At distance $H$ from the endpoints, local trace continuity bounds
the truncation error by $\varepsilon(H)\to0$. Boundary strips
contribute $O(H)$; the exterior Poisson mass of retained poles is
$O(\log(2+R))$ by this counting bound. Consequently
$\sum_k g(a_{k,R,\Phi})=\int_{-R}^R J_g(u_\Phi(0);s)ds +O(R\varepsilon(H)+H+\log(2+R))$.
Use \eqref{gas:band-channel-mean}, divide by $2R$, and send first
$R\to\infty$, then $H\to\infty$.
\end{proof}

\section{Continuous norming data}
\label{gas:continuous-norming-section}
For finite data in \eqref{gas:finite-reconstruction} and real
$\gamma\in C([0,B])$, define $\mathcal T_\gamma$ by
$(a_j,c_j)\mapsto(a_j,c_j+\gamma(a_j))$.
\begin{theorem}
\label{gas:continuous-norming-action}
These maps extend uniquely to a jointly continuous action of
$C([0,B];\R)$, with the uniform norm, on $\mathcal X_B$, satisfying
$$
 \|\mathcal T_\gamma q-\mathcal T_\eta q\|_\infty
 \le8B^2\|\gamma-\eta\|_\infty,\enspace
 \mathcal T_{2t\,\mathrm{id}}=S(t),\enspace
 \mathcal T_dq=q(\cdot-d).
$$
If finite $q_n\to q$ locally smoothly and $\gamma_n\to\gamma$
uniformly, their modified solutions converge to $S(t)\mathcal T_\gamma q$
in $C^\infty_{\rm loc}(\R^2)$.
Write $K_q=K_{\Theta_q}$, $A_q=-L_q|_{K_q}$, $0\le A_q\le B$, and
$Y_q=X^*|_{K_q}$ on $K_q\cap\Dom X^*$, where $X^*$ is
$i\partial_\xi$ on $H^1(0,\infty)$ in Fourier variables.
For $z=x+iy$, $y>0$, set
$\mathcal R_{q,\gamma}(z)=(z-Y_q-\gamma(A_q))^{-1}P_q$
and $\mathcal R_q=\mathcal R_{q,0}$, extended by zero on $K_q^\perp$.
Their difference is trace class and
\begin{equation}\label{gas:norming-resolvent-reconstruction}
 \mathcal P_y(\mathcal T_\gamma q-q)(x)
 =-2\Im\Tr(\mathcal R_{q,\gamma}(x+iy)-\mathcal R_q(x+iy)).
\end{equation}
\end{theorem}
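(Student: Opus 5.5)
The plan is to work first with finite data and then extend by density and compactness of $\mathcal X_B$ (Theorem~\ref{gas:compact-flow}). For a finite multisoliton $q$ with data $(a_j,c_j)$, formula \eqref{gas:finite-reconstruction} at $t=0$ gives $q=-2\Im\Tr(x+B)^{-1}$, where $B=-\operatorname{diag}(c_j)+M$ and $M$ depends only on the $a_j$. On $K_q$, which is $N$-dimensional, the finite inverse spectral theory of \cite{Sun21} identifies $Y_q$ (in the eigenbasis of $A_q$, conjugated by $D_N=\diag(\sqrt{a_j})$ as in Section~\ref{sec:exact-range-proof}) with $-B$. Indeed, \eqref{eq:finite-center-Wu}--\eqref{eq:finite-X-offdiagonal} give the diagonal and off-diagonal entries, and the domain/commutator argument of Lemma~\ref{lem:Xstar-discrete} shows $X^*K_q\subset K_q$. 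Hence $z-Y_q-\gamma(A_q)$ corresponds to $z+B-\diag(\gamma(a_j))$, which is exactly the matrix for the shifted data $c_j+\gamma(a_j)$. This yields
\[
\mathcal T_\gamma q(x)=-2\Im\Tr(x-Y_q-\gamma(A_q))^{-1}.
\]
Since every eigenvalue of $B$ lies in $\C_+$ with imaginary part at least $(2B)^{-1}$, the Poisson extension is the evaluation at $x+iy$, because $\mathcal P_y$ of a Poisson sum shifts heights. Subtracting the formulas for $\gamma$ and $0$ proves \eqref{gas:norming-resolvent-reconstruction} for finite data. The special cases are then immediate: $\gamma=2t\,\mathrm{id}$ reproduces $B_N(t)$, so $\mathcal T_{2t\,\mathrm{id}}=S(t)$, and a constant $\gamma=d$ shifts $x$, giving translation.

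Next I would prove the Lipschitz bound for finite data. The resolvent identity gives
\[
\mathcal R_{q,\gamma}-\mathcal R_{q,\eta}=\mathcal R_{q,\gamma}\,(\gamma-\eta)(A_q)\,\mathcal R_{q,\eta}.
\]
Its trace is bounded by $\|\gamma-\eta\|_\infty\,\|\mathcal R_{q,\gamma}\|_{\Sch_2}\|\mathcal R_{q,\eta}\|_{\Sch_2}$. Both resolvents have dissipative inverses with $\Im Y_q\ge(2A_q)^{-1}$, from the boundary form in Lemma~\ref{rad:lem-coordinate}. This lets me bound $\|\mathcal R\|_{\Sch_2}^2\le y^{-1}\Tr\Im(\cdot)$-type quantities, which are in turn controlled by $\mathcal P_y$ of the potential. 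Combined with $0\le \mathcal T_\gamma q\le 4B$, this should produce a pointwise bound of order $B^2\|\gamma-\eta\|_\infty$ after letting $y\to0$. To reach the constant $8B^2$, the cleanest route is probably an interpolation in $\gamma_s=\eta+s(\gamma-\eta)$: differentiate $-2\Im\Tr(x-Y-\gamma_s(A))^{-1}$ in $s$ and bound the derivative by $\|\gamma-\eta\|_\infty$ times $\Tr|\mathcal R_{\gamma_s}|^2$ at the real point. The quantity $\sum_j(\Im\lambda_j)^{-1}$-type trace equals $\pi^{-1}\cdot$mass, and this is where $2\cdot(2B)\cdot 2B$ arises.

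Finally I would extend the maps. Given $q\in\mathcal X_B$ and finite $q_n\to q$, the data of $\mathcal T_{\gamma_n}q_n$ have $a_j\le B$, so this sequence is precompact in $\mathcal X_B$. Uniqueness of the limit is the main obstacle: two finite approximants may have unrelated spectral data. I would handle it by showing that $\mathcal P_y\mathcal T_\gamma q_n$ converges, through \eqref{gas:norming-resolvent-reconstruction} and the local trace continuity of Theorem~\ref{gas:inner-classification}. The argument first replaces $\gamma$ by polynomials, using the Lipschitz bound, and then $\gamma(A_q)$ by compressions $P_{q_n}\to P_q$ (strongly, via $\Theta_n\to\Theta$). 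The convergence $(z-Y_{q_n}-\gamma(A_{q_n}))^{-1}P_{q_n}\to\mathcal R_{q,\gamma}$ follows from strong resolvent convergence of maximal dissipative operators. Together with the trace-class difference bound from the resolvent identity, this gives convergence of the traces. Poisson extensions determine the bounded limit, which gives uniqueness and defines $\mathcal T_\gamma$ on $\mathcal X_B$, with \eqref{gas:norming-resolvent-reconstruction} passing to the limit. The group law $\mathcal T_\gamma\mathcal T_\eta=\mathcal T_{\gamma+\eta}$ and joint continuity follow from the finite case, the uniform Lipschitz bound, and compactness. The statement about modified solutions then follows from $S(t)\mathcal T_\gamma=\mathcal T_{\gamma+2t\,\mathrm{id}}$ and the derivative bounds of Theorem~\ref{gas:compact-flow}, which upgrade local uniform convergence to $C^\infty_{\rm loc}(\R^2)$.
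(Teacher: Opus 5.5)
Your finite-data step and overall architecture match the paper: identifying $Y_q$ with $-\diag(\sqrt{a_j})B_N\diag(\sqrt{a_j})^{-1}$ via \cite{Sun21}, similarity invariance of the trace, the cases $\gamma=2t\,\mathrm{id}$ and $\gamma=d$, and compactness of $\mathcal X_B$ plus uniqueness of the limit through its Poisson extension. However, two steps fail as written.

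\textbf{Convergence of traces.} You claim that strong resolvent convergence, together with a trace-class bound on $\mathcal R_{q_n,\gamma}(z)-\mathcal R_{q_n}(z)$, gives convergence of its trace. It does not. Strong convergence with uniform $\Sch_1$ bounds allows trace to escape to spatial infinity. The local trace continuity of Theorem~\ref{gas:inner-classification} only controls compactly localized operators $\chi P_q\chi$, and these resolvents are nonlocal. For non-finite $q$ you also have not shown that $\mathcal R_q(z)$ is Hilbert--Schmidt, so the trace-class claim itself is unsupported. What is missing is $\Sch_2$ convergence of $\mathcal R_{q_n}(z)$, which the paper obtains as follows.
\begin{enumerate}
\item Backward shifts preserve $K_q$, so $Y_q$ is maximal dissipative and $\mathcal R_q(z)=(z-X^*)^{-1}P_q$.
\item This operator has the explicit form $\mathcal R_q(z)g(w)=\bigl(P_qg(w)-(P_qg)(z)\bigr)/(z-w)$.
\item The diagonal of the model-space kernel gives $\|M_{(z-\cdot)^{-1}}P_q\|_{\Sch_2}^2=\int q(x)\,dx/(2\pi|z-x|^2)$.
\item The bound $0\le q_n\le4B$ and dominated convergence give convergence of these norms, so weak convergence upgrades to $\Sch_2$ convergence. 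The rank-one evaluation terms converge because $P_{q_n}k_z\to P_qk_z$, where $k_z$ is the Hardy reproducing kernel.
\item Write $R_n=\mathcal R_{q_n}(z)$. Then $\mathcal R_{q_n,\gamma}=(I-R_n\Gamma_n)^{-1}R_n$ with $\|(I-R_n\Gamma_n)^{-1}\|\le1+\|\gamma\|_\infty/y$, and $\mathcal R_{q_n,\gamma}-R_n=\mathcal R_{q_n,\gamma}\Gamma_nR_n$ with $\Gamma_n\to\Gamma$ strongly. These give $\Sch_1$ convergence of the differences.
\end{enumerate}
The tightness supplied by the weight $|z-x|^{-2}$ against the uniform bound on $q_n$ is the idea your sketch lacks. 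It is needed both for uniqueness of the extension and for passing \eqref{gas:norming-resolvent-reconstruction} to the limit.

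\textbf{The constant $8B^2$.} The inequality $\Ima Y_q\ge(2A_q)^{-1}$ is false. By the boundary form in Lemma~\ref{rad:lem-coordinate}, $\Ima\langle Y_qf,f\rangle=-|I_+f|^2/(4\pi)$, which is a rank-one nonpositive form. $Y_q$ is only similar to $-B_N$, through the possibly ill-conditioned $\diag(\sqrt{a_j})$. Hilbert--Schmidt norms are not similarity invariant, so $\Sch_2$ bounds on $K_q$ at real points are not uniform. A bound through the mass $\sum_ja_j$ would not be uniform over $\mathcal F_B$ either.

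Work instead in the $B_N$ coordinates, where $\Ima B_N=\diag((2a_j)^{-1})\ge(2B)^{-1}I$. For $R=(x+B_N)^{-1}$, the identity $R-R^*=-2iR(\Ima B_N)R^*$ gives
\[ q(x)=2\Tr(R\,\Ima B_N\,R^*)\ge B^{-1}\|R\|_{\Sch_2}^2, \]
so $\|R\|_{\Sch_2}^2\le Bq(x)\le4B^2$. Along the center change $B_N\mapsto B_N-s\,\diag(\gamma(a_j)-\eta(a_j))$, which stays in $\mathcal F_B$, this yields $|\partial_sq|\le2\|\gamma-\eta\|_\infty\|R\|_{\Sch_2}^2\le8B^2\|\gamma-\eta\|_\infty$.
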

\begin{proof}
For finite data, $R=(x+B_N)^{-1}$ has
$\|R\|_{\mathrm{HS}}^2\le Bq(x)\le4B^2$ since
$\Im B_N\ge(2B)^{-1}I$. A center change $B_N\mapsto B_N-sD$
therefore gives $|\partial_sq|\le2\|D\|\|R\|_{\mathrm{HS}}^2$.

If $q_n\to q$, then $P_{q_n}\to P_q$ strongly,
$L_{q_n}\to L_q$ in the strong resolvent sense, and
$\Gamma_n:=\gamma(A_{q_n})\to\Gamma:=\gamma(A_q)$ strongly,
with uniformly bounded norms. For the last assertion use
$\Gamma_n=F(L_{q_n})P_{q_n}$ with $F\in C_c(\R)$ equal to
$\gamma(-\lambda)$ on $[-B,0]$.
Backward shifts preserve $K_q$; their restricted generator is
$-iY_q$. Hence $Y_q$ is maximal dissipative,
$\mathcal R_q(z)=(z-X^*)^{-1}P_q$, and the bounded self-adjoint
perturbation $\Gamma$ satisfies $\|\mathcal R_{q,\gamma}(z)\|\le y^{-1}$.
The formula
$$
 \mathcal R_q(z)g(w)=\frac{P_qg(w)-(P_qg)(z)}{z-w},\enspace
 \|M_{(z-x)^{-1}}P_q\|_{\mathrm{HS}}^2
 =\int\frac{q(x)}{2\pi|z-x|^2}\,dx
$$
gives Hilbert--Schmidt convergence of $M_{(z-x)^{-1}}P_{q_n}$:
strong projection convergence identifies the weak limit in $\Sch_2$,
and the integral gives convergence of the norms. The rank-one evaluation
terms converge because $P_{q_n}k_z\to P_qk_z$ in $L^2$, where $k_z$
is the Hardy reproducing kernel. Thus
$\mathcal R_{q_n}(z)\to\mathcal R_q(z)$ in $\Sch_2$.
Putting $R_n=\mathcal R_{q_n}(z)$, the identities
$\mathcal R_{q_n,\gamma}=(I-R_n\Gamma_n)^{-1}R_n$, $\|(I-R_n\Gamma_n)^{-1}\|\le1+\|\gamma\|_\infty/y, \enspace \mathcal R_{q_n,\gamma}-R_n=\mathcal R_{q_n,\gamma}\Gamma_nR_n$
give Hilbert--Schmidt convergence of perturbed resolvents and trace-norm
convergence of their differences. For finite data,
\cite[Proposition~5.4]{Sun21} gives $Y_q=-A^{1/2}B_NA^{-1/2}$;
the center change and the finite Poisson formula prove
\eqref{gas:norming-resolvent-reconstruction}.
Compactness and trace-norm convergence now determine a unique limit
of the modified potentials by its Poisson extension. This proves
continuity in $q$; the finite Lipschitz bound gives joint continuity.
The action and translation identities pass by density, and
Theorem~\ref{gas:compact-flow} gives convergence of the solutions.
\end{proof}

\begin{coro}
\label{gas:multiband-norming-phases}
For $q_\Phi=u_\Phi(0)$ from Theorem~\ref{gas:arbitrary-band-quasiperiodic-gas}
and real $\gamma\in C([0,b])$,
$\mathcal T_\gamma q_\Phi=q_{\Phi+\delta}$, where
$\delta_j=\int_{-q_j}^{-p_j}\gamma(a)\,da$.
Every phase vector is obtained from any fixed one in this way.
\end{coro}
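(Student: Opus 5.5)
The plan is to prove the identity first for finite multisoliton approximants of $q_\Phi$, and then pass to the limit using the joint continuity of the norming action in Theorem~\ref{gas:continuous-norming-action}. By Corollary~\ref{gas:arbitrary-band-finite-arrays}, $q_\Phi$ is the $C^\infty_{\rm loc}$ limit of the finite sums $q_{R,\Phi}$. Their simple eigenvalues $a_{k,R,\Phi}\in[a,b]$ equidistribute with density $(2\pi)^{-1}\sum_j\mathbf 1_{[-q_j,-p_j]}$ per unit length, uniformly in $\Phi$. Hence $\mathcal T_\gamma q_\Phi=\lim_R\mathcal T_\gamma q_{R,\Phi}$, and the task is to identify this limit as $q_{\Phi+\delta}$.

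\textbf{Step 1: constant weights.} For constant $\gamma\equiv d$ on each band, the translation identity $\mathcal T_dq=q(\cdot-d)$ gives
$$
\mathcal T_d q_\Phi(x)=q_\Phi(x-d)=u_\Phi(0,x-d).
$$
In the formula $E=\diag(e^{i(K_jz-\Omega_jt-\Phi_j)})$, this shift replaces $\Phi_j$ by $\Phi_j+K_jd=\Phi_j+\int_{-q_j}^{-p_j}d\,da$. Likewise, $\mathcal T_{2t\,\mathrm{id}}=S(t)$ sends $\Phi_j$ to
$$
\Phi_j+\Omega_jt=\Phi_j+\int_{-q_j}^{-p_j}2ta\,da,
$$
since $\Omega_j=-K_j(p_j+q_j)$. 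Both cases match $\delta_j$ and serve as consistency checks.

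\textbf{Step 2: general weights.} The action is a group action, $\mathcal T_{\gamma+\eta}=\mathcal T_\gamma\mathcal T_\eta$ (this passes by density from finite data). By Lipschitz continuity in $\gamma$, it suffices to treat $\gamma$ supported in a neighbourhood of a single band $[-q_j,-p_j]$, or piecewise constant/polynomial $\gamma$. One route is to build an independent multiband flow. Consider the band-$j$ local flows obtained from $\mathcal T_{\gamma}$ with $\gamma=\mathbf 1$ near band $j$ and $0$ near the other bands (continuous, since the bands are separated). I expect these to act on the closed family $\{q_\Phi\}$ as phase rotations $\Phi\mapsto\Phi+\theta e_j K_j$.

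To show this, I would use the resolvent reconstruction \eqref{gas:norming-resolvent-reconstruction} together with the unitary $\mathcal V$ from the proof of Theorem~\ref{gas:arbitrary-band-quasiperiodic-gas}. Under $\mathcal V$, $A_q$ becomes multiplication by $-(p_j+s)$ on band channel $j$. I then need to understand $Y_q=X^*|_{K_q}$ in these coordinates. My expectation is that $X^*$ acts as $i\partial_s$ plus a finite-rank-type coupling, so that $e^{-i\Gamma}$-type conjugation shifts $\Phi_j$ by $\int\gamma$ over the band. Concretely, I would compute the derivative $\frac{d}{d\epsilon}\mathcal T_{\epsilon\gamma}q_\Phi$ at $\epsilon=0$ through the trace formula, and compare it with $\sum_j\partial_{\Phi_j}q_\Phi\int_{\rm band\ j}\gamma$.

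\textbf{Step 3: the uniqueness ODE.} Both $\epsilon\mapsto\mathcal T_{\epsilon\gamma}q_\Phi$ and $\epsilon\mapsto q_{\Phi+\epsilon\delta}$ would satisfy the same first-order evolution. Uniqueness then follows from the Lipschitz bound $8B^2\|\gamma-\eta\|_\infty$ combined with the group property.

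\textbf{The hard part.} The main obstacle is exactly this derivative identification, i.e.\ showing that the family $\{q_\Phi\}$ is invariant under $\mathcal T_\gamma$ for nonconstant $\gamma$. If the direct operator computation stalls, I will try a finite-dimensional approach. Approximate by periodic finite-gap data ($K_j$ commensurate) and use Corollary~\ref{gas:arbitrary-band-finite-arrays}. For $\mathcal T_\gamma$ on $q_{R,\Phi}$, the centers shift by $\gamma(a_k)$. After the thermodynamic limit this is equivalent to the density-weighted average of a local translation $\gamma(a)$ on each band, with total phase shift $\int\gamma$ per band. This is the standard soliton-gas phase-shift heuristic, made rigorous by local trace continuity (Theorem~\ref{gas:inner-classification}).

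\textbf{Transitivity.} The final claim follows by choosing $\gamma$ continuous, piecewise linear and separately adjusted on the disjoint bands, so that the band integrals $\delta_j$ take arbitrary prescribed values modulo $2\pi$.
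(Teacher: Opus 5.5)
There is a genuine gap: the identity $\mathcal T_\gamma q_\Phi=q_{\Phi+\delta}$ is never proved for nonconstant $\gamma$. Your Step~1 covers only constant weights and $\gamma=2t\,\mathrm{id}$, which Theorem~\ref{gas:continuous-norming-action} already provides. Step~2 leaves as an ``expectation'' exactly the fact that carries the whole proof. Step~3 would not close the argument even if the derivative identity were granted. The bound $\|\mathcal T_\gamma q-\mathcal T_\eta q\|_\infty\le8B^2\|\gamma-\eta\|_\infty$ is Lipschitz in the weight, not in the potential, and Theorem~\ref{gas:continuous-norming-action} gives only continuity of $q\mapsto\mathcal T_\gamma q$ on $\mathcal X_B$. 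To compare $\epsilon\mapsto\mathcal T_{\epsilon\gamma}q_\Phi$ with $\epsilon\mapsto q_{\Phi+\epsilon\delta}$ through the group law, you must push the $o(h)$ error of each step through $\mathcal T_{\epsilon\gamma}$. That needs a Lipschitz or differentiability estimate in $q$ that you do not have. The finite-approximant and thermodynamic fallbacks do not help either. Continuity of the action already gives $\mathcal T_\gamma q_{R,\Phi}\to\mathcal T_\gamma q_\Phi$, and shifting each center by the eigenvalue-dependent amount $\gamma(a_k)$ is not a translation of anything. Local trace continuity therefore gives no reason for the limit to lie in the finite-gap family, let alone with phase $\Phi+\delta$.

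The missing idea is the explicit form of $Y_q=X^*|_{K_q}$ in the band coordinates $\mathcal V_\Phi$. Integration by parts gives $x\mathcal V_\Phi F=\mathcal V_\Phi(iF')+iv^*F(0)/\sqrt{2\pi}$, and $X^*$ removes the evaluation term. So $Y_q$ becomes $iF'$ on $\{F_j\in H^1(0,K_j):F(K)=D_\Phi CF(0)\}$, with $D_\Phi=\diag(e^{-i\Phi_j})$. The phase enters only through this boundary condition, while $\gamma(A_q)$ is multiplication by $\Gamma_j(s)=\gamma(-p_j-s)$. Solve $(z-Y_q-\gamma(A_q))F=G$ with the integrating factor $e_j(s,z)=\exp(-izs+i\int_0^s\Gamma_j)$. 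This moves the weight into the boundary condition, since $e_j(K_j,z)=e^{-izK_j+i\delta_j}$ turns $D_\Phi$ into $D_{\Phi+\delta}$. The resolvent then splits into two pieces. The first is a Volterra part, whose difference from the case $\gamma=0$ has zero trace because its kernel is strictly triangular. The second is a finite-rank part with trace $-i\Tr[K(I-E_{\Phi+\delta}(z)C)^{-1}]$. Inserting this into \eqref{gas:norming-resolvent-reconstruction} identifies the Poisson extension of $\mathcal T_\gamma q_\Phi-q_\Phi$ with that of $q_{\Phi+\delta}-q_\Phi$ exactly, for every continuous $\gamma$. No approximation, derivative or uniqueness argument is needed. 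Your transitivity remark is correct once this identity is in hand.
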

\begin{proof}
Under $\mathcal V_\Phi$, $A_q$ is multiplication by $-p_j-s$ and
$Y_q$ becomes $Y_\Phi F=iF'$ on
$\{F_j\in H^1(0,K_j):F(K)=D_\Phi CF(0)\}$,
$D_\Phi=\operatorname{diag}(e^{-i\Phi_j})$.
Indeed, integration by parts gives
$x\mathcal V_\Phi F=\mathcal V_\Phi(iF')+iv^*F(0)/\sqrt{2\pi}$;
$X^*$ subtracts the last term. Dissipativity follows from
$C^*C=I-vv^*$, and the resolvent equation is invertible because
$\|\operatorname{diag}(e^{izK_j})D_\Phi C\|<1$ for $\Im z>0$.
Put $\Gamma_j(s)=\gamma(-p_j-s)$ and
$e_j(s,z)=\exp(-izs+i\int_0^s\Gamma_j)$.
Solving the first-order equation splits its resolvent into the
Volterra kernel $i\mathbf1_{r<s}e_j(s,z)/e_j(r,z)$ and a finite-rank
term. The Volterra resolvent difference has trace zero by its strict
triangular support, while the latter has trace
$-i\Tr[K(I-E_{\Phi+\delta}(z)C)^{-1}]$.
Subtract the case $\gamma=0$ and apply
\eqref{gas:norming-resolvent-reconstruction} to obtain the claimed phase
change. Continuous functions on disjoint bands can have arbitrary
prescribed integrals.
\end{proof}

\begin{prop}\label{prop:cm-bo-factor}
Let $(\lambda_j)_{j\in J}$ be a separated discrete real family and
suppose the matrix
$A_{jj}=p_j$, $A_{jk}=-i/(\lambda_j-\lambda_k)$ for $j\ne k$
extends to a bounded self-adjoint operator with $aI\le A\le bI$,
$0<a<b$. Put $Y=\diag(\lambda_j)$ on its natural domain and,
for real $\gamma\in C([a,b])$, set
$B_{\gamma,t}=Y-\gamma(A)-2tA+i/(2A)$,
$R_{\gamma,t}(z)=(z+B_{\gamma,t})^{-1}$.
For $\Im z>-(2b)^{-1}$, this resolvent is Hilbert--Schmidt and
$\|R_{\gamma,t}(z)\|\le(\Im z+(2b)^{-1})^{-1}$.
The function
\begin{equation}\label{gas:cm-bo-factor-field}
 u^\gamma(t,x)=\Tr[R_{\gamma,t}(x)^*A^{-1}R_{\gamma,t}(x)]
\end{equation}
is a global BO solution in $\mathcal X_b$, bounded by $4b$.
Every increasing finite exhaustion of $J$ gives finite multisolitons
converging in $C^\infty_{\rm loc}(\R^2)$; their zero-extended resolvents
converge in Hilbert--Schmidt norm locally uniformly on the stated strip.
This also holds for uniformly convergent finite norming functions.
For fixed $\gamma,t$,
\begin{equation}\label{gas:cm-resolution}
 \frac1{2\pi}\int_\R R_{\gamma,t}(x)^*A^{-1}R_{\gamma,t}(x)\,dx=I
\end{equation}
in the strong operator sense.
\end{prop}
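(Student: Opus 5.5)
The plan is to reduce every assertion to the finite inverse formula \eqref{gas:finite-reconstruction} through finite sections, and then pass to the limit with the continuity results of Theorems~\ref{gas:compact-flow} and~\ref{gas:continuous-norming-action}.

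\emph{Resolvent bounds.} First, $\Im(z+B_{\gamma,t})=\Im z+(2A)^{-1}\ge\Im z+(2b)^{-1}$ because $aI\le A\le bI$. The real part $Y-\gamma(A)-2tA$ is self-adjoint on $\Dom Y$, being a bounded perturbation of $Y$. Hence $z+B_{\gamma,t}$ is maximal accretive up to rotation, and the numerical range bound gives $\|R_{\gamma,t}(z)\|\le(\Im z+(2b)^{-1})^{-1}$. For the Hilbert--Schmidt property, write $R_{\gamma,t}(z)=(z+Y)^{-1}\bigl(I+W(z+Y)^{-1}\bigr)^{-1}$-type identities with $W=-\gamma(A)-2tA+i/(2A)$ bounded. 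Separation of $(\lambda_j)$ makes $(z+Y)^{-1}$ Hilbert--Schmidt for $\Im z\neq 0$; for the full strip one uses the second resolvent identity $R=R_0-R\,W' R_0$, with $R_0$ taken at a shifted point of large imaginary part, so that the HS norm propagates by the bounded factor.

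\emph{Finite sections.} For a finite $F\subset J$, compress to $P_FAP_F$. It is again a Cauchy-matrix type matrix with $aI\le A_F\le bI$, since it is a compression of $A$. The point is to conjugate into the form \eqref{gas:finite-reconstruction}. By the Calogero--Moser/Sun correspondence \cite[Proposition~5.4]{Sun21}, the pair $(Y_F,A_F)$ satisfies $[Y_F,A_F]=i(I-ee^*)$-type commutation with $e$ the all-ones vector. Diagonalizing $A_F=U\diag(a_j)U^*$ then turns $Y_F$ into $-c_j$ on the diagonal with off-diagonal entries $i/(a_j-a_k)$ after rescaling by $\diag(\sqrt{a_j})$. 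The identity $\Tr[R^*A^{-1}R]=-2\Im\Tr R$, which follows from $R-R^*=R^*(B^*-B)R=-iR^*A^{-1}R$, identifies $u^\gamma_F$ with $-2\Im\Tr(x+B)^{-1}$. This is a finite multisoliton with eigenvalues $-a_j\in[-b,-a]$ and centers $c_j+\gamma(a_j)+2ta_j$, that is $S(t)\mathcal T_\gamma$ applied to the $\gamma=0$ data. It therefore lies in $\mathcal F_b$ and is bounded by $4b$.

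\emph{Limits.} As $F\uparrow J$, $P_FAP_F\to A$ strongly, so $\gamma(A_F)\to\gamma(A)$ strongly by polynomial approximation on $[a,b]$. The compressed resolvents, extended by zero, converge strongly by the resolvent identity and the uniform bound above. Upgrading to HS convergence is the main obstacle. I would first try to show convergence of HS norms via the identity $\|R\|_{\rm HS}^2\le b\,u^\gamma\le 4b^2$, combined with Fatou and a lower bound. The cleanest route is to use \eqref{gas:cm-resolution} at finite level, which is a finite-dimensional Plancherel statement: the Poisson profiles integrate to $2\pi$ each. Together with uniform decay in $x$ coming from $\|R(x)\|\le\ldots$, this yields tightness of $\Tr[P_GR^*A^{-1}RP_G]$ uniformly. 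Strong convergence plus norm convergence then gives HS convergence locally uniformly in $z$ via analyticity and Vitali.

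Trace-class, hence pointwise, convergence of $u_F^\gamma$ follows. Theorem~\ref{gas:compact-flow} then gives $C^\infty_{\rm loc}$ convergence of the solutions, $u^\gamma\in\mathcal X_b$, the BO equation, and $0\le u^\gamma\le4b$. Uniformly convergent norming functions $\gamma_n$ are handled by joint continuity in Theorem~\ref{gas:continuous-norming-action}. Finally, \eqref{gas:cm-resolution} passes from finite sections by weak limits tested on finitely supported vectors: the integrand converges pointwise in $x$ and is dominated by $C(1+x^2)^{-1}$, using a resolvent decay estimate from $\|R(x)e_j\|\lesssim|x+\lambda_j|^{-1}$.
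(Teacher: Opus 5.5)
\textbf{The gap: Hilbert--Schmidt convergence of the resolvents.} This step does not go through as you describe it. The finite-level identity \eqref{gas:cm-resolution} only gives $\int_\R\|A_F^{-1/2}R_F(x)e_j\|^2\,dx=2\pi$ for each $j$. That is an integral in $x$. It gives no uniform smallness of $\sum_{j\notin G}\|R_F(z)e_j\|^2$ at a fixed $z$. The operator-norm bound on $R(x)$ gives no decay in $j$ either. So neither the tightness nor the convergence of HS norms you need follows from what you wrote.

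In fact the obstacle is not there. Use the resolvent identity you already have for strong convergence, with the fixed HS operator $R=R_{\gamma,t}(z)$ on the right. Put $W=B_{\gamma,t}-Y$ and let $W_F$ be its finite analogue. Then $P_FR-R_FP_F=R_F(W_FP_F-P_FW)R$. Since $W_FP_F-P_FW\to0$ strongly with uniform bounds and $R\in\mathfrak S_2$, the right side tends to zero in $\mathfrak S_2$. Also $\|(I-P_F)R\|_{\mathfrak S_2}\to0$, so $R_FP_F\to R$ in $\mathfrak S_2$.

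This is the paper's argument. It extends $P_nAP_n$ by $a_0(I-P_n)$, $a_0\in[a,b]$, so that $R_n-R=R_n(W-W_n)R$ holds on the whole space. The complementary block $(z+Y+w_0)^{-1}(I-P_n)$ is then small in $\mathfrak S_2$ by separation.

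Alternatively, the column bound you use only at the end gives the tightness directly. From $(z+\lambda_j+i\mu)R_F(z)e_j=e_j-R_F(z)(W_F-i\mu)e_j$ with $\mu>(2b)^{-1}$, one gets $\|R_F(z)e_j\|\le C_z|z+\lambda_j+i\mu|^{-1}$ uniformly in $F$. Either way, the same estimate covers uniformly convergent $\gamma_n$, since $\|\gamma_n(A_F)-\gamma(A_F)\|\le\|\gamma_n-\gamma\|_\infty$.

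\textbf{Two smaller points.}

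First, reducing $P_FAP_F$ to \eqref{gas:finite-reconstruction} needs its eigenvalues to be simple and $e^*v_j\ne0$. Both follow from the commutator you quote, which gives $|e^*v|=1$ for every normalized eigenvector. In the unitary eigenbasis normalized by $e^*v_j=1$ you read off $Y_{jk}=i/(a_j-a_k)$ directly, with centers $c_j=-Y_{jj}+\gamma(a_j)$. No $\diag(\sqrt{a_j})$ rescaling enters.

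Second, for \eqref{gas:cm-resolution}, ``each Poisson profile integrates to $2\pi$'' is only the trace of the finite-level operator identity. The identity itself follows from $R-R^*=-iR^*A^{-1}R$ and $\int_{-M}^M(x+B_F)^{-1}\,dx\to-i\pi I$, which holds because the spectrum of $B_F$ lies in $\C_+$. With that and the column bound, your dominated-convergence passage on finitely supported vectors, followed by isometric extension, is a valid alternative.

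The paper instead argues directly in infinite dimensions. The bounds $\|e^{isB}\|\le e^{-s/(2b)}$ and $\frac{d}{ds}(e^{-isB^*}e^{isB})=-e^{-isB^*}A^{-1}e^{isB}$ give $\int_0^\infty e^{-isB^*}A^{-1}e^{isB}\,ds=I$. The Laplace representation $R(x)=-i\int_0^\infty e^{isx}e^{isB}\,ds$ with vector-valued Plancherel then yields \eqref{gas:cm-resolution} without any approximation.
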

\begin{proof}
For finite sections, $[A,Y]=i(ee^*-I)$, $e=(1,\ldots,1)^{\mathsf T}$,
gives $|e^*v|=1$ for every normalized eigenvector of $A$.
Thus its eigenvalues are simple. Normalize by $e^*v_j=1$; in this
basis $Y_{jk}=i/(a_j-a_k)$ for $j\ne k$, so $B_{\gamma,t}$ is
\eqref{gas:finite-reconstruction} with centers
$c_j=-Y_{jj}+\gamma(a_j)$. The identity
$R-R^*=-iR^*A^{-1}R$ proves \eqref{gas:cm-bo-factor-field}.

Separation implies $(Y+i)^{-1}\in\Sch_2$.
The real part of $B_{\gamma,t}$ is self-adjoint on $\Dom Y$;
coercivity for $z+B_{\gamma,t}$ and its adjoint proves invertibility
and the norm bound, and its resolvent identity proves membership
in $\Sch_2$. For finite projections $P_n$, extend
$A_n=P_nAP_n$ by $a_0(I-P_n)$, $a_0\in[a,b]$.
These extensions and their continuous functions converge strongly.
The resolvent identity, applied to a fixed Hilbert--Schmidt resolvent,
then gives Hilbert--Schmidt convergence; the complementary diagonal
block tends to zero because $(Y+i)^{-1}\in\Sch_2$.
Uniform inverse bounds give local uniformity and trace convergence.
Theorem~\ref{gas:compact-flow} identifies the resulting BO limit.
For $B=B_{\gamma,t}$,
$\|e^{isB}\|\le e^{-s/(2b)}$ and
$$
 \frac{d}{ds}(e^{-isB^*}e^{isB})=-e^{-isB^*}A^{-1}e^{isB},
 \enspace R(x)=-i\int_0^\infty e^{isx}e^{isB}\,ds.
$$
Integration and vector-valued Plancherel prove \eqref{gas:cm-resolution}.
\end{proof}

\begin{lemma}
\label{gas:cm-spectral-coordinate-chart}
Under Proposition~\ref{prop:cm-bo-factor}, suppose $J=\N_0$ and
$Y_{jj}=dj$, $d>0$. For fixed $t$ and real $\gamma\in C([a,b])$,
$H=Y-2tA-\gamma(A)$ on $\Dom Y$ has increasing simple eigenvalues
$\lambda_0<\lambda_1<\cdots\to\infty$, with
$|\lambda_j-dj|\le2b|t|+\|\gamma\|_\infty$.
Its normalized eigenvectors have unique phases satisfying
$e^*v_j:=\sum_{k\ge0}(v_j)_k=1$. If $Ue_j=v_j$ and $A^\sharp=U^*AU$, then
$\lambda_{j+1}-\lambda_j\ge\frac2{b-a}$, $A^\sharp_{jj}\in[a,b]$, $A^\sharp_{jk}=-\frac{i}{\lambda_j-\lambda_k}\enspace(j\ne k)$.
The solution is reconstructed by
$u^\gamma(t,x)=\Tr[\mathcal R(x)^*(A^\sharp)^{-1}\mathcal R(x)]$,
where $\mathcal R(x)=(x+\diag(\lambda_j)+i/(2A^\sharp))^{-1}$.
\end{lemma}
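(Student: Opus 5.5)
Throughout, write $H=Y-2tA-\gamma(A)$ with $G=2tA+\gamma(A)$; this $G$ is bounded self-adjoint with $\|G\|\le2b|t|+\|\gamma\|_\infty$. The plan is to treat $H$ as a bounded perturbation of the diagonal operator $Y=\diag(dj)$, and then transport the commutator identity $[A,Y]=i(ee^*-I)$ into the eigenbasis of $H$.

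\textbf{Spectrum of $H$.} Since $Y$ has compact resolvent, $H$ is self-adjoint on $\Dom Y$ with compact resolvent. Min--max comparison with $Y$ gives $|\lambda_j-dj|\le\|G\|$ for the ordered eigenvalues counted with multiplicity.

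\textbf{The commutator for $H$ and the phases.} Next I use the commutator. Because $G$ commutes with $A$, and using $[A,Y]=i(ee^*-I)$ (valid on finite sections and passed to the limit in the form sense on $\Dom Y$), we get
\[
 [A,H]=[A,Y]=i(ee^*-I).
\]
Here $e$ is only a formal vector. I would make sense of $e^*v$ for $v\in\Dom Y$ by absolute convergence: $\sum_k|v_k|\le(\sum_k(1+k^2)|v_k|^2)^{1/2}(\sum_k(1+k^2)^{-1})^{1/2}$. Pairing the identity with eigenvectors $v_j,v_k$ then gives
\[
 (\lambda_k-\lambda_j)\langle Av_k,v_j\rangle=i\bigl((e^*v_k)\overline{(e^*v_j)}-\delta_{jk}\bigr).
\]
\begin{itemize}
\item Taking $j=k$ gives $|e^*v_j|=1$, which yields the unique phase normalization $e^*v_j=1$.
\item If an eigenvalue had multiplicity at least two, we could pick $v\perp v'$ in the same eigenspace. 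The pairing would give $(e^*v)\overline{(e^*v')}=0$, contradicting $|e^*v|=|e^*v'|=1$. Hence all eigenvalues are simple.
\item With $e^*v_j=1$, the off-diagonal relation reads $A^\sharp_{jk}=-i/(\lambda_j-\lambda_k)$.
\item The diagonal entries satisfy $A^\sharp_{jj}=\langle Av_j,v_j\rangle\in[a,b]$.
\end{itemize}

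\textbf{Gap bound.} This is the main obstacle. For each fixed $j$, consider the $2\times2$ compression of $A^\sharp$ to $\spn\{e_j,e_{j+1}\}$. Its eigenvalues lie in $[a,b]$, so its off-diagonal modulus is at most $(b-a)/2$: a Hermitian matrix with spectrum in $[a,b]$ has $|m_{12}|\le(\lambda_{\max}-\lambda_{\min})/2$. Hence $1/(\lambda_{j+1}-\lambda_j)\le(b-a)/2$, which is exactly the claimed bound $\lambda_{j+1}-\lambda_j\ge2/(b-a)$.

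The delicate point is justifying the pairing identity rigorously on the infinite lattice. The plan there is:
\begin{enumerate}
\item establish the identity for finite sections $P_nAP_n$;
\item use strong resolvent convergence of the sections to $H$;
\item use convergence of the simple eigenvectors in the graph norm of $Y$, which makes $e^*v$ continuous.
\end{enumerate}

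\textbf{Reconstruction.} $U$ is unitary, since the eigenvectors form a basis by compact resolvent. Conjugating gives
\[
 U^*(x+B_{\gamma,t})U=x+\diag(\lambda_j)+i/(2A^\sharp),
\]
because $U^*(Y-G)U=\diag(\lambda_j)$ and $U^*A^{-1}U=(A^\sharp)^{-1}$. Trace invariance under this conjugation applied to \eqref{gas:cm-bo-factor-field} yields the stated formula for $u^\gamma$.
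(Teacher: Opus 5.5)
Your proposal is correct and follows the paper's proof essentially step for step. It uses min--max against $Y$ for the eigenvalue bound, and it extends the commutator $[A,Y]=i(ee^*-I)$ as a form identity on $\Dom Y$, with $e^*$ continuous in the graph norm and the $2tA$, $\gamma(A)$ terms cancelling because they commute with $A$. That identity gives $|e^*v|=1$, simplicity, the phases and the off-diagonal entries. Your $2\times2$ compression is the paper's bound $\|A^\sharp-\tfrac{a+b}{2}I\|\le\tfrac{b-a}{2}$ for the gap, and the reconstruction is the same unitary conjugation of $B_{\gamma,t}$.

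The finite-section and eigenvector-convergence plan at the end is unnecessary. As phrased, it also presupposes the simplicity it is meant to help prove. Both sides of the form identity are continuous in the graph norm of $Y$, and finitely supported vectors form a core, so density already gives the identity on all of $\Dom Y$.
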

\begin{proof}
The functional $e^*$ is continuous in the graph norm of $Y$.
Approximation by finite vectors extends the commutator to
$(Au)^*Hv-(Hu)^*Av=i(\overline{e^*u}\,e^*v-u^*v), \enspace u,v\in\Dom Y$,
since the bounded terms commute with $A$.
Compact resolvent and min--max give the eigenvalue estimate.
The identity yields $|e^*v|=1$ for every normalized eigenvector,
proving simplicity and the phase choice; on distinct eigenvectors
it gives the off-diagonal entries. The bound
$\|A^\sharp-(a+b)I/2\|\le(b-a)/2$ gives the gap.
Unitary conjugation of $B_{\gamma,t}$ proves reconstruction.
\end{proof}

\section{A one-sided gas with continuous norming data}
\label{gas:one-sided-continuum}\label{sec:cm-physical-negative-transform}\label{gas:sawtooth-explicit-spectral}

Fix $0<a<b$, $K=b-a$, $d=2\pi/K$ and $p=(a+b)/2$. On
$\ell^2(\mathbb N_0)$ set
$Ye_j=dj e_j$, $A_{jj}=p$, $A_{jk}=-\frac{i}{d(j-k)}\ (j\ne k), \enspace\mathcal D(Y)=\{v:(jv_j)\in\ell^2\}$.
The Laurent symbol and bounds are
\begin{equation}\label{gas:half-lattice-symbol}
 \alpha(\theta)=p+\frac{\pi-\theta}{d},\enspace0<\theta<2\pi,
 \enspace aI\le A\le bI.
\end{equation}
For real $\gamma\in C([a,b])$, put $\Gamma=\gamma(A)$ and, on
$\mathcal D(Y)$,
$$
 \begin{gathered}
 B_{\gamma,t}=Y-\Gamma-2tA+\tfrac i2A^{-1},\enspace
 R_{\gamma,t}(x)=(x+B_{\gamma,t})^{-1},\\
 u^\gamma(t,x)=\Tr(R_{\gamma,t}(x)^*A^{-1}R_{\gamma,t}(x)).
 \end{gathered}
$$
Any continuous extension of $\gamma$ to $[0,b]$ gives the same
action in Theorem~\ref{gas:continuous-norming-action}, since all finite
spectra lie in $[a,b]$.
For $\gamma_r(s)=\log r(s)/(2s)$, the notation in
Theorem~\ref{thm:main-gas} is $u_r=u^{\gamma_r}$ and
$u_1=u^0$.

\begin{theorem}
\label{gas:half-lattice-gas}
The function $u^\gamma$ is a global BO solution in $\mathcal X_b$,
$0\le u^\gamma\le4b$. For the projection $P_N$ onto indices
$0,\ldots,N-1$, the eigenvalues $a_{j,N}$ of $A_N=P_NAP_N$ are
simple and lie in $[a,b]$. The multisolitons with data
$c_{j,N}^\gamma=-d(N-1)/2+\gamma(a_{j,N})$ converge to $u^\gamma$
in $C^\infty_{\rm loc}(\mathbb R^2)$, and
$$
 N^{-1}\sum_{j=1}^N\delta_{a_{j,N}}
 \rightharpoonup K^{-1}\mathbf1_{[a,b]}(s)\,ds,
 \enspace u^\gamma(t)=S(t)\mathcal T_\gamma u^0(0).
$$
For every positive $r\in C([a,b])$, $\gamma_r(s)=\log r(s)/(2s)$ gives
$n_{j,N}^{(r)}=e^{2a_{j,N}c_{j,N}^{\gamma_r}} =e^{-d(N-1)a_{j,N}}r(a_{j,N})$.
The map $r\mapsto u^{\gamma_r}(0)$ is injective.
\end{theorem}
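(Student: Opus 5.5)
The plan is to obtain everything except injectivity directly from Proposition~\ref{prop:cm-bo-factor} and Lemma~\ref{gas:cm-spectral-coordinate-chart}, specialized to $J=\N_0$, $\lambda_j=dj$. Those results apply because \eqref{gas:half-lattice-symbol} gives $aI\le A\le bI$ and the lattice is separated. Proposition~\ref{prop:cm-bo-factor} then shows that $u^\gamma$ is a global BO solution in $\mathcal X_b$ with $0\le u^\gamma\le4b$; positivity follows from \eqref{gas:cm-bo-factor-field} and $A^{-1}>0$.

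For the finite sections $P_N$, we first check that $A_N=P_NAP_N$ satisfies $aI\le A_N\le bI$ by compression. Simplicity of its eigenvalues comes from the commutator identity $[A_N,Y_N]=i(ee^*-I)$ used in the proof of Proposition~\ref{prop:cm-bo-factor}. In the eigenbasis normalized by $e^*v_j=1$, the matrix $B_{\gamma,t}$ takes the form \eqref{gas:finite-reconstruction} with centers $c_j=-(v_j^*Y_Nv_j)+\gamma(a_{j,N})$. To identify $v_j^*Y_Nv_j=d(N-1)/2$, we use the reflection symmetry $J_N e_k=e_{N-1-k}$ composed with complex conjugation. This antiunitary map $\mathcal C$ satisfies $\mathcal CA_N\mathcal C=A_N$, because the off-diagonal entries $-i/(d(j-k))$ are mapped to themselves, and $\mathcal CY_N\mathcal C=d(N-1)-Y_N$. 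Since the eigenvalues are simple, $\mathcal Cv_j=\omega v_j$ for a unimodular $\omega$. Hence $v_j^*Y_Nv_j=d(N-1)-v_j^*Y_Nv_j$, which gives the stated centers; the constraint $e^*v_j=1$ is compatible with this.

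Convergence of these finite multisolitons to $u^\gamma$ in $C^\infty_{\rm loc}$ is the exhaustion statement of Proposition~\ref{prop:cm-bo-factor}. The identity $u^\gamma(t)=S(t)\mathcal T_\gamma u^0(0)$ follows by passing to the limit in the finite identity $\mathcal T_{2t\,\mathrm{id}}\mathcal T_\gamma=S(t)\mathcal T_\gamma$, using the joint continuity in Theorem~\ref{gas:continuous-norming-action}. The norming formula $e^{2a_{j,N}c_{j,N}^{\gamma_r}}=e^{-d(N-1)a_{j,N}}r(a_{j,N})$ is immediate from $2s\gamma_r(s)=\log r(s)$. For the eigenvalue distribution, note that $A$ is the Toeplitz operator with symbol $\alpha$, which is real and bounded with range $(a,b)$. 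Szeg\H{o}'s first limit theorem (the Avram--Parter version for bounded real symbols) gives $N^{-1}\Tr g(A_N)\to(2\pi)^{-1}\int_0^{2\pi}g(\alpha(\theta))\,d\theta$. The change of variables $s=\alpha(\theta)$ has $ds=-d\theta/d$, so the limit equals $K^{-1}\int_a^bg$.

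The main obstacle is injectivity of $r\mapsto u^{\gamma_r}(0)$. I would try to recover $\gamma$ through the trace formula \eqref{gas:norming-resolvent-reconstruction}, which expresses $\mathcal P_y(u^\gamma-u^0)$ through $\Tr[(z-Y_q-\gamma(A_q))^{-1}-(z-Y_q)^{-1}]P_q$. For $y\to\infty$, or alternatively $x\to+\infty$, expand in powers of $z^{-1}$. The second-order term is $\Tr\gamma(A_q)P_q$-type data, which diverges, so the expansion should instead be done at the level of the Toeplitz model. There the relevant functionals of $\gamma$ reduce to integrals $\int_a^b\gamma(s)\,\kappa(s,z)\,ds$ against a family of kernels, such as $(z-2s t)^{-2}$-like Cauchy kernels, whose span is dense in $C([a,b])^*$. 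Since $\gamma$ determines $r$, this would give injectivity. An alternative route that avoids expansions uses the spectral chart of Lemma~\ref{gas:cm-spectral-coordinate-chart}: the datum $u^\gamma(0)$ determines the inner function $\Theta$, hence the model-space operators $A_q$ and $Y_q+\gamma(A_q)$, and the task is then to show that the joint unitary class of this pair determines $\gamma$ on the absolutely continuous spectrum $[a,b]$. Making either route rigorous is where the real work lies.
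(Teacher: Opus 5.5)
Everything except the final injectivity claim is correct and follows the paper. Proposition~\ref{prop:cm-bo-factor} gives the solution, the bound $4b$ and simplicity via $[A_N,Y_N]=i(ee^*-I)$. Your antiunitary reflection $\mathcal C$, with $\mathcal CA_N\mathcal C=A_N$ and $\mathcal CY_N\mathcal C=d(N-1)-Y_N$, is exactly the paper's ``index reversal followed by conjugation'' that yields the centers. The finite identity $u_N^\gamma(t)=S(t)\mathcal T_\gamma u_N^0(0)$ passes to the limit by Theorem~\ref{gas:continuous-norming-action}. For the eigenvalue distribution you cite Szeg\H{o}'s theorem for real bounded symbols. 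The paper instead proves it in place, approximating the sawtooth $\alpha$ by real Fej\'er sums and controlling normalized Hilbert--Schmidt errors of the Toeplitz sections; either route works.

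The genuine gap is injectivity of $r\mapsto u^{\gamma_r}(0)$. You sketch two routes but carry out neither, and both are static: they try to read $\gamma$ off the single function $u^\gamma(0)$. At fixed time the weight is poorly exposed. The leading term $\log(b/a)/(2\pi x)$ as $x\to+\infty$ (Theorem~\ref{gas:half-lattice-far-fields}) does not see $\gamma$. The naive $z^{-1}$ expansion of \eqref{gas:norming-resolvent-reconstruction} diverges, as you noticed. Your kernel $(z-2st)^{-2}$ loses all dependence on $s$ at $t=0$.

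The missing idea is to bring in the time evolution. Suppose $u^{\gamma}(0)=u^{\eta}(0)$. Your identity $u^\gamma(t)=S(t)\mathcal T_\gamma u^0(0)$ gives $u^\gamma(t)=S(t)u^\gamma(0)$, and uniqueness of the flow on $\mathcal X_b$ (Theorem~\ref{gas:global-flow}) then gives $u^\gamma(t)=u^\eta(t)$ for all $t$. Along rays $x=T\xi$ with $\xi>2b$ one has $R_{\gamma,T}(T\xi)=T^{-1}S_\gamma$, where
\[
 S_\gamma=\bigl(Y/T+\xi-2A+T^{-1}(\tfrac i2A^{-1}-\gamma(A))\bigr)^{-1}.
\]
This operator is uniformly bounded for large $T$, since $Y/T+\xi-2A\ge\xi-2b>0$. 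The argument then runs as follows:
\begin{enumerate}
\item Use the resolvent identity $S_\gamma-S_0=T^{-1}S_\gamma\gamma(A)S_0$ together with the column bounds \eqref{gas:half-lattice-column-bound}.
\item Recenter at $j\approx rT$, so that the sections converge to the bilateral Toeplitz operator, and take a Riemann sum in $r$.
\item This gives Theorem~\ref{gas:half-lattice-weight-response}:
\[
 T^2(u^\gamma-u^\eta)(T,T\xi)\longrightarrow\frac1{2\pi}\int_a^b\frac{\gamma(s)-\eta(s)}{s(\xi-2s)^2}\,ds .
\]
\end{enumerate}
This limit must vanish for every $\xi>2b$. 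Expanding in powers of $1/\xi$ gives $\int_a^b(\gamma-\eta)(s)s^{n-1}\,ds=0$ for all $n\ge0$, so $\gamma=\eta$ by polynomial density. Finally, $r=e^{2s\gamma_r(s)}$ recovers $r$.

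It is precisely the term $-2tA$, evaluated on the scale $x=t\xi$, that produces the nondegenerate family of kernels in the spectral variable $s$ that your first route was looking for.
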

\begin{proof}
The Fourier series of $\alpha$ gives~\eqref{gas:half-lattice-symbol}.
Proposition~\ref{prop:cm-bo-factor} gives reconstruction, convergence,
the bound $\|R_{\gamma,t}(x)\|\le2b$ and simplicity of $A_N$.
Index reversal followed by conjugation commutes with $A_N$; simplicity
therefore gives symmetric eigenvector moduli and $Y$-expectation
$d(N-1)/2$. This proves the center and norming formulas; the action
follows from Theorem~\ref{gas:continuous-norming-action}.

For the eigenvalue distribution, replace $\alpha$ by real Fej\'er
sums $\alpha_m\in[a,b]$. The normalized Hilbert--Schmidt error of
their Toeplitz sections is bounded by $\|\alpha-\alpha_m\|_2$.
Telescoping powers transfers this bound to polynomial traces;
finite-band paths contribute only $O(1)$ endpoint errors. Thus
$N^{-1}\Tr f(A_N)\to(2\pi)^{-1}\int f(\alpha(\theta))\,d\theta
=K^{-1}\int_a^bf(s)\,ds$, first for polynomials and then for
continuous $f$. Injectivity follows from
Theorem~\ref{gas:half-lattice-weight-response}.
\end{proof}

Put $\ell=2\pi/K$, $dm=d\theta/(2\pi)$,
$\omega(e^{i\theta})=a+K\theta/(2\pi)$, and $A_+=T_\omega$ on
$H^2(\mathbb T)$. Set $H(w)=(1+w)/(1-w)$ and, for $a<s<b$,
$$
 d_s=(s-a)/K,\enspace\vartheta_s=2\pi d_s,\enspace
 g_s(w)=\exp\left[-\tfrac12\int\log|\omega(\zeta)-s|
 H(w\bar\zeta)\,dm(\zeta)\right].
$$
Analytic square roots below equal one at zero.
\begin{prop}
\label{gas:sawtooth-spectral-transform}
For $k_u(w)=(1-\bar uw)^{-1}$, the formula
$$
 Uk_u(s)=\overline{\phi(u;s)},\enspace
 \phi(w;s)=\sqrt{\frac{\sin(\pi d_s)}\pi}
 \frac{g_s(w)}{\sqrt{(1-w)(1-we^{-i\vartheta_s})}}
$$
defines a unitary $U:H^2(\mathbb T)\to L^2(a,b)$ with
$UA_+U^*=M_s$. Hence $A_+$ has simple purely absolutely continuous
spectrum $[a,b]$.
\end{prop}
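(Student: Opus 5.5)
The proof splits into three parts: compute $\langle A_+k_u,k_v\rangle$, show that $U$ preserves the reproducing-kernel Gram matrix, and identify $UA_+U^*$ as multiplication by $s$.

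\textbf{Step 1 (Toeplitz action on kernels).} Since $A_+=T_\omega$ and $k_u$ is the Szeg\H{o} kernel, $\langle T_\omega k_u,k_v\rangle=\int\omega(\zeta)k_u(\zeta)\overline{k_v(\zeta)}\,dm$. We want the spectral representation
$$\langle T_\omega k_u,k_v\rangle=\int_a^b s\,\overline{\phi(u;s)}\phi(v;s)\,ds,$$
together with the same identity with $s$ replaced by $1$, which expresses $\langle k_u,k_v\rangle=(1-\bar uv)^{-1}$. Once both hold, $U$ extends isometrically from the dense span of kernels and satisfies $\langle UA_+f,Ug\rangle=\langle sUf,Ug\rangle$. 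It is then enough to prove the resolvent version: for $z\notin[a,b]$,
$$\langle (T_\omega-z)^{-1}k_u,k_v\rangle=\int_a^b\frac{\overline{\phi(u;s)}\phi(v;s)}{s-z}\,ds.$$
Stieltjes inversion then gives both identities simultaneously and shows that the spectral measure of $T_\omega$ on kernel vectors is absolutely continuous with density $\overline{\phi(u;s)}\phi(v;s)$.

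\textbf{Step 2 (Wiener--Hopf factorization).} For $z\notin[a,b]$, $\omega-z$ is a nonvanishing piecewise-continuous symbol with a single jump at $\zeta=1$, running from $a-z$ to $b-z$. Its winding is zero for $z$ off $[a,b]$, so it admits a Wiener--Hopf factorization $\omega-z=h_z^+\,\overline{h_z^-}$ with outer factors $h_z^\pm$ given by the Herglotz integral of $\log(\omega-z)$. This yields
$$(T_\omega-z)^{-1}k_u=\frac{1}{h_z^+}\,P_+\!\Big[\frac{k_u}{\overline{h_z^-}}\Big]=\frac{k_u}{h_z^+\overline{h_z^-(u)}}.$$
The jump of $\log(\omega-z)$ across $z=s\in(a,b)$ occurs exactly on the arc $\theta<\vartheta_s$, where $\omega<s$. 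Taking boundary values $z=s\pm i0$ gives factors $|g_s|^{-2}$ times an explicit power of $(1-w)$ and $(1-we^{-i\vartheta_s})$ coming from the arc endpoints. The imaginary part of $\pi\log$ on the arc supplies $\sin(\pi d_s)/\pi$. The jump of the resolvent kernel then comes out as $\overline{\phi(u;s)}\phi(v;s)$, and the square roots in $\phi$ are precisely the endpoint singularities of the Cauchy integral of $\mathbf 1_{\text{arc}}$.

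\textbf{Step 3 (surjectivity and simplicity).} To show $U$ is onto, it suffices that the functions $s\mapsto\phi(u;s)$, $|u|<1$, span a dense subspace of $L^2(a,b)$. If $F\perp$ all of them, then $\int F(s)\phi(w;s)\,ds\equiv0$ as an analytic function of $w$. Expanding at $w=0$ and using the intertwining relation (the span is invariant under $M_s$, since it contains the image of $A_+$ applied to kernels), the measure $F\,\overline{\phi(0;s)}\,ds$ annihilates all polynomials in $s$. Because $\phi(0;s)=\sqrt{\sin(\pi d_s)/\pi}\,g_s(0)\neq0$, this forces $F=0$. Unitarity follows, and $M_s$ on $L^2(a,b)$ has simple, purely absolutely continuous spectrum $[a,b]$.

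\textbf{Main obstacle.} The hard step is Step 2: the explicit boundary evaluation with the correct constants. In particular, one must verify that the endpoint singularities produce exactly the exponents $-\tfrac12$, which requires careful branch tracking of $\log(\omega-s\mp i0)$ on the arc. One must also justify the limit $z\to s$ for the piecewise-continuous symbol, using uniform bounds on $g_s$ for $s$ in compact subsets of $(a,b)$ and $|w|<1$.
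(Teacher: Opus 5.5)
Your proposal follows essentially the same route as the paper. The outer (Wiener--Hopf) factorization $\omega-z=F_z\overline{F_{\bar z}}$ gives $\langle (A_+-z)^{-1}k_u,k_v\rangle=F_z(v)^{-1}\overline{F_{\bar z}(u)^{-1}}/(1-\bar uv)$; the boundary values on the cut and Stone's formula give the density $\phi(v;s)\overline{\phi(u;s)}$; surjectivity follows because the closed, $M_s$-invariant range contains $U1=\phi(0;\cdot)>0$.

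Two small corrections:
\begin{enumerate}[label=\textup{(\arabic*)}]
\item The boundary product is $g_s(v)\overline{g_s(u)}$, not $|g_s|^{-2}$.
\item Bounds on compact subsets of $(a,b)$ do not by themselves exclude atoms at $a,b$. The paper bounds the scalar resolvents uniformly up to the endpoints via $\sup_z\int|\log|\omega-z||\,dm<\infty$. Alternatively, you can note that $\langle (A_+-a)f,f\rangle=\int(\omega-a)|f|^2\,dm>0$ for $f\neq0$, and likewise at $b$.
\end{enumerate}
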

\begin{proof}
Set $F_z=\exp[\frac12\int\log(\omega-z)H\,dm]$, continuing the
real logarithm from $z<a$. Outer factorization gives
$\langle(A_+-z)^{-1}k_u,k_v\rangle
=F_z(v)^{-1}\overline{F_{\bar z}(u)^{-1}}/(1-\bar uv)$.
Continuation to the cut gives
$$
 F_{s\pm i0}(w)^{-1}
 =g_s(w)e^{\pm i\pi d_s/2}
 \left(\frac{1-we^{-i\vartheta_s}}{1-w}\right)^{\pm1/2}.
$$
Subtracting the boundary values and using Stone's formula gives
the spectral density $\phi(v;s)\overline{\phi(u;s)}$.
For fixed disk points, boundedness of $H$ and the uniform bound on
$\int|\log|\omega-z||\,dm$ bound the scalar resolvents near $[a,b]$.
Their Poisson kernels give $\mu(I)\le C|I|$, including endpoints;
the form bounds exclude exterior spectrum. Integrating the density
therefore makes $U$ an isometry. Its closed range reduces spectral
multiplication (the compressed spectral projections have the same
norms as the full ones) and contains $U1=\phi(0;\cdot)>0$ almost
everywhere. Bounded multiples of this function are dense, proving
surjectivity.
\end{proof}

Let $J$ be coefficientwise conjugation on $H^2$ and $C$ conjugation on $L^2(a,b)$.
Then $A=JA_+J$, $U_A=CUJ$ is unitary and $U_AAU_A^*=M_s$.
\begin{theorem}
\label{thm:cm-physical-negative-transform}
Fix $t\in\mathbb R$ and real $\gamma\in C([a,b])$. Write
$\mathcal B=B_{\gamma,t}$, $R(x)=(x+\mathcal B)^{-1}$,
$q=u^\gamma(t)$ and $e=(1,1,\ldots)^{\mathsf T}$. Define
$(Wf)(x)=(2\pi)^{-1/2}e^*R(x)A^{-1/2}f$.
Then $W$ is an isometry onto the negative spectral subspace of $L_q$, with
$L_qW=-WA$ and $WW^*=P_q$. The full spectrum is purely absolutely
continuous, equal to $[-b,-a]\cup[0,\infty)$, with multiplicity one
on $[-b,-a]$. For bounded Borel $h$ on $[a,b]$,
\begin{equation}\label{eq:cm-negative-spectral-kernel}
 [h(-L_q)P_q](x,y)=(2\pi)^{-1}e^*R(x)A^{-1}h(A)R(y)^*e.
\end{equation}
\end{theorem}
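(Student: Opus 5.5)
The plan is to reduce everything to the identities already available for finite sections and for the inner-function model. The key identity is the resolution of the identity \eqref{gas:cm-resolution} from Proposition~\ref{prop:cm-bo-factor},
$$
 \frac1{2\pi}\int_\R R(x)^*A^{-1}R(x)\,dx=I .
$$
Combined with the analyticity of $x\mapsto R(x)$ in $\Im x>-(2b)^{-1}$, this should give both the isometry property of $W$ and its Hardy-space range.

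\textbf{Step 1: $W$ is an isometry into $H_+$.} Write $(Wf)(x)=(2\pi)^{-1/2}\langle A^{-1/2}f, R(x)^*e\rangle$ formally. The vector $e$ is not in $\ell^2$, so $e^*R(x)$ has to be interpreted through Lemma~\ref{gas:cm-spectral-coordinate-chart}: $e^*$ is continuous in the graph norm of $Y$, and $R(x)$ maps into $\Dom Y$.

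Set $g=A^{-1/2}f$. I would rather compute $\|Wf\|_2^2$ via
$$
 \int|e^*R(x)g|^2\,dx .
$$
I would use the commutator identity $[A,Y]=i(ee^*-I)$, which gives the rank-one formula
$$
 R(x)-R(x)^* \;\text{and}\; R(x)^*A^{-1}R(x)\;\leadsto\; R(x)^*e\,e^*R(x)\ \text{up to }A\text{-conjugation}.
$$
The aim is the identity
$$
 A^{1/2}R(x)^*A^{-1}R(x)A^{1/2}\approx A^{-1/2}R(x)^*ee^*R(x)A^{-1/2}
$$
after integration in $x$. This is easiest to prove on finite sections, where $W_N$ is the eigenfunction map of the $N$-soliton Lax operator. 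The finite spectral theory (\cite{Sun21,BKV26}, and \eqref{eq:finite-q-expansion}) gives $W_N$ isometric onto $\Hd(u_N)$ with $L_{u_N}W_N=-W_NA_N$. Analyticity in the upper half plane and $L^2$ decay then give $\Ran W\subset H_+$.

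\textbf{Step 2: pass to the limit and get intertwining.} Proposition~\ref{prop:cm-bo-factor} supplies Hilbert--Schmidt convergence of the resolvents, locally uniformly in $x$. This gives $W_{N}f\to Wf$ in $L^2_{\rm loc}$, and the isometry gives convergence in $L^2$ by norm convergence. Hence $W$ is an isometry.

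The intertwining $L_qW=-WA$ passes to the limit weakly, using strong resolvent convergence $L_{u_N}\to L_q$ (Theorem~\ref{gas:inner-classification}). So $\Ran W\subset \Ran P_q$.

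\textbf{Step 3: surjectivity onto $\Ran P_q$ (the main obstacle).} For $WW^*=P_q$, I compare diagonals. From \eqref{gas:cm-bo-factor-field} and the rank-one identity, the kernel of $WW^*$ on the diagonal is
$$
 (2\pi)^{-1}e^*R(x)A^{-1}R(x)^*e .
$$
This should equal $u^\gamma(t,x)/(2\pi)=P_q(x,x)$, the latter by Lemma~\ref{gas:bounded-inner-model}.

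Since $WW^*\le P_q$ and both have the same local traces $\Tr(\chi\,\cdot\,\chi)$, their difference is a positive operator with zero local traces. Hence it vanishes.

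The delicate point is the trace identity $e^*RA^{-1}R^*e=\Tr(R^*A^{-1}R)$. I would prove it at the finite level, where it is the Sun reconstruction in the normalized basis $e^*v_j=1$ of Lemma~\ref{gas:cm-spectral-coordinate-chart}. I would then pass to the limit via the trace-norm convergence of Theorem~\ref{gas:inner-classification}.

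\textbf{Step 4: spectral conclusions.} With $W$ unitary onto $\Ran P_q$ and $L_qW=-WA$, the negative part of $L_q$ is unitarily equivalent to $-A$. Proposition~\ref{gas:sawtooth-spectral-transform} and $U_A$ give $A\cong M_s$ on $L^2(a,b)$, so this part is simple and purely absolutely continuous on $[-b,-a]$.

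The complement is unitarily equivalent to $D$ by Lemma~\ref{gas:bounded-inner-model}, which gives absolutely continuous spectrum $[0,\infty)$.

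Finally, $h(-L_q)P_q=Wh(A)W^*$. Writing out the kernel gives \eqref{eq:cm-negative-spectral-kernel}, the two factors $A^{-1/2}$ combining to $A^{-1}$.
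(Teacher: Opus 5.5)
\textbf{Gap: the isometry of $W$ is never established.} This is one of the theorem's claims, and your later steps depend on it.

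In Step~1 the target identity you display is off by a factor of $A$. Integrating its left side with \eqref{gas:cm-resolution} gives $2\pi A$, so it would yield $W^*W=A$. What is actually needed is
\[
 \frac1{2\pi}\int_\R R(x)^*ee^*R(x)\,dx=A,\quad\text{i.e.}\quad \int_\R|e^*R(x)g|^2\,dx=2\pi\langle g,Ag\rangle .
\]
You then defer to finite sections, but Step~2 is circular. From isometric $W_N$ and local convergence $W_NP_Nf\to Wf$ you only get weak convergence in $L^2$ and $\|Wf\|_2\le\|f\|_2$; mass may escape to $|x|\to\infty$. The ``norm convergence'' you invoke is equivalent to $\|Wf\|_2=\|f\|_2$, which is exactly what is being proved.

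Steps~3--4 do not repair this. For a contraction with $L_qW=-WA$ you still get $WW^*\le P_q$, and the local traces then give $WW^*=P_q$. So $W$ is a partial isometry, and $W^*W$ is a projection commuting with $A$. Since $A$ has simple spectrum, $W^*W=\mathbf 1_S(A)$ for some Borel $S\subset[a,b]$. Nothing in your argument excludes $|[a,b]\setminus S|>0$. In that case the negative part of $L_q$ is only equivalent to $-A$ on $\mathbf 1_S(A)\ell^2$. Neither the isometry nor the identification of the negative spectrum with $[-b,-a]$ follows; only the kernel formula survives.

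\textbf{The missing idea: prove the isometry intrinsically on $\ell^2$, as the paper does.}
\begin{enumerate}
\item Extend $[A,Y]=i(ee^*-I)$ to the form identity $A\mathcal B-\mathcal B^*A=iee^*$ on $\mathcal D(Y)$.
\item Use the semigroup $V_s=e^{is\mathcal B}$, which satisfies $\|V_s\|\le e^{-s/(2b)}$. The form identity gives $\frac{d}{ds}\langle V_sg,AV_sg\rangle=-|e^*V_sg|^2$, hence $\int_0^\infty|e^*V_sg|^2\,ds=\langle g,Ag\rangle$.
\item The Laplace formula $R(x)g=-i\int_0^\infty e^{ixs}V_sg\,ds$ and Plancherel then give $\|Wf\|_2=\|f\|_2$ for $f\in A^{1/2}\mathcal D(Y)$. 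The same formula gives $\Ran W\subset H_+$ directly, since the Fourier support lies in $[0,\infty)$.
\end{enumerate}
Only after this can your Step~2 upgrade local convergence to strong convergence by equality of norms.

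\textbf{A smaller point: the rows $e^*R_N(x)$.} Hilbert--Schmidt convergence $R_N(x)\to R(x)$ does not by itself control $e^*R_N(x)$, since $e\notin\ell^2$. Write
\[
 R(x)^*e=h_0+R(x)^*[(1-x)I-V^*]h_0,\qquad h_0=(Y+I)^{-1}e\in\ell^2,\ V=\mathcal B-Y,
\]
so that strong resolvent convergence gives convergence of the rows in $\ell^2$. The diagonal identity $q(x)=\|A^{-1/2}R(x)^*e\|^2$ then passes to the limit from the finite identity through this row convergence and locally uniform convergence $q_N\to q$. It does not come from the trace-norm statement of Theorem~\ref{gas:inner-classification}.

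The rest of your outline matches the paper: the weak passage of the intertwining through resolvents, the local-trace comparison via Lemma~\ref{gas:bounded-inner-model}, and the spectral identification via $U_A$ and $D$.
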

\begin{proof}
For $h_0=(Y+I)^{-1}e\in\ell^2$, $e^*$ is continuous in the graph norm of
$\mathcal D(Y)$, and, with $V=\mathcal B-Y$,
\begin{equation}\label{eq:cm-observation-row-domain}
 R(x)^*e=h_0+R(x)^*[(1-x)I-V^*]h_0\in\ell^2.
\end{equation}
Apply the resolvent equation to finite truncations of $h_0$ and
pass to the limit on the right in $\ell^2$ to justify this identity.
The commutator $[A,Y]=i(ee^*-I)$ extends from finite support as a
form on $\mathcal D(Y)$, giving
$A\mathcal B-\mathcal B^*A=i ee^*$.
The semigroup $V_s=e^{i\mathcal B s}$ preserves this domain and
decays at rate $e^{-s/(2b)}$. Differentiating
$\langle V_sg,AV_sg\rangle$ and integrating yields
$\int_0^\infty|e^*V_sg|^2\,ds=\langle g,Ag\rangle$.
The Laplace formula in the graph norm
$R(x)g=-i\int_0^\infty e^{ixs}V_sg\,ds$ and Plancherel make $W$
an isometry on $A^{1/2}\mathcal D(Y)$, hence on $\ell^2$, into $H_+$.

For finite sections, diagonalize $A_N$ with $e_N^*u_j=1$ and set
$G_N=A_N^{1/2}\mathcal B_NA_N^{-1/2}$, $v_N=A_N^{-1/2}e_N$.
Equations~\eqref{eq:finite-center-Wu}--\eqref{eq:finite-X-offdiagonal}
give the negative eigenfunction row
$\widehat\varphi_N(\xi)=\sqrt{2\pi}v_N^*e^{iG_N\xi}$;
Fourier inversion gives $\varphi_N=iW_N$.
Consequently $W_N^*W_N=I_N$, $L_{q_N}W_N=-W_NA_N$ and
$q_N(x)=\|A_N^{-1/2}R_N(x)^*e_N\|^2$, the last identity also
following from the resolvent identity for $G_N-G_N^*=iv_Nv_N^*$.

Proposition~\ref{prop:cm-bo-factor} and
\eqref{eq:cm-observation-row-domain} give convergence of the
resolvent rows locally uniformly in $x$. Thus $W_NP_Nf\to Wf$
locally in $L^2$, globally weakly, and then strongly by equality of
norms. Also $W_NA_NP_Nf\to WAf$.
The uniformly bounded potentials $q_N\to q$ locally uniformly,
so $L_{q_N}\to L_q$ in strong resolvent sense. Passing through
$(L_{q_N}-i)^{-1}$ proves $L_qW=-WA$ and $WW^*\le P_q$.
The row identity gives $q(x)=\|A^{-1/2}R(x)^*e\|^2$; hence, for
$\chi\in C_c^\infty(\mathbb R;\mathbb R)$, Lemma~\ref{gas:bounded-inner-model} gives
$\Tr(\chi WW^*\chi)=(2\pi)^{-1}\int\chi^2q =\Tr(\chi P_q\chi)$.
Positivity implies $WW^*=P_q$. Functional calculus proves
\eqref{eq:cm-negative-spectral-kernel}; equivalence to $-M_s$ on
this range and to $D$ on its complement proves the spectral claims.
\end{proof}

\section{Spatial and long-time asymptotics}
\label{gas:one-sided-exterior-asymptotics}
We use the notation of Section~\ref{gas:one-sided-continuum}, with
$\Gamma=\gamma(A)$, $C=(2A)^{-1}$ and
$\alpha(\theta)=p+(\pi-\theta)/d$.

\begin{theorem}
\label{gas:half-lattice-far-fields}
Uniformly for $|t|\le T_0<\infty$,
$$u^\gamma(t,x)=\frac{\log(b/a)}{2\pi x}
 +O_{a,b,T_0,\|\gamma\|_\infty}(x^{-2}\log(2+x)),\enspace x\to+\infty.$$
Put $\varrho=\sqrt{a/b}$ and $\delta_\gamma=\int_a^b\gamma(s)\,ds$.
For every $m,n\ge0$, $\partial_x^m\partial_t^n(u^\gamma-Q_\gamma)
\to0$ as $x\to-\infty$, uniformly for $|t|\le T_0$, where
\begin{equation}\label{gas:half-lattice-background}
 Q_\gamma(t,x)=K\frac{1-\varrho^2}
 {1-2\varrho\cos(Kx-K(a+b)t-\delta_\gamma)+\varrho^2}.
\end{equation}
\end{theorem}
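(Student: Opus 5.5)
The proof will run both asymptotics through the reconstruction formula $u^\gamma(t,x)=\Tr(R(x)^*A^{-1}R(x))$, which is also the diagonal of \eqref{eq:cm-negative-spectral-kernel} with $h\equiv1$. It then transfers $A$ to the multiplication model of Proposition~\ref{gas:sawtooth-spectral-transform}, where $U_AAU_A^*=M_s$ on $L^2(a,b)$. In this model $Y$ is the Toeplitz-type coordinate conjugate to $A$. Since $A$ is the Toeplitz operator with sawtooth symbol $\alpha$, the commutator $[A,Y]=i(ee^*-I)$ identifies $Y$ with $-i\,d/ds$ plus a boundary (rank-one) condition coupling $s=a$ and $s=b$. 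Concretely, $Y$ acts as $i\partial_s$ on $H^1(a,b)$ with a boundary relation of the form $F(b)=\kappa F(a)$, corrected by the rank-one term $e\,e^*$. This is the half-line analogue of the band computation in Corollary~\ref{gas:multiband-norming-phases}, now with the single band $[a,b]$ and Toeplitz rather than Laurent structure. We then have $\mathcal B=Y-\Gamma-2tA+\tfrac i2A^{-1}$ with $\Gamma$, $2tA$ and $\tfrac i2A^{-1}$ all multiplication operators in $s$. Hence $x+\mathcal B$ is a first-order ODE operator, and $R(x)$ splits, exactly as in that corollary, into a Volterra kernel $i\mathbf1_{r<s}e(s)/e(r)$ with $e(s)=\exp(-i\int^s(x+\tfrac i{2\sigma}-\gamma(\sigma)-2t\sigma)\,d\sigma)$, plus a rank-one boundary term. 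Here the rank-one term is governed by the half-line (Wiener--Hopf) structure encoded by the outer functions $g_s$ and $F_z$.

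For $x\to+\infty$, I would show that the Volterra part dominates and that the boundary term is lower order. The Volterra part gives $\Tr(R^*A^{-1}R)\approx\int_a^b\!\int_r^b s^{-1}|e(s)/e(r)|^2\,ds\,dr$ after scaling. A more robust route, which I would try first, is to use \eqref{gas:cm-resolution}, or equivalently the identity $\Tr(R-R^*)=-iu$, and compute $-2\Ima\Tr R(x)$ asymptotically in $1/x$. The Neumann expansion $R(x)=x^{-1}-x^{-1}\mathcal BR(x)$ fails because $Y$ is unbounded, so I would instead work in the spectral model. There the trace is a Cauchy-type integral over $s\in[a,b]$ whose leading term is $\frac1{2\pi x}\int_a^b\frac{ds}{s}=\frac{\log(b/a)}{2\pi x}$, and the $1/s$ weight comes from $A^{-1}$. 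The $O(x^{-2}\log(2+x))$ error comes from the endpoint singularities of $\phi(\cdot;s)$ (the factors $(1-w)^{-1/2}$), which generate a logarithm. Dependence on $t$ and $\gamma$ enters only through bounded multiplication operators, so it affects only the second-order term, uniformly for $|t|\le T_0$.

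For $x\to-\infty$, the oscillation reverses and the boundary term carries the limit. Let $\chi$ denote the half-line index cutoff. As $x\to-\infty$, the localization of $R(x)$ on indices $j\approx -x/d$ moves deep into the lattice, away from the edge at $j=0$. There the Toeplitz matrix looks like the two-sided Laurent operator with symbol $\alpha$, and the problem reduces to a one-band periodic gas of the type in Theorem~\ref{gas:arbitrary-band-quasiperiodic-gas} with $r=1$, $p_1=-b$, $q_1=-a$. For this single band, $v_0^2=a/b=\varrho^2$. With $E=e^{i(Kx-K(a+b)t-\Phi)}$ and $C=\varrho$, the formula $u=m-2\Ima\partial_x\log(1-\varrho E)$ yields exactly $Q_\gamma$ as in \eqref{gas:half-lattice-background}. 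The phase $\Phi$ is fixed by the lattice origin, and it is shifted by $\delta_\gamma=\int_a^b\gamma$ by Corollary~\ref{gas:multiband-norming-phases}. Convergence of all derivatives then follows by combining Hilbert--Schmidt convergence of the shifted resolvents, as in Proposition~\ref{prop:cm-bo-factor}, with compactness of $\mathcal X_b$ from Theorem~\ref{gas:compact-flow}. This identifies every $C^\infty_{\rm loc}$ limit point of translates $u^\gamma(t,\cdot+x_n)$, $x_n\to-\infty$, with the periodic background at the matching phase.

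\textbf{Main obstacle.} The hardest step is the comparison between the translated half-lattice resolvent and the Laurent one, i.e.\ showing that the edge effect at $j=0$ is invisible far away. I would attempt it through the semigroup bound $\|e^{is\mathcal B}\|\le e^{-s/(2b)}$, which localizes $R(x)^*e$ to indices within $O(1)$ of $-x/d$ up to exponentially small tails. After index translation by $n$, the two operators then differ only by terms supported near the shifted edge $-n$, which gives $o(1)$ in Hilbert--Schmidt norm after weighting.
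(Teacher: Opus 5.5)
\textbf{The $x\to+\infty$ half has a genuine gap.} It lies in your structural claim about $Y$.

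In the spectral model of the \emph{Toeplitz} operator $A$, $Y$ is not $i\partial_s$ with a two-point boundary condition plus a rank-one correction. Since $Y$ and $ee^*$ are self-adjoint, that $i\partial_s$ would have to carry a unimodular condition $F(b)=\kappa F(a)$. It would then have a two-sided lattice of eigenvalues. A rank-one change of the resolvent preserves interlacing, yet $Y=\diag(dj)_{j\ge0}$ has the semibounded spectrum $d\N_0$. So no such description exists.

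The first-order picture (Volterra kernel plus rank-one boundary term) is correct only for the bilateral Laurent operator. There $A_\infty=M_\alpha$ and $Y_\infty=-id\partial_\theta$ become $M_s$ and $i\partial_s$ with the periodic condition. The edge at $j=0$ is exactly what destroys this structure.

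Your own formula exposes the problem. The ratio $|e(s)/e(r)|^2=s/r$ does not depend on $x$. So the ``dominant'' Volterra contribution to $\Tr(R^*A^{-1}R)$ is of order one and cannot produce $\log(b/a)/(2\pi x)$. The Cauchy-integral route, and the attribution of the logarithm to $(1-w)^{-1/2}$, are asserted rather than derived.

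\textbf{The missing idea} is to expand around the diagonal operator $x+Y$ rather than around $x$.
For $x>0$, $D_x=(x+Y)^{-1}$ satisfies $\|D_x\|_{\mathfrak S_2}^2\le x^{-2}+(dx)^{-1}$. The operator $V=-2tA-\Gamma+\frac i2A^{-1}$ is bounded uniformly for $|t|\le T_0$. For $x\ge2\|V\|$, the resolvent identity gives $\|R-D_x\|_{\mathfrak S_2}\le2\|V\|x^{-1}\|D_x\|_{\mathfrak S_2}$. Hence $u^\gamma=\sum_{j\ge0}(A^{-1})_{jj}(x+dj)^{-2}+O(\|V\|x^{-2})$.

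It then remains to estimate the diagonal of the Toeplitz inverse. The Laurent coefficients $c_n$ of $1/\alpha$ satisfy $c_0=\frac d{2\pi}\log(b/a)$ and $c_n=O(|n|^{-1})$, because $\alpha$ jumps. The Schur-complement formula for $(PA_\infty P)^{-1}$ in terms of $A_\infty^{-1}$ gives $0\le c_0-(A^{-1})_{jj}\le b\sum_{k<0}|c_{k-j}|^2\le C/(j+1)$. Summing against $(x+dj)^{-2}$ yields the main term $c_0/(dx)=\log(b/a)/(2\pi x)$. The edge correction $C/(j+1)$ is exactly what produces the $x^{-2}\log(2+x)$ error.

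\textbf{The $x\to-\infty$ half} follows the paper's route in outline: recenter at $N\approx-x/d$, compare with the Laurent operator, and compute the bilateral field. Two steps need repair.

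First, the localization is not exponential. The entries $A_{jk}$ decay only like $|j-k|^{-1}$, and the semigroup bound yields only $\|R\|\le2b$. What works is strong convergence of the recentered $A$ (extended by $a_0I$), and hence of $A^{-1}$ and $\gamma(A)$, to their Laurent counterparts. The resolvent identity upgrades this to Hilbert--Schmidt convergence, because the bilateral resolvent is Hilbert--Schmidt and the complementary block has squared norm $O(N^{-1})$.

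Second, the phase must be computed rather than ``fixed by the lattice origin''. Note also that for one band $C_{11}=-\varrho$, not $\varrho$. Identifying the bilateral field with some $u_\Phi$ from Theorem~\ref{gas:arbitrary-band-quasiperiodic-gas} would itself require this computation.

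Do it directly. The operator $B_\infty=-id\partial_\theta+V(\theta)$ is conjugated by $G=\exp[-(i/d)\int_0^\theta(V-V_{\rm av})]$ to $Y_\infty+V_{\rm av}$. Here $V_{\rm av}=-2tp-\bar\gamma+iy_*$, with $\bar\gamma=K^{-1}\int_a^b\gamma$ and $y_*=d\log(b/a)/(4\pi)$. The conjugation error has zero trace, so $R-R^*=-iR^*A_\infty^{-1}R$ gives the lattice sum \eqref{eq:half-cm-bilateral-wave}. Poisson summation then yields $Q_\gamma$ with $\varrho=e^{-Ky_*}$ and $\delta_\gamma=K\bar\gamma$ at once. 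On the circle, your Volterra-plus-boundary description of the resolvent is legitimate.
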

\begin{proof}
For $D_x=(x+Y)^{-1}$ and $V=-2tA-\Gamma+iC$, the resolvent identity gives
$\|R-D_x\|_{\mathfrak S_2}\le2\|V\|x^{-1}\|D_x\|_{\mathfrak S_2}$
when $x\ge2\|V\|$, with
$\|D_x\|_{\mathfrak S_2}^2\le x^{-2}+(dx)^{-1}$. Hence
$u^\gamma=\sum_{j\ge0}(A^{-1})_{jj}(x+dj)^{-2}+O(\|V\|x^{-2})$.
The Laurent coefficients of $A_\infty^{-1}$ satisfy
$c_0=d\log(b/a)/(2\pi)$ and $c_j=O(|j|^{-1})$.
The inverse block formula yields
$0\le c_0-(A^{-1})_{jj}\le b\sum_{k<0}|c_{k-j}|^2\le C/(j+1)$.
Since $\sum_{j\ge0}(x+dj)^{-2}=(dx)^{-1}+O(x^{-2})$ and
$\sum_{j\ge0}(j+1)^{-1}(x+dj)^{-2}=O(x^{-2}\log(2+x))$,
the first assertion follows.

For $x=-dN+z$, $0\le z<d$, recenter the half lattice at $N$.
The compressed $A_\infty$, extended by $a_0I$ with $a_0\in[a,b]$,
converges strongly to $A_\infty$, as do its inverse and continuous
functions. The resolvent identity and $\|R\|\le2b$ upgrade this to
Hilbert--Schmidt convergence: the bilateral reference resolvent is
Hilbert--Schmidt and the complementary block has squared norm
$O(N^{-1})$. The convergence is uniform on compact $(t,z)$-sets
and survives all derivatives, which are products of resolvents and
bounded functions of $A$.

The bilateral operator is $B_\infty=-id\partial_\theta+V(\theta)$,
where $V=-2t\alpha-\gamma(\alpha)+i/(2\alpha)$ has average
$V_{\rm av}=-2tp-\bar\gamma+iy_*$,
$\bar\gamma=K^{-1}\int_a^b\gamma$ and $y_*=d\log(b/a)/(4\pi)$.
The periodic Lipschitz multiplier
$G=\exp[-(i/d)\int_0^\theta(V-V_{\rm av})]$ gives
$B_\infty=G(Y_\infty+V_{\rm av})G^{-1}$.
For $R_0=(z+Y_\infty+V_{\rm av})^{-1}$,
$GR_0G^{-1}-R_0=-GR_0[Y_\infty,G^{-1}]R_0$ is trace class with
zero trace: in Fourier coordinates each inner sum
$\sum_j[(R_0)_{j-\ell,j-\ell}-(R_0)_{jj}]$ telescopes to zero;
its absolute sum is $O(|\ell|)$ and $G,G^{-1}\in H^1$ justify interchange.
Consequently $R-R^*=-iR^*A_\infty^{-1}R$ gives
\begin{equation}\label{eq:half-cm-bilateral-wave}
 \operatorname{Tr}(R^*A_\infty^{-1}R)
 =\sum_{j\in\mathbb Z}\frac{2y_*}
 {(z+dj-2pt-\bar\gamma)^2+y_*^2}.
\end{equation}
Poisson summation proves \eqref{gas:half-lattice-background}.
\end{proof}

\begin{theorem}
\label{gas:half-lattice-exterior}
Uniformly for $\xi$ in each compact $J\Subset(2b,\infty)$,
$$u^\gamma(T,T\xi)=\frac{F(\xi)}T+O_J(T^{-3/2}),\enspace
 F(\xi)=\frac1{2\pi\xi}\log\frac{b(\xi-2a)}{a(\xi-2b)},\enspace T\to\infty.$$
The constant may depend on $a,b,J,\|\gamma\|_\infty$.
\end{theorem}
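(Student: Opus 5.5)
We work directly with the trace formula $u^\gamma(T,x)=\Tr(R^*A^{-1}R)$, where $R=(x+B_{\gamma,T})^{-1}$ and $B_{\gamma,T}=Y-\Gamma-2TA+\tfrac i2A^{-1}$. The idea is to take the large parameter to be $T$ and treat $Y-\Gamma+\tfrac i2A^{-1}$ as a perturbation of the dominant part $x-2TA$. Put $x=T\xi$ with $\xi\in J\Subset(2b,\infty)$. Then $x-2TA=T(\xi-2A)$ satisfies $\xi-2A\ge \xi-2b\ge c_J>0$, so $T(\xi-2A)+Y$ is self-adjoint with positive lower bound $Tc_J$. We use $R-R^*=-iR^*A^{-1}R$, so that
\[
 u^\gamma=-2\Ima\Tr R=i\Tr(R-R^*)
\]
whenever this trace makes sense. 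The model resolvent is $D_T=(T(\xi-2A)+Y)^{-1}$, and the perturbation is $V=-\Gamma+\tfrac i2A^{-1}$, which is bounded. The perturbation does not commute with $D_T$, but $\|V\|\,\|D_T\|\lesssim T^{-1}$. The second-order resolvent expansion gives
\[
 R=D_T-D_TVD_T+D_TVRVD_T .
\]

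\emph{Leading term.} The first term $D_T$ is self-adjoint and contributes nothing to $\Ima\Tr$. The main contribution is $\Ima\Tr(D_TVD_T)=\tfrac12\Tr(D_TA^{-1}D_T)$, which picks out $\tfrac i2A^{-1}$; the term $\Gamma$ is real and self-adjoint, so its trace against $D_T^2$ is real. Hence
\[
 u^\gamma\approx\Tr\bigl(A^{-1}(T(\xi-2A)+Y)^{-2}\bigr).
\]
To evaluate this, use the Toeplitz structure of $A$. The operator $Y$ grows linearly with slope $d$, so over indices $j\lesssim T$ the matrix $Y$ is comparable to $T$. We therefore compare with a symbol computation, exactly as in the proof of Theorem~\ref{gas:half-lattice-far-fields}. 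Approximately, $Y$ commutes with functions of $A$ up to the commutator $[A,Y]=i(ee^*-I)$, whose effect is lower order. Replacing the trace by its semiclassical integral gives
\[
 \sum_{j\ge0}\frac{d\theta}{2\pi}\int\frac{\alpha(\theta)^{-1}}{(T(\xi-2\alpha(\theta))+dj)^2}
 \approx\frac1{2\pi}\int_0^{2\pi}\frac{d\theta}{\alpha(\theta)\,T(\xi-2\alpha(\theta))} .
\]
Summing over $j$ produces $1/(d\cdot T(\xi-2\alpha))$. Changing variables to $s=\alpha(\theta)$, with $d\theta=d\,ds$, gives
\[
 \frac1{2\pi T}\int_a^b\frac{ds}{s(\xi-2s)}=\frac{F(\xi)}T .
\]
To make this rigorous, write $A^{-1}=\int s^{-1}dE(s)$ and use the spectral transform $U_A$ from Theorem~\ref{thm:cm-physical-negative-transform}. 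The diagonal of $(T(\xi-2A)+Y)^{-2}$ can be handled with a Laplace representation
\[
 \int_0^\infty \sigma\, e^{-\sigma(T(\xi-2A)+Y)}\,d\sigma .
\]
Here $e^{-\sigma Y}$ is diagonal, which reduces the trace to
\[
 \sum_j e^{-\sigma dj}\bigl(A^{-1}e^{-\sigma T(\xi-2A)}\bigr)_{jj},
\]
up to a commutator error. The Toeplitz diagonal entries $\bigl(f(A)\bigr)_{jj}$ are controlled by the symbol with boundary error $O(1/(j+1))$, as in the far-field proof.

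\emph{Error terms.} The cubic remainder $D_TVRVD_T$ has trace norm at most $\|V\|^2\|R\|\,\|D_T\|_{\Sch_2}^2$. Here $\|R\|\le 2b$ and $\|D_T\|_{\Sch_2}^2\approx\sum_j (Tc+dj)^{-2}\lesssim T^{-1}$, so the remainder is $O(T^{-1})$, which is too weak. It has to be improved using $\|D_T\|\lesssim T^{-1}$: write the remainder as $D_TVD_TVD_T+\ldots$, which gives $O(\|D_T\|\,\|D_T\|_{\Sch_2}^2)=O(T^{-2})$. The first-order commutator and boundary errors are where the $T^{-3/2}$ arises. The Toeplitz boundary error $\sum_j (j+1)^{-1}(T+dj)^{-2}$ is $O(T^{-2}\log T)$. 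The commutator between $e^{-\sigma Y}$ and $e^{-\sigma TA}$ is the genuine obstacle: Duhamel gives $[Y,e^{-\sigma TA}]$ of size $\sigma T$ times a rank-one-plus-identity term. Summed against $\sigma e^{-\sigma cT}$, this yields a relative error of order $T^{-1/2}$ once the rank-one part $ee^*$ is controlled. The rank-one part behaves like a vector $e$ with $|e^*v|=1$ on normalized eigenvectors, as in Lemma~\ref{gas:cm-spectral-coordinate-chart}. I would attempt this step via the diagonalization of Lemma~\ref{gas:cm-spectral-coordinate-chart}, in which $H=Y-2TA-\Gamma$ has eigenvalues $\lambda_j$, $A^\sharp$ is Cauchy-like, and $u^\gamma=\Tr[\mathcal R^*(A^\sharp)^{-1}\mathcal R]$ with $\mathcal R=(x+\diag\lambda_j+i/(2A^\sharp))^{-1}$. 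The eigenvalue distribution of $H$ near $-x=-T\xi$ is then governed by the symbol $dj-2T\alpha$. That step, namely a uniform counting estimate for $\lambda_j$ with error $O(\sqrt T)$, is the main obstacle.
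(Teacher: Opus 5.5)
Your reduction step is sound and matches the paper's first step in substance. With $D_T=(T(\xi-2A)+Y)^{-1}=T^{-1}S$, the expansion $R=D_T-D_TVD_T+D_TVRVD_T$ and the identity $R-R^*=-iR^*A^{-1}R$ give
$u^\gamma(T,T\xi)=\Tr(A^{-1}D_T^2)+O(T^{-2})$. This is the paper's $Tu^\gamma=T^{-1}\sum_j\langle Se_j,A^{-1}Se_j\rangle+O(T^{-1})$. For the remainder, use accretivity, $\|R\|\le(c_JT-\|\gamma\|_\infty)^{-1}$, rather than $\|R\|\le2b$. Your symbol heuristic for $F(\xi)$ is also correct.

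The gap is that the content of the theorem, $\Tr(A^{-1}D_T^2)=F(\xi)/T+O(T^{-3/2})$, is not proved; you label it the main obstacle yourself. Neither of your two routes closes it.
\begin{itemize}
\item \emph{Laplace route.} It does not work as written, because $[A,Y]=i(ee^*-I)$ is not lower order. It is an unbounded form. On the roughly $T$ indices that carry the trace, the rank-one part has norm of order $T$. So for $\sigma\sim T^{-1}$ the commutator of $\sigma TA$ with $\sigma Y$ has norm of order one. Replacing $e^{-\sigma(T(\xi-2A)+Y)}$ by $e^{-\sigma Y}e^{-\sigma T(\xi-2A)}$ therefore has no quantified error, and the relative $T^{-1/2}$ is asserted rather than derived.
\item \emph{Diagonalization route.} An $O(\sqrt T)$ eigenvalue count is not enough. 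Over the eigenpairs of $Y-2TA$ one has $\Tr(A^{-1}D_T^2)=\sum_j\langle A^{-1}v_j,v_j\rangle(T\xi+\lambda_j)^{-2}$. The weights $\langle A^{-1}v_j,v_j\rangle$ vary with $j$, so you would need a quantitative joint distribution of eigenvalues and weights. The paper supplies nothing of that precision: Proposition~\ref{prop:gas-side-eigenvalue-marks} gives only $o(1)$ errors, and only for $n/t$ in compact subsets of $(h,\infty)$.
\end{itemize}

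The missing idea is the paper's column-by-column freezing of $Y/T$, which avoids both global commutator expansions and eigenvalue counting.
\begin{itemize}
\item For $r=j/T$, it compares $Se_j$, recentered at $j$, with the bilateral Toeplitz column $z=(\xi+dr-2A_\infty)^{-1}e_0$.
\item Ellipticity gives coercivity. Bounded variation of the sawtooth symbol gives $|z_k|\le C_J(1+r)^{-2}|k|^{-1}$.
\item Truncate $z$ to $w$ at $M=\min(\lfloor j/2\rfloor,\lfloor T\rfloor)$ and use the exact identity $(Y'/T+\xi+dr-2A_j)w-e_0=P(\xi+dr-2A_\infty)(w-z)+T^{-1}Y'w$.
\item The non-commutativity of $Y$ and $A$ then enters only through the explicit term $T^{-1}Y'w$, of size $\sqrt M/T$.
\item The tail of $z$ costs $M^{-1/2}$. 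The half-lattice boundary costs $(j+1)^{-1/2}$, through $\|(I-P)A_\infty^{-1}z\|$.
\item Summing these column errors against the weights $(1+r)^{-2}$ gives the relative $O(T^{-1/2})$.
\item A Riemann sum of the bilateral diagonal $f_\xi(r)$, with $\int_0^\infty f_\xi(r)\,dr=F(\xi)$ because $dK=2\pi$, gives the leading term.
\end{itemize}
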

\begin{proof}
Set $H_T=Y/T+\xi-2A$, $S=H_T^{-1}$ and
$S_\gamma=(H_T+T^{-1}(-\Gamma+iC))^{-1}$.
The Neumann factorization using $Y/T+\xi$ and the resolvent identity give
\begin{equation}\label{gas:half-lattice-column-bound}
 \|S_\gamma e_j\|+\|Se_j\|\le\frac{C_J}{1+j/T},\enspace
 \|(S_\gamma-S)e_j\|\le\frac{C_J}{T(1+j/T)}.
\end{equation}
Also $\|S_\gamma-S\|\le C_J/T$.
Thus $\|S_\gamma\|_{\mathfrak S_2}+\|S\|_{\mathfrak S_2}=O_J(T^{1/2})$
and $\|S_\gamma-S\|_{\mathfrak S_2}=O_J(T^{-1/2})$, whence
$Tu^\gamma=T^{-1}\sum_j\langle Se_j,A^{-1}Se_j\rangle+O_J(T^{-1})$.

Recenter at $j$, put $r=j/T$, $P=\mathbf1_{k\ge-j}$,
$A_j=PA_\infty P$ and $z=(\xi+dr-2A_\infty)^{-1}e_0$.
Bounded variation of the symbol gives
$|z_k|\le C_J(1+r)^{-2}|k|^{-1}$ for $k\ne0$.
For $j\ge2$ truncate at $M=\min(\lfloor j/2\rfloor,\lfloor T\rfloor)$,
writing $w=\mathbf1_{|k|\le M}z$ and $Y'e_k=dke_k$.
The identity
$(Y'/T+\xi+dr-2A_j)w-e_0
=P(\xi+dr-2A_\infty)(w-z)+T^{-1}Y'w$
and coercivity give
$$\|Se_j-Pz\|\le C_J\left[\frac{M^{-1/2}}{1+r}
 +\frac{\sqrt M}{T(1+r)^2}\right],$$
where the left side is understood in recentered coordinates.
Likewise, for $v=A_\infty^{-1}z$,
$\|(I-P)v\|\le C_J(1+r)^{-1}(j+1)^{-1/2}$ and
$A_j^{-1}Pz=Pv+A_j^{-1}PA_\infty(I-P)v$. Hence
$$\big|\langle Se_j,A^{-1}Se_j\rangle-\langle z,A_\infty^{-1}z\rangle\big|
 \le\frac{C_J(M^{-1/2}+(j+1)^{-1/2})}{(1+r)^2}
       +\frac{C_J\sqrt M}{T(1+r)^3}.$$
Its normalized sum is $O_J(T^{-1/2})$, by splitting at $j=T$;
the first two indices contribute $O(T^{-1})$.
The bilateral diagonal is
$f_\xi(r)=(2\pi)^{-1}\int_0^{2\pi}
[\alpha(\theta)(\xi+dr-2\alpha(\theta))^2]^{-1}\,d\theta$.
Since $|f_\xi'|\le C_J(1+r)^{-3}$, its Riemann-sum error is $O_J(T^{-1})$.
Since $dK=2\pi$, this gives
$\int_0^\infty f_\xi(r)\,dr=(2\pi)^{-1}\int_a^b[s(\xi-2s)]^{-1}ds=F(\xi)$.
\end{proof}

\begin{theorem}
\label{gas:half-lattice-weight-response}
For $\gamma,\eta\in C([a,b];\mathbb R)$, locally uniformly for $\xi>2b$,
\begin{equation}\label{gas:half-lattice-response}
 T^2(u^\gamma-u^\eta)(T,T\xi)\longrightarrow
 \frac1{2\pi}\int_a^b\frac{\gamma(s)-\eta(s)}{s(\xi-2s)^2}\,ds,
 \enspace T\to\infty.
\end{equation}
The map $\gamma\mapsto u^\gamma(0,\cdot)$ is injective.
\end{theorem}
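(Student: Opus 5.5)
The plan is to reuse the exterior analysis of Theorem~\ref{gas:half-lattice-exterior}, carried one order further in $T^{-1}$ and linearized in the norming perturbation. Put $H_T=Y/T+\xi-2A$, $S=H_T^{-1}$ and $V_\gamma=-\Gamma+iC$, so that $R_{\gamma,T}(T\xi)=T^{-1}S_\gamma$ with $S_\gamma=(H_T+T^{-1}V_\gamma)^{-1}$. Then
\[
 u^\gamma(T,T\xi)=T^{-2}\Tr(S_\gamma^*A^{-1}S_\gamma).
\]
For the first-order term, the resolvent identity $S_\gamma=S-T^{-1}SV_\gamma S+T^{-2}SV_\gamma S_\gamma V_\gamma S$ gives
\[
 T^2(u^\gamma-u^\eta)(T,T\xi)=T^{-1}\cdot2\Rea\Tr\bigl(S^*A^{-1}S(\Gamma-\Gamma_\eta)S\bigr)+\text{errors}.
\]
Here the $iC$ contributions cancel in the difference at first order. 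The errors are of the type $T^{-2}\|S\|_{\mathfrak S_2}^2\cdot O(1)=O(T^{-1})$, using $\|S\|_{\mathfrak S_2}=O(T^{1/2})$ from \eqref{gas:half-lattice-column-bound}. So the whole claim reduces to showing
\[
 T^{-1}\Tr\bigl(S^*A^{-1}S\,\Delta\,S\bigr)\to\frac1{2\pi}\int_a^b\frac{\delta\gamma(s)}{s(\xi-2s)^2}\,ds,
\]
where $\Delta=\delta\gamma(A)$ and $\delta\gamma=\gamma-\eta$. No imaginary part survives: $S$ is self-adjoint and $\Delta$, $A$ are self-adjoint, so $\Tr(SA^{-1}S\Delta S)$ is real by cyclicity.

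Next, compute this trace by recentering, exactly as in the previous proof. Write the trace as $\sum_j\langle S\Delta S e_j,A^{-1}Se_j\rangle$ and, at index $j$ with $r=j/T$, replace $S$ by the bilateral Toeplitz resolvent $(\xi+dr-2A_\infty)^{-1}$. The truncation errors from the columns and from $P$ are controlled by the same bounds $C_J(M^{-1/2}+(j+1)^{-1/2})(1+r)^{-2}$. These sum to $o(T)$; note that the extra factor $S\Delta$ is bounded, and $\Delta=\delta\gamma(A)$ converges strongly under compression by the functional-calculus argument used for $A^{-1}$. The bilateral diagonal is the symbol average
\[
 \frac1{2\pi}\int_0^{2\pi}\frac{\delta\gamma(\alpha(\theta))}{\alpha(\theta)(\xi+dr-2\alpha(\theta))^3}\,d\theta.
\]
Approximating $\delta\gamma$ uniformly by polynomials makes $\delta\gamma(A_\infty)$ a Laurent operator with symbol $\delta\gamma\circ\alpha$, which suffices since everything is continuous in $\|\delta\gamma\|_\infty$. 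A Riemann sum in $r$ then gives
\[
 \int_0^\infty\!dr\,\frac1K\int_a^b\frac{\delta\gamma(s)}{s(\xi+dr-2s)^3}\,ds=\frac1{2dK}\int_a^b\frac{\delta\gamma(s)}{s(\xi-2s)^2}\,ds.
\]
With $dK=2\pi$, and the factor $2$ from $2\Rea$, the constant is $1/(2\pi)$, matching \eqref{gas:half-lattice-response}. Local uniformity in $\xi$ follows because all constants depend only on $J$.

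For injectivity, suppose $u^\gamma(0,\cdot)=u^\eta(0,\cdot)$. By the flow uniqueness in Theorem~\ref{gas:global-flow} and $u^\gamma(t)=S(t)\mathcal T_\gamma u^0(0)$ (Theorem~\ref{gas:half-lattice-gas}), the two solutions agree for all $t$. Hence
\[
 \int_a^b\frac{\delta\gamma(s)}{s(\xi-2s)^2}\,ds=0\qquad\text{for all }\xi>2b.
\]
This function of $\xi$ is, up to differentiation, the Stieltjes-type transform $\int\mu(ds)/(\xi-2s)$ of the measure $\delta\gamma(s)\,ds/s$, which is analytic in $\xi\notin[2a,2b]$. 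Its vanishing on $(2b,\infty)$ forces vanishing everywhere off $[2a,2b]$. Expanding at $\xi=\infty$ kills all moments, so $\delta\gamma=0$.

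The main obstacle will be justifying the uniform recentering estimate for the cubic product $S\Delta S$, since $\Delta$ is only a continuous function of $A$ and not a banded matrix. I will handle this by polynomial approximation of $\delta\gamma$, using the Lipschitz-in-$\|\gamma\|_\infty$ control of both sides.
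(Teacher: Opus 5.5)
Your proposal is correct and follows essentially the paper's route. You reduce $T^2(u^\gamma-u^\eta)$ to $\frac2T\Rea\Tr(SA^{-1}S\Delta S)+O(T^{-1})$ by the first-order resolvent expansion and the column/Hilbert--Schmidt bounds, recenter with strong convergence of $S$, $A^{-1}$ and $\delta\gamma(A)$ to their Laurent counterparts under the domination $C_J(1+j/T)^{-2}$, pass to the Riemann sum in $r$, and get injectivity from flow uniqueness and the moment expansion at $\xi=\infty$.

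There are only three minor slips. First, the trace is not real ``by cyclicity'', since cyclic shifts cannot reverse the order of the noncommuting factors; this is harmless because you keep $2\Rea$ and the bilateral limit is real. Second, your stated intermediate target for $T^{-1}\Rea\Tr(\cdots)$ should read $\frac1{4\pi}\int_a^b\cdots$, although your final constant computation is right. Third, local uniformity in $\xi$ needs a sequential argument for the qualitative strong-convergence step, not merely $J$-dependent constants.
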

\begin{proof}
For $\eta=0$, use $S_\gamma-S_0=T^{-1}S_\gamma\Gamma S_0$ in the
quadratic trace. By \eqref{gas:half-lattice-column-bound}, replacing
all resolvents by $S$ costs $O_J(T^{-1})$ in the normalized trace, giving
$$T^2(u^\gamma-u^0)
 =\frac2T\operatorname{Re}\sum_{j\ge0}
 \langle Se_j,A^{-1}S\Gamma Se_j\rangle+O_J(T^{-1}).$$
For $j/T\to r>0$, the recentered $A$, $A^{-1}$ and $\gamma(A)$ converge
strongly to their bilateral counterparts. The uniform Neumann series
gives $S\to(\xi+dr-2A_\infty)^{-1}$. The summand therefore tends to
$(2\pi)^{-1}\int_0^{2\pi}\gamma(\alpha)/[\alpha(\xi+dr-2\alpha)^3]\,d\theta$.
The bound $C_J(1+j/T)^{-2}$ controls both ends of the Riemann sum;
convergence is uniform on compact $(r,\xi)$-sets by the same sequential
argument. Integrating in $r$ proves \eqref{gas:half-lattice-response}.
Replacing $\Gamma$ by $\gamma(A)-\eta(A)$ proves the general case.
If the initial fields agree, uniqueness of the BO flow makes this
transform zero. Its expansion at infinity gives
$\int_a^b(\gamma-\eta)(s)s^{n-1}\,ds=0$ for all $n\ge0$;
polynomial density and continuity imply $\gamma=\eta$.
\end{proof}

\section{Long-time asymptotics in the periodic region}
\label{gas:one-sided-gas-ray-section}
\begin{prop}
\label{prop:half-cm-gap-deficit}
The increasing eigenvalues of $Y-2tA$ satisfy
$\lambda_{j+1}(t)-\lambda_j(t)\ge d$ for every $j\ge0$ and $t\in\mathbb R$.
\end{prop}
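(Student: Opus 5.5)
The plan is to combine the commutator identity of Lemma~\ref{gas:cm-spectral-coordinate-chart} (with $\gamma=0$) with a monotonicity argument in $t$. Write $H(t)=Y-2tA$ on $\mathcal D(Y)$. Lemma~\ref{gas:cm-spectral-coordinate-chart} supplies simple eigenvalues $\lambda_j(t)$, normalized eigenvectors $v_j(t)$ with $e^*v_j=1$, and the off-diagonal formula $A^\sharp_{jk}=-i/(\lambda_j-\lambda_k)$. Since $A$ is bounded and the eigenvalues are simple and isolated, analytic perturbation theory of type (A) makes each $\lambda_j(t)$ real analytic. The Hellmann--Feynman formula then gives $\lambda_j'(t)=-2A^\sharp_{jj}(t)\in[-2b,-2a]$. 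The gap bound itself should come from a Toeplitz/commutator rigidity rather than from derivative estimates alone, so the steps are arranged as follows.

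\textbf{Step 1 (initial time).} At $t=0$ the eigenvalues are $dj$, so the gaps equal $d$ exactly.

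\textbf{Step 2 (derivative of a gap).} Differentiate the gap $g_j(t)=\lambda_{j+1}-\lambda_j$, which gives $g_j'=-2(A^\sharp_{j+1,j+1}-A^\sharp_{jj})$. To control the diagonal entries, use the unitarity of $U(t)$ and the commutator identity $[A,H]=i(ee^*-I)$ transported to the eigenbasis; this is exactly the identity that produces $A^\sharp_{jk}=-i/(\lambda_j-\lambda_k)$. Differentiating $A^\sharp$ in $t$ via $\dot v_j=\sum_{k\ne j}\frac{2A^\sharp_{kj}}{\lambda_j-\lambda_k}v_k$ gives the Calogero--Moser type flow
\[
 \frac{d}{dt}A^\sharp_{jj}=\sum_{k\ne j}\frac{4|A^\sharp_{jk}|^2}{\lambda_j-\lambda_k}
 =\sum_{k\ne j}\frac{4}{(\lambda_j-\lambda_k)^3}.
\]
Together with $\dot\lambda_j=-2A^\sharp_{jj}$, this makes $(\lambda_j)$ a rational Calogero--Moser system with Hamiltonian $\sum_j(A^\sharp_{jj})^2+\sum_{j\neq k}(\lambda_j-\lambda_k)^{-2}$, up to normalization.

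\textbf{Step 3 (repulsion argument).} For finite sections one can solve explicitly. The finite rational Calogero--Moser flow with initial positions $dj$ and equal momenta $p$ has purely repulsive forces, so the extreme particles accelerate outward. The nearest-neighbor gap $g_j$ satisfies $\ddot g_j=-2\frac{d}{dt}(A^\sharp_{j+1,j+1}-A^\sharp_{jj})$. At $t=0$, $\dot g_j=0$ because all diagonal entries equal $p$. I would then show that $\min_j g_j$ can never drop below its initial value $d$, by a maximum-principle argument: at a first time where a minimal gap reaches $d$ from above with $\dot g_j\le0$, the cubic repulsion terms force $\ddot g_j\ge0$.

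\textbf{Main obstacle.} The difficulty is that Step~3 is a second-order comparison, and $\dot g_j$ is not forced to be nonnegative. The cleaner route I would try first avoids this. Choose $\mathcal U$ with $U_t$ unitary and write $e^{-itY\cdot}$-type Lax conjugation. The matrix $L(t)=\diag(\lambda_j(t))$, together with $A^\sharp(t)$, is the isospectral image of the Lax pair $(Y-2tA,\,A)$. The positions at time $t$ are the eigenvalues of $Y-2tA$, and the spectrum of $A$ lies in the interval $[a,b]$ of length $K=2\pi/d$. The gap bound should then follow from the commutator identity alone. For eigenvectors $v_j,v_{j+1}$, the relation $(\lambda_{j+1}-\lambda_j)A^\sharp_{j,j+1}=-i$ gives $g_j=1/|A^\sharp_{j,j+1}|$. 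It therefore suffices to show $|A^\sharp_{j,j+1}|\le 1/d$. For this I would try the unitary $2\times2$ compression of $A^\sharp-p$: its norm is at most $K/2$, which only yields $g_j\ge 2/K=d/\pi$. The sharp constant $d$ would then require an additional argument exploiting the Toeplitz sawtooth symbol $\alpha$, for instance via the explicit spectral transform of Proposition~\ref{gas:sawtooth-spectral-transform}, to improve the Schur bound by the factor $\pi$.
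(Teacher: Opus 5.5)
There is a genuine gap: neither of your routes reaches the sharp constant $d$. Step~3 is never carried out. As you note, a minimal gap may have $\dot g_j<0$, and a second-order ``repulsion'' comparison does not by itself stop $\inf_j g_j$ from decreasing. There are further difficulties: at $t=0$ every gap already equals $d$, and in the infinite system the infimum of the gaps need not be attained. Your alternative route correctly reduces the claim for adjacent eigenvalues to $|A^\sharp_{j,j+1}|\le 1/d$, using $A^\sharp_{jk}=-i/(\lambda_j-\lambda_k)$. However, the only estimate you actually have is $\|A-pI\|\le K/2$, and it yields just $g_j\ge 2/K=d/\pi$. That is exactly the weaker gap bound already recorded in Lemma~\ref{gas:cm-spectral-coordinate-chart}. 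Gaining the factor $\pi$ ``via Proposition~\ref{gas:sawtooth-spectral-transform}'' is a hope, not an argument. The Calogero--Moser dynamics are not needed at all.

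The missing idea is the translation structure of the half lattice, which makes the statement static and elementary. Let $S$ be the forward shift $Se_j=e_{j+1}$ on $\ell^2(\mathbb N_0)$; it preserves $\mathcal D(Y)$.
\begin{itemize}
\item Since $A$ is Toeplitz, $S^*AS=A$.
\item A direct computation gives $S^*YS=Y+dI$.
\item Hence $S^*(Y-2tA)S=Y-2tA+dI$ on $\mathcal D(Y)$.
\end{itemize}
So the compression of $H=Y-2tA$ to the codimension-one subspace $\operatorname{Ran}S=\{e_0\}^\perp$ is unitarily equivalent to $H+dI$, whose eigenvalues are $\lambda_j+d$. Now apply codimension-one min--max interlacing. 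In the max--min formula for $\lambda_{j+1}$, take the $j+1$ constraints $e_0,S\psi_1,\ldots,S\psi_j$. This gives
\[
\lambda_{j+1}\ge\sup_{\psi}\inf_{g\perp\psi_i}\langle (H+d)g,g\rangle=\lambda_j+d,
\]
for every $j\ge0$ and every $t$. This is precisely the paper's proof.
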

\begin{proof}
The shift identity $S^*(Y-2tA)S=Y-2tA+dI$ gives the assertion by
codimension-one min--max interlacing.
\end{proof}

\subsection{The transmission coefficient for the step operator}
Let $D=-i\partial_y$ and let $\Pi_\pm$ be the projections onto the
positive and negative Fourier half-lines, with the Fourier convention
of Section~\ref{sec:lax-data}.
\begin{lemma}
\label{lem:hardy-step-scattering}
For $\kappa>h>0$, set $s_\pm=\kappa\pm h$,
$\nu=(2\pi)^{-1}\log(s_+/s_-)$ and
$\Phi(\kappa)=\pi^{-1}\operatorname{arctanh}^2(h/\kappa)$.
There is a bounded positive-frequency generalized eigenfunction
$w_\kappa\in H^1_{\rm loc}$ satisfying
\begin{equation}\label{gas:hardy-step-equation}
 (D-h\Pi_+\operatorname{sgn}(y))w_\kappa=\kappa w_\kappa.
\end{equation}
For $\pm y>0$, $w_\kappa(y)=C_\pm e^{is_\pm y}+r_\kappa(y)$, where
$|C_\pm|=1$,
$C_+/C_-=e^{i\Phi(\kappa)}$,
and, with all differentiated estimates,
\begin{equation}\label{gas:hardy-step-mellin-tail}
 r_\kappa(y)=D_\pm|y|^{-1-i\nu}+O(|y|^{-2}\log|y|),
 \enspace y\to\pm\infty.
\end{equation}
For $F=\widehat w_\kappa$ and
$a_\kappa=\Pi_-(\operatorname{sgn}(y)w_\kappa)$,
$$
 F(s)=A_\kappa s^{i\nu}(1+O(s|\log s|)),\enspace
 \widehat a_\kappa(-s)=B_\kappa s^{i\nu}(1+O(s|\log s|)),\enspace s\to0.
$$
The endpoint estimates may be differentiated, and all estimates are
locally uniform for $\kappa>h$.
\end{lemma}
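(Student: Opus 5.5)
The natural route is Fourier space, where $\Pi_+$ is a cutoff and $\operatorname{sgn}(y)$ becomes a principal value convolution. Since $\widehat{\operatorname{sgn}}(\xi)=2/(i\xi)$ in the p.v. sense, equation \eqref{gas:hardy-step-equation} for $F=\widehat w_\kappa$ reads
$$
 (\xi-\kappa)F(\xi)=\frac{h}{\pi i}\,\mathbf 1_{\xi>0}\operatorname{p.v.}\int\frac{F(\eta)}{\xi-\eta}\,d\eta .
$$
Positive frequency means $\operatorname{supp}F\subset[0,\infty)$, so this is a singular integral equation on the half-line with Cauchy kernel, i.e.\ a scalar Riemann--Hilbert (Wiener--Hopf/Carleman) problem.

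Set $\Psi(z)=\frac1{2\pi i}\int_0^\infty F(\eta)(\eta-z)^{-1}d\eta$. Then $F=\Psi_+-\Psi_-$, and the p.v.\ integral equals $\pi i(\Psi_++\Psi_-)$ on $(0,\infty)$. The equation becomes
$$
 (\xi-\kappa-h)\Psi_+=(\xi-\kappa+h)\Psi_-,\qquad \xi>0,
$$
which is a jump $G(\xi)=(\xi-s_-)/(\xi-s_+)$ on $(0,\infty)$. The plan is to solve it by the standard scalar factorization $X(z)=\exp\bigl(\frac1{2\pi i}\int_0^\infty\log G(\eta)\,\frac{d\eta}{\eta-z}\bigr)$, with the branch of $\log G$ chosen so that it jumps by $2\pi i$ across $[s_-,s_+]$. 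The generalized eigenfunction at $\kappa$ corresponds to the distributional solution carrying poles/deltas at $\xi=s_\pm$, which produce the plane waves $e^{is_\pm y}$ on each side. Explicitly, $\log G$ is $\log|(\xi-s_-)/(\xi-s_+)|$ outside $[s_-,s_+]$ plus $i\pi$ inside. The first term gives Cauchy integrals with logarithmic singularities at $s_\pm$, and the second gives powers $(z-s_\mp)^{\pm1/2}$ near the band. At $\xi=0$ the value $\log G(0)=\log(s_-/s_+)=-2\pi\nu$ produces the factor $z^{i\nu}$. This yields $F(s)=A_\kappa s^{i\nu}(1+O(s|\log s|))$, and similarly $\widehat a_\kappa(-s)\sim B_\kappa s^{i\nu}$, since $a_\kappa$ is the negative-frequency remainder $\Psi_-$ side. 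Smoothness of $\log G$ near $0$ gives the $O(s|\log s|)$ correction, and all of this is differentiable and locally uniform in $\kappa$.

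Back in $y$, the singularities of $F$ at $\xi=s_\pm$, which are simple poles after multiplying by the appropriate power factor, produce $C_\pm e^{is_\pm y}$ with support in the correct half-line. The one-sided choice follows from the $\pm i0$ prescription dictated by boundedness. The endpoint behaviour $s^{i\nu}$ at $\xi=0$ transforms by Mellin/Abelian asymptotics into $D_\pm|y|^{-1-i\nu}$, with error $O(|y|^{-2}\log|y|)$ coming from the $s|\log s|$ term. Smoothness of $F$ elsewhere and its decay at infinity give faster decay of the remaining pieces, so boundedness and $H^1_{\rm loc}$ follow. For $|C_\pm|=1$, a flux/current identity should work: because $h\operatorname{sgn}$ is real and $\Pi_+$ is self-adjoint, the Wronskian-type argument of Lemma~\ref{rad:lem-static} for $\int\overline p(k*p)$ gives conservation of $|C_+|^2=|C_-|^2$, and we normalize to one.

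The main obstacle is the exact phase $C_+/C_-=e^{i\Phi(\kappa)}$ with $\Phi=\pi^{-1}\operatorname{arctanh}^2(h/\kappa)$. The plan is to read off the residues of $X$ at $s_+$ and $s_-$. Their ratio is $\exp$ of the Cauchy integral of the real part $\log|(\eta-s_-)/(\eta-s_+)|$ evaluated at the band endpoints; the imaginary part only contributes modulus and $\pm\pi/2$ phases, which cancel in the ratio. This reduces to computing
$$
 \frac1{2\pi}\int_0^\infty \log\Bigl|\frac{\eta-s_-}{\eta-s_+}\Bigr|\Bigl(\frac1{\eta-s_+}-\frac1{\eta-s_-}\Bigr)d\eta ,
$$
a dilogarithm-type integral. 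After substituting $\eta=s_-\tau$ it should collapse, via the identity $\mathrm{Li}_2(x)+\mathrm{Li}_2(1/x)$ and the reflection formula, to $\frac1{4\pi}\log^2(s_+/s_-)$. This matches $\pi^{-1}\operatorname{arctanh}^2(h/\kappa)$ because $\operatorname{arctanh}(h/\kappa)=\tfrac12\log(s_+/s_-)$. This computation, together with correctly tracking the branch and $\pm i0$ conventions so that no spurious $\pi$ phases survive, is where I expect the real work to lie.
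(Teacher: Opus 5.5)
Your route is the paper's: Fourier transform, a scalar Riemann--Hilbert problem for $\Psi(z)=\frac1{2\pi i}\int_0^\infty F(\eta)(\eta-z)^{-1}\,d\eta$, and an exponential Cauchy-integral factor. The poles at $s_\pm$ give the one-sided plane waves, and the endpoint power at $0$ gives the tail through $\mathcal F^{-1}(\mathbf 1_{s>0}s^{k+i\nu})$. You then read $\widehat a_\kappa$ off $\Psi$ on the negative axis; precisely, $\widehat a_\kappa(\xi)=-2\Psi(\xi)$ for $\xi<0$.

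As written, however, your jump relation has the wrong sign. Since $\Psi_\pm(\xi)=\pm\frac12F(\xi)+\frac1{2\pi i}\operatorname{p.v.}\int F(\eta)(\eta-\xi)^{-1}d\eta$, one has $\operatorname{p.v.}\int F(\eta)(\xi-\eta)^{-1}d\eta=-\pi i(\Psi_++\Psi_-)$. The equation therefore becomes $(\xi-s_-)\Psi_+=(\xi-s_+)\Psi_-$, which is the paper's $(s-\kappa+h)G_+-(s-\kappa-h)G_-=0$. Your relation is the same problem with $h$ replaced by $-h$. Solving it yields $e^{is_-y}$ on $y>0$, $e^{is_+y}$ on $y<0$, and $F\sim s^{-i\nu}$. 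With your jump, $\log G(0)=-2\pi\nu$ gives $X\sim(-z)^{-i\nu}$, not $z^{i\nu}$; the exponent you state comes from a second, compensating slip.

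The paper also avoids the branch bookkeeping you identify as the real work.

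\textbf{Where the cut goes.} The jump is the restriction of the rational function $(z-s_+)/(z-s_-)$, which is positive and smooth on $(-\infty,0]$, so the paper puts the cut there. It sets $Z(z)=\exp\{\frac1{2\pi i}\int_{-\infty}^0\log\frac{u-s_+}{u-s_-}\,\frac{du}{u-z}\}$, with $G=[Z(z)(z-s_-)]^{-1}$ in $\C_+$ and $G=[Z(z)(z-s_+)]^{-1}$ in $\C_-$. Then $Z$ is analytic across $(0,\infty)$ with $|Z|=1$ there, and the poles give $C_\pm=-i/Z(s_\pm)$. Hence $|C_\pm|=1$ at once, with no flux identity or renormalization.

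\textbf{Your factor is the same object.} After the sign correction, take your positive-axis factor with $+i\pi$ on $(s_-,s_+)$. Rotating $(0,\infty)$ through $\C_+$ shows its values in $\C_-$ are $1/Z$. So the logarithmic and square-root singularities you describe at $s_\pm$ combine into the factor $(z-s_+)/(z-s_-)$ on the upper side and disappear on the lower side.

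\textbf{The phase needs no dilogarithms.} With $\ell(u)=\log\frac{u-s_+}{u-s_-}$ the integrand is $-\ell\ell'$, so $\log\frac{Z(s_-)}{Z(s_+)}=\frac1{2\pi i}\int_{-\infty}^0(-\ell\ell')\,du=-\frac{\ell(0)^2}{4\pi i}=i\Phi(\kappa)$. Your principal-value integral is likewise $-\frac1{4\pi}\int_0^\infty(\log^2|G|)'\,d\eta$. The principal-value contributions at $s_\pm$ vanish, so it equals $\frac1{4\pi}\log^2(s_+/s_-)=\Phi(\kappa)$ directly. What your sketch does not supply is a derivation that $\log(C_+/C_-)$ equals this finite-part expression. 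With $Z$, that step is just evaluating a smooth function at two points.

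\textbf{Missing check.} You should verify, as the paper does, that the constructed $\Psi$ is the Cauchy transform of its own jump. This uses $\Psi=z^{-1}+O(z^{-2}\log|z|)$ at infinity and boundedness in sectors at $0$. Without it, $F=\Psi_+-\Psi_-$ need not solve the Fourier-side equation you started from.
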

\begin{proof}
On the plane cut along $(-\infty,0]$, define
\begin{equation}\label{gas:hardy-step-factor}
 Z(z)=\exp\left\{\frac1{2\pi i}\int_{-\infty}^0
 \frac{\log((u-s_+)/(u-s_-))}{u-z}\,du\right\}.
\end{equation}
Its jump is $(u-s_+)/(u-s_-)$, so the functions
$[Z(z)(z-s_-)]^{-1}$ above the real axis and
$[Z(z)(z-s_+)]^{-1}$ below it define $G$ analytic across the negative
axis. Set $F=G_+-G_-$, interpreting poles as boundary distributions,
and $w_\kappa=\mathcal F^{-1}F$. The Cauchy formula is
$G(z)=(2\pi i)^{-1}\int_0^\infty F(s)/(s-z)\,ds$.
Indeed $G(z)=z^{-1}+O(z^{-2}\log|z|)$ at infinity, and its
imaginary-power singularity at zero is bounded in fixed angular
sectors, so the corresponding contour contributions vanish;
the poles use $(u\pm i0)^{-1}=\operatorname{pv}(1/u)\mp i\pi\delta_0$.
Plemelj's identities give
$(s-\kappa+h)G_+-(s-\kappa-h)G_-=0$ for $s>0$,
which is the Fourier transform of \eqref{gas:hardy-step-equation}.
The pole terms give
\begin{equation}\label{gas:hardy-step-amplitudes}
 C_\pm=-i/Z(s_\pm),\enspace |C_\pm|=1,
\end{equation}
since $\log Z$ is purely imaginary on the positive axis.

Splitting the logarithm in \eqref{gas:hardy-step-factor} at its
endpoint value $2\pi\nu$ gives
$Z(s)=Z_0s^{-i\nu}(1+O(s|\log s|))$, $|Z_0|=1$.
Thus $F(s)=-2h s^{i\nu}(1+O(s|\log s|))/[(\kappa^2-h^2)Z_0]$;
the identity $\widehat a_\kappa(s)=-2G(s)$ for $s<0$ gives the
second endpoint expansion. Both follow with differentiated bounds.
Subtract localized pole distributions. Their inverse transforms differ
from the half-line plane waves by rapidly decaying terms. The remaining
endpoint powers use the Abel-regularized integral
$\mathcal F^{-1}(\mathbf1_{s>0}s^{k+i\nu})(y)
=(2\pi)^{-1}\Gamma(k+1+i\nu)e^{\pm i\pi(k+1+i\nu)/2}|y|^{-k-1-i\nu}$
for $\pm y>0$. This gives \eqref{gas:hardy-step-mellin-tail}.
At infinity, $\partial_s^jF(s)=O(s^{-2-j})$;
after a high-frequency cutoff, arbitrarily high integrable derivatives
of $(is)^kF$ give rapid decay away from $y=0$ by integration by parts.
This also gives boundedness of $w_\kappa$ and at most a logarithmic
singularity of $w_\kappa'$ at zero, hence $H^1_{\rm loc}$.

With $L(v)=\log((v+h)/(v-h))$ and
$L'(v)=-2h/(v^2-h^2)$,
$$\log\frac{Z(s_-)}{Z(s_+)}
=-\frac1{2\pi i}\int_\kappa^\infty
 L(v)\left(\frac1{v-h}-\frac1{v+h}\right)dv
=-\frac{L(\kappa)^2}{4\pi i}=i\Phi(\kappa).$$
Together with \eqref{gas:hardy-step-amplitudes}, this proves the phase.
\end{proof}

\subsection{Eigenvalue asymptotics and reconstruction of the solution}
\label{subsec:gas-side-curvature}
Put $h=2\pi/d^2$, $c=h/\pi$ and
$V(\theta)=-h\operatorname{sgn}\theta+c\theta$ on $(-\pi,\pi)$.
With $D=-i\partial_\theta$, $P$ the projection onto nonnegative
modes and $Q=I-P$, set
$L_t=P(D+tV)P=d^{-1}(Y-2tA+2ptI)$; its gaps are at least one.
\begin{prop}
\label{prop:gas-side-eigenvalue-marks}
Let $(\lambda_n(t),v_n(t))$ be the increasing normalized eigenpairs
of $Y-2tA$ and put
$\Delta(\kappa)=-d\operatorname{arctanh}^2(h/\kappa)/(2\pi^2)$.
For every compact interval $J\Subset(h,\infty)$ with nonempty interior
and every $f\in C([a,b])$, as $t\to+\infty$,
\begin{equation}\label{eq:gas-side-eigenvalue-asymptotics}
 \sup_{n/t\in J}|\lambda_n(t)-dn+2pt-\Delta(n/t)|\to0,
\end{equation}
\begin{equation}\label{eq:gas-side-spectral-marks}
 \sup_{n/t\in J}\left|\langle f(A)v_n(t),v_n(t)\rangle
 -\frac1{b-a}\int_a^bf(s)\,ds\right|\to0.
\end{equation}
\end{prop}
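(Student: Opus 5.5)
The plan is to treat $L_t=P(D+tV)P$ semiclassically in $\theta$, with $1/t$ as the small parameter. I will build explicit quasimodes for eigenvalues near $n$ whenever $\kappa=n/t$ lies in a slightly enlarged compact $J'\Subset(h,\infty)$. The gap bound of Proposition~\ref{prop:half-cm-gap-deficit} then converts quasimodes into eigenpairs.

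The identity $L_t=d^{-1}(Y-2tA+2ptI)$ translates \eqref{eq:gas-side-eigenvalue-asymptotics} into the statement
\[
 \mu_n(t):=d^{-1}(\lambda_n-dn+2pt)\to -\Phi(n/t)/(2\pi),
\]
where $\Phi$ is the phase of Lemma~\ref{lem:hardy-step-scattering}. Since $\Delta=-d\,\Phi/(2\pi)$, this is exactly the claimed limit.

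\emph{Step 1 (WKB away from the jump).} For a spectral parameter $\mu=n+\mu'$, the unprojected equation $(D+tV)g=\mu g$ on $\theta\ne0$ has the solutions $g=\exp\bigl(i\mu\theta-it\int_0^\theta V\bigr)$ on $(0,\pi]$ and on $[-\pi,0)$. Their local frequency $\mu-tV(\theta)$ lies in $[t(\kappa-h),t(\kappa+h)]$ up to $O(1)$. It is therefore $\ge t(\min J'-h)>0$. Consequently $Qg$ is negligible after smooth localization away from $\theta=0$: nonstationary phase gives $O(t^{-\infty})$ for the cut-off pieces. The potential $V$ is continuous at $\theta=\pm\pi$, with value $0$ there, so the two branches join smoothly through $\pm\pi$ on the circle.

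\emph{Step 2 (matching at the jump).} In the scaled variable $y=t\theta$, the operator $P(D+tV)P$ near $\theta=0$ becomes $t(D_y-h\Pi_+\operatorname{sgn}y)+c\,y$ plus lower-order terms. The linear term is a perturbation of relative size $O(|y|/t)$. I will take as inner solution $w_\kappa(t\theta)$ from Lemma~\ref{lem:hardy-step-scattering}. Its asymptotics $C_\pm e^{is_\pm y}$ match the two WKB branches, whose local frequencies at $0^\pm$ are $t(\kappa\pm h)$. The Mellin tails \eqref{gas:hardy-step-mellin-tail} are $O(|y|^{-1})$, and the endpoint expansions of $\widehat w_\kappa$ and $\widehat a_\kappa$ at $s=0$ control the nonlocal leakage through $\Pi_\pm$.

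I glue the inner and outer solutions on the region $1\ll|y|\ll t$, say $|y|\sim t^{1/2}$. Going once around the circle, the outer phases accumulate $2\pi\mu$, since $\int_{-\pi}^{\pi}V=0$. The inner solution contributes the jump $C_+/C_-=e^{i\Phi(\kappa)}$. Single-valuedness therefore forces $2\pi\mu'+\Phi(\kappa)\in2\pi\Z$. Choosing the representative $\mu'\approx-\Phi(\kappa)/(2\pi)$ gives a normalized quasimode $g_n$ with $\|(L_t-\mu)g_n\|=o(1)$, uniformly for $n/t\in J'$.

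The main obstacle is this gluing estimate. I must control simultaneously the $|y|^{-1-i\nu}$ tails, the commutator of the cutoffs with the nonlocal projection $\Pi_+$, and the $cy$ drift across the matching region. My first attempt is to bound $Q$ applied to the glued function by Plancherel, using the explicit half-line transforms of $s^{i\nu}$ from the proof of Lemma~\ref{lem:hardy-step-scattering}.

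\emph{Step 3 (eigenvalues and labels).} The gaps of $L_t$ are at least one. Hence each quasimode lies within $o(1)$ of a unique eigenvalue, and the corresponding normalized eigenvector is $o(1)$-close to $g_n$. Distinct $n$ give quasimodes at distance about $1$ from each other, so this assignment is injective. It remains to show that the eigenvalue attached to $g_n$ is $\lambda_n$ with the same index. I will argue by continuity in $t$ along rays $n/t=\kappa$. The index cannot jump while the $o(1)$ bounds hold. For large $\kappa$ the jump is a small perturbation, $\Phi(\kappa)\to0$, and a direct count (each eigenvalue is $O(1)$-close to the unperturbed mode $n$) fixes the labeling. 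Enlarging $J$ to a long interval $[\min J,\kappa_*]$ then propagates the labeling to all of $J$.

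\emph{Step 4 (spectral marks).} For the marks \eqref{eq:gas-side-spectral-marks} I approximate $f$ uniformly on $[a,b]$ by polynomials. Then $\langle A^kv_n,v_n\rangle=\langle A^kg_n,g_n\rangle+o(1)$. Since $A$ is the Toeplitz compression of multiplication by $\alpha$ and $Qg_n$ is negligible, I get $\langle A^kg_n,g_n\rangle=\int\alpha^k|g_n|^2\,d\theta/2\pi+o(1)$. Finally $|g_n|=1+o(1)$ off an $O(t^{-1/2})$ neighbourhood of $0$, because $|C_\pm|=1$. This yields $(2\pi)^{-1}\int_0^{2\pi}f(\alpha(\theta))\,d\theta=(b-a)^{-1}\int_a^bf$.
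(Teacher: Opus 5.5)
Your matching picture (WKB branches joined through $\pm\pi$, the inner step solution $w_\kappa$, the monodromy condition $\kappa t+\Phi(\kappa)/(2\pi)=n$) agrees with the paper, and so does your Step~4. Step~2, however, contains a genuine gap. The glued quasimode does not have residual $o(1)$, and sharper matching estimates will not make it so.

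Take the paper's version $u_t=q_tp_\kappa(t\theta)+\chi r_\kappa(t\theta)$, $q_t=e^{-itc\theta^2/2}$, which keeps the Mellin tail out to $|\theta|\sim1$. Its exact residual is
\[
 R_t=(L_t-\kappa t)Pu_t=-thP(\chi a_\kappa(t\cdot))+tcP(\theta\chi r_\kappa(t\cdot))-iP(\chi'r_\kappa(t\cdot))-tPVQu_t .
\]
The first, second and fourth terms are $O(1)$, and nothing in your construction makes them cancel.
\begin{itemize}
\item The drift term equals $cD_{\operatorname{sgn}\theta}\operatorname{sgn}(\theta)|t\theta|^{-i\nu}\chi(\theta)+o(1)$ for $|\theta|\gg t^{-1}$. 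Since $P$ cannot annihilate a nonzero function supported in a proper arc, this term is not small.
\item The leakage $Qu_t$ that you expected to be negligible is of order $t^{-1}$, because of the endpoint singularity $F(s)\sim A_\kappa s^{i\nu}$; indeed $t^{1+i\nu}Qu_t\to b_\kappa$. Hence $tPVQu_t=O(1)$.
\item Likewise, $P$ leaves an $O(t^{-1})$ piece of $\chi a_\kappa(t\cdot)$, and this piece is multiplied by $th$.
\end{itemize}
Truncating the tail at $|y|\sim t^{1/2}$, as your pure WKB outer solution does, is worse. The discarded piece is slowly varying with norm about $t^{-3/4}$, and $L_t-\kappa t$ has size about $t$ on it. Since the quantity you want, $-\Phi(\kappa)/(2\pi)$, is itself an $O(1)$ shift, an $O(1)$ residual together with the unit gap determines nothing.

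The missing idea is that this residual is \emph{non-resonant}. The family $t^{i\nu}R_t$ converges strongly, so $\{R_t\}$ is precompact and essentially lives on modes $j\ll t$. There the compression of $L_t-\kappa t=t(D/t+T_V-\kappa)$ is negative definite of size about $t$, because $T_V=PVP\le h<\kappa$. With $M=\lfloor\eta t\rfloor$ and $0<\eta<\kappa-h$, the paper sets
\[
 C_t=P_M(D/t+T_V-\kappa)P_M\le-(\kappa-h-\eta)I,\quad g_t=C_t^{-1}P_MR_t,\quad \psi_t=Pu_t-t^{-1}g_t .
\]
Then $(L_t-\kappa t)\psi_t=(I-P_M)R_t-(I-P_M)T_Vg_t\to0$. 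The correction has norm $O(t^{-1})$, so it does not affect the spectral marks. This correction is a global Toeplitz inversion on the low modes; it does not arise from local gluing.

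Your Step~3 also needs a different anchor. Min--max only places the $n$-th eigenvalue of $L_t$ within $th$ of $n$, since $\|tPVP\|=th$, so no direct count fixes the labels at large $\kappa$. Moreover, your quasimode bounds are uniform only on compact $\kappa$-sets. The paper instead deforms $(h,c)$ to $(\sigma h,\sigma c)$, $0\le\sigma\le1$, at fixed $\kappa\in J$. It checks that all estimates are uniform down to $\sigma=0$, where the trial function is exactly the $n$-th mode, and the unit gap then rules out relabeling.
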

\begin{proof}
Use the preceding step eigenfunction and put
$p_\kappa(y)=C_{\operatorname{sgn}y}e^{i(\kappa y+h|y|)}$,
$r_\kappa=w_\kappa-p_\kappa$.
Choose $\chi\in C_c^\infty(-\pi,\pi)$ equal to one near zero and impose
\begin{equation}\label{eq:gas-ray-monodromy}
 \kappa t+\Phi(\kappa)/(2\pi)=n\in\mathbb Z.
\end{equation}
For $q_t=e^{-itc\theta^2/2}$, define
$u_t=q_tp_\kappa(t\theta)+\chi r_\kappa(t\theta)$.
Its jumps at zero cancel, and \eqref{eq:gas-ray-monodromy} matches
all endpoint derivatives since $c\pi=h$.
Thus $u_t\in H^1(\mathbb T)$,
$\|u_t-q_tp_\kappa(t\cdot)\|_2=o(1)$, and its norm stays positive.

The endpoint expansions imply
\begin{equation}\label{eq:gas-ray-negative-projection}
 t^{1+i\nu}Qu_t\longrightarrow b_\kappa
 \enspace\hbox{strongly in }L^2(\mathbb T).
\end{equation}
Indeed, decompose
$u_t=\chi w_\kappa(t\cdot)+\chi(q_t-1)p_\kappa(t\cdot)
 +(1-\chi)q_tp_\kappa(t\cdot)$.
For the first term, its $j<0$ Fourier coefficient is
$(4\pi^2)^{-1}\int_0^\infty F(s)\widehat\chi(j-ts)ds$.
After $v=ts$, the scaled coefficient tends to
$(4\pi^2)^{-1}A_\kappa\int_0^\infty v^{i\nu}\widehat\chi(j-v)dv$.
The bound $C_N(1+|j|)^{-N}$ gives convergence in weighted $\ell^2$;
parts away from zero, including localized pole distributions, decay
rapidly. For the second term, three nonstationary integrations by
parts give $Ct/(t+|j|)^3$, hence negative projection $O(t^{-3/2})$:
its first endpoint terms cancel because $q_t-1$ vanishes to second
order at zero. The last term is $O(t^{-N})$ by positive phase
derivative and cancellation of periodic endpoint terms.

With $a_\kappa=\Pi_-^{\mathbb R}(\operatorname{sgn}(\cdot)w_\kappa)$,
the exact residual is
$$R_t:=(L_t-\kappa t)Pu_t
=-thP(\chi a_\kappa(t\cdot))+tcP(\theta\chi r_\kappa(t\cdot))
 -iP(\chi'r_\kappa(t\cdot))-tPVQu_t.$$
The family $t^{i\nu}R_t$ converges strongly. The first term uses
the negative-frequency endpoint expansion and the same cutoff
argument; the second uses dominated convergence from
$|t\theta r_\kappa(t\theta)|\le C$ and its Mellin limit.
The third is $O(t^{-1})$, and the last uses
\eqref{eq:gas-ray-negative-projection}. Hence $\{R_t\}$ is precompact.

Choose $0<\eta<\kappa-h$, $M=\lfloor\eta t\rfloor$ and let
$P_M$ project onto $0\le j\le M$. Set
$C_t=P_M(D/t+T_V-\kappa)P_M$, $g_t=C_t^{-1}P_MR_t$ and
$\psi_t=Pu_t-t^{-1}g_t$.
Since $C_t\le-(\kappa-h-\eta)I$, the extended inverses converge
strongly to $(T_V-\kappa)^{-1}$. Precompactness of $R_t$ implies
that of $g_t$, so
$(L_t-\kappa t)\psi_t=(I-P_M)R_t-(I-P_M)T_Vg_t\to0, \enspace \|g_t/t\|_2=O(t^{-1})$.
These arguments hold uniformly on compact $\kappa$-sets by passing
to convergent parameter subsequences and retaining the scalar phase
$t^{i\nu(\kappa)}$.

The gap bound gives a unique eigenvalue within $o(1)$ of $\kappa t$.
To fix its label, replace $(h,c)$ by $(\sigma h,\sigma c)$,
$0\le\sigma\le1$. The poles $\kappa\pm\sigma h$ stay in a compact subset of $(0,\infty)$,
so a common cutoff contains both and avoids zero.
The factors $Z_\sigma$ and their derivatives are uniform there;
translations of $(s-s_\pm\mp i0)^{-1}$ have uniformly bounded
distribution seminorms, including when the poles coalesce. The endpoint and tail coefficients of $F,r$
vanish at zero coupling; the $a$ residual is multiplied by $\sigma h$.
Thus the estimates extend uniformly to $\sigma=0$, where the trial
function is the $n$th mode. The gap one prevents relabeling.
Solving \eqref{eq:gas-ray-monodromy} gives
\eqref{eq:gas-side-eigenvalue-asymptotics}.

The normalized eigenvector is $o(1)$ close, up to phase, to the
constant-modulus oscillation $q_tp_\kappa(t\cdot)$.
For fixed $m\ge0$, nonstationary integration gives
$\|Q[\alpha^m q_tp_\kappa(t\cdot)]\|_2=O(t^{-1/2})$;
possible amplitude jumps contribute $O((t+|j|)^{-1})$ coefficients.
Polynomial induction replaces $f(A)$ by multiplication by $f(\alpha)$.
Uniform polynomial approximation and the uniform distribution of
$\alpha$ on $[a,b]$ prove \eqref{eq:gas-side-spectral-marks}.
\end{proof}

\begin{theorem}
\label{gas:half-lattice-gas-ray}
Let $\varrho=\sqrt{a/b}$ and
$\Phi_{\rm ray}(\xi)=\pi^{-1}\operatorname{arctanh}^2((b-a)/(a+b-\xi))$
for $\xi<2a$. For real $\gamma\in C([a,b])$, uniformly on compact
$\xi$-intervals in $(-\infty,2a)$ and bounded $y$-intervals,
\begin{equation}\label{gas:half-lattice-gas-ray-profile}
 \begin{aligned}
 u^\gamma(t,t\xi+y)&=
 K\frac{1-\varrho^2}{1-2\varrho\cos\Theta_\gamma+\varrho^2}+o(1),\\
 \Theta_\gamma&=K(t\xi+y)-K(a+b)t-\int_a^b\gamma(s)\,ds-\Phi_{\rm ray}(\xi),
 \end{aligned}
\end{equation}
as $t\to+\infty$, with every fixed number of $y$-derivatives.
\end{theorem}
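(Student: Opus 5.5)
The plan is to reconstruct $u^\gamma(t,x)$ from the eigen-coordinates of Lemma~\ref{gas:cm-spectral-coordinate-chart}, and then feed in the ray asymptotics of Proposition~\ref{prop:gas-side-eigenvalue-marks}.

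\textbf{Step 1: the frozen formula.} At time $t$ we diagonalize $H=Y-2tA-\Gamma$. Because $\Gamma=\gamma(A)$ is bounded and $O(1)$, we treat it as a perturbation of $Y-2tA$. This gives
\[
u^\gamma(t,x)=\Tr\bigl[\mathcal R(x)^*(A^\sharp)^{-1}\mathcal R(x)\bigr],\qquad
\mathcal R(x)=\bigl(x+\diag(\lambda_j)+i/(2A^\sharp)\bigr)^{-1}.
\]
At $x=t\xi+y$ the resolvent is concentrated, up to a Hilbert--Schmidt tail, on indices with $\lambda_j\approx -t\xi$. By Proposition~\ref{prop:gas-side-eigenvalue-marks}, these satisfy $dn\approx 2pt-t\xi$, that is $n/t\approx (2p-\xi)/d=(a+b-\xi)/d$. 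Since $\xi<2a$, we get $n/t>(b-a)/d$.

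Next we check that $h=2\pi/d^2$ corresponds to the threshold $(b-a)/d$ after the rescaling $L_t=d^{-1}(Y-2tA+2ptI)$. The eigenvalue of $L_t$ is $\kappa t$ with $\kappa=n/t$, and in these units $h/\kappa=(b-a)/(a+b-\xi)$. So the ray lies in the allowed region $J\Subset(h,\infty)$. We recenter at such an $n$ and pass to a bilateral limit.

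\textbf{Step 2: the local eigenvalue lattice.} In a window $|j-n|\le M$, the eigenvalues form an arithmetic progression of step $d$ with offset $-2pt+\Delta(n/t)+o(1)$. This uses \eqref{eq:gas-side-eigenvalue-asymptotics}, with $\Gamma$ shifting each eigenvalue by $-\langle\Gamma v_n,v_n\rangle+o(1)$. By \eqref{eq:gas-side-spectral-marks}, that shift is $-\bar\gamma=-K^{-1}\int_a^b\gamma$. Because $\Delta$ is smooth, it is locally constant on the scale $n/t$.

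For the matrix $A^\sharp$, the off-diagonal entries are exactly $-i/(\lambda_j-\lambda_k)$. The diagonal entries $\langle Av_n,v_n\rangle$ tend to $p$ by \eqref{eq:gas-side-spectral-marks} applied to $f(s)=s$. Hence the recentered $A^\sharp$ converges strongly to the bilateral Toeplitz $A_\infty$ with entries $p$ and $-i/(d(j-k))$, and similarly $(A^\sharp)^{-1}\to A_\infty^{-1}$.

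\textbf{Step 3: the limit and the phase.} We upgrade strong convergence of the recentered $A^\sharp$ to Hilbert--Schmidt convergence of the resolvents, exactly as in the proof of Theorem~\ref{gas:half-lattice-far-fields}. This uses $\|\mathcal R\|\le2b$ and the facts that the bilateral reference resolvent is Hilbert--Schmidt and the far tail is small. The trace then converges to the bilateral sum \eqref{eq:half-cm-bilateral-wave} with shift $z+dj-2pt-\bar\gamma+\Delta$.

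Poisson summation then gives \eqref{gas:half-lattice-background} with phase
\[
K\bigl(x-(a+b)t\bigr)-K\bar\gamma+K\Delta.
\]
We compute $K\Delta=-Kd\operatorname{arctanh}^2(h/\kappa)/(2\pi^2)=-\Phi_{\rm ray}(\xi)$, using $Kd=2\pi$. Together with $K\bar\gamma=\int_a^b\gamma$, this is \eqref{gas:half-lattice-gas-ray-profile}. The $y$-derivatives follow because they are products of resolvents.

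\textbf{Main obstacle.} The hardest step is Step 2's claim that the off-diagonal structure of the \emph{perturbed} $A^\sharp$ converges. Proposition~\ref{prop:gas-side-eigenvalue-marks} treats $Y-2tA$ without $\Gamma$, so it must be re-run for $Y-2tA-\Gamma$. The cleanest route is to note that $\Gamma=\gamma(A)$ becomes the multiplier $\gamma(\alpha(\theta))$ in the $\theta$-picture. Adding it to $V$ only changes the potential by a bounded $O(1/t)$ term after scaling.

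That change modifies the monodromy \eqref{eq:gas-ray-monodromy} by the average $-\bar\gamma/d$, an integral over the circle since the trial function has constant modulus. The ODE conjugation by $G$, as in Theorem~\ref{gas:half-lattice-far-fields}, removes the mean-zero part. Uniformity of the gap $d$ (Proposition~\ref{prop:half-cm-gap-deficit}) and the eigenvalue bound of Lemma~\ref{gas:cm-spectral-coordinate-chart} prevent relabeling.
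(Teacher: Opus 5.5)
\textbf{Verdict: there is a genuine gap in Step 2.} Your Steps 1--3 match the paper's argument when $\gamma=0$. The problem is the passage to general $\gamma$.

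\emph{First-order perturbation is not available.} Proposition~\ref{prop:gas-side-eigenvalue-marks} is proved only for $Y-2tA$. You cannot get the spectrum of $H=Y-2tA-\gamma(A)$ from it by first-order perturbation. Here $\|\Gamma\|=O(1)$ and the gaps are $O(1)$, so the second-order terms $\sum_{k\ne n}|\langle\Gamma v_n,v_k\rangle|^2/(\lambda_n-\lambda_k)$ are a priori $O(1)$, not $o(1)$. That the shift $-\bar\gamma$ is nevertheless the right answer is a WKB fact; it does not follow from \eqref{eq:gas-side-spectral-marks}.

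\emph{Your fallback is only a sketch, and its counting step fails.} Re-running the step/WKB construction with a gauge factor amounts to re-proving the hardest proposition, with new difficulties:
\begin{enumerate}[label=(\alph*)]
\item $\gamma(A)$ is not the Toeplitz operator with symbol $\gamma\circ\alpha$.
\item $\gamma$ is only continuous.
\item The gauge multiplier has a kink exactly at the step $\theta=0$. There it fails to commute with $P$ and interacts with the delicate $O(1)$ residual analysis.
\end{enumerate}
More concretely, the completeness/labeling argument you cite does not apply. Proposition~\ref{prop:half-cm-gap-deficit} rests on $S^*(Y-2tA)S=Y-2tA+dI$, which uses $S^*AS=A$. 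But $S^*\gamma(A)S\ne\gamma(A)$ in general; for example, $S^*A^2S=A^2+(S^*Ae_0)(S^*Ae_0)^*$. So for $H$ you only have the gap $2/(b-a)=d/\pi$ from Lemma~\ref{gas:cm-spectral-coordinate-chart}. That gap does not exclude extra eigenvalues between your approximate lattice points. Hence the local structure $z+d\Z$ needed for the bilateral limit is not established. (Also, the off-diagonal entries $-i/(\lambda_j-\lambda_k)$ are exact for every continuous $\gamma$, so they are not the real obstacle. The eigenvalue locations and the diagonal marks are.)

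\emph{The missing idea: no perturbed spectral analysis is needed.} The norming action is an abelian group action with $\mathcal T_{2t\,\mathrm{id}}=S(t)$, and $\mathcal T_c$ is translation by $c$. Together with $u^\gamma(t)=S(t)\mathcal T_\gamma u^0(0)$ this gives
\[
u^\gamma(t)=\mathcal T_\gamma u^0(t),\qquad u^\gamma(t,t\xi+\cdot)=\mathcal T_\gamma\bigl[u^0(t,t\xi+\cdot)\bigr].
\]
The paper then argues as follows:
\begin{enumerate}[label=(\roman*)]
\item It proves the case $\gamma=0$ exactly as in your Steps 1--3, where Proposition~\ref{prop:gas-side-eigenvalue-marks} applies verbatim.
\item It uses continuity of $\mathcal T_\gamma$ on $\mathcal X_b$ in $C^\infty_{\rm loc}$ (Theorem~\ref{gas:continuous-norming-action}) to push each subsequential one-band periodic limit through $\mathcal T_\gamma$.
\item On such waves, $\mathcal T_\gamma$ acts as a phase shift by $-\int_a^b\gamma$. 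This follows as in \eqref{gas:half-lattice-background}, or from Corollary~\ref{gas:multiband-norming-phases} with one band.
\end{enumerate}
This yields \eqref{gas:half-lattice-gas-ray-profile} with all $y$-derivatives, with no labeling issue for the perturbed operator.
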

\begin{proof}
First let $\gamma=0$. Proposition~\ref{prop:gas-side-eigenvalue-marks}
gives $\lambda_j=dj-2pt+\Delta(j/t)+o(1)$ and
$\langle v_j,Av_j\rangle\to p$ uniformly for $j/t$ in compact
subsets of $(h,\infty)$. In the eigenbasis,
Lemma~\ref{gas:cm-spectral-coordinate-chart} gives off-diagonal
entries $-i/(\lambda_j-\lambda_k)$ and $aI\le A\le bI$.
For any sequence $(t_l,\xi_l)$, extract a subsequence with $\xi_l$
convergent and choose $j_l$ so that
$z_l=t_l\xi_l+dj_l-2pt_l+\Delta((2p-\xi_l)/d)\in[0,d)$.
Then $j_l/t_l=(2p-\xi_l)/d+O(t_l^{-1})$; after $z_l\to z$,
the recentered coordinates converge locally to $z+d\mathbb Z$
and their diagonal matrix elements to $p$.

For $R(y)=(y+\Lambda+iC)^{-1}$, the identity
$R(y)e_j=[e_j-iR(y)(C-I)e_j]/(y+\lambda_j+i)$ and
$\|R\|\le2b$ bound the squared Hilbert--Schmidt tail outside
$[-M,M]$ by $O(M^{-1})$. Square-summable Cauchy tails give strong
convergence of $A$, then of its inverse and the resolvents;
the tail bound gives Hilbert--Schmidt convergence. The quadratic
trace and all its $y$-derivatives therefore converge to the bilateral
reconstruction \eqref{eq:half-cm-bilateral-wave}.
Since $K\Delta((2p-\xi)/d)=-\Phi_{\rm ray}(\xi)$, this is the
case $\gamma=0$ of \eqref{gas:half-lattice-gas-ray-profile}.
The sequential argument proves uniformity.
For general $\gamma$, the translation-invariant, locally smooth
continuous action $\mathcal T_\gamma$
(Theorem~\ref{gas:continuous-norming-action}) acts on each bilateral
limit, shifting its phase by $-\int_a^b\gamma$ as in
\eqref{gas:half-lattice-background}.
\end{proof}

\subsection{Proof of Theorem~\ref{thm:main-gas}: the construction and asymptotics of soliton gases}
\label{subsec:proof-main-gas}
\begin{proof}
Part~\textup{(i)} is Theorem~\ref{gas:arbitrary-band-quasiperiodic-gas}
and Corollary~\ref{gas:arbitrary-band-finite-arrays}.
For \textup{(ii)}, take $\gamma=\gamma_r=\log r/(2s)$ in
Theorem~\ref{gas:half-lattice-gas}. Theorem~\ref{thm:cm-physical-negative-transform}
gives the Lax spectrum; Theorems~\ref{gas:half-lattice-far-fields},
\ref{gas:half-lattice-exterior} and~\ref{gas:half-lattice-gas-ray}
give the spatial and long-time limits.
Theorem~\ref{gas:half-lattice-weight-response}, with $\eta=0$,
gives the exterior response, recovery of $r$, and injectivity.
\end{proof}


\begin{thebibliography}{99}

\bibitem{AhernClark}
{\small \textsc{Ahern, P. R., Clark, D. N.}
Radial limits and invariant subspaces,
{\it American Journal of Mathematics,} {\bf 92} (1970), 332--342.}

\bibitem{gasAltschul19}
{\small \textsc{Altschul, B.}
Statistical equilibrium of the Korteweg--de Vries and Benjamin--Ono unidirectional soliton models,
{\it Journal of Statistical Mechanics: Theory and Experiment,} (2019), 103209.}

\bibitem{BKV26}
{\small \textsc{Badreddine, R., Killip, R., Vi\c{s}an, M.}
Orbital stability of Benjamin--Ono multisolitons,
{\it arXiv preprint arXiv:2509.14153v2,} (2026).}

\bibitem{BKVasymp}
{\small R. Badreddine, R. Killip, and M. Vi\c{s}an,
Asymptotic stability of Benjamin--Ono multisolitons in $L^2(\mathbb R)$,
arXiv:2606.12951 (2026).}

\bibitem{Benjamin}
{\small \textsc{Benjamin, T. B.}
Internal waves of permanent form in fluids of great depth,
{\it Journal of Fluid Mechanics,} {\bf 29} (1967), 559--592.}

\bibitem{BGGM}
{\small E. Blackstone, L. Gassot, P. G\'erard, and P. D. Miller,
The Benjamin--Ono initial-value problem for rational data with application
to long-time asymptotics and scattering,
\emph{Ann. Inst. H. Poincar\'e C Anal. Non Lin\'eaire} (2025),
published online, doi:10.4171/AIHPC/169.}

\bibitem{BJM18} {\small \textsc{M. Borghese, R. Jenkins, K. D. T.-R. McLaughlin} Long time asymptotic behavior of the focusing nonlinear Schr\"odinger equation, {\it Annales de l'Institut Henri Poincar\'e C, Analyse non lin\'eaire,} {\bf 35} (2018), 887--920.}

\bibitem{Chen}
{\small X. Chen, Explicit formula for the Benjamin--Ono equation with square integrable
and real valued initial data and applications to the zero dispersion limit,
\emph{Pure Appl. Anal.} \textbf{7} (2025), 101--126.}

\bibitem{DZ93} {\small \textsc{P. A. Deift, X. Zhou} A steepest descent method for oscillatory Riemann--Hilbert problems. Asymptotics for the MKdV equation, {\it Annals of Mathematics,} {\bf 137} (1993), 295--368.}

\bibitem{gasDK91}
{\small \textsc{Dobrokhotov, S. Yu., Krichever, I. M.}
Multi-phase solutions of the Benjamin--Ono equation and their averaging,
\emph{Mathematical Notes} \textbf{49} (1991), 583--594.}

\bibitem{gasEl03}
{\small \textsc{El, G. A.}
The thermodynamic limit of the Whitham equations,
{\it Physics Letters A,} {\bf 311} (2003), 374--383.}

\bibitem{gasEK05}
{\small \textsc{El, G. A., Kamchatnov, A. M.}
Kinetic equation for a dense soliton gas,
{\it Physical Review Letters,} {\bf 95} (2005), 204101.}

\bibitem{gasET20}
{\small \textsc{El, G., Tovbis, A.}
Spectral theory of soliton and breather gases for the focusing nonlinear Schr\"odinger equation,
{\it Physical Review E,} {\bf 101} (2020), 052207.}

\bibitem{GarciaModel}
{\small \textsc{Garcia, S. R., Mashreghi, J., Ross, W. T.}
{\it Introduction to Model Spaces and Their Operators}, Cambridge University Press, Cambridge, (2016).}

\bibitem{Garnett}
{\small \textsc{Garnett, J. B.}
{\it Bounded Analytic Functions}, revised first edition, Springer, New York, (2007).}

\bibitem{GG}
{\small \textsc{Gassot, L., G\'erard, P.}
Infinite-order multisoliton solutions to the Benjamin--Ono equation and soliton resolution,
{\it arXiv preprint arXiv:2603.15419v2,} (2026).}

\bibitem{GGM}
{\small \textsc{Gassot, L., G\'erard, P., Miller, P. D.}
A proof of the soliton resolution conjecture for the Benjamin--Ono equation,
{\it arXiv preprint arXiv:2601.10488v2,} (2026).}

\bibitem{GGMExplicit26}
{\small \textsc{Gassot, L., G\'erard, P., Miller, P. D.}
An explicit formula for the solution of the Benjamin--Ono equation and its applications,
{\it arXiv preprint arXiv:2610.00770v1,} (2026).}

\bibitem{Gerard}
{\small P. G\'erard, An explicit formula for the Benjamin--Ono equation,
\emph{Tunis. J. Math.} \textbf{5} (2023), 593--603.}

\bibitem{GKT}
{\small P. G\'erard, T. Kappeler, and P. Topalov,
Sharp well-posedness results of the Benjamin--Ono equation in
$H^s(\mathbb T,\mathbb R)$ and explicit properties of its solutions,
\emph{Acta Math.} \textbf{231} (2023), 31--88.}

\bibitem{GL24} {\small \textsc{P. G\'erard, E. Lenzmann} The Calogero--Moser derivative nonlinear Schr\"odinger equation, {\it Communications on Pure and Applied Mathematics,} {\bf 77} (2024), 4008--4062.}

\bibitem{gasGGJM21}
{\small \textsc{Girotti, M., Grava, T., Jenkins, R., McLaughlin, K. D. T.-R.}
Rigorous asymptotics of a KdV soliton gas,
{\it Communications in Mathematical Physics,} {\bf 384} (2021), 733--784.}

\bibitem{gasGGJMM23}
{\small \textsc{Girotti, M., Grava, T., Jenkins, R., McLaughlin, K. D. T.-R., Minakov, A.}
Soliton versus the gas: Fredholm determinants, analysis, and the rapid oscillations behind the kinetic equation,
{\it Communications on Pure and Applied Mathematics,} {\bf 76} (2023), 3233--3299.}

\bibitem{IfrimTataru} {\small \textsc{M. Ifrim, D. Tataru} Well-posedness and dispersive decay of small data solutions for the Benjamin--Ono equation, {\it Annales Scientifiques de l'\'Ecole Normale Sup\'erieure,} {\bf 52} (2019), 297--335.}

\bibitem{IonescuKenig}
{\small \textsc{Ionescu, A. D., Kenig, C. E.}
Global well-posedness of the Benjamin--Ono equation in low-regularity spaces,
{\it Journal of the American Mathematical Society,} {\bf 20} (2007), 753--798.}

\bibitem{KenigMartel} {\small \textsc{C. E. Kenig, Y. Martel} Asymptotic stability of solitons for the Benjamin--Ono equation, {\it Revista Matem\'atica Iberoamericana,} {\bf 25} (2009), 909--970.}

\bibitem{KLV24}
{\small \textsc{Killip, R., Laurens, T., Vi\c{s}an, M.}
Sharp well-posedness for the Benjamin--Ono equation,
{\it Inventiones mathematicae,} {\bf 236} (2024), no.~3, 999--1054.}

\bibitem{KK24}
{\small \textsc{Kim, T., Kwon, S.}
Soliton resolution for Calogero--Moser derivative nonlinear Schr\"odinger equation,
{\it arXiv preprint arXiv:2408.12843v2,} (2026), to appear in
{\it Journal of the European Mathematical Society.}}

\bibitem{KS21}
{\small \textsc{Klein, C., Saut, J.-C.}
\emph{Nonlinear Dispersive Equations: Inverse Scattering and PDE Methods},
Applied Mathematical Sciences, vol.~209, Springer, Cham, 2021.}

\bibitem{Matsuno79}
{\small \textsc{Matsuno, Y.}
Exact multi-soliton solution of the Benjamin--Ono equation,
{\it Journal of Physics A: Mathematical and General,} {\bf 12} (1979), no.~4, 619--621.}

\bibitem{MP12} {\small \textsc{L. Molinet, D. Pilod} The Cauchy problem for the Benjamin--Ono equation in $L^2$ revisited, {\it Analysis \& PDE,} {\bf 5} (2012), 365--395.}

\bibitem{Ono75}
{\small \textsc{Ono, H.}
Algebraic solitary waves in stratified fluids,
{\it Journal of the Physical Society of Japan,} {\bf 39} (1975), no.~4, 1082--1091.}

\bibitem{Saut}
{\small J.-C. Saut, Sur quelques g\'en\'eralisations de l'\'equation de Korteweg--de Vries,
\emph{J. Math. Pures Appl.} \textbf{58} (1979), 21--61.}

\bibitem{Saut2}
{\small \textsc{Saut, J.-C.}
Benjamin--Ono and intermediate long wave equations: modeling, IST and PDE,
in \emph{Nonlinear Dispersive Partial Differential Equations and Inverse Scattering},
Fields Institute Communications, vol.~83, Springer, New York, 2019, pp.~95--160.}

\bibitem{Sun21}
{\small \textsc{Sun, R.}
Complete integrability of the Benjamin--Ono equation on the multi-soliton manifolds,
{\it Communications in Mathematical Physics,} {\bf 383} (2021), 1051--1092.}

\bibitem{TaoBO}
{\small T. Tao, Global well-posedness of the Benjamin--Ono equation in $H^1(\mathbb R)$,
\emph{J. Hyperbolic Differ. Equ.} \textbf{1} (2004), 27--49.}

\bibitem{TaoSolitons}
{\small \textsc{Tao, T.}
Why are solitons stable?,
\emph{Bulletin of the American Mathematical Society}
\textbf{46} (2009), no.~1, 1--33.}

\bibitem{Wu}
{\small Y. Wu, Simplicity and finiteness of discrete spectrum of the Benjamin--Ono scattering operator,
\emph{SIAM J. Math. Anal.} \textbf{48} (2016), 1348--1367.}

\bibitem{WuDirect}
{\small Y. Wu, Jost solutions and the direct scattering problem of the Benjamin--Ono equation,
\emph{SIAM J. Math. Anal.} \textbf{49} (2017), 5158--5206.}

\bibitem{ZabuskyKruskal} {\small \textsc{N. J. Zabusky, M. D. Kruskal} Interaction of solitons in a collisionless plasma and the recurrence of initial states, {\it Physical Review Letters,} {\bf 15} (1965), 240--243.}

\bibitem{gasZak71}
{\small \textsc{Zakharov, V. E.}
Kinetic equation for solitons,
{\it Soviet Physics JETP,} {\bf 33} (1971), 538--541.}

\end{thebibliography}
\end{document}